\newtheorem{Thm}{Theorem}[section]
\newtheorem{Prop}[Thm]{Proposition}
\newtheorem{Cor}[Thm]{Corollary}
\newtheorem{Lem}[Thm]{Lemma}
\theoremstyle{definition}
\newtheorem*{definition}{Definition}
\newtheorem*{remark}{Remark}
\numberwithin{equation}{section}
\begin{document}

\newcommand{\Z}[0]{\mathbb{Z}}
\newcommand{\Q}[0]{\mathbb{Q}}
\newcommand{\F}[0]{\mathbb{F}}
\newcommand{\N}[0]{\mathbb{N}}
\renewcommand{\O}[0]{\mathcal{O}}
\newcommand{\p}[0]{\mathfrak{p}}
\newcommand{\m}[0]{\mathrm{m}}
\newcommand{\Tr}{\mathrm{Tr}}
\newcommand{\Hom}[0]{\mathrm{Hom}}
\newcommand{\Gal}[0]{\mathrm{Gal}}
\newcommand{\Res}[0]{\mathrm{Res}}
\newcommand{\id}{\mathrm{id}}
\newcommand{\cl}{\mathrm{cl}}
\newcommand{\mult}{\mathrm{mult}}
\newcommand{\adm}{\mathrm{adm}}
\newcommand{\tr}{\mathrm{tr}}
\newcommand{\pr}{\mathrm{pr}}
\newcommand{\Ker}{\mathrm{Ker}}
\newcommand{\ab}{\mathrm{ab}}
\newcommand{\sep}{\mathrm{sep}} 
\newcommand{\triv}{\mathrm{triv}}
\newcommand{\alg}{\mathrm{alg}}
\newcommand{\ur}{\mathrm{ur}}
\newcommand{\Coker}{\mathrm{Coker}}
\newcommand{\Aut}{\mathrm{Aut}}
\newcommand{\Ext}{\mathrm{Ext}}
\newcommand{\Iso}{\mathrm{Iso}}
\newcommand{\GL}{\mathrm{GL}}
\newcommand{\Fil}{\mathrm{Fil}}
\newcommand{\an}{\mathrm{an}}
\renewcommand{\c}{\mathcal }
\newcommand{\crys}{\mathrm{crys}}
\newcommand{\st}{\mathrm{st}}
\newcommand{\Map}{\mathrm{Map}}
\newcommand{\Sym}{\mathrm{Sym}}
\newcommand{\Spec}{\mathrm{Spec}}
\newcommand{\Frac}{\mathrm{Frac}}
\newcommand{\LT}{\mathrm{LT}}
\newcommand{\Alg}{\mathrm{Alg}}
\newcommand{\To}{\longrightarrow}
\newcommand{\Aug}{\mathrm{Aug}}
\newcommand{\wt}{\widetilde}
\newcommand{\op}{\mathrm}
\newcommand{\Ad}{\op{Ad}}
\newcommand{\ad}{\op{ad}}
\newcommand{\fr}{\mathfrak}

\title[Ramification via deformations]
{Ramification filtration via deformations}
\author{Victor Abrashkin}
\address{Department of Mathematical Sciences, 
Durham University, Science Laboratories, 
South Rd, Durham DH1 3LE, United Kingdom \ \&\ Steklov 
Institute, Gubkina str. 8, 119991, Moscow, Russia
}
\email{victor.abrashkin@durham.ac.uk}
\date{}
\keywords{local field, ramification subgroups}
\subjclass[2010]{11S15, 11S20}

\begin{abstract} Let $\mathcal K$ be a field of formal Laurent series 
with coefficients in a finite field of 
characteristic $p$, $\mathcal G_{<p}$ --- the maximal quotient of 
the Galois group of $\c K$ 
of period $p$ and 
nilpotent class $<p$ and $\{\mathcal G_{<p}^{(v)}\}_{v\geqslant 0}$ --- 
the filtration by ramification subgroups in the upper numbering.  
Let $\mathcal G_{<p}=G(\mathcal L)$ be the identification of 
nilpotent Artin-Schreier theory: here 
$G(\mathcal L)$ is the group obtained from a suitable profinite Lie 
$\mathbb{F} _p$-algebra $\mathcal L$ via the Campbell-Hausdorff composition law. 
We develop a new technique to the description of the ideals 
$\mathcal L^{(v)}$ such that $G(\mathcal L^{(v)})=\mathcal G_{<p}^{(v)}$ 
and the explicite construction of their generators. 
Given $v_0\geqslant 1$ we construct epimorphism of Lie algebras 
$\bar\eta ^{\dag }:\mathcal L\longrightarrow 
\bar{\mathcal L}^{\dag }$ and an action 
$\Omega _U$ of the formal group of order $p$,  
$\alpha _p=\operatorname{Spec}\,\mathbb{F} _p[U]$, $U^p=0$, 
on $\bar{\mathcal L}^{\dag }$. Suppose  
$d\Omega _U=B^{\dag }U$, where 
$B^{\dag }\in\operatorname{Diff}\bar{\mathcal L}^{\dag }$, 
and 
 $\bar{\mathcal L}^{\dag }[v_0]$ is the ideal of 
$\bar{\mathcal L}^{\dag }$ generated 
by the elements of $B^{\dag }(\bar{\mathcal L}^{\dag })$. The main result of 
the paper states that 
$\mathcal L^{(v_0)}=(\bar\eta ^{\dag })^{-1}\bar{\mathcal L}^{\dag }[v_0]$. 
In the last sections we relate this 
result to the explicit construction of generators of $\mathcal L^{(v_0)}$ obtained 
earlier by the author, develop its more efficient version and apply it to the 
recovering of the whole ramification filtration of $\mathcal G_{<p}$ from 
the set of its jumps. 
\end{abstract}
\maketitle

\section*{Introduction} \label{S0}

Let $\c K$ be a complete discrete valuation field of characteristic $p$
with finite residue field $k\simeq\F _{p^{N_0}}$, $N_0\in\N $. 
Let $\c K_{<p}$ be a maximal $p$-extension of $\c K$ with the Galois group 
$\Gal (\c K_{<p}/\c K):=\c G_{<p}$ 
of nilpotence class $<p$ and exponent $p$. The advantage of $\c G_{<p}$ 
(compared to the whole Galois group $\c G$ of $\c K$) comes 
from the following fact:   
any $p$-group $G$ of nilpotence class $s_0<p$ and exponent 
$p$ can be presented in the form 
$G(L)$, where $L$ is a Lie $\F _p$-algebra of 
nilpotence class $s_0$ and the set $G(L):=L$ 
is provided with a group structure via the 
Campbell-Hausdorff composition law, cf.\,Sect.\,\ref{S1.2}.  

Consider the decreasing filtration by ramification subgroups in the upper numbering
$\{\c G^{(v)}_{<p}\}_{v\geqslant 0}$ of $\c G_{<p}$. 
This filtration substantially reflects arithmetic structure 
of the field $\c K$, cf. \cite{Ab10}. First results about the structure of these  
ramification subgroups were obtained by the 
author in \cite{Ab1}. This approach included: 

a) a construction of the 
identification $\c G_{<p}= G(\c L)$, where $\c L$ is explicitly defined 
Lie $\F _p$-algebra (nilpotent Artin-Shreier theory);

b) a construction of ideals $\c L^{(v)}$ such that 
$\c G_{<p}^{(v)}=G(\c L^{(v)})$.

Namely, we constructed 
explicit elements 
$\c F_{\alpha , -N}\in\c L\otimes k$, where $\alpha\geqslant 1$ and  
$N\in\Z _{\geqslant 0}$, 
allowing us to characterize the ideals $\c L^{(v)}$ as follows. 
Given $v_0\geqslant 1$ there is $N(v_0)\in\Z _{\geqslant 0}$ such that   
$\c L^{(v_0)}$ is the minimal ideal in $\c L$ satisfying the condition: 
if $\alpha \geqslant v_0$ and $N\geqslant N(v_0)$ then  
$\c F_{\alpha , -N}\in\c L^{(v_0)}\otimes k$. 

For a generalization of these results cf.\,\cite{Ab2, Ab3} and 
for their application to 
an analogue of the Grothendieck conjecture  cf.\,\cite{Ab4, Ab5}. For  
the study of an analogue $\Gamma _{<p}=G(L)$ of 
the group $\c G_{<p}$ in the case of 
local fields $K$ of mixed characteristic containing  
$p$-th roots of unity cf.\,\cite{Ab11, Ab12}. In these two papers 
we obtained a description of  
the corresponding ramification ideals $L^{(v)}$ and their interpretation 
in terms of the Demushkin relation for $\Gamma _{<p}$. 
Our method was based on a new technique (a linearization procedure) 
which allowed us to work with  
arithmetic properties of local fields in terms of Lie algebras. The statement  
of final results in terms of Lie algebras looked quite natural. We believe that it 
would be difficult to achieve such results exclusively via group theoretic means. 
To some extent this phenomenon could be treated 
as an evidence of the existence of a 
hidden ``analytic structure'' 
on the Galois group which shows up on the level 
of Lie algebras in our case. 
However, the above mentioned study of the mixed characteristic case  
is based quite substantially on the 
characteristic $p$ results from the papers \cite {Ab1}, \cite{Ab2} and \cite{Ab3}. 
It should be pointed out that  in \cite{Ab1} the  proof 
of the main result was not done 
completely in terms of Lie algebras. We could not \lq\lq linearize\rq\rq\ 
the verification of the criterion describing the ramification ideals 
$\c L^{(v)}$. As a result, we proceeded with non-trivial 
calculations in the enveloping algebra of $\c L$. 
In later papers \cite{Ab2} and \cite{Ab3} 
we managed to  
generalize our approach to the case of groups of period $p^M$, $M>1$ 
(but still of nilpotence class $<p$).  
At the same time it became clear that we should develop 
new techniques and methods when working with more complicated objects, e.g. 
higher local fields, cf.\,e.g.\,\cite{Ab13}. 

In this paper we develop a linearization procedure 
which allows us to obtain the results from 
\cite{Ab1}  exclusively in 
terms of Lie theory. For a given $v_0>0$, we characterize 
the ramification ideal $\c L^{(v_0)}$ in terms of deformations of some auxilliary 
Lie $\F _p$-algebra $\bar{\c L}^{\dag }$ with a suitably chosen 
module of coefficients. 
This algebra is provided with an action of a formal group 
of order $p$ which comes from 
a derivation of a higher order. The appearance of such 
derivations is quite a new phenomenon. Note that  in 
\cite{Ab11, Ab12} we also 
used the action of formal group of order $p$ 
but it came from usual derivations. 

Let us sketch briefly the main steps of our approach.

We start with a choice of an (sufficiently general) epimorphism 
$\eta _e:\c G\To G(\c L)$ which induces identification 
$\c G_{<p}\simeq G(\c L)$ given by the nilpotent Artin-Shreier theory.  
Here $\c L$ is a profinite Lie $\F _p$-algebra such that 
its extension of scalars $\c L_{k}:=\c L\otimes k$ 
has a fixed set of profinite generators. The map  
$\eta _{e}$ depends on a choice of an element 
$e\in\c L_{\c K}:=\c L\otimes\c K$ specified below. 
\medskip 

Choose $v_0\in\mathbb R$, $v_0>0$. 
We aim to characterize the ideal $\c L^{(v_0)}\subset\c L$ 
such that $\eta _e(\c G^{(v_0)})=G(\c L^{(v_0)})$. 
For this reason we: 
\medskip

a)\ define a decreasing central filtration of $\c L$ by its ideals 
$\c L=\c L(1)\supset \dots \supset \c L(s)\supset \dots \, ,$  
and set $\bar{\c L}=\c L/\c L(p)$ with the induced filtration 
$\{\bar{\c L}(s)\}_{s\geqslant 1}$ (note that $\bar{\c L}(p)=0$);  
\medskip  

b)\ introduce a lift $\c V:\bar{\c L}^{\dag }\To \bar{\c L}$ where 
$\bar{\c L}^{\dag }$ is a Lie $\F _p$-algebra of 
nilpotent class $<p$ together with its 
central filtration $\bar{\c L}^{\dag }(s)$ such that 
${\c V}(\bar{\c L}^{\dag }(s))=
\bar{\c L}(s)$ and $\bar{\c L}^{\dag }(p)=0$;
\medskip 

c)\ specify a group epimorphism  
$\eta _{\bar e^{\dag }}:\c G\To G(\bar{\c L}^{\dag })$ 
such that 
$${\c V}\eta _{\bar e^{\dag }}=
\eta _{\bar{e}}:=\eta _{e}\,\op{mod}\,G(\c L(p));$$

d)\ introduce the actions 
$\Omega _{\gamma }:\bar{\c L}^{\dag }\To\bar{\c L}^{\dag }$ of the elements 
$\gamma\in\Z /p$;
\medskip 

e)\ introduce the ideal $\bar {\c L}[v_0]$ in $\bar{\c L}$ as the minimal ideal 
such that for any $\gamma\in\Z /p$, 
$\c V^{-1}\bar{\c L}[v_0]\supset \Omega _{\gamma }(\Ker {\c V})$ 
(this condition is not easy to study because the action of $\Z /p$  
appears in terms of 
complicated 
Campbell-Hausdorff group law);
\medskip 

f)\ establish that the actions $\Omega _{\gamma }$ can 
be defined in terms of some co-action 
$\Omega _U:
\bar{\c L}^{\dag }\To \bar {\c L}^{\dag }\otimes \F_p[U]$ of 
the formal group scheme 
$\alpha _p=\F _p[U]$, $U^p=0$, with coaddition 
$\Delta U=U\otimes 1+1\otimes U$;
\medskip 

g)\  if $d\Omega _U=B^{\dag }U$ is the differential of $\Omega _U$ (here 
$B^{\dag }\in \op{Diff}\bar{\c L}^{\dag }$) 
then $\bar{\c L}[v_0]$ appears as the minimal ideal in $\bar{\c L}$ containing  
${\c V}B^{\dag }(\bar{\c L}^{\dag })$;
\medskip 

h)\ verify  
that $\c L^{(v_0)}=\bar{\op{pr}} ^{-1}\bar{\c L}[v_0]$, where $\bar{\op{pr}}$ is 
the natural projection from $\c L$ to $\bar{\c L}$. 
\medskip 

The above characterization 
of $\c L^{(v_0)}$ can be used for a considerable simplification 
of the process of recovering of explicit generators. 
These generators appeared in \cite{Ab1} as \lq\lq linear\rq\rq\ 
components of some elements from $\c L^{(v_0)}$. Our method allows us to 
skip the verification that these linear components generate the 
ideal $\c L^{(v_0)}$.  
\medskip 

In the final Section we relate the description of ramification ideals with 
their description in \cite{Ab1}, discuss the problem of effective 
construction of their generators, and show how the knowledge of 
the jumps of ramification filtration in $\c G_{<p}$ allows us 
to recover the structure of this filtration. 

The methods of this paper admit a generalization to the Galois groups of 
period $p^M$ as well as to the case 
of higher dimensional local fields in the 
characteristic $p$ case. 
In particular, the \lq\lq $p^M$-version\rq\rq\ \cite{Ab3} of \cite{Ab1} 
required much more complicated study of \lq\lq non-linear\rq\rq\ 
components,  
which can be now avoided due to our approach 
(the paper in preparation).
This also will provide us with much better background 
for the papers \cite{Ab11, Ab12} and their  
upcoming 
\lq\lq $p^M$-versions\rq\rq\ including the case of higher 
dimensional local fields.

{\bf Notation.} 
 Suppose $s\in\N $. For any topological 
group $G$, we denote by $C_s(G)$ the closure of the subgroup of 
$G$ generated by the commutators of order $\geqslant s$. If $L$ is a topological 
Lie algebra then $C_s(L)$ is the closure of the ideal generated by 
commutators of degree $\geqslant s$. 
For any topological $A$-modules $M$ and $B$ we use the notation  
$M_B:=M\hat\otimes _AB$.

\section{Preliminaries} \label{S1} 

Suppose $\c K$ is a field of characteristic $p$,  
$\c K_{sep}$ is a separable closure of $\c K$ and 
$\c G=\Gal (\c K_{sep}/\c K)$. We assume that $\c G$ acts on $\c K_{sep}$ 
as follows: if  
$g_1,g_2\in\c G$ and $a\in\c K_{sep}$ then 
$g_1(g_2a)=(g_1g_2)a$. Denote by $\sigma $ the 
morphism of taking $p$-th power in $\c K_{sep}$.

In \cite{Ab1, Ab2} we developed a nilpotent analogue of the classical 
Artin-Schreier theory of cyclic field extensions of characteristic $p$. 
We are going to use the covariant analog of this theory, 
cf. the discussion in \cite{Ab10}, 
for explicit description of the group $\c G_{<p}=\c G/\c G^pC_p(\c G)$ as follows.

\subsection{Lie algebra $\c L$} \label{S1.1} 

Suppose $\c K=k((t))$ where $t$ is a fixed uniformizer  
and $k\simeq\F _{p^{N_0}}$ with $N_0\in\N $. 
Fix $\alpha _0\in k$ such that 
$\op{Tr}_{k/\F _p}(\alpha _0)=1$.

Let $\Z ^+(p)=\{a\in\N\ |\ \op{gcd}\,(a,p)=1\}$ and 
$\Z ^0(p)=\Z ^+(p)\cup\{0\}$. 

Let  
$\wt{\c L}$ be a profinite free Lie $\F _p$-algebra with the 
(topological) module of 
generators $\c K^*/\c K^{*p}$ and   
$\c L=\wt{\c L}/C_p(\wt{\c L})$. We can obtain the set   
$$\{D_{0}\}\cup\{D_{an}\ |\ a\in\Z ^+(p), n\in\Z/N_0\}$$ 
of topological 
generators of $\c L_k$
via the following identifications:
$$(\c K^*/\c K^{*p})\hat\otimes _{\F _p}k=
\op{Hom}_{\F _p}(\c K/(\sigma -\id )\c K\,,k)=$$
$$\op{Hom}_{\F _p}(
\oplus _{a\in\Z ^+(p)}kt^{-a}\oplus\F _p\,\alpha _0\,,k)
=\prod_{a\in\Z ^+(p)}\Hom _{\F _p}(kt^{-a},k)\times kD_0$$
and $\Hom _{\F _p}(kt^{-a},k)=\prod _{n\in\Z /N_0}kD_{an}$, 
where for any $\alpha\in k$ and $a,b\in\Z ^+(p)$, 
$D_{an}(\alpha t^{-b})=\delta _{ab}\,\sigma ^n(\alpha )$. 
Note also that the first identification uses 
the Witt pairing \cite{Fo, AJ} and $D_{0}$ comes  from 
$t\otimes 1\in\,(\c K^*/\c K^{*p})\hat\otimes _{\F _p}k$. 

For any $n\in\Z /N_0$, set 
$D_{0n}=t\otimes (\sigma ^n\alpha _0)=(\sigma ^n\alpha _0)D_0$.

\subsection{Groups and Lie algebras of nilpotent class $<p$} \label{S1.2} 

The basic ingredient of the nilpotent Artin-Schreier 
theory is the equivalence of the category of 
$p$-groups of nilpotent class $s_0<p$ and the 
category of Lie $\Z _p$-algebras of the same nilpotent class $s_0$,  
\cite{La, Kh}. 
In the case of objects killed by $p$, this 
equivalence can be explained as follows. 

Let $L$ be a Lie  $\F _p$-algebra of nilpotent class $<p$, i.e. $C_p(L)=0$. 

Let $A$ be an enveloping algebra  of $L$. Then there is a natural embedding 
$L\subset A$, the elements of $L$ generate the augmentation ideal $J$ of $A$ 
and we have a morphism of algebras $\Delta :A\To A\otimes A$ uniquely determined by the 
condition $\Delta (l)=l\otimes 1+1\otimes l$ for all $l\in L$.

Applying the Poincar\'e-Birkhoff-Witt Theorem as in  
\cite{Ab1} Sect.\,1.3.3, we obtain that:

--- $L\cap J^p=0$; 

--- $L\,\op{mod}\,J^p=\{a\,\op{mod}
\,J^p\ |\ \Delta (a)\equiv a\otimes 
1+1\otimes a\,\op{mod}\,(J\otimes 1+1\otimes J)^p\}\, ;$

--- the set $\wt{\exp}(L)\,\op{mod}J^p$ is identified with the set of all 
''diagonal elements modulo degree $p$``, i.e. with 
the set of $a\in 1+J\,\op{mod}\,J^p$ such that 
$\Delta (a)
\equiv a\otimes a\,\op{mod}(J\otimes 1+1\otimes J)^p$.  
(Here  $\wt{\exp}(x)=\sum _{0\leqslant i<p}x^i/i!$ 
is the truncated exponential.) 
\medskip 

In particular,  there is a natural embedding 
$L\subset A/J^p$ and in terms of this embedding 
the Campbell-Hausdorff formula appears as    
$$(l_1,l_2)\mapsto l_1\circ l_2=
l_1+l_2+\frac{1}{2}[l_1,l_2]+\dots ,\ \ l_1,l_2\in L\, ,$$
where $\wt{\exp}(l_1)
\wt{\exp}(l_2)\equiv \wt{\exp}(l_1\circ l_2)\,\op{mod}\,J^p$.   
This composition law provides the set $L$ with 
a group structure and we denote this group by $G(L)$. Note that  a subset 
$I\subset L$ is an ideal in $L$ iff 
$G(I)$ is a normal subgroup in $G(L)$. 
Clearly, $G(L)$  has exponent $p$ and nilpotent class $<p$. 
Then the correspondence 
$L\mapsto G(L)$ is the above mentioned  
equivalence of the categories of $p$-groups of 
exponent $p$ and nilpotent class $s<p$ 
and Lie 
$\F _p$-algebras of the same nilpotent class $s$. 
This  
equivalence can be naturally extended to the categories of 
pro-finite Lie algebras and 
pro-finite $p$-groups.

\subsection{Epimorphism  $\eta _e:\c G\To G(\c L)$} \label{S1.3} 

Let $L$ be a finite Lie $\F _p$-algebra 
of 
nilpotent class $<p$ and set $L_{sep}:=L_{\c K_{sep}}$. The elements of 
$\c G=\Gal (\c K_{sep}/\c K)$ and  
$\sigma $ act on $L_{sep}$ through the second factor, 
$L_{sep}|_{\sigma =\id}=L$ and $(L_{sep})^{\c G}=L_{\c K}$. 
The covariant nilpotent Artin-Schreier theory states that  
for any $e\in G(L_{\c K})$,  the set 
$$\c F(e)=\{f\in G(L_{sep})\ |\ \sigma (f)=e\circ f\}$$  
is not empty and for any fixed $f\in \c F(e)$, the map  
$\tau\mapsto (-f)\circ \tau (f)$ is a continuous group homomorphism 
$\pi _f(e):\c G\To G(L)$.  The correspondence $e\mapsto\pi _f(e)$ has 
the following properties: 
\medskip 

a) if $f'\in\c F(e)$ then $f'=f\circ l$, where $l\in G(L)$, and 
$\pi _f(e)$ and $\pi _{f'}(e)$ are conjugated via $l$; 
\medskip 

b) for any continuous group homomorphism $\pi :\c G\To G(L)$, 
there are $e\in G(L_{\c K})$ and 
$f\in \c F(e)$ such that $\pi _f(e)=\pi $;
\medskip 

c) for appropriate elements $e,e'\in G(L_{\c K})$ and 
$f,f'\in G(L_{sep})$, we have 
$\pi _f(e)=\pi _{f'}(e')$ iff 
there is an $x\in G(L_{\c K})$ such that $f'=x\circ f$ and, therefore,   
$e'=\sigma (x)\circ e\circ (-x)$. 
\medskip 

In 
\cite{Ab1,Ab2,Ab3} we applied this theory 
to the Lie algebra $\c L$ from Sect.\ref{S1.1} 
via a special choice of $e\in\c L_{\c K}$. 
Now we just assume that 

\begin{equation} \label{E1.1}   e\equiv \sum \limits _{a\in\Z ^0(p)} t^{-a}D_{a0}
\,\op{mod}\,C_2(\c L_{\c K})\, .
\end{equation}

Under this assumption the map 
$\pi _f(e)\,\op{mod}\,\c G^pC_2(\c G)$ 
induces a group isomorphism of 
$\c G^{ab}\hat\otimes\F _p$ and $G(\c L)/C_2(G(\c L))
=\c L^{ab}=\c K^*/\c K^{*p}$,  
which coincides with the inverse to the reciprocity map 
of local class field theory, cf.  \cite{AJ}. 
This also implies that $\pi _f(e)$ (when taken modulo 
$\c G^pC_p(\c G)$) induces a group isomorphism 
$\c G_{<p}\simeq G(\c L)$. 
We agree to fix 
a choice of $f\in\c F(e)$ and 
use the notation  $\eta _e=\pi _f(e)$. So, at this stage, 
$\eta _e$ is just an arbitrary lift of the canonical 
isomorphism of local class field theory. 

\subsection{Auxiliary fields $\c K'_{\gamma }$}\label{S1.4} 

Our approach to the ramification filtration in $\c G_{<p}$ substantially 
uses the construction of a totally 
ramified extension $\c K'$ of $\c K$ such that 
$[\c K':\c K]=q$ and 
the Herbrand function $\varphi _{\c K'/\c K}$ 
has only one edge point $(r^*,r^*)$. Here $q=p^{N^*}$ with $N^*\in\N $, and 
$r^*=b^*/(q-1)$, where $b^*\in\Z ^+(p)$.  For simplicity, we assume 
that $N^*\equiv 0\,\op{mod}\,N_0$, i.e. $\sigma ^{N^*}$ acts 
as identity on the residue field $k$ of $\c K$. More substantial restrictions 
on these  parameters will be introduced in 
Sect.\ref{S2.1}.
\medskip

For a detailed explanation of the construction of 
$\c K'$ cf. e.g. \cite{Ab3}, Sect.1.5. We just recall that 
if $r^*=m/n$ with coprime $m,n\in\N $, 
then $\c K'=\c K(U^n)\subset\c K(u)(U)$, where $u^n=t$ and 
$U^q+r^*U=u^{-m}$. 
We can apply Hensel's Lemma 
to choose a uniformizer $t_1$ in $\c K'$ such that 
$t=t_1^qE(t_1^{b^*})^{-1}$, where    
$E(X)=\exp (X+X^p/p+\dots +X^{p^n}/p^n+\dots )
\in\Z _p[[X]]$ is the Artin-Hasse exponential. 
\medskip

We need the following generalization of the construction of $\c K'$.

For $\gamma\in\Z /p\setminus\{0\}$, let the field 
$\c K'_{\gamma }=k((t_{\gamma }))$ be such that:
\medskip  

a)\ $[\c K'_{\gamma }:\c K]=q$;
\medskip 

b)\ $\varphi _{\c K'_{\gamma }/\c K}(x)$ has only 
one edge point $(r^*,r^*)$;
\medskip 

c)\ $\c K'_{\gamma }=k((t_{\gamma }))$, where 
$t=t^{q}_{\gamma }E(\gamma  t^{b^*}_{\gamma  })^{-1}$. 
\medskip 

The fields $\c K'_{\gamma }$ appear in the same 
way as the field $\c K'$.  More precisely,   
$\c K'_{\gamma }=\c K(U_{\gamma }^n)\subset \c K(u)(U_{\gamma })$, where 
$u^n=t$ and $U_{\gamma }^q+\gamma r^*U_{\gamma }=
u^{-m}$. Note that $\c K'_{\gamma }$  
is separable over $\c K$ (but generally is not a $p$-extension over $\c K$).

\subsection{The criterion} \label{S1.5} 

Suppose $\c K'_{\gamma }$ is the 
field from Sect.\ref{S1.4}. 
Consider the field isomorphism $\iota _{\gamma }:
\c K\To \c K'_{\gamma }$ such that 
$\iota _{\gamma }:t\mapsto t_{\gamma }$ and 
$\iota _{\gamma }|_k=\id _k$. 
Let $e_{\gamma }=(\id _{\c L}\otimes\iota _{\gamma })e$.  
Then   $\sigma ^{N^*}e_{\gamma }=e(t_{\gamma  }^q)$ (this is the result of the substitution 
$t\mapsto t_{\gamma }^q$ to $e=e(t)$).

Choose 
$f_{\gamma }\in \c F(e_{\gamma })$ and consider  
$\pi _{f_{\gamma }}(e_{\gamma }):\Gal (\c K_{sep}/\c K'_{\gamma })\To 
G(\c L)$.  

For $Y\in \c L_{sep}$ and an ideal $\c I$ 
in $\c L$,
define the field of definition  
of $Y\,\op{mod}\,\c I_{sep}$ over, say,  $\c K$ 
as 
$$\c K(Y\,\op{mod}\,\c I_{sep}):=\c K_{sep}^{\c H}\, ,$$ 
where $\c H=\{g\in\c G\ |\ (\id _{\c L}\otimes g)Y
\equiv Y\,\op{mod}\,\c I_{sep}\}$.  

For any field extension $\c E'/\c E$ in $\c K_{sep}$,  
define the biggest ramification number 
$$v(\c E'/\c E)=\op{max}\{v\ |\ \Gal (\c K_{sep}/\c E)^{(v)} 
\text{acts non-trivially on } 
\c E'\}\, .$$ 

The methods from \cite{Ab1, Ab2, Ab3} are based 
on the following criterion.  

Suppose $v_0>0$, $r^*<v_0$ and the auxiliary fields $\c K'_{\gamma }$ 
correspond to the parameters $r^*$ and $N^*$ (with $q=p^{N^*}$).

\begin{Prop} \label{P1.1}
 Suppose   
$f=X_{\gamma }\circ \sigma ^{N^*}(f_{\gamma })$. 
Then $\c L^{(v_0)}$ is the minimal ideal in the family of 
all ideals $\c I$ of $\c L$ such that 
$$v(\c K'_{\gamma }(X_{\gamma }\,\op{mod}\,\c I_{sep})/\c K'_{\gamma })<
qv_0-b^*.$$ 
\end{Prop}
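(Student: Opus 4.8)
The plan is to reduce the statement to a concrete property of the ramification filtration of $\Gal(\c K_{sep}/\c K'_\gamma)$ combined with the descent formula relating ramification in $\c K'_\gamma/\c K$ to ramification over $\c K$. First I would unwind the definition: by construction $f = X_\gamma\circ\sigma^{N^*}(f_\gamma)$, and the cocycle relations $\sigma(f)=e\circ f$, $\sigma^{N^*}e_\gamma(t_\gamma)=e(t_\gamma^q)$ force $X_\gamma\in G(\c L_{sep})$ to satisfy a twisted $1$-cocycle identity for the group $\Gal(\c K_{sep}/\c K'_\gamma)$. Thus for $\tau\in\Gal(\c K_{sep}/\c K'_\gamma)$ one has $\eta_e(\tau)$ expressed, modulo an ideal $\c I$, in terms of $(-X_\gamma)\circ\tau(X_\gamma)\bmod\c I_{sep}$; hence $\tau$ acts trivially on $X_\gamma\bmod\c I_{sep}$ if and only if $\tau\in\Ker(\eta_e\bmod G(\c I))$ restricted to this subgroup — the point being that $\c K'_\gamma(X_\gamma\bmod\c I_{sep})$ is exactly the fixed field of that kernel. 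This translates the quantity $v(\c K'_\gamma(X_\gamma\bmod\c I_{sep})/\c K'_\gamma)$ into the largest $v$ such that $\Gal(\c K_{sep}/\c K'_\gamma)^{(v)}$ is not contained in $\Ker(\eta_e\bmod G(\c I))$, equivalently the largest upper-numbering ramification jump of the image of $\Gal(\c K_{sep}/\c K'_\gamma)$ in $G(\c L/\c I)$.

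Next I would invoke the Herbrand transition. Since $\varphi_{\c K'_\gamma/\c K}$ has the single edge point $(r^*,r^*)$ with $r^* = b^*/(q-1)$ and $r^*<v_0$, for $v\geqslant v_0$ the Herbrand function is affine with slope $1/q$ past the edge, so $\psi_{\c K'_\gamma/\c K}(v) = qv - b^*$; consequently $\Gal(\c K_{sep}/\c K'_\gamma)^{(qv-b^*)}$ maps onto $\c G^{(v)}\cap\Gal(\c K_{sep}/\c K'_\gamma)$ modulo the relevant edge corrections, and more precisely the image of $\Gal(\c K_{sep}/\c K'_\gamma)^{(w)}$ in $\c G_{<p}$ for $w = qv_0 - b^*$ generates, together with the part killed below the edge, the subgroup $\c G^{(v_0)}_{<p}$. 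The inequality $v(\c K'_\gamma(X_\gamma\bmod\c I_{sep})/\c K'_\gamma) < qv_0 - b^*$ then says precisely that $\Gal(\c K_{sep}/\c K'_\gamma)^{(qv_0-b^*)}$ acts trivially on $X_\gamma\bmod\c I_{sep}$, i.e. lies in $\Ker(\eta_e\bmod G(\c I))$, which by the transition and the class field theory normalization of $\eta_e$ (from \eqref{E1.1} and Sect.\ref{S1.3}) is equivalent to $\eta_e(\c G^{(v_0)})\subseteq G(\c I)$, that is $\c L^{(v_0)}\subseteq\c I$. Taking the minimal such $\c I$ gives $\c L^{(v_0)}$ itself, which is the claim; one must check that $\c I=\c L^{(v_0)}$ does satisfy the strict inequality, which follows because at $v=v_0$ exactly the jump disappears modulo $\c L^{(v_0)}$.

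I expect the main obstacle to be the careful bookkeeping at the edge point $(r^*,r^*)$: the ramification number $v(\c K'_\gamma/\c K) = r^*$ means that $\Gal(\c K_{sep}/\c K'_\gamma)$ sits inside $\c G^{(r^*)}$ but not $\c G^{(r^*+)}$, and one has to make sure that the contribution of the wild part of $\c K'_\gamma/\c K$ (encoded in the parameter $b^*$) is correctly separated from the genuine jump at $v_0$ — i.e. that the strict inequality $qv_0 - b^*$ rather than $\leqslant$ is the right threshold, and that no spurious lower jumps of $X_\gamma$ coming from $\c K'_\gamma/\c K$ itself interfere. This is where I would need the explicit description $t = t_\gamma^q E(\gamma t_\gamma^{b^*})^{-1}$ and Hensel's lemma choice of $t_1$ (resp. $t_\gamma$) from Sect.\ref{S1.4}, together with the standard compatibility of upper-numbering filtrations with subgroups (Herbrand's theorem), to pin down $\psi_{\c K'_\gamma/\c K}$ on the relevant range and to verify that the operator $X_\gamma$ has no jump strictly below $qv_0 - b^*$ beyond those already accounted for. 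The rest — the cocycle manipulation in step one and the descent of $\Ker(\eta_e\bmod G(\c I))$ to $\c L^{(v_0)}\subseteq\c I$ — is formal given the properties (a)--(c) of Sect.\ref{S1.3} and the functoriality of the ramification filtration.
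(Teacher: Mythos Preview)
Your Herbrand step is the right ingredient and matches the paper, but your first step contains a genuine gap. The claim that $\c K'_\gamma(X_\gamma\bmod\c I_{sep})$ is exactly the fixed field of $\Ker(\eta_e\bmod G(\c I))\cap\Gal(\c K_{sep}/\c K'_\gamma)$ is false in general: since $\eta_e(\tau)=(-f)\circ\tau(f)$, that fixed field is $\c K'_\gamma(f\bmod\c I_{sep})$, not $\c K'_\gamma(X_\gamma\bmod\c I_{sep})$. Writing out the relation $f=X_\gamma\circ\sigma^{N^*}(f_\gamma)$ gives
\[
\eta_e(\tau)=\bigl((-\sigma^{N^*}f_\gamma)\circ[(-X_\gamma)\circ\tau X_\gamma]\circ\sigma^{N^*}f_\gamma\bigr)\circ\sigma^{N^*}\!\pi_{f_\gamma}(e_\gamma)(\tau),
\]
so $\tau$ fixing $X_\gamma\bmod\c I_{sep}$ is equivalent to $\eta_e(\tau)\in G(\c I)$ only after one also controls the factor $\pi_{f_\gamma}(e_\gamma)(\tau)$, i.e.\ only on the smaller subgroup $\Gal(\c K_{sep}/\c K'_\gamma(f_\gamma\bmod\c I_{sep}))$. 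Your argument never accounts for the field $\c K'_\gamma(f_\gamma\bmod\c I_{sep})$.

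The missing idea, and the one the paper actually uses, is the field isomorphism $\iota_\gamma:\c K\to\c K'_\gamma$, $t\mapsto t_\gamma$. Because $e_\gamma=(\id_{\c L}\otimes\iota_\gamma)e$, the pair $(e_\gamma,f_\gamma)$ over $\c K'_\gamma$ is the $\iota_\gamma$-transport of $(e,f)$ over $\c K$, and therefore
\[
v(\c K'_\gamma(f_\gamma\bmod\c I_{sep})/\c K'_\gamma)=v(\c K(f\bmod\c I_{sep})/\c K)=:v.
\]
Now the Herbrand computation (your step two) gives $v(\c K'_\gamma(f\bmod\c I_{sep})/\c K'_\gamma)=\psi_{\c K'_\gamma/\c K}(v)=qv-b^*$ whenever $v>r^*$, and since $qv-b^*>v$ in that range, the extension generated by $f_\gamma$ contributes a strictly smaller ramification number than the one generated by $f$. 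From $f=X_\gamma\circ\sigma^{N^*}f_\gamma$ and its inverse one then reads off $v(\c K'_\gamma(X_\gamma\bmod\c I_{sep})/\c K'_\gamma)=qv-b^*$, and the equivalence with $v<v_0$ follows. In short: you need the transport identity for $f_\gamma$ to separate the contribution of $X_\gamma$ from that of $f_\gamma$; without it the cocycle manipulation alone does not pin down $\c K'_\gamma(X_\gamma\bmod\c I_{sep})$.
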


The proof goes along the lines 
of the proof for $\gamma =1$, cf.  e.g. 
 \cite{Ab3}, Sect.1.6.  
It is based just on the following 
elementary properties of the upper ramification numbers: 
\medskip 
  
{\it  if $v=v(\c K(f\,\op{mod}\,\c I_{sep})/\c K)$ then: 
\medskip

-- \  $v(\c K_{\gamma }'(f_{\gamma }\,
\op{mod}\,\c I_{sep})/\c K_{\gamma }')=v$; 
\medskip 

-- \  $v(\c K_{\gamma }'(f_{\gamma }\,\op{mod}\,
\c I_{sep})/\c K)=\varphi _{\c K_{\gamma }'/\c K}(v)$; 
\medskip 

-- \  if $v>r^*$ then $\varphi _{\c K_{\gamma }'/\c K}(v)=r^*+(v-r^*)/q<v$. }
\medskip 

Note that  
$f=X_{\gamma } \circ \sigma ^{N^*}f_{\gamma }$ implies that 
$e(t)=\sigma X_{\gamma }\circ \sigma ^{N^*}e_{\gamma }\circ (-X_{\gamma })$. 
\medskip 

Vice versa, suppose $X\in\c L_{sep}$ and 
\begin{equation} \label{E1.2} 
e(t)=\sigma X\circ \sigma ^{N^*}e_{\gamma }\circ (-X)\, .
\end{equation} 
 Then 
$l=(-\sigma ^{N^*}f_{\gamma })
\circ (-X)\circ f\in\c L_{sep}|_{\sigma =\id }=\c L$ 
and replacing $f_{\gamma }$ by 
$f_{\gamma }\circ l\in\c F(e_{\gamma })$ we obtain 
$f=X\circ \sigma ^{N^*}f_{\gamma }$. Therefore, 
in Prop.\ref{P1.1} we can use  
identity \eqref{E1.2} instead of the 
identity $f=X_{\gamma }\circ \sigma ^{N^*}f_{\gamma }$. 

Note that for any $\gamma $, there is a unique field isomorphism 
$\iota '_{\gamma }:\c K'_{\gamma }\To \c K$ such that 
$\iota '_{\gamma }(t_{\gamma })=t$ and 
$\iota '_{\gamma }|_k=\id $. Therefore, if we set 
$e^{(q)}:=e(t^q)$ and 
$\gamma *e^{(q)}:=e(t^qE(\gamma t^{b^*})^{-1})$ then 
Prop.\ref{P1.1} can be stated in the following equivalent form. 

\begin{Prop} \label{P1.2} 
If $X_{\gamma }\in \c L_{sep}$ is such that 
$$\gamma *e^{(q)}=
\sigma X_{\gamma }\circ e^{(q)}\circ (-X_{\gamma })\, $$  
then $\c L^{(v_0)}$ is the minimal ideal in the set of all ideals 
$\c I$ of $\c L$ such that 
$$v(\c K(X_{\gamma }\,\op{mod}\,\c I_{sep})/\c K)< qv_0-b^*\,.$$ 
\end{Prop}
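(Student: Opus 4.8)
The plan is to derive Proposition \ref{P1.2} from Proposition \ref{P1.1} by a change of variables, so the real content is to see that the two statements are literally the same after transporting everything along the isomorphism $\iota '_{\gamma }:\c K'_{\gamma }\To\c K$. First I would unwind the definitions: by construction $t=t^q_{\gamma }E(\gamma t^{b^*}_{\gamma })^{-1}$, so applying $\iota '_{\gamma }$ (which sends $t_{\gamma }\mapsto t$ and is the identity on $k$) to this relation gives $\iota '_{\gamma }(t)=t^qE(\gamma t^{b^*})^{-1}$. Hence $\iota '_{\gamma }$ identifies $\c K'_{\gamma }$ with $\c K$ in such a way that the element $e_{\gamma }=(\id _{\c L}\otimes\iota _{\gamma })e$, which by definition satisfies $e_{\gamma }(t_{\gamma })=e(t_{\gamma })$ with $t_{\gamma }$ playing the role of the uniformizer, is carried to $e(t)$ viewed inside $\c L_{\c K}$; and the relation $\sigma ^{N^*}e_{\gamma }(t_{\gamma })=e(t^q_{\gamma })$ recalled in Sect.\ref{S1.5} becomes, after applying $\iota '_{\gamma }$, precisely $\sigma ^{N^*}e(t)=e(\iota '_{\gamma }(t)^q\cdot\text{(correction)})$. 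Being careful here, the cleanest bookkeeping is: $\sigma ^{N^*}e_{\gamma }$ corresponds under $\iota '_{\gamma }$ to $e$ evaluated at $t^qE(\gamma t^{b^*})^{-1}=\gamma *e^{(q)}$ in the notation introduced just before Prop.\ref{P1.2}, while $e^{(q)}=e(t^q)$ corresponds to the same $\sigma ^{N^*}e_{\gamma }$ but for the reference field $\c K'$ (the $\gamma =1$ case with $E$-factor absorbed), so that identity \eqref{E1.2}, namely $e(t)=\sigma X\circ\sigma ^{N^*}e_{\gamma }\circ(-X)$, transforms into $\gamma *e^{(q)}=\sigma X_{\gamma }\circ e^{(q)}\circ(-X_{\gamma })$.

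Next I would address the ramification-number bookkeeping. Proposition \ref{P1.1} concerns $v(\c K'_{\gamma }(X_{\gamma }\,\op{mod}\,\c I_{sep})/\c K'_{\gamma })$, a ramification number computed over $\c K'_{\gamma }$. Since $\iota '_{\gamma }$ is a field isomorphism $\c K'_{\gamma }\simeq\c K$ extending to a compatible isomorphism of separable closures and of Galois groups, it preserves the entire ramification filtration in the upper numbering; therefore $v(\c K'_{\gamma }(X_{\gamma }\,\op{mod}\,\c I_{sep})/\c K'_{\gamma })$ equals $v(\c K(X_{\gamma }\,\op{mod}\,\c I_{sep})/\c K)$ after transporting $X_{\gamma }$ along $\iota '_{\gamma }$, with the same ideal $\c I$ of $\c L$ (the isomorphism acts only through the coefficient field, not on $\c L$ itself, so ideals of $\c L$ are untouched). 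Consequently the bound $v(\c K'_{\gamma }(X_{\gamma }\,\op{mod}\,\c I_{sep})/\c K'_{\gamma })<qv_0-b^*$ of Prop.\ref{P1.1} becomes the bound $v(\c K(X_{\gamma }\,\op{mod}\,\c I_{sep})/\c K)<qv_0-b^*$ of Prop.\ref{P1.2}, verbatim. Since the family of ideals $\c I$ satisfying the respective conditions is the same, the minimal ideal in that family — which is $\c L^{(v_0)}$ by Prop.\ref{P1.1} — is the same, giving the assertion of Prop.\ref{P1.2}.

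The one point requiring genuine care, and what I expect to be the main obstacle, is the precise matching of the twisted elements: one must check that under $\iota '_{\gamma }$ the pair $\bigl(\sigma ^{N^*}e_{\gamma },\,e(t)\bigr)$ goes to the pair $\bigl(e^{(q)},\,\gamma *e^{(q)}\bigr)$ with the roles as written, rather than with $\gamma $ replaced by some power of $\gamma $ or with an inverted correction factor. This amounts to tracking the Artin–Hasse factor through $t=t^q_{\gamma }E(\gamma t^{b^*}_{\gamma })^{-1}$ and through the substitution $t_{\gamma }\mapsto t$, and verifying that $e^{(q)}=e(t^q)$ really is the image of $\sigma ^{N^*}e_{\gamma }$ in the untwisted normalization (i.e. that the frobenius twist $\sigma ^{N^*}$ on the coefficient side matches the $q$-power substitution on the uniformizer side, which uses $N^*\equiv 0\bmod N_0$ so that $\sigma ^{N^*}$ is trivial on $k$). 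Once this identification is pinned down, the remainder of the proof is the formal transport argument above, and the conjugacy relation $e(t)=\sigma X_{\gamma }\circ\sigma ^{N^*}e_{\gamma }\circ(-X_{\gamma })$ passes to $\gamma *e^{(q)}=\sigma X_{\gamma }\circ e^{(q)}\circ(-X_{\gamma })$ by applying $\iota '_{\gamma }\otimes\id _{\c L}$ to both sides and using that this map commutes with the Campbell–Hausdorff operation $\circ$ and intertwines the two copies of $\sigma ^{N^*}$ appropriately.
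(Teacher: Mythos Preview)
Your approach is exactly the paper's: Proposition~\ref{P1.2} is obtained from Proposition~\ref{P1.1} (in its reformulation via \eqref{E1.2}) by transporting along the field isomorphism $\iota'_\gamma:\c K'_\gamma\to\c K$, $t_\gamma\mapsto t$. One slip to fix in your middle paragraph: you write that $\sigma^{N^*}e_\gamma$ corresponds under $\iota'_\gamma$ to $\gamma*e^{(q)}$, but since $\sigma^{N^*}e_\gamma=e(t_\gamma^q)$ and $\iota'_\gamma(t_\gamma)=t$, it maps to $e(t^q)=e^{(q)}$; it is $e(t)$ that maps to $\gamma*e^{(q)}$, because $\iota'_\gamma(t)=t^qE(\gamma t^{b^*})^{-1}$. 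You have this the right way round in your final paragraph and in the transformed identity you display, so the argument is sound once that sentence is corrected.
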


Suppose $\wt{\c J}\subset\c L$ is a closed ideal and 
$\pi :\c L\To L:=\c L/\wt{\c J}$ 
is a natural projection. Then we can use $e_L=\pi _{\c K}(e)\in L_{\c K}$, 
$f_L:=\pi _{sep}(f)\in L_{sep}$, $\eta _{e_L}=\pi\eta _e:\c G\To G(L)$ 
and $X_{\gamma L}:=\pi _{sep}(X_{\gamma })$ to state the following  analog 
of Prop.\ref{P1.2}. 

\begin{Prop} \label{P1.3} 
$L^{(v_0)}:=\eta _{e_L}(\c G^{(v_0)})$ is 
the minimal ideal in the set of all ideals 
$\c I$ of $L$ such that 
$v(\c K(X_{\gamma L}\,\op{mod}\,\c I_{sep})/\c K)< qv_0-b^*$.   
\end{Prop}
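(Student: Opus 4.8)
The plan is to deduce Proposition \ref{P1.3} from Proposition \ref{P1.2} by pushing the whole picture forward along the projection $\pi:\c L\To L=\c L/\wt{\c J}$, and then checking that the two characterizations of the ramification ideal are compatible with this pushforward. First I would record the basic compatibility of the Artin--Schreier data: applying $\pi_{\c K}$ to \eqref{E1.1} and using that $\pi$ is a morphism of Lie algebras commuting with the $\c G$- and $\sigma $-actions (which act through the field factor only), one gets that $e_L=\pi_{\c K}(e)$ still satisfies an equation of the shape \eqref{E1.1} for $L$, that $f_L=\pi_{sep}(f)\in\c F(e_L)$, and that $\eta_{eL}:=\pi\eta_e=\pi_{f_L}(e_L)$ is the associated homomorphism $\c G\To G(L)$. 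In particular $\eta_{eL}$ is still surjective onto $G(L)$ (as the composite of the epimorphism $\eta_e$ with the epimorphism $\pi$), so $L^{(v_0)}:=\eta_{eL}(\c G^{(v_0)})$ is a well-defined ideal of $L$, and by functoriality of the upper-numbering ramification filtration under quotients it equals $\pi(\c L^{(v_0)})$.

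Next I would transport the auxiliary solution $X_\gamma $. Applying $\pi_{sep}$ to the identity $\gamma *e^{(q)}=\sigma X_\gamma \circ e^{(q)}\circ(-X_\gamma )$ in $G(\c L_{sep})$ and using that $\pi_{sep}$ is a group homomorphism for the Campbell--Hausdorff law, we obtain $\gamma *e_L^{(q)}=\sigma X_{\gamma L}\circ e_L^{(q)}\circ(-X_{\gamma L})$ with $X_{\gamma L}=\pi_{sep}(X_\gamma )$. Thus the hypotheses of Proposition \ref{P1.2}, read for the Lie algebra $L$ in place of $\c L$, are satisfied by $X_{\gamma L}$. Proposition \ref{P1.2} then gives directly that $L^{(v_0)}=\eta_{eL}(\c G^{(v_0)})$ is the minimal ideal $\c I$ of $L$ with $v\big(\c K(X_{\gamma L}\,\op{mod}\,\c I_{sep})/\c K\big)<qv_0-b^*$, which is exactly the assertion of Proposition \ref{P1.3}.

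The main point to be careful about — and what I expect to be the only real obstacle — is the very last reduction: strictly speaking Proposition \ref{P1.2} was stated for the specific algebra $\c L$ equipped with the generators and the element $e$ of Sect.\ref{S1.1}--\ref{S1.3}, so one must check that its proof (which, as the text notes, rests only on the elementary behavior of upper ramification numbers under the extensions $\c K'_\gamma /\c K$ together with the covariant Artin--Schreier formalism of Sect.\ref{S1.3}) goes through verbatim for $(L,e_L,f_L)$; the key input is that $\eta_{eL}$ induces the reciprocity isomorphism on abelianizations composed with $\pi$, which follows from the corresponding property of $\eta_e$ since \eqref{E1.1} is preserved by $\pi$. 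Alternatively, one avoids re-running the proof entirely by combining $L^{(v_0)}=\pi(\c L^{(v_0)})$ with Proposition \ref{P1.2} for $\c L$ and the trivial observation that, for an ideal $\c I$ of $L$ with preimage $\pi^{-1}\c I$ in $\c L$, one has $\c K(X_{\gamma L}\,\op{mod}\,\c I_{sep})=\c K(X_\gamma \,\op{mod}\,(\pi^{-1}\c I)_{sep})$, so that the minimality statements match up under the bijection $\c I\mapsto\pi^{-1}\c I$ between ideals of $L$ and ideals of $\c L$ containing $\wt{\c J}$ — and $\c L^{(v_0)}\supset\wt{\c J}$ need not hold, but minimality over the larger family is harmless since one only needs the minimal such $\c I$, which is $\pi(\c L^{(v_0)})$ because adding $\wt{\c J}$ does not change the field of definition condition.
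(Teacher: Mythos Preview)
Your proposal is correct and matches the paper's intent: the paper does not give a separate proof of Proposition~\ref{P1.3} but simply introduces $e_L,f_L,\eta_{eL},X_{\gamma L}$ as the $\pi$-images of the corresponding objects and states the result as ``the following analog of Prop.~\ref{P1.2}'', leaving the verification to the reader. Your two routes (re-running the argument of Propositions~\ref{P1.1}--\ref{P1.2} for $(L,e_L,f_L)$, or pulling ideals back along $\pi$ and using that the family of admissible ideals is upward-closed so the minimum among ideals containing $\wt{\c J}$ is $\c L^{(v_0)}+\wt{\c J}$, whose image is $\pi(\c L^{(v_0)})=L^{(v_0)}$) are both valid ways to fill this in; the last sentence of your write-up would benefit from stating explicitly that $\c L^{(v_0)}+\wt{\c J}$ still satisfies the ramification bound (monotonicity of the field of definition), which is the content of your phrase ``adding $\wt{\c J}$ does not change the field of definition condition''.
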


\subsection{Lie algebra $\bar{\c L}$ and 
epimorphism $\eta _{\bar e}$}\, \label{S1.6}

Introduce the weight function (i.e.\,valuation) $\op{wt}:\c L_k\To \N $ on $\c L_k$ by  
setting on its generators $\op{wt}(D_{an})=s$ if $(s-1)v_0\leqslant a<sv_0$. 
We obtain a decreasing central 
filtration by the ideals $\c L(s)=\{l\in\c L\ |\ 
\op{wt}(l)\geqslant s\}$ of $\c L$ such that $\c L(1)=\c L$.  This weight function 
gives us also a decreasing filtration of ideals 
$\c J(s)$ in the enveloping algebra $\c A$ such that 
$\c J(1)=\c J$ and for any $s$, 
$(\c J(s)+\c J^p)\cap \c L=\c L(s)$ 
(use the Poincar\'e-Birkhoff-Witt theorem).

Consider the $k$-submodule ${\c N}$ in ${\c L}_{\c K}$ generated by 
all $t^{-b}l$, where for some $s\geqslant 1$, 
$l\in {\c L}(s)_k$ and $b<sv_0$. 
Then ${\c N}$ has a natural structure of a 
Lie algebra over $k$. 
For any $i\geqslant 0$, 
let ${\c N}(i)$ be the $k$-submodule 
in ${\c L}_{\c K}$ generated by all $t^{-b}l$ 
where $l\in{\c L}(s)$ and 
$b<(s-i)v_0$. Then ${\c N}(i)$ is an ideal in 
${\c N}$.

Let 
$\bar{\op{pr}}:\c L\To\bar{\c L}:=\c L/\c L(p)$ be a natural projection. 
Then $\bar{\c L}(s)=\bar{\op{pr}}(\c L(s))$ 
is a decreasing central 
filtration in $\bar{\c L}$ such that $\bar{\c L}(p)=0$. 
Let $\bar{\c N}\subset \bar{\c L}_{\c K}$ be an analog of $\c N$ 
(where the algebra $\bar{\c L}$ is used instead of $\c L$). 
 
For $i\geqslant 0$, let $\bar{\c N}(i)$ be the appropriate 
ideals in $\bar{\c N}$. 
Note that $\bar{\c N}(p-1)\subset\bar{\c L}_{\m }$, where 
$\m =tk[[t]]$ (use that $\bar{\c L}(p)=0$), and 
introduce the Lie 
algebra $\wt{\c N}=\bar{\c N}/\bar{\c N}(p-1)$. 

We assume (in addition to \eqref{E1.1}) that:
\begin{equation} \label{E1.3}  e\in\c N\, 
\end{equation}
(now $\eta _e$ is not an arbitrary lift of the 
reciprocity map of class field theory but it is still 
quite general).

Let $\bar e:=\bar{\op{pr}}_{\c K}e\in\bar{\c N}$ and 
$\bar f:=(\bar{\op{pr}}_{sep})f\in\bar{\c L}_{sep}$. 
If $\eta _{\bar e}:=\bar{\op{pr}}\cdot \eta _e$ 
then for any $\tau\in\c G$, 
$\bar{\eta }_{\bar e}(\tau )=(-\bar f)\circ \tau\bar f$. 
Verify that $\eta _{\bar e}$ depends only on  
$\wt e:=\bar e\,\op{mod}\,\bar{\c N}(p-1)\in\wt{\c N}$. 

\begin{Prop} \label{P1.4} 
Let $\bar e'\in\bar{\c L}_{\c K}$ and $\bar e'\equiv 
\bar e\,\op{mod}\,\bar{\c N}(p-1)$. 
Then there is a unique $\bar f'\in\bar{\c L}_{sep}$ 
such that $\sigma \bar f'=\bar e'\circ \bar f'$ and 
$\bar f'\circ (-\bar f)\in\bar{\c N}(p-1)$.
\end{Prop}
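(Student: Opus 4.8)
The plan is to set up a successive-approximation (Hensel-type) argument along the central filtration $\{\bar{\c L}(s)\}$, using that $\bar{\c N}(p-1)$ sits compatibly inside this filtration. First I would write $\bar e' = \bar e \circ \bar n$ for a uniquely determined $\bar n \in \bar{\c N}(p-1)$ (the map $x \mapsto \bar e \circ x$ is a bijection of $\bar{\c L}_{\c K}$, and $\bar e'\equiv\bar e$ modulo $\bar{\c N}(p-1)$ forces $\bar n$ into that ideal). Seeking $\bar f' = \bar h \circ \bar f$ with $\bar h \in \bar{\c N}(p-1)_{sep}$ (so that automatically $\bar f'\circ(-\bar f)=\bar h\in\bar{\c N}(p-1)$), the equation $\sigma\bar f' = \bar e'\circ\bar f'$ becomes, after substituting $\sigma\bar f = \bar e\circ\bar f$ and using the Campbell–Hausdorff law to move terms across, an equation of the shape
\begin{equation*}
\sigma \bar h = \Phi(\bar h),
\end{equation*}
where $\Phi$ is a (non-linear) operator on $\bar{\c N}(p-1)_{sep}$ built from $\bar n$, $\bar e$, $\bar f$ and iterated brackets; the key structural point is that $\bar{\c N}(p-1)$ is an ideal on which the relevant commutators land, so $\Phi$ preserves $\bar{\c N}(p-1)_{sep}$ and, more importantly, is a contraction for the filtration topology.

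Next I would make the contraction precise. Because $C_p(\bar{\c L})=\bar{\c L}(p)=0$, every iterated bracket of length $\geqslant p$ vanishes, and $\bar{\c N}(p-1)$ consists (roughly) of elements whose bracket-length-versus-$t$-order balance is extremal, so that brackets of two elements of $\bar{\c N}(p-1)$, or of $\bar{\c N}(p-1)$ with anything in $\bar{\c L}_{sep}$, lie in deeper terms $\bar{\c N}(p-1)\cap\bar{\c L}(s+1)_{sep}$. Hence modulo $\bar{\c L}(s+1)_{sep}$ the operator $\bar h \mapsto \sigma\bar h - \Phi(\bar h)$ is affine–linear in $\bar h$ with the linear part congruent to $\sigma - \id$ on the graded pieces. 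One then solves layer by layer: assuming $\bar h_s$ solves the equation modulo $\bar{\c L}(s+1)_{sep}$, the correction term lives in the graded piece $\bar{\c L}(s+1)/\bar{\c L}(s+2)$ tensored up to $\c K_{sep}$, and surjectivity of $\sigma - \id$ on $\c K_{sep}$ (Artin–Schreier) lets us lift to $\bar h_{s+1}$; the additive Artin–Schreier ambiguity is killed by the requirement that the correction stay inside $\bar{\c N}(p-1)$ together with the constraint that fixes $\bar h$ over $\c K$, which also pins down uniqueness at each stage. The process terminates after finitely many steps since $\bar{\c L}(p)=0$, yielding the unique $\bar h$, hence the unique $\bar f'$.

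For uniqueness of $\bar f'$ as stated (a priori we only know $\bar f'\circ(-\bar f)\in\bar{\c N}(p-1)$, not that it has the special form $\bar h\circ\bar f$ with $\bar h$ in the separable closure of the ideal — but $\bar f'\circ(-\bar f)$ visibly is such an element, so the two formulations coincide): if $\bar f_1', \bar f_2'$ both work, then $\bar g := \bar f_1'\circ(-\bar f_2')\in\bar{\c N}(p-1)_{sep}$ satisfies $\sigma\bar g = \bar e'\circ\bar g\circ(-\bar e')$, i.e. $\bar g$ is fixed by a twisted $\sigma$-action, and running the same graded contraction argument — now with zero inhomogeneous term — forces $\bar g=0$ layer by layer. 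The step I expect to be the main obstacle is verifying cleanly that $\Phi$ genuinely lands in and contracts on $\bar{\c N}(p-1)$: this requires a careful bookkeeping of how the Campbell–Hausdorff correction terms $\tfrac12[l_1,l_2]+\cdots$ interact with the weight function $\op{wt}$ defining $\c L(s)$ and with the extra $t$-order shift built into the definition of $\bar{\c N}(p-1)$ (recall $\bar{\c N}(i)$ is generated by $t^{-b}l$ with $l\in\c L(s)$ and $b<(s-i)v_0$), i.e. one must check the bracket of a generator $t^{-b_1}l_1$ of $\bar{\c N}(p-1)$ with an element of $\bar{\c L}_{sep}$ or of $\bar{\c N}$ again satisfies the defining inequality after raising bracket-length — essentially the fact that $\bar{\c N}(p-1)$ is an ideal in $\bar{\c N}$, already asserted in the text, upgraded to strict improvement in the $\c L(s)$-filtration. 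Once that is in place the Artin–Schreier solvability is routine.
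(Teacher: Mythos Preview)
Your filtration-induction framework is correct and matches the paper's, but you miss the one observation that makes the argument clean: the text notes just before the proposition that $\bar{\c N}(p-1)\subset\bar{\c L}_{\m}$, i.e.\ every element of $\bar{\c N}(p-1)$ has strictly positive $t$-adic valuation. Hence $\sigma$ is topologically nilpotent on $\bar{\c N}(p-1)$, and $\sigma-\id$ is a \emph{bijection} there with explicit inverse $-\sum_{m\geqslant 0}\sigma^m$. So the linearized equation $\sigma\delta-\delta=A_s$ at each inductive step (with $A_s\in\bar{\c N}(p-1)\cap\bar{\c L}(s)_{\c K}$) is solved directly and uniquely over $\c K$ by this convergent series. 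There is no need to pass to $\c K_{sep}$, no Artin--Schreier step, and no ambiguity to kill. Your remark that the ambiguity is ``killed by the requirement that the correction stay inside $\bar{\c N}(p-1)$'' gestures at the right fact but leaves unjustified why any Artin--Schreier solution lies in $\bar{\c N}(p-1)\subset\bar{\c L}_{\c K}$ at all; the reason is exactly this nilpotence of $\sigma$ on $\m$. (Your writing $\bar h\in\bar{\c N}(p-1)_{sep}$ while simultaneously asserting $\bar f'\circ(-\bar f)=\bar h\in\bar{\c N}(p-1)$ reflects this confusion: $\bar{\c N}(p-1)$ lives inside $\bar{\c L}_{\c K}$, not $\bar{\c L}_{sep}$.)

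Concretely, the paper seeks $\bar x\in\bar{\c N}(p-1)$ with $\bar e'=(\sigma\bar x)\circ\bar e\circ(-\bar x)$ (equivalent to your equation $\sigma\bar h=\Phi(\bar h)$ after unwinding), then sets $\bar f'=\bar x\circ\bar f$. Since $\bar e,\bar e'\in\bar{\c N}$ by assumption \eqref{E1.3} and $\bar{\c N}(p-1)$ is an ideal in $\bar{\c N}$, the error term $A_s$ at each stage is automatically in $\bar{\c N}(p-1)$, so the Campbell--Hausdorff bookkeeping you flag as the main obstacle is not needed. Uniqueness follows because $\bar x$ is unique at every step (the inverse of $\sigma-\id$ on $\bar{\c N}(p-1)$ is unique), or equivalently because $\bar{\c N}(p-1)\cap\bar{\c L}\subset\bar{\c L}_{\m}\cap\bar{\c L}=0$.
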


\begin{proof} Note that 
$\sigma $ is topologically nilpotent on 
$\bar{\c N}(p-1)\subset\bar{\c L}_{\m}\,$.  
Prove the existence of  
$\bar x\in\bar{\c N}(p-1)\,$ such that 
$\bar e '= (\bar\sigma x)\circ \bar e\circ (-\bar x)$ 
by induction on $s\geqslant 1$ modulo 
the ideals $\bar{\c L}(s)_{\c K}$ as follows: 

-- if $s=1$ there is nothing to prove; 

-- if $s\geqslant 1$ and $\bar x_s\in\bar{\c N}(p-1)$ is such that 
$\bar e'=(\sigma \bar x_s)\circ \bar e\circ (-\bar x_s)+A_s$ 
with $A_s\in\bar{\c L}(s)_{\c K}$, 
then $A_s\in\bar{\c N}(p-1)\cap\bar{\c L}(s)_{\c K}$. If 
$\delta =-\sum \limits _{m\geqslant 0}\sigma ^m(A_s)$ then 
$\bar x_{s+1}:=\bar x_s+\delta \in \bar{\c N}(p-1)\cap \bar{\c L}(s)_{\c K}$, 
$\sigma \delta -\delta =A_s$ and 
$$\bar e'\equiv (\sigma \bar x_{s+1})\circ 
\bar e\circ (-\bar x_{s+1})\,\op{mod}\,\bar{\c L}(s+1)_{\c K}\,.$$
Clearly, $\bar x=\bar x_p$. 

Now $\bar f'=\bar x\circ \bar f\in\bar{\c L}_{sep}$  
satisfies the 
requirements of proposition.  If $\bar f''\in\bar{\c L}_{sep}$ also has 
such properties then $\bar f''\circ (-\bar f')
\in\bar{\c N}(p-1)\cap \bar{\c L}=0$ and 
$\bar f''=\bar f'$. 
 \end{proof}

\section{Lie algebra $\bar{\c L}^{\dag }$ and ideal   
$\bar{\c L}[v_0]\subset\bar{\c L}$} \label{S2} 

In this section we 
introduce the Lie $\F _p$-algebra $\bar{\c L}^{\dag }$ together with the  
epimorphism of Lie algebras ${\c V}:
\bar{\c L}^{\dag }\To \bar{\c L}$ and its section 
$(j^0)^{-1}:\bar{\c L}\simeq \bar{\c L}^{\dag }[0]\subset\bar{\c L}^{\dag }$. Let  
$\alpha _{p}=\Spec\, \F _p[U]$, $U^p=0$,  
be the formal group scheme over $\F _p$ with the 
coaddition $\Delta (U)=U\otimes 1+1\otimes U$. We introduce the coaction 
$\Omega _U:\bar{\c L}^{\dag }\To \F _p[U]\otimes \bar{\c L}^{\dag }$ 
of $\alpha _{p}$ on $\bar{\c L}^{\dag }$ and use it to define and 
characterize the ideal $\bar{\c L}[v_0]$ of $\bar{\c L}$.

 \subsection{Parameters $r^*$ and $N^*$} \label{S2.1} \ 
Fix $u ^*\in\N $ and $w^*>0$. (Below we will specify $u^*=(p-1)(p-2)+1$ 
and $w^*=(p-1)v_0$.) 
\medskip 

For $1\leqslant s<p$, denote 
by $\delta _0(s)$ the minimum of (strictly) positive values of 
$$v_0-\frac{1}{s}(a_1+a_2/p^{n_2}+\dots +a_u/p^{n_u})\, ,$$ 
where $u\leqslant u^*$,   
all $n_i\in\Z _{\geqslant 0}$ and $a_i\in [0,w^*)\cap\Z $. 
The existence of such $\delta _0(s)$ can be proved easily by induction on $u$ 
for any fixed $s$.  
\medskip 

Set $\delta _0:=\min\{\delta _0(s)\ |\ 1\leqslant s<p\}$. 
\medskip 

Let $r^*\in\Q $ be such that 
$r^*=b_0^*/(q^*_0-1)$, where $q^*_0=p^{N^*_0}$ with 
$N^*_0\geqslant 2$, $b^*_0\in\N $  
and $\op{gcd}(b_0^*,\, p(q^*_0-1))=1$. The set of 
such $r^*$ is dense 
in $\mathbb R _{>0}$ 
and we can assume that $r^*\in (v_0-\delta _0, v_0)$. 
\medskip

For $1\leqslant u\leqslant u^*$, introduce the following subsets in $\Q $\,:
\medskip   

--- $A[u]$ is the set of all 
$$a_1p^{-n_1}+a_2p^{-n_2}+\dots +a_up^{-n_u}\, ,$$ 
where 
$0=n_1\leqslant \dots \leqslant n_u$, 
 all   
$a_i\in [0,w^*)\cap\Z $.  
If $M\in\Z _{\geqslant 0}$ 
we denote by $A[u,M]$ the subset 
of $A[u]$ consisted of the elements satisfying the additional 
restriction $n_u\leqslant M$. Note that $A[u,M]$ is finite. 
\medskip 

--- $B[u]$ is the set of all numbers 
$$r^*(b_1p^{-m_1}+b_2p^{-m_2}+\dots +b_up^{-m_u})\, ,$$ 
where all $0=m_1\leqslant \dots \leqslant m_u$, 
$b_i\in\Z _{\geqslant 0}$, $b_1\ne 0$ and $b_1+\dots +b_u<p$. 
(In particular, $0\notin B[u]$.) 
For $M\in\Z _{\geqslant 0}$, $B[u,M]$ is the subset of $B[u]$ 
consisted of the elements satisfying the additional 
restrictions $m_u\leqslant M$. The set $B[u,M]$ 
is also finite. 
\medskip 

\begin{Lem} \label{L2.1}
 For any $u$, $A[u]\cap B[u]=\emptyset $. 
\end{Lem}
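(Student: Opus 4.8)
The plan is to prove $A[u]\cap B[u]=\emptyset$ by arguing that any common element would force a divisibility relation incompatible with the choice $\gcd(b_0^*,\,p(q_0^*-1))=1$. First I would observe that every element of $A[u]$ lies in $\Z[1/p]$, i.e.\ has a denominator that is a pure power of $p$: indeed $a_1p^{-n_1}+\dots+a_up^{-n_u}$ with $n_1=0$ can be written as $c/p^{n_u}$ for some $c\in\Z$ (and $c\in\Z_{\geqslant 0}$ since all $a_i\geqslant 0$, with $c\geqslant a_1\geqslant 0$). By contrast, an element of $B[u]$ equals $r^*\cdot(b_1p^{-m_1}+\dots+b_up^{-m_u})=r^*\cdot(d/p^{m_u})$ where $d=b_1p^{m_u-m_1}+\dots+b_u\in\Z_{\geqslant 0}$, $d\geqslant b_1\geqslant 1$, and $b_1+\dots+b_u<p$ forces $1\leqslant d<p^{m_u+1}$ — in particular $p\nmid d$ is \emph{not} automatic, but $d\not\equiv 0\bmod p$ will follow from $b_1\neq 0$ only if $m_1=\dots$; instead I would just keep $d$ as is and write the $B[u]$ element as $b_0^* d\big/\big(p^{m_u}(q_0^*-1)\big)$.

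The key step is then a denominator comparison. Suppose for contradiction $c/p^{n_u}=b_0^* d/\big(p^{m_u}(q_0^*-1)\big)$ with $c,d\geqslant 0$ integers; since the left side is $0$ only if $c=0$, and the right side is nonzero (as $b_0^*,d\geqslant 1$ and $0\notin B[u]$), we get $c\geqslant 1$. Cross-multiplying, $c\,p^{m_u}(q_0^*-1)=b_0^*\,d\,p^{n_u}$. Now $q_0^*-1=p^{N_0^*}-1$ is coprime to $p$, and $\gcd(b_0^*,p)=1$; hence the $p$-adic valuations of the two sides give $m_u=n_u$ (after cancelling the common $p$-power, using that the remaining factors on each side are prime to $p$ — here one uses that $c$ may contribute $p$'s, so more care is needed). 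To make this clean I would instead argue via $\gcd(b_0^*,q_0^*-1)=1$: from $c(q_0^*-1)p^{m_u}=b_0^* d\,p^{n_u}$ we deduce $(q_0^*-1)\mid b_0^* d$, hence $(q_0^*-1)\mid d$ since $\gcd(q_0^*-1,b_0^*)=1$; but then $d\geqslant q_0^*-1=p^{N_0^*}-1\geqslant p^2-1$, which must be reconciled with the bounds on $d$ coming from $b_1+\dots+b_u<p$ and $m_u\leqslant$ whatever the effective range is — and this is where the quantitative estimate enters.

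The main obstacle, as usual for these arithmetic-exclusion lemmas, is pinning down the size estimates so that the two descriptions genuinely cannot coincide: one must bound an $A[u]$-element from above by roughly $u^*\cdot w^*$ and show a $B[u]$-element forced into $\Z[1/p]$ (i.e.\ with $p^{m_u}\mid$ denominator killed) is necessarily $\geqslant r^*$ times an integer divisible by a large power prime to $p$, hence too large, OR has the wrong $p$-adic valuation. Concretely I expect the cleanest route is: reduce both sides to lowest terms; the reduced denominator of an $A[u]$ element is a power of $p$, while the reduced denominator of a $B[u]$ element is $p^{j}(q_0^*-1)/\gcd(\dots)$ and, crucially, the factor $q_0^*-1>1$ survives because $\gcd(b_0^*,q_0^*-1)=1$ and $b_1+\dots+b_u<p$ prevents the numerator $b_0^*(b_1p^{m_u-m_1}+\dots+b_u)$ from being divisible by $q_0^*-1$ (this last divisibility-blocking claim, which uses $N_0^*\geqslant 2$ so that $q_0^*-1$ has a prime factor $>p$ or at least is $\geqslant p^2-1>p-1$, is the delicate point). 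Once the reduced denominators are shown to be a pure $p$-power versus something with a nontrivial factor prime to $p$, they cannot be equal, so the rationals differ, and the proof concludes. I would write the divisibility-blocking step carefully and leave the routine finiteness/positivity checks to the reader.
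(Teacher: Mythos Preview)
Your overall strategy matches the paper's: show $A[u]\subset\Z[1/p]$, then show $B[u]\cap\Z[1/p]=\emptyset$ by proving that the factor $q_0^*-1$ in the denominator of a $B[u]$-element cannot be cancelled. You correctly isolate the crucial claim: with $d=b_1p^{m_u-m_1}+\dots+b_up^{m_u-m_u}$ and $0<b_1+\dots+b_u<p$, one needs $(q_0^*-1)\nmid d$.

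The gap is that you do not actually prove this claim, and your two hints at a proof both fail. Your first attempt bounds $d$ directly and hopes ``$m_u\leqslant$ whatever the effective range is'' --- but there is \emph{no} bound on $m_u$ in the definition of $B[u]$, so $d$ can be arbitrarily large and a size argument alone is hopeless. Your second hint, that $N_0^*\geqslant 2$ forces $q_0^*-1$ to have a prime factor $>p$ (or at least $q_0^*-1\geqslant p^2-1$), does not by itself prevent $(q_0^*-1)\mid d$ once $d$ is large.

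What you are missing is the periodicity trick: since $p^{N_0^*}=q_0^*\equiv 1\pmod{q_0^*-1}$, each exponent in $d$ may be reduced modulo $N_0^*$ without changing $d\bmod(q_0^*-1)$. After this reduction every exponent lies in $[0,N_0^*-1]$, so the reduced sum satisfies
\[
0<b_1p^{n_1'}+\dots+b_up^{n_u'}\leqslant (b_1+\dots+b_u)\,p^{N_0^*-1}\leqslant (p-1)p^{N_0^*-1}<p^{N_0^*}-1=q_0^*-1,
\]
the last strict inequality using $N_0^*\geqslant 2$. Hence $d\not\equiv 0\pmod{q_0^*-1}$. With this one line your outline becomes a complete proof, identical in substance to the paper's.
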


\begin{proof}
 Note that $A[u]\subset\Z [1/p]$.
Prove that $B[u]\cap\Z [1/p]=\emptyset $. 

It will be sufficient to verify that 
for any $n_1,\dots ,n_u, b_1,\dots, b_u\in\Z _{\geqslant 0}$ 
such that 
$0<b_1+\dots +b_u<p$, we have  
$$b_1p^{n_1}+\dots +b_up^{n_u}\not\equiv 0\,\op{mod}\,(q^*_0-1)\, .$$ 
Since $q^*_0\equiv 1\,\op{mod}\,(q^*_0-1)$ we can assume that all $n_i<N_0^*$. 
But then $0<b_1p^{n_1}+\dots +b_up^{n_u}\leqslant (p-1)p^{N_0^*-1}<q^*_0-1$. 
The lemma is proved. 
\end{proof}

For $\alpha, \beta\in\Q $, set $\rho (\alpha,\beta )=|\alpha -\beta |$. 

\begin{Lem}   \label{L2.2}
 If $\alpha\notin B[u]$ then 
$$\rho (\alpha ,B[u]):=\inf\{\rho (\alpha ,\beta )\ |\ 
\beta\in B[u]\}\ne 0\, .$$
\end{Lem}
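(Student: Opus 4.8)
The statement asserts that the infimum defining $\rho(\alpha, B[u])$ is attained and positive whenever $\alpha \notin B[u]$; equivalently, that $B[u]$ is a closed discrete subset of $\mathbb{Q}$ (or at least has no accumulation point at any $\alpha \notin B[u]$). The plan is to show that $B[u]$ is in fact a \emph{finite} set, after which the claim is immediate since a finite set of rationals is closed and the distance from any point outside it to it is a positive minimum over finitely many terms. The key observation is that the exponents $m_i$ and coefficients $b_i$ entering the definition of $B[u]$ can only take finitely many effective values once we account for the arithmetic constraint coming from $r^* = b_0^*/(q_0^*-1)$.

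First I would recall, exactly as in the proof of Lemma 2.1, that $q_0^* \equiv 1 \pmod{q_0^*-1}$, so modulo $(q_0^*-1)$ each power $p^{m_i}$ is congruent to $p^{m_i \bmod N_0^*}$; hence in the expression $b_1 p^{-m_1} + \dots + b_u p^{-m_u}$ one may, without changing membership in $\mathbb{Z}[1/p]$-related reductions, assume all $m_i < N_0^*$ — but more to the point, I would argue directly that the \emph{denominators} appearing in elements of $B[u]$ are bounded. Writing a typical element of $B[u]$ as $r^*(b_1 p^{-m_1} + \dots + b_u p^{-m_u})$ with $0 = m_1 \le \dots \le m_u$ and $b_1 + \dots + b_u < p$, the quantity $b_1 p^{-m_1} + \dots + b_u p^{-m_u}$ lies in $\frac{1}{p^{m_u}}\mathbb{Z}$ and is bounded above by $b_1 + \dots + b_u < p$; the issue is only that $m_u$ is unbounded a priori. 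I would clear denominators: $p^{m_u}(b_1 p^{-m_1} + \dots + b_u p^{-m_u}) = b_1 p^{m_u} + b_2 p^{m_u - m_2} + \dots + b_u$ is a positive integer with $p$-adic valuation equal to $0$ (the last term $b_u$ need not be prime to $p$, so instead I track the \emph{leading} term: the valuation is $\min_i(m_u - m_i + v_p(b_i))$, which is bounded). The cleanest route is: since the $b_i$ range over a fixed finite set $\{0,1,\dots,p-1\}$ (because $b_1+\dots+b_u<p$ forces each $b_i < p$), and $b_1 \ne 0$, the sum $S := b_1 p^{-m_1} + \dots + b_u p^{-m_u}$ satisfies $p^{-1} \le b_1 \cdot 1 \le S < p$, and $S$ has bounded denominator iff we can bound $m_u$.

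To bound $m_u$ I would use the constraint from $r^*$ more carefully, or — more robustly — simply abandon the finiteness route for $B[u]$ in general and instead prove the weaker discreteness statement that suffices: for any fixed $\alpha$, only finitely many elements of $B[u]$ lie within distance $1$ of $\alpha$. Indeed, any $\beta \in B[u]$ with $|\beta - \alpha| < 1$ satisfies $\beta < |\alpha| + 1$, hence $S = \beta/r^* < (|\alpha|+1)/r^*$ is bounded; combined with $S \ge b_1 r^{*-1}\cdot r^* \cdot$ — rather, $S \ge p^{-m_u}$ is too weak, so here I instead note $S \in \mathbb{Z}[1/p]$ with numerator and denominator controlled: write $S = c/p^{m_u}$ in lowest terms with $c \in \mathbb{N}$; then $c = p^{m_u} S < p^{m_u}(|\alpha|+1)/r^*$, while also $c \ge 1$, and crucially $c \not\equiv 0 \pmod p$, so $p^{m_u} \mid$ denominators forces... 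The main obstacle, which I flag explicitly, is precisely this: showing that the exponent $m_u$ cannot grow without the element $\beta$ escaping to infinity. I expect this is handled by the observation that $b_1 p^{m_u} + b_2 p^{m_u-m_2} + \dots + b_u$, being a fixed-length sum of at most $u \le u^*$ terms with coefficients $< p$, is comparable to $p^{m_u}$ from below (its leading term is $b_1 p^{m_u} \ge p^{m_u}$, and subsequent terms are positive), so $S \ge b_1 \ge 1$ gives nothing, but $S = (b_1 p^{m_u} + \dots)/p^{m_u} \ge b_1 \ge 1$ — the denominator $p^{m_u}$ in the \emph{reduced} fraction can only persist if $p \nmid (b_1 p^{m_u} + \dots + b_u)$, i.e. $p \nmid b_u$, and then $S r^* = \beta$ has denominator exactly $p^{m_u}(q_0^*-1)/\gcd(\dots)$, which is $\ge p^{m_u}$; but $\beta$ near $\alpha$ bounds $|\beta|$, not its denominator. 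Therefore the correct finishing move is: \emph{elements of $B[u]$ of bounded size have bounded denominator only if} — and here one invokes that $S < $ (bound) together with $S = c/p^{m_u}$, $c \ge 1$ forces $p^{m_u} \le c/S \cdot$ nothing. I would consequently structure the proof as: (i) reduce to bounding, for each fixed target value, the number of representations; (ii) observe $b_1+\dots+b_u < p$ and $u \le u^*$ make the coefficient/length data finite; (iii) show $m_u$ is bounded in terms of $|\alpha|$ using that $\beta/r^* = b_1 + b_2 p^{-(m_2-m_1)} + \dots$ is bounded below by $b_1 \ge 1$ \emph{and} bounded above, while the spacing of such finite $p$-adic sums of bounded length and bounded digits within any bounded interval is itself bounded below (this is the genuine content, provable by induction on $u$ exactly as $\delta_0(s)$ was shown to exist in Section 2.1); (iv) conclude $B[u] \cap (\alpha - 1, \alpha + 1)$ is finite, so $\rho(\alpha, B[u])$ is a minimum over a finite nonempty-or-trivial set and is nonzero since $\alpha \notin B[u]$.

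In short, the engine of the proof is the same elementary finiteness-of-digit-sums argument already used to establish the existence of $\delta_0(s)$: a sum $\sum_{i=1}^{u} b_i p^{-m_i}$ with $u \le u^*$, digits $b_i$ in a fixed finite range, and $0 = m_1 \le \dots \le m_u$, when constrained to lie in a bounded interval, can only take finitely many values, because either $m_u$ is bounded (finitely many tuples) or the sum is forced outside the interval. Multiplying by the fixed constant $r^*$ transports this to $B[u]$. The main obstacle is making the "$m_u$ bounded or sum escapes" dichotomy precise; I would do it by induction on $u$, peeling off the largest-exponent term, mirroring the inductive existence proof for $\delta_0(s)$ that the paper has already asserted.
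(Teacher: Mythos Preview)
Your proposal has a genuine gap: both finiteness claims you pursue are false. For $u\geqslant 2$ the set $B[u]$ is infinite and not discrete; e.g.\ the elements $r^*(1+p^{-m})$, $m\geqslant 1$, all lie in $B[2]$ and accumulate at $r^*\in B[1]\subset B[2]$. In particular $B[2]\cap(\alpha-1,\alpha+1)$ is typically infinite, so the strategy of reducing to a finite minimum cannot work. The dichotomy you assert---``either $m_u$ is bounded or the sum is forced outside the interval''---is exactly what fails: large $m_u$ makes the tail term $b_up^{-m_u}$ \emph{small}, so the sum stays near an element of $B[u-1]$ rather than escaping.

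The paper's proof is the induction on $u$ you gesture at in your last sentence, but with the correct target: closedness, not finiteness. For $u=1$ the set $B[1]=\{r^*b_1\mid 1\leqslant b_1<p\}$ is genuinely finite. For the inductive step, assume $\rho(\alpha,B[u])=\delta>0$ and choose $M_u$ with $r^*(p-1)p^{-(M_u+1)}<\delta/2$. Any $\beta\in B[u+1]\setminus B[u+1,M_u]$ has its last term of absolute value $<\delta/2$; dropping it yields $\beta'\in B[u]$ with $\rho(\beta,\beta')<\delta/2$, hence $\rho(\alpha,\beta)>\delta/2$ by the triangle inequality. The remaining $\beta$ lie in the finite set $B[u+1,M_u]$, which is bounded away from $\alpha$ since $\alpha\notin B[u+1]$. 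Thus $\rho(\alpha,B[u+1])\geqslant\min\{\rho(\alpha,B[u+1,M_u]),\,\delta/2\}>0$. Your ``peeling off the largest-exponent term'' intuition is right, but what the peeling delivers is proximity to $B[u]$, not a bound on the number of values.
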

\begin{proof} 
 Use induction on $u$. 

If $u=1$ there is nothing to prove because $B[1]$ is finite. 

Suppose $u\geqslant 1$ and $\rho (\alpha ,B[u])>0$. 

Choose $M_u\in\Z _{\geqslant 0}$ such that 
$r^*(p-1)/p^{M_u+1}<\rho (\alpha ,B[u])/2$. 

If $\beta\in B[u+1]\setminus B[u+1,M_u]$ then 
there is $\beta '\in B[u]$ such that 
$\rho (\beta ,\beta ')<\rho (\alpha ,B[u])/2$. Then 
$$\rho (\alpha ,\beta )\geqslant 
\rho (\alpha ,\beta ')-\rho (\beta ',\beta )
\geqslant \rho (\alpha ,B[u])-\rho (\alpha ,B[u])/2=\rho (\alpha ,B[u])/2\,,$$ 
and we obtain  
$$\rho (\alpha ,B[u+1])\geqslant 
\min\{\rho (\alpha ,B[u+1,M_u]), \rho (\alpha ,B[u])/2\}>0\, .$$
The lemma is proved. 
\end{proof}

\begin{Lem} \label{L2.3}
 If $\beta\notin A[u]$ then $\rho (\beta ,A[u])\ne 0$. 
\end{Lem}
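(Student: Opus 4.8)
If $\beta\notin A[u]$ then $\rho(\beta,A[u])\neq 0$.

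The plan is to imitate the proof of Lemma \ref{L2.2} almost verbatim, interchanging the roles of $B[u]$ and $A[u]$ and replacing the coefficient bound $r^*(p-1)$ by the bound $w^*$ that controls the size of the integers $a_i$. So I would argue by induction on $u$. For the base case $u=1$, the constraint $0=n_1$ forces $A[1]=[0,w^*)\cap\Z$, which is finite; hence $\rho(\beta,A[1])\neq 0$ for every $\beta\notin A[1]$.

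For the inductive step I would first record the (immediate) inclusion $A[u]\subseteq A[u+1]$: adjoin a last summand with $a_{u+1}=0$, $n_{u+1}=n_u$, which is legitimate since $0\in[0,w^*)\cap\Z$. In particular, $\beta\notin A[u+1]$ implies $\beta\notin A[u]$, so the inductive hypothesis gives $\rho(\beta,A[u])>0$. Now choose $M\in\Z_{\geqslant 0}$ with $w^*p^{-(M+1)}<\rho(\beta,A[u])/2$. The key observation is that any $\alpha\in A[u+1]\setminus A[u+1,M]$, say $\alpha=a_1p^{-n_1}+\dots+a_{u+1}p^{-n_{u+1}}$ with $n_{u+1}>M$, lies within distance $a_{u+1}p^{-n_{u+1}}<w^*p^{-(M+1)}<\rho(\beta,A[u])/2$ of its truncation $\alpha'=a_1p^{-n_1}+\dots+a_up^{-n_u}\in A[u]$; the triangle inequality then yields $\rho(\beta,\alpha)\geqslant\rho(\beta,\alpha')-\rho(\alpha',\alpha)\geqslant\rho(\beta,A[u])-\rho(\beta,A[u])/2=\rho(\beta,A[u])/2$. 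Since $A[u+1,M]$ is finite (Section \ref{S2.1}) and $\beta\notin A[u+1]\supseteq A[u+1,M]$, the quantity $\rho(\beta,A[u+1,M])$ is strictly positive, so $\rho(\beta,A[u+1])\geqslant\min\{\rho(\beta,A[u+1,M]),\,\rho(\beta,A[u])/2\}>0$.

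I do not expect a genuine obstacle: this is the same combinatorial estimate already carried out for $B[u]$, and the only points needing (routine) checking are the finiteness of $A[1]$ and of each $A[u,M]$, the monotonicity $A[u]\subseteq A[u+1]$, and the elementary bound $a_{u+1}<w^*$. If anything, the argument is slightly cleaner than for $B[u]$, because here the truncation step has no side condition (such as the preservation of $b_1\neq 0$ in Lemma \ref{L2.2}) to verify.
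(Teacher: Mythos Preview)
Your proposal is correct and follows exactly the approach the paper intends: the paper's own proof of Lemma~\ref{L2.3} consists of the single line ``The proof is similar to the proof of above Lemma~\ref{L2.2}'', and your argument is precisely that adaptation, with the bound $r^*(p-1)$ replaced by $w^*$. You are even slightly more explicit than the paper's proof of Lemma~\ref{L2.2}, since you spell out the inclusion $A[u]\subseteq A[u+1]$ needed to pass from $\beta\notin A[u+1]$ to $\beta\notin A[u]$ before invoking the inductive hypothesis.
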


\begin{proof} 
 The proof is similar to the proof of above Lemma \ref{L2.2}. 
\end{proof}

\begin{Lem} \label{L2.4} 
For all $u_1,u_2\leqslant u^*$,  $\rho (A[u_1], B[u_2])>0$. 
\end{Lem}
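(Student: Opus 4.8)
The plan is to reduce the claim $\rho(A[u_1],B[u_2])>0$ to the disjointness and positive-distance facts already established, by exploiting that the two kinds of sets live in different places: $A[u_1]\subset\Z[1/p]$ while the ``$r^*$-weighted'' numbers in $B[u_2]$ avoid $\Z[1/p]$ entirely (Lemma~\ref{L2.1}), and moreover $B[u_2]$ has no accumulation on $A[u_1]$ because of the arithmetic of $r^*=b_0^*/(q_0^*-1)$ with $\gcd(b_0^*,p(q_0^*-1))=1$. First I would fix $u_1,u_2$ and argue by contradiction: suppose there are sequences $\alpha_j\in A[u_1]$, $\beta_j\in B[u_2]$ with $\rho(\alpha_j,\beta_j)\to 0$. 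The elements of $B[u_2]$ with bounded exponents $m_u\leqslant M$ form the finite set $B[u_2,M]$, and similarly $A[u_1,M]$ is finite; so if the exponents stayed bounded the infimum would be attained and would be nonzero by Lemma~\ref{L2.1}. Hence some exponent must go to infinity, and the strategy is to peel off the ``tail'' terms that are going to $0$ and land in a lower-$u$ stratum, exactly as in the inductive step of Lemma~\ref{L2.2}.

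Concretely, I would run a double induction on $u_1+u_2$ (or nested induction, outer on $u_2$, inner on $u_1$), with the base cases $u_1=u_2=1$ handled by finiteness together with $A[1]\subset\Z[1/p]$ and $B[1]\cap\Z[1/p]=\emptyset$. For the inductive step: by Lemma~\ref{L2.3} and Lemma~\ref{L2.2}, both $\rho(\beta,A[u_1])$ (for $\beta\in B[u_2]\setminus A[u_1]$, which is all of $B[u_2]$ since $A[u_1]\subset\Z[1/p]$ and $B[u_2]\cap\Z[1/p]=\emptyset$) and $\rho(\alpha,B[u_2])$ (for $\alpha\in A[u_1]$) are individually positive; but that alone does not bound the mutual distance. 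So instead I would choose a cutoff $M$ large enough that $r^*(p-1)/p^{M+1}$ and $(w^*-1)/p^{M+1}$ are both less than, say, half the (strictly positive, by the nested inductive hypotheses applied to all strata $A[u_1',u_2']$ with $u_1'\leqslant u_1$, $u_2'\leqslant u_2$, $u_1'+u_2'<u_1+u_2$, plus the finite quantities $\rho(A[u_1,M],B[u_2,M])$, $\rho(A[u_1,M],B[u_2])$, $\rho(A[u_1],B[u_2,M])$) minimum of the relevant distances. Then for any $\alpha\in A[u_1]$, $\beta\in B[u_2]$, if the top exponent of $\beta$ exceeds $M$ I replace $\beta$ by the truncation $\beta'\in B[u_2']$ (some $u_2'\leqslant u_2$) obtained by dropping the small tail, with $\rho(\beta,\beta')$ controlled by the geometric tail bound; symmetrically for $\alpha$; and I conclude by the triangle inequality $\rho(\alpha,\beta)\geqslant \rho(\alpha',\beta')-\rho(\alpha,\alpha')-\rho(\beta,\beta')$, which is bounded below by a positive constant coming from the inductive hypothesis or from one of the finite sets.

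The one subtlety to get right is the interaction case, where $\alpha$ is large (top exponent $>M$) \emph{and} $\beta$ is large: truncating both simultaneously lands in $\rho(A[u_1'],B[u_2'])$ with $u_1'+u_2'<u_1+u_2$, which is positive by induction, while the two tails are each $<\frac14$ of that positive constant. The other cases ($\alpha$ small, $\beta$ large; $\alpha$ large, $\beta$ small; both small) reduce to one of the finite-set distances $\rho(A[u_1,M],B[u_2])$, etc., which are positive by Lemmas~\ref{L2.1}--\ref{L2.3} since $A$-type and $B$-type numbers can never coincide. I would also double-check that truncating a $B[u]$-element indeed produces a legitimate $B[u']$-element: dropping the last few summands keeps $b_1\neq 0$, keeps $b_1+\dots+b_{u'}<p$, and keeps the normalization $m_1=0$, so the membership is preserved; the analogous check for $A[u]$ is immediate.

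The main obstacle I anticipate is bookkeeping in the simultaneous-truncation case: one must choose the single cutoff $M$ \emph{before} knowing $\alpha,\beta$, so it has to dominate all the finitely many inductive constants and all the finite-set distances at once, and one must verify that after truncation the residual exponents are genuinely $\leqslant M$ so that the geometric tail estimate $\sum_{n>M}(w^*-1)p^{-n}=(w^*-1)p^{-M}/(1-1/p)$ (and its $B$-analogue with $r^*(p-1)$) actually applies. This is routine but fiddly; once the constant $M$ is pinned down, the triangle-inequality argument closes immediately.
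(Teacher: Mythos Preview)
Your approach is essentially correct, but it is considerably more elaborate than the paper's, and it contains one circularity that needs repairing.

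\textbf{Comparison with the paper.} The paper inducts on $u_1$ \emph{only}. The base case $u_1=1$ uses that $A[1]$ is finite and applies Lemma~\ref{L2.2} to each of its elements. For the inductive step, assuming $\rho(A[u_1],B[u_2])=\delta>0$, the paper picks $M_1$ with $w^*/p^{M_1}<\delta/2$; then any $\alpha\in A[u_1+1]\setminus A[u_1+1,M_1]$ is $\delta/2$-close to some $\alpha'\in A[u_1]$, giving $\rho(\alpha,\beta)\geqslant\rho(\alpha',\beta)-\delta/2\geqslant\delta/2$ for every $\beta\in B[u_2]$, while for the finite set $A[u_1+1,M_1]$ one again invokes Lemma~\ref{L2.2}. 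No truncation on the $B$-side is ever needed: Lemma~\ref{L2.2} has already absorbed that work. Your double induction on $u_1+u_2$ with simultaneous truncation of $\alpha$ and $\beta$ rediscovers the content of Lemma~\ref{L2.2} inside the proof of Lemma~\ref{L2.4}, which is why your argument is longer.

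\textbf{The circularity.} You write that $M$ should be chosen so that the tail bounds are less than half the minimum of the inductive constants \emph{together with} the quantities $\rho(A[u_1,M],B[u_2,M])$, $\rho(A[u_1,M],B[u_2])$, $\rho(A[u_1],B[u_2,M])$. The last three depend on $M$, so this is not a well-posed choice of $M$; moreover, as $M$ grows these distances can only shrink, so there is no monotonicity argument to close the loop. The fix is simple: choose $M$ using only the inductive constants $\min_{u_1'+u_2'<u_1+u_2}\rho(A[u_1'],B[u_2'])$ (say, tails $<\tfrac14$ of this). The three $M$-dependent quantities then enter \emph{after} $M$ is fixed, as the positive lower bounds in the ``one side small'' and ``both small'' cases; their positivity comes from Lemmas~\ref{L2.1}--\ref{L2.3} and finiteness, exactly as you say. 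With this adjustment your argument goes through, but the paper's one-sided induction is cleaner and avoids the case split altogether.
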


\begin{proof}
If $u_1=1$ this follows from Lemma \ref{L2.2} because $A[1]$ is finite. 

Suppose $u_1\geqslant 1$ and $\rho (A[u_1],B[u_2])=\delta >0$. 

Choose $M_1\in\Z _{\geqslant 0}$ such that $w^*/p^{M_1}<\delta /2$. 

If $\alpha\in A[u_1+1]\setminus A[u_1+1,M_1]$ 
then there is $\alpha '\in A[u_1]$ such that 
$\rho (\alpha ,\alpha ')<\delta /2$. 
Then for any $\beta\in B[u_2]$, we have 
$$\rho (\alpha ,\beta )\geqslant \rho (\alpha ',\beta )-
\rho (\alpha ,\alpha ')>\delta /2\,.$$ 
Therefore, for any $\alpha\in A[u_1+1]$, 
$$\rho (\alpha ,B[u_2])\geqslant \min 
\{\,\rho (A[u_1+1,M_1], B[u_2]),\ \delta /2\,\}\}>0 \,.$$ 
Lemma is proved. 
\end{proof}
\medskip 
 
{\sc Fix the values $u^*=(p-1)(p-2)+1$ and $w^*=(p-1)v_0$} 
\newline 
(since $u^*\geqslant p-1$, $B[u^*]=B[p-1]$). 
\medskip 

Choose $N^*\in\N $ satisfying the 
following conditions:
\medskip 

{\bf C1)}\  $N^*\equiv 0\,\op{mod}\,N^*_0$;
\medskip

{\bf C2)}\  $p^{N^*}\rho (A[u^*], B[u^*])\geqslant 2r^*(p-1)$;  
\medskip 

{\bf C3)}\  $r^*(1-p^{-N^*})\in (v_0-\delta _0, v_0)$. 
\medskip

Introduce $q=p^{N^*}$ and $b^*=b^*_0(q-1)/(q_0-1)\in\N $.
\medskip  

Note that $r^*=b^*/(q-1)$ and $b^*\in\Z ^+(p)$. 
\medskip

\begin{Prop} \label{P2.5} 
 If $\alpha\in A[u^*]$ and $\beta\in B[u^*]$ 
then  
$$q\,|q\alpha -(q-1)\beta |>b^*(p-1)\, .$$ 
 \end{Prop}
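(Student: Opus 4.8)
The plan is to reduce the inequality $q\,|q\alpha-(q-1)\beta|>b^*(p-1)$ to the already-established separation estimate in condition \textbf{C2)}, namely $p^{N^*}\rho(A[u^*],B[u^*])\geqslant 2r^*(p-1)$, by rewriting $q\alpha-(q-1)\beta$ in terms of the quantity $\alpha-\beta$ whose distance is controlled. First I would write $q\alpha-(q-1)\beta=q(\alpha-\beta)+\beta$, so that
\begin{equation}
q\,|q\alpha-(q-1)\beta|=q\,|q(\alpha-\beta)+\beta|\geqslant q\bigl(q\,|\alpha-\beta|-|\beta|\bigr)=q\bigl(q\,\rho(\alpha,\beta)-\beta\bigr),
\end{equation}
using that $\beta\in B[u^*]$ is positive. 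Since $\alpha\in A[u^*]\subset\Z[1/p]$ and $\beta\notin\Z[1/p]$ by Lemma \ref{L2.1} (indeed $B[u^*]\cap\Z[1/p]=\emptyset$), we have $\alpha\neq\beta$, so $\rho(\alpha,\beta)\geqslant\rho(A[u^*],B[u^*])$, which is strictly positive by Lemma \ref{L2.4}.

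Next I would bound $\beta$ from above. Every $\beta\in B[u^*]=B[p-1]$ has the form $r^*(b_1p^{-m_1}+\dots+b_{p-1}p^{-m_{p-1}})$ with $m_1=0$ and $b_1+\dots+b_{p-1}<p$, so $\beta\leqslant r^*(b_1+\dots+b_{p-1})\leqslant r^*(p-1)$. Combining this with the previous display and with \textbf{C2)} (which gives $q\,\rho(A[u^*],B[u^*])\geqslant 2r^*(p-1)$, hence $q\,\rho(\alpha,\beta)\geqslant 2r^*(p-1)$), I obtain
\begin{equation}
q\,|q\alpha-(q-1)\beta|\geqslant q\bigl(q\,\rho(\alpha,\beta)-\beta\bigr)\geqslant q\bigl(2r^*(p-1)-r^*(p-1)\bigr)=q\,r^*(p-1).
\end{equation}
Finally, since $r^*=b^*/(q-1)$ by the normalization fixed just before the proposition, $q\,r^*(p-1)=\dfrac{q\,b^*(p-1)}{q-1}>b^*(p-1)$, which is the claimed inequality. (The inequality is strict because $q/(q-1)>1$, so one does not even need the estimates above to be strict — any slack suffices.)

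The only delicate point is making sure the separation constant used is the right one: the term being estimated is $q(\alpha-\beta)+\beta$, and one must be careful that it is $q$ times the distance $\rho(\alpha,\beta)$ — not $\rho$ times $q\alpha$ versus $(q-1)\beta$ directly — that gets amplified by the factor $q=p^{N^*}$ chosen large in \textbf{C2)}. I expect the main obstacle, such as it is, to be purely bookkeeping: tracking that $u^*\geqslant p-1$ so that $B[u^*]=B[p-1]$ and the bound $\beta\leqslant r^*(p-1)$ is valid for all admissible $\beta$, and confirming that Lemma \ref{L2.1} applies to guarantee $\alpha\neq\beta$ so that $\rho(\alpha,\beta)$ is bounded below by the global separation $\rho(A[u^*],B[u^*])$ rather than being zero. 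No genuinely hard step is involved once the algebraic identity $q\alpha-(q-1)\beta=q(\alpha-\beta)+\beta$ is in hand.
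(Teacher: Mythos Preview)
Your proof is correct and follows essentially the same route as the paper's: both rewrite $q\alpha-(q-1)\beta=q(\alpha-\beta)+\beta$, apply the reverse triangle inequality, bound $|\alpha-\beta|\geqslant\rho(A[u^*],B[u^*])$ and $\beta\leqslant r^*(p-1)$, invoke \textbf{C2)} to get $q\rho\geqslant 2r^*(p-1)$, and finish with $qr^*(p-1)>b^*(p-1)$ via $r^*=b^*/(q-1)$. Your appeal to Lemma~\ref{L2.1} is not strictly needed, since Lemma~\ref{L2.4} already supplies the positive lower bound $\rho(A[u^*],B[u^*])>0$ that you use.
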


\begin{proof} 
Indeed, the left-hand side of our inequality equals 
$$q\,|q\alpha -(q-1)\beta |=q^2|\alpha -\beta +\beta /q|
\geqslant q^2|\alpha -\beta |-
\beta q
\geqslant q^2\rho (A[u^*],\, B[u^*])$$
$$-r^*(p-1)q
\geqslant 2r^*(p-1)q-r^*(p-1)q=r^*(p-1)q>b^*(p-1)\, .$$
\end{proof}

 \subsection{The set $\mathfrak{A}^0$} \label{S2.2} 
 Use the above parameters $r^*$, $N^*$, $q=p^{N^*}$.   

\begin{definition}  ${\mathfrak{A}}^0$ is the set of all  
$\iota =p^m(q\alpha -(q-1)\beta )$,  where   $m\in\Z _{\geqslant 0}$,   
$\alpha \in A[u^*,m]$, $\beta \in B[u^*,m]\cup\{0\}$ and $|\iota |\leqslant  b^*(p-1)$. 
(Note that $p^m\alpha \in\Z _{\geqslant 0}$ and $p^m\beta /r^*\in\N $.)
\end{definition}

 Let $\mathfrak{A}^0_0:=\{\iota\in\mathfrak {A}^0\ |\ \beta =0\}$.

\begin{Lem} \label{L2.6} 
Suppose $\iota =p^m(q\alpha -(q-1)\beta )\in\mathfrak{A}^0$. Then:
\medskip 

{\rm a)} $\mathfrak{A}^0_0=\{qa\ |\ a\in [0,(p-1)v_0)\cap\Z \,\}$; 
\medskip 

{\rm b}\ if $\beta\ne 0$ then $m<N^*$ 
(in particular, $\mathfrak{A}^0$ is finite);
\medskip 

{\rm c)} the integers $p^m\alpha $ and $p^m\beta /r^*$ 
do not depend on the presentation of 
$\iota $ in the form $p^m(q\alpha -(q-1)\beta )$ 
from the definition of $\mathfrak{A}^0$. 
\end{Lem}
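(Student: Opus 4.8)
\textbf{Proof proposal for Lemma \ref{L2.6}.}

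The plan is to prove the three statements essentially independently, exploiting the separation results established in Lemmas \ref{L2.1}--\ref{L2.4} and Proposition \ref{P2.5}, together with the explicit choice of the parameters $u^*$, $w^*$, $N^*$ made just before the statement.

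For part (a), I would start from the definition with $\beta =0$: an element of $\mathfrak A^0_0$ is $\iota =p^m\cdot q\alpha $ with $\alpha \in A[u^*,m]$ and $|\iota |\leqslant b^*(p-1)$. Since $p^m\alpha \in\Z _{\geqslant 0}$, write $a:=p^m\alpha \in\Z _{\geqslant 0}$, so $\iota =qa$; the constraint $qa\leqslant b^*(p-1)=r^*(q-1)(p-1)<q(p-1)v_0$ forces $a<(p-1)v_0$. Conversely, given $a\in [0,(p-1)v_0)\cap\Z $, I must exhibit $m$ and $\alpha \in A[u^*,m]$ with $p^m\alpha =a$ and $|qa|\leqslant b^*(p-1)$; taking $m=0$ and $\alpha =a$ works because $a<(p-1)v_0=w^*$ shows $a\in [0,w^*)\cap\Z $, hence $a\in A[1]\subset A[u^*]$ (allowing $a_1=a$, $a_2=\dots =a_{u^*}=0$, $n_i=0$), and then $a\in A[u^*,0]$, while $qa\leqslant q(p-1)v_0$ needs to be compared with $b^*(p-1)=r^*(q-1)(p-1)$; here I must use condition \textbf{C3} (or the choice $r^*\in(v_0-\delta _0,v_0)$) to ensure $qv_0\leqslant r^*(q-1)$, i.e.\ that the ceiling $a<(p-1)v_0$ really does imply $qa\leqslant b^*(p-1)$ --- this delicate inequality at the boundary is the one point where I would be careful, since $a$ is an integer and $(p-1)v_0$ need not be.

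For part (b), suppose $\beta \ne 0$ and, for contradiction, $m\geqslant N^*$. Then $\iota =p^m(q\alpha -(q-1)\beta )=p^{m-N^*}q(q\alpha -(q-1)\beta )$, so $q\mid \iota /p^{m-N^*}$, and more to the point $|\iota |=p^m|q\alpha -(q-1)\beta |\geqslant p^{N^*}|q\alpha -(q-1)\beta |=q|q\alpha -(q-1)\beta |$. But Proposition \ref{P2.5} gives $q|q\alpha -(q-1)\beta |>b^*(p-1)$, contradicting $|\iota |\leqslant b^*(p-1)$. (One must check $\alpha \in A[u^*,m]\subset A[u^*]$ and $\beta \in B[u^*,m]\subset B[u^*]$, so Proposition \ref{P2.5} applies; the case $\beta =0$ is excluded by hypothesis.) Hence $m<N^*$. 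Finiteness of $\mathfrak A^0$ then follows: $m$ ranges over the finite set $\{0,1,\dots ,N^*-1\}$ when $\beta \ne 0$ and (by part (a)) over a bounded set when $\beta =0$, while for each such $m$ the sets $A[u^*,m]$ and $B[u^*,m]$ are finite by definition.

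For part (c), the claim is that if $p^m(q\alpha -(q-1)\beta )=p^{m'}(q\alpha '-(q-1)\beta ')$ with both expressions admissible, then $p^m\alpha =p^{m'}\alpha '$ and $p^m\beta /r^*=p^{m'}\beta '/r^*$ (equivalently $p^m\beta =p^{m'}\beta '$). I would argue by reducing to a common denominator: multiply through to clear the $p$-powers and the denominator $q-1$ of $r^*$, and separate the $\Z [1/p]$-part from the part divisible by $q-1$. Concretely, $q\alpha \in\Z [1/p]$ (indeed $p^m\alpha \in\Z $), while $(q-1)\beta =b^*_0(q-1)(b_1p^{-m_1}+\dots )/(q^*_0-1)\in (q-1)\Z [1/p]$ with $p^m\beta /r^*=p^m(b_1p^{-m_1}+\dots )\in\N $; the equality $p^m q\alpha -p^{m'}q\alpha ' = p^m(q-1)\beta -p^{m'}(q-1)\beta '$ then has left side in $\Z [1/p]$ and right side in $(q-1)\Z [1/p]$ after clearing powers of $p$, and Lemma \ref{L2.1} (the disjointness $A[u^*]\cap B[u^*]=\emptyset $, more precisely $B[u^*]\cap\Z [1/p]=\emptyset $) or rather its arithmetic content forces both sides to vanish once the $p$-denominators are matched. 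The main obstacle here is bookkeeping: one must track the valuations at $p$ carefully, since $m$ and $m'$ may differ, and use that $\gcd(b^*_0,p(q^*_0-1))=1$ so that no hidden cancellation occurs between the numerator $b^*_0$ and the modulus $q^*_0-1$. Once both sides are shown to vanish, $p^m q\alpha =p^{m'}q\alpha '$ gives the first assertion and $p^m(q-1)\beta =p^{m'}(q-1)\beta '$ the second. I expect part (c) to be the most technical, but it is purely a matter of unwinding the definitions and invoking the $p$-adic separation already packaged in Lemma \ref{L2.1}.
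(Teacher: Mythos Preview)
Your part (b) is correct and matches the paper's argument exactly.

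In part (a), your converse direction contains an actual error: the inequality you propose to use, $qv_0\leqslant r^*(q-1)$, is \emph{false}, since $r^*<v_0$ gives $r^*(q-1)<v_0(q-1)<qv_0$. The paper instead uses the definition of $\delta _0$: for an integer $a$ with $a<(p-1)v_0$, the quantity $v_0-a/(p-1)$ is one of the positive values entering the definition of $\delta _0(p-1)$ (take $u=1$, $a_1=a$), so $a/(p-1)\leqslant v_0-\delta _0$; then condition \textbf{C3} gives $v_0-\delta _0<r^*(1-q^{-1})=b^*/q$, whence $qa<b^*(p-1)$. You correctly flagged this as the delicate point but did not invoke $\delta _0$, which is precisely what closes the gap.

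Your part (c) is where the real problem lies. The approach via ``separating the $\Z [1/p]$-part'' and Lemma \ref{L2.1} does not work as you describe: after clearing everything, the equation reads $q(p^m\alpha -p^{m'}\alpha ')=b^*(p^m\beta /r^*-p^{m'}\beta '/r^*)$ with both sides in $\Z $, and Lemma \ref{L2.1} gives no mechanism to force both sides to vanish. Coprimality of $q$ and $b^*$ only yields $q\mid (p^m\beta /r^*-p^{m'}\beta '/r^*)$; to conclude this difference is zero you need a \emph{size bound}. The paper supplies exactly this, and the argument is much shorter than what you sketch: from part (b) one has $m,m'<N^*$, and writing $\beta /r^*=b_1+b_2p^{-m_2}+\dots +b_up^{-m_u}$ with $0\leqslant m_i\leqslant m$ and $b_1+\dots +b_u<p$ gives
\[
p^m\beta /r^*\leqslant p^m(b_1+\dots +b_u)\leqslant p^m(p-1)<p^{m+1}\leqslant p^{N^*}=q,
\]
and similarly $p^{m'}\beta '/r^*<q$. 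Two non-negative integers below $q$ that are congruent modulo $q$ must coincide, and then $p^m\alpha =p^{m'}\alpha '$ follows immediately. So the key input you are missing in (c) is to feed the bound $m<N^*$ from (b) back in; Lemma \ref{L2.1} plays no role here.
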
 

\begin{proof} a) If $\iota\in\frak{A}^0_0$ then $\iota =qp^m\alpha\in\frak{A}^0_0\subset\frak{A}^0$ 
means that 
$p^m\alpha /(p-1)\leqslant b^*/q=r^*(1-q^{-1})\in (v_0-\delta _0,v_0)$. 
By the choice of $\delta _0$ from  Sect.\ref{S2.1}, the inequalities 
$p^m\alpha /(p-1)<v_0$ and $p^m\alpha /(p-1)\leqslant v_0-\delta _0$ are equivalent. 
Therefore, $\frak{A}^0_0\subset\{qa\ |\ a\in [0,(p-1)v_0)\cap\Z\}$. 
The opposite embedding is obvious. 
\medskip

b) If $\beta\in B[u^*,m]$ and $m\geqslant N^*$ then by Prop.\ref{P2.5}, 
 $|\iota |> b^*(p-1)$ i.e. 
$\iota \notin \mathfrak{A}^0$. 
\medskip

c)  
If $\iota =p^{m'}(q\alpha '-(q-1)\beta ')$ 
is another presentation 
of $\iota $ then 
$p^m\beta /r^*$ and $p^{m'}\beta '/r^*$ are 
non-negative congruent modulo $q$ integers  
and the both are smaller than $q$. Indeed, 
if $\beta /r^*=b_1+b_2p^{-m_2}+\dots +b_up^{-m_u}$, 
where all $0\leqslant m_i\leqslant m$ and $u\leqslant u^*$, then 
$$p^m\beta /r^*\leqslant p^m(b_1+\dots +b_u)
\leqslant p^m(p-1)<p^{m+1}\leqslant q$$
because $m<N^*$. Similarly, $p^{m'}\beta '/r^*<q$.  
Therefore, 
they coincide and this implies 
also that 
$p^m\alpha =p^{m'}\alpha '$. 
 \end{proof}
 
 \begin{Cor} \label{C2.7}
  Suppose that  $\iota =p^m(q\alpha -(q-1)\beta )\in\mathfrak{A}^0$.  
  Then the sum of the \lq\lq\,$p$-digits\rq\rq\  
  $b_1+\dots +b_u$ of the appropriate $\beta /r^*
  =b_1+b_2p^{-m_2}+\dots +b_up^{-m_u}$ depends only on $\iota $. 
 \end{Cor}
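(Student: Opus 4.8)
The plan is to deduce this corollary directly from part (c) of Lemma~\ref{L2.6}. The point of (c) was that the integers $p^m\alpha$ and $p^m\beta/r^*$ attached to $\iota$ are intrinsic to $\iota$, i.e.\ independent of the chosen presentation $\iota=p^m(q\alpha-(q-1)\beta)$. So the issue is not well-definedness of a single $p$-adic expansion, but rather that $\beta/r^*$ might admit \emph{several} representations of the shape $b_1+b_2p^{-m_2}+\dots+b_up^{-m_u}$ coming from the definition of $B[u^*]$, a priori with different digit sums $b_1+\dots+b_u$. Thus the corollary is really a statement about $\beta/r^*$ alone: I need to show that the quantity $b_1+\dots+b_u$ is determined by the rational number $\beta/r^*$ (equivalently by the integer $p^m\beta/r^*$, which by (c) depends only on $\iota$).

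First I would reduce to the case $\beta\neq 0$, since for $\beta=0$ the digit sum is $0$ and there is nothing to prove (here I am using part (a), or just that $\mathfrak A^0_0$ consists of the $\beta=0$ elements). For $\beta\neq 0$, write $\nu:=p^m\beta/r^*\in\mathbb N$, which by Lemma~\ref{L2.6}(c) is an invariant of $\iota$, and which by the estimate in the proof of (b)--(c) satisfies $0<\nu<q=p^{N^*}$. Any admissible representation $\beta/r^*=b_1+b_2p^{-m_2}+\dots+b_up^{-m_u}$ with $0=m_1\leqslant m_2\leqslant\dots\leqslant m_u\leqslant m$, $b_1\neq 0$, $b_1+\dots+b_u<p$, gives after multiplying by $p^m$ the identity $\nu=b_1p^m+b_2p^{m-m_2}+\dots+b_up^{m-m_u}$, a sum of $u$ terms $b_ip^{e_i}$ with exponents $e_i=m-m_i$ in $[0,m]$, nonnegative integer digits $b_i$, and total digit sum $b_1+\dots+b_u<p$. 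So it suffices to prove: if a positive integer $\nu<p^{N^*}$ has two such representations $\nu=\sum b_ip^{e_i}=\sum b_j'p^{e_j'}$ with $\sum b_i<p$ and $\sum b_j'<p$, then $\sum b_i=\sum b_j'$.

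For that last claim I would pass to the base-$p$ expansion of $\nu$. Because the total digit sum of either representation is $<p$, no carrying occurs when one collects the terms $b_ip^{e_i}$ (grouping equal exponents, the partial digit at each power of $p$ is at most $\sum b_i<p$), so the representation already \emph{is} the base-$p$ expansion of $\nu$ in the sense that the coefficient of $p^k$ in the standard expansion equals $\sum_{e_i=k}b_i$. Hence $\sum_i b_i$ equals the sum of the base-$p$ digits of $\nu$, which is manifestly independent of the representation; the same holds for $\sum_j b_j'$, and the two are equal. Combining with Lemma~\ref{L2.6}(c) (which makes $\nu$, hence its digit sum, a function of $\iota$ alone) gives the corollary. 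I expect the only mildly delicate point to be the ``no carrying'' observation — making precise that a sum of powers of $p$ with small coefficients whose total is $<p$ reconstructs the ordinary base-$p$ digits — but this is elementary and is already implicitly used in the proof of Lemma~\ref{L2.6}(c) when bounding $p^m\beta/r^*\leqslant p^m(p-1)<q$.
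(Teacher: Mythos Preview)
Your argument is correct and is precisely the intended deduction: in the paper Corollary~\ref{C2.7} is stated without proof as an immediate consequence of Lemma~\ref{L2.6}(c), and you have simply made explicit the elementary observation that, since $\sum b_i<p$, no carrying occurs and hence $\sum_i b_i$ coincides with the ordinary base-$p$ digit sum of the invariant integer $p^m\beta/r^*$.
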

 
 \begin{definition} 
  $\op{ch}(\iota ):=b_1+\dots +b_u$\,. 
 \end{definition} 
 
In the notation from Sect.\ref{S2.2} suppose 
$\iota =p^m(q\alpha -(q-1)\beta )\in{\mathfrak{A}}^0$.   
By Lemma \ref{L2.6} $p^m\alpha $ depends only on $\iota $ and 
can be presented (non-uniquely) in the form 
$a_1p^{n_1}+a_2p^{n_2}+\dots +a_up^{n_u}$ where all coefficients 
$a_i\in [0,(p-1)v_0))\cap \Z $, 
$0\leqslant n_i\leqslant m$, $n_1=m$ and $u\leqslant u^*$. 

\begin{definition} 
 $\kappa (\iota )$ is the maximal natural number 
such that for any above presentation  
 of $p^m\alpha $, $\kappa (\iota )\leqslant u$. 
\end{definition}

\begin{remark}  
a) If $\iota\in\mathfrak{A}^0$ then 
$\kappa (\iota )\leqslant u^*$ and $\op{ch}(\iota )\leqslant p-1$;

b) if $\iota\in\mathfrak{A}^0_0$ then $\op{ch}(\iota )=0$;

c) if $\iota\in\mathfrak{A}^0_0$ and $\iota\ne 0$ then $\kappa (\iota )=1$.
\end{remark}
\medskip 

\subsection{Lie algebras ${\c L}^{\,\dag }$ 
and $\bar{\c L}^{\,\dag }$} \label{S2.3}

Suppose 
$\iota =p^m(q\alpha -(q-1)\beta )\in\mathfrak{A}^0$ 
is given in the standard notation from Sect.\ref{S2.2}. 
Let $w^0(\iota )$ be the minimal (positive) natural number such that 
$\iota < w^0(\iota )b^*$.  

\begin{definition} The subset 
$\mathfrak{A}^+(p)$ consists of  $\iota\in\mathfrak{A}^0$ such that 
 
--- $\iota >0$;

--- $\op{gcd}(p^m\alpha \,,p^m\beta /r^*,\,p)=1$;

--- $w^0(\iota )+\op{ch}(\iota )\leqslant p-1$;

--- $\kappa (\iota )\leqslant (p-2)\op{ch}(\iota )+w^0(\iota )$. 
\end{definition} 

\begin{remark}
For any $\iota\in\mathfrak{A}^+(p)$, 
$(p-2)\op{ch}(\iota )+w^0(\iota )\leqslant
(p-2)^2+p-1=u^*\, .$
\end{remark}

The elements of $\{t^{-\iota }\ |\ \iota\in \mathfrak{A}^+(p)\}$ behave 
\lq\lq well\rq\rq\ modulo 
$(\sigma -\id )\c K$, i.e. the natural map 
$\sum\limits _{\iota\in\mathfrak{A}^+(p)}kt^{-\iota }\To \c K/(\sigma -\id )\c K$ 
is injective. This is implied by the following proposition. 

\begin{Prop} \label{P2.8} 
Let $v_p$ be the $p$-adic valuation such that $v_p(p)=1$. 
\medskip 

{\rm a)} 
Then all $\iota p^{-v_p(\iota )}$, where $\iota\in\mathfrak{A}^+(p)$, 
 are pairwise different. 
\medskip 

{\rm b)} If $\iota\in\mathfrak{A}^+(p)$ and $\op{ch}(\iota )=1$ then 
$\iota p^{-v_p(\iota )}\geqslant qv_0-b^*$.
\end{Prop}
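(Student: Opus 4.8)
\textbf{Proof proposal for Proposition \ref{P2.8}.}

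The plan is to reduce both statements to elementary, if somewhat delicate, estimates on the integers $p^m\alpha$ and $p^m\beta/r^*$ attached to an element $\iota=p^m(q\alpha-(q-1)\beta)\in\mathfrak A^+(p)$ via Lemma \ref{L2.6}(c), using the key separation inequality of Proposition \ref{P2.5} together with the arithmetic constraints built into the definition of $\mathfrak A^+(p)$ (in particular $\op{gcd}(p^m\alpha, p^m\beta/r^*, p)=1$, the bound $w^0(\iota)+\op{ch}(\iota)\le p-1$, and $m<N^*$ when $\beta\ne0$, from Lemma \ref{L2.6}(b)). Throughout I would write $\iota=p^{v}\iota'$ with $v=v_p(\iota)$ and $\iota'$ coprime to $p$, and note that since the generating term $a_1p^{n_1}=a_1p^m$ of $p^m\alpha$ has $n_1=m$ (so $p^m\alpha$ is a $p$-adic integer whose lowest nonzero digit sits at position $m$ unless $a_1$ itself is divisible by $p$), and since $p^m\beta/r^*$ has lowest term $b_1p^{m_1}=b_1$ with $b_1\ne0$, one controls $v_p$ of the combination $q(p^m\alpha)-(q-1)(p^m\beta/r^*)\cdot r^*$ — more precisely of $qp^m\alpha-(q-1)p^m\beta$ — in terms of the coprimality hypothesis.

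\textbf{Part (a).} For injectivity of $\iota\mapsto\iota p^{-v_p(\iota)}$ on $\mathfrak A^+(p)$, suppose $\iota_1,\iota_2\in\mathfrak A^+(p)$ have $\iota_1 p^{-v_p(\iota_1)}=\iota_2 p^{-v_p(\iota_2)}$, say $=\iota'$, so $\iota_2=p^{\ell}\iota_1$ for some $\ell\in\Z$ (WLOG $\ell\ge0$). Writing $\iota_i=p^{m_i}(q\alpha_i-(q-1)\beta_i)$, I would argue that multiplying by $p^\ell$ forces a new presentation of $\iota_2$ with a shifted $m$; but then the invariants $p^{m_2}\alpha_2$ and $p^{m_2}\beta_2/r^*$ of Lemma \ref{L2.6}(c) are $p^\ell$ times the corresponding invariants of $\iota_1$, contradicting $\op{gcd}(p^{m_2}\alpha_2,p^{m_2}\beta_2/r^*,p)=1$ unless $\ell=0$. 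The case $\beta_1=0$ or $\beta_2=0$ must be handled separately using Lemma \ref{L2.6}(a), where $\mathfrak A^0_0=\{qa\mid a\in[0,(p-1)v_0)\}$ and $v_p(qa)=N^*+v_p(a)$; here coprimality $\op{gcd}(p^m\alpha,p)=1$ pins down $a$ up to the power of $p$, and one checks an element of $\mathfrak A^0_0$ cannot share a normalized value with an element having $\beta\ne0$ because of the congruence classes modulo $q-1$ isolated in the proof of Lemma \ref{L2.1} (an element of $B[u^*]$ is never in $\Z[1/p]$, whereas $qa$ is).

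\textbf{Part (b).} Assume $\iota\in\mathfrak A^+(p)$ with $\op{ch}(\iota)=1$, i.e. $\beta/r^*=p^{n}$ for a single $p$-power (so $\beta=r^*p^{-s}$ for some $0\le s\le m$, giving $p^m\beta/r^*=p^{m-s}$). Then $w^0(\iota)\le p-2$ from $w^0(\iota)+\op{ch}(\iota)\le p-1$, hence $\iota<(p-2)b^*$, and I want the lower bound $\iota'=\iota p^{-v_p(\iota)}\ge qv_0-b^*$. The strategy is: since $\iota=p^m(q\alpha-(q-1)\beta)$ with $p^m\beta/r^*=p^{m-s}$, we get $\iota=q\,p^m\alpha-(q-1)r^*p^{m-s}=q\,p^m\alpha-b^*p^{m-s}$. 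Now $v_p(\iota)$: the term $b^*p^{m-s}$ has $v_p=m-s$ (as $b^*\in\Z^+(p)$), while $q\,p^m\alpha$ has $v_p=N^*+m+v_p(\alpha p^{-m}\cdot p^m)$... I would instead directly estimate: $\iota>0$ and $\iota<(p-2)b^*$ force $q\,p^m\alpha$ and $b^*p^{m-s}$ to be close, so $p^m\alpha/p^{m-s}$ is within $(p-2)/q$ of $r^*\approx v_0$, forcing $v_p(p^m\alpha)=m-s$ as well (since $r^*$ is a unit times a power... actually $v_p(r^*)=0$), and then $v_p(\iota)\ge m-s$ with the normalized $\iota'=\iota/p^{v_p(\iota)}\le \iota/p^{m-s}=q\cdot p^s\alpha - b^*$. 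Combined with $p^s\alpha\ge v_0$ (from $p^m\alpha\ge p^{m-s}v_0$, which holds because the corresponding term-by-term comparison of $p^m\alpha$ and $b^*p^{m-s}/q$ is controlled by $r^*>v_0-\delta_0$ and the integrality of the digits $a_i\in[0,(p-1)v_0)$ — this is where $\delta_0$ from Sect.\ref{S2.1} enters), one concludes $\iota'\ge qv_0-b^*$.

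\textbf{Main obstacle.} The hard part will be Part (b): pinning down $v_p(\iota)$ exactly enough to show the normalization divides out \emph{at most} $p^{m-s}$ (not more), and then showing $p^s\alpha\ge v_0$ rather than merely $p^s\alpha>v_0-\delta_0$. The first point requires using coprimality $\op{gcd}(p^m\alpha,p^m\beta/r^*,p)=1$ to see that $p^m\alpha$ and $p^{m-s}$ are not simultaneously divisible by $p$, so $v_p(\iota)=\min(v_p(q\,p^m\alpha),m-s)=m-s$ exactly when $v_p(p^m\alpha)=m-s$, and ruling out $v_p(p^m\alpha)>m-s$ by the size estimate $|q\,p^m\alpha - b^*p^{m-s}|=\iota<(p-2)b^*$ which pins $p^m\alpha$ to roughly $r^*p^{m-s}/1$... carefully this needs $r^*$ irrational-like separation, but $r^*=b^*/(q-1)$ is rational — so instead the argument must be that $q\,p^m\alpha$ and $(q-1)b^*p^{m-s}/... $ hmm; I expect the clean route is via Proposition \ref{P2.5}-type separation applied with $u^*$ and the hypothesis $\op{ch}(\iota)=1$ forcing $\beta\in B[1,m]$, making $q|q\alpha-(q-1)\beta|>b^*(p-1)$ fail unless the single-digit structure collapses the estimate — and then the passage from $v_0-\delta_0$ to $v_0$ is exactly the equivalence "$p^m\alpha/(p-1)<v_0 \iff \le v_0-\delta_0$" already used in Lemma \ref{L2.6}(a), transported through $\op{ch}(\iota)=1$. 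Making these two steps fully rigorous, rather than the bookkeeping around presentations, is the crux.
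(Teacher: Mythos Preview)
Your Part (a) is essentially correct and close to the paper's argument. The key step---that if $\iota_2=p^{\ell}\iota_1$ with $\ell\geqslant 1$ then the invariants $p^{m_2}\alpha_2,\,p^{m_2}\beta_2/r^*$ of $\iota_2$ are $p^{\ell}$ times those of $\iota_1$, contradicting the coprimality in the definition of $\mathfrak A^+(p)$---is exactly what the paper uses (tersely) for the $\op{ch}\geqslant 1$ case. In fact your scaling argument already covers the crossover $\op{ch}=0$ vs.\ $\op{ch}\geqslant 1$: if $\iota_1\in\mathfrak A_0^0$ then the shifted presentation of $\iota_2=p^{\ell}\iota_1$ has $\beta=0$, forcing $\op{ch}(\iota_2)=0$ by Lemma~\ref{L2.6}(c). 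So your separate treatment of that case via Lemma~\ref{L2.1} is unnecessary. The paper instead handles the crossover by a size estimate (the $\op{ch}=0$ normalized values lie below $(v_0-\delta_0)(p-1)$, the $\op{ch}\geqslant 1$ ones above), which it gets for free from Proposition~\ref{P2.5}; your route is a bit more economical here.

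Part (b) has a genuine gap. You introduce a parameter $s$ with $\beta=r^*p^{-s}$, but the definition of $B[u]$ forces $m_1=0$ and $b_1\neq 0$; since $\op{ch}(\iota)=\sum b_i=1$, this pins down $b_1=1$ and all other $b_i=0$, hence $\beta=r^*$ exactly. So $s=0$ and $\iota/p^m=q\alpha-(q-1)r^*=q\alpha-b^*$ with no further casework. Once you see this, the $v_p$ computation is immediate and does not need coprimality: $\alpha\in A[u^*,m]$ with $m<N^*$ gives $q\alpha=p^{N^*-m}(p^m\alpha)\in p\Z$, while $b^*\in\Z^+(p)$, so $q\alpha-b^*$ is an integer not divisible by $p$ and $v_p(\iota)=m$. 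Your discussion of ``$v_p(\iota)=\min(v_p(qp^m\alpha),m-s)$'' and of ruling out $v_p(p^m\alpha)>m-s$ via size estimates is off the mark and would not close. After that, the finish is as you sketched: either $\alpha\geqslant v_0$ and you are done, or $\alpha<v_0$, whence $\alpha\leqslant v_0-\delta_0$ by the very definition of $\delta_0$ (with $s=1$), and then $q\alpha-b^*\leqslant q(v_0-\delta_0)-b^*<0$ by condition~{\bf C3}, contradicting $\iota>0$.
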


\begin{proof} a) Suppose $\iota =p^m(q\alpha -(q-1)\beta )\in \mathfrak{A}^+(p)$. 
\medskip 

If $\op{ch}(\iota )=0$ then  
$\iota\mapsto \iota p^{-v_p(\iota )}$ identifies $\{\iota\in\mathfrak{A}^+(p)\ |\ 
\op{ch}(\iota )=0\}$ 
with  
$\Z ^+(p)\cap [0,(p-1)v_0)$, cf. Lemma \ref{L2.6}a). 

\begin{remark}
For similar reasons, if $1\leqslant s<p$ 
and $a\in \Z ^+(p)\cap [0,(p-1)v_0)$ then $a<sv_0$ iff $qa<sb^*$.
\end{remark}

If $\op{ch}(\iota )\geqslant 1$ then 
$\iota p^{-m}\notin p\N $, i.e. $m\geqslant v_p(\iota )$. 

Indeed, 
$\iota p^{-m}=q\alpha -(q-1)\beta \in p\N $ implies (use that 
$q\alpha\in p\N $ because $m<N^*$) that 
$p^{-m}(b_1+b_2p^{m_2}+\dots +b_up^{m_u})\in p\N $ where 
all $m_i\in [0,m]$. But this number is $\leqslant \op{ch}\,\iota <p$. 
The contradiction. 

Then by Prop.\ref{P2.5}, 
$\iota p^{-v_p(\iota )}\geqslant \iota p^{-m}=|q\alpha -(q-1)\beta |>$ 
\linebreak 
$(b^*/q)(p-1)=r^*(1-q^{-1})(p-1)> 
(v_0-\delta _0)(p-1)$  
(use property {\bf C3} from Sect.\ref{S2.1}). 

Finally, 
if $\iota \in\mathfrak{A}^0_0$ then $\iota p^{-v_p(\iota )}=a<
(p-1)v_0$ implies that  $a<(v_0-\delta _0)(p-1)$ 
by the choice of $\delta _0$, cf. Sect.\ref{S2.1}. On the other hand, 
for all $\iota\in \mathfrak{A}^+(p)$ with $\op{ch}(\iota )\geqslant 1$, the values  
$\iota p^{-v_p(\iota )}$ are different (use that 
$\op{gcd}(p^m\alpha ,p^m\beta /r^*)\not\equiv 0\,\op{mod}\,p$) 
and bigger than $(v_0-\delta _0)(p-1)$. 
\medskip 

b) Here $\iota p^{-\iota _p(\iota )}=\iota p^{-m}=q\alpha -b^*$. 
If $\alpha\geqslant v_0$ then  
$\iota p^{-v_p(\iota )}\geqslant qv_0-b^*$. 
If $\alpha <v_0$ then 
$\iota p^{-v_p(\iota )}\leqslant q(v_0-\delta _0-r^*(q-1)/q))<0$, cf. condition 
C3) from Sect.\ref{S2.1}. The contradiction. 

The proposition is completely proved.  
\end{proof}

\begin{definition} 
  $\mathfrak{A}^0(p)=\mathfrak{A}^+(p)\cup\{0\}$.
\end{definition}

Let $\wt{\c L}_k^{\,\dag }$ be the Lie algebra over $k$ with the set of 
free generators 
$$\{\ D^{\dag }_{\iota n}\ |\ \iota\in\mathfrak{A}^+(p), 
n\in\Z /N_0\}\cup \{D^{\,\dag }_0\}\, .$$ 
Set (compare with Sect.\ref{S1.1}) 
$D^{\,\dag }_{0n}=\sigma ^n(\alpha _0)D_0^{\,\dag }$, 
use the notation $\sigma $ for 
the $\sigma $-linear automorphism of 
$\wt{\c L}^{\,\dag }_k$ such that  
$\sigma :D^{\,\dag }_{\iota n}\mapsto D^{\,\dag }_{\iota ,n+1}$, and  
introduce the Lie $\F _p$-algebras   
$\wt{\c L}^{\,\dag }:=\wt{\c L}^{\,\dag }_k|_{\sigma =\id }$ and  
$\c L^{\,\dag }=\wt{\c L}^{\,\dag }/C_p(\wt{\c L}^{\,\dag })$.  
Note that $\wt{\c L}^{\dag }\otimes k=\wt{\c L}^{\dag }_k$, 
and this matches the agreement about extensions of scalars from the end of Introduction.  
\medskip

Introduce the $w^0$-weights,  
$w^{0}(D^{\dag }_{\iota n}):=w ^{0}(\iota )$.

Denote by $\{\c L^{\,\dag}(s)\}_{s\geqslant 1}$ the minimal 
central filtration of $\c L^{\,\dag }$ such that 
all $D^{\,\dag }_{\iota n}$ with 
$w^{0}(D^{\,\dag }_{\iota n})\geqslant s$ 
belong to $\c L^{\,\dag }(s)_k$. 
This means that $\c L^{\dag }(s)_k$ is an ideal in $\c L^{\dag }_k$ 
generated as $k$-module by all  
$[\dots [D^{\dag }_{\iota _1n_1}, D^{\dag }_{\iota _2n_2}],
\dots ,D^{\dag }_{\iota _rn_r}]$  
such that $w^{0}(\iota _1)+\dots +w^{0}(\iota _r)\geqslant s$. 
Note that $C_s(\c L^{\dag })\subset \c L^{\dag }(s)$. 
\medskip

Let $\c A^{\dag }$ be the enveloping algebra for $\c L^{\dag }$. 

For $m\in\Z _{\geqslant 0}$, let  
$\c A^{\dag }[m]_k$ be the $k$-submodule in $\c A^{\dag }_k$ generated 
by all monomials $D^{\dag }_{\iota _1n_1}
\dots D^{\dag }_{\iota _rn_r}$ such that 
$\op{ch}(\iota _1)+\dots +\op{ch}(\iota _r)=m$. 

By setting 
$\c A^{\dag }[m]=\c A^{\dag }\cap\c A^{\dag }[m]_k$  
we obtain a   
grading  in the category of $\F _p$-algebras 
${\c A}^{\dag }=\oplus _{m\geqslant 0}{\c A}^{\dag }[m]$ 
and the induced grading $\c L^{\dag }=
\oplus _{m\geqslant 0}\c L^{\dag }[m]$ in the category of 
Lie algebras. 
\medskip

For $s\geqslant 1$, set 
$\c L^{\dag }(s)[m]=\c L^{\dag }(s)\cap\c L^{\dag }[m]$. Then 
$\c L^{\dag }(s)=\oplus_{m\geqslant 0}\c L^{\dag }(s)[m]$. 

Let $\bar{\c L}^{\dag }$ be the quotient of $\c L^{\dag }$ by the ideal 
$\sum\limits _{s+m\geqslant p}\c L^{\dag }(s)[m]$. We have the induced 
central filtration 
$\{\bar{\c L}^{\dag }(s)\}_{s\geqslant 1}$ in $\bar{\c L}^{\dag }$ 
such that $\bar{\c L}^{\dag }(p)=0$. 

We also have the induced gradings 
$\bar{\c L}^{\dag }=\oplus _{m\geqslant 0}\bar{\c L}^{\dag }[m]$ 
and $\bar{\c L}^{\dag }(s)=\oplus _{m\geqslant 0}\bar{\c L}^{\dag }(s)[m]$, 
where 
$\bar{\c L}^{\dag }(s)[m]:=\bar{\c L}^{\dag }(s)\cap \bar{\c L}^{\dag }[m]$. 

\begin{definition}
 If $l\in\c L^{\dag }[m]_k$ or 
$l\in\bar{\c L}^{\dag }[m]_k$, $l\ne 0$,  we set $\op{ch}(l)=m$.
\end{definition}

Clearly, for any $m_1,m_2$, 
$[\bar{\c L}^{\dag }[m_1],\bar{\c L}^{\dag }[m_2]]
\subset \bar{\c L}^{\dag }[m_1+m_2]$.

\subsection{Lie algebra $\wt{\c N}^{sp}$} \label{S2.4}

Let ${\c N}^{\,\dag }$ be the $k$-submodule in ${\c L}^{\dag }_{\c K}$ 
generated by the elements of the form $t^{-b}l$, where 
$l\in{\c L}^{\,\dag }(s)[m]_k$ and $b<(s+m)b^*$; 
${\c N}^{\,\dag }$ is a Lie $k$-subalgebra 
in ${\c L}^{\,\dag }_{\c K}$ and for $j\geqslant 0$,  
$t^{jb^*}{\c N}^{\,\dag }$ are 
ideals in ${\c N}^{\,\dag }$. 

Introduce similarly the $k$-subalgebra 
$\bar{\c N}^{\dag }$ in $\bar{\c L}^{\dag }_{\c K}$ 
(generated by all $t^{-b}l$, where 
$l\in\bar{\c L}^{\,\dag }[m](s)_k$ and $b<(s+m)b^*$) and its ideals 
$t^{jb^*}\bar{\c N}^{\,\dag }$.  
Note that $t^{(p-1)b^*}\bar{\c N}^{\,\dag }\subset \bar{\c L}^{\,\dag }_{\m }$ 
(use that $\bar{\c L}^{\,\dag }(p)=0$).  
\medskip 

Set  $\wt {\c N}^{\,\dag }=\bar{\c N}^{\,\dag }/t^{(p-1)b^*}\bar{\c N}^{\,\dag }$. 
\medskip 

The grading from Sect.\ref{S2.3} induces the gradings 
$\bar{\c N}^{\,\dag }=\oplus _{m\geqslant 0}{\bar{\c N}}^{\,\dag }[m]$ and   
$\wt {\c N}^{\,\dag }=\oplus _{m\geqslant 0}\wt {\c N}^{\,\dag }[m]$.

\begin{definition}  
Let 
$\wt{\c N}^{sp}$ be the $k$-submodule 
in $\wt{\c N}^{\dag }$ generated by the elements of 
the form $t^{-\iota }l$ 
with $l\in\bar{\c L}^{\dag }(s)[m]_k$ such that:

a) $\iota\in\mathfrak{A}^0$; 

b) $\iota +\op{ch}(\iota )b^*<(s+m)b^*$; 

c) $\op{ch}(\iota )\geqslant m$, $\kappa (\iota )\leqslant (p-2)m+s$.
\end{definition}

\begin{remark}
 Condition b) means that $t^{-\iota }l
\in t^{\,\op{ch}(\iota )b^*}\wt{\c N}^{\,\dag }$. 
\end{remark}

It will be convenient to introduce the following modules.  

Let $\c N^{sp}$ be a $k$-submodule in $\c N^{\dag }
\subset \c L^{\dag }_{\c K}$ generated by the elements 
$t^{-\iota }l$ such that $l\in\c L^{\dag }(s)[m]$, 
$\iota\in \mathfrak{A}^0$, $\iota +\op{ch}(\iota )b^*<(s+m)b^*$, 
$\op{ch}(\iota )\geqslant m$ and $\kappa (\iota )\leqslant (p-2)m+s$. 
Then the image of $\c N^{sp}$ under the natural map  
$\c N^{\dag }\To \bar{\c N}^{\dag }\To \wt{\c N}^{\dag }$ 
coincides with $\wt{\c N}^{sp}$. 

Let $\c I^{sp}$ be the submodule in $\c N^{\dag }$ generated by  
$t^{-b}l$ such that $l\in\c L^{\dag }(s)[m]$ and either $s+m\geqslant p$ or 
$b<(s+m)b^*-(p-1)b^*$. Then 
the image of ${\c I}^{sp}$ under the above natural map 
${\c N}^{sp}\To\wt{\c N}^{\dag }$ 
is 0.

\begin{Lem} \label{L2.9} 
{\rm a)}\ 
$\wt{\c N}^{ sp}$ is a Lie subalgebra in $\wt{\c N}^{\,\dag}$. 
\medskip 

{\rm b)}\ For any $j\geqslant 0$, $t^{\,jb^*}\wt{\c N}^{ sp}$ 
is an ideal in $\wt{\c N}^{ sp}$. 
\end{Lem}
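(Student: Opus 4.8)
The strategy is to reduce both claims to the analogous—and already established—statements for $\wt{\c N}^{\,\dag }$ together with the product rules recorded in Section~\ref{S2.3}, namely $[\bar{\c L}^{\dag }[m_1],\bar{\c L}^{\dag }[m_2]]\subset\bar{\c L}^{\dag }[m_1+m_2]$ and the fact that $t^{jb^*}\bar{\c N}^{\,\dag }$ (hence $t^{jb^*}\wt{\c N}^{\,\dag }$) are ideals in $\bar{\c N}^{\,\dag }$ (resp.\ $\wt{\c N}^{\,\dag }$). First I would fix notation: a typical generator of $\wt{\c N}^{sp}$ is $x=t^{-\iota }l$ with $\iota\in\mathfrak A^0$, $l\in\bar{\c L}^{\dag }(s)[m]_k$, subject to (a)--(c). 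Since $\wt{\c N}^{sp}$ is $k$-spanned by such $x$, and the bracket is $k$-bilinear, it suffices to check closure on two generators $x_i=t^{-\iota _i}l_i$ with $l_i\in\bar{\c L}^{\dag }(s_i)[m_i]_k$, $i=1,2$.

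For part~a), compute $[x_1,x_2]=t^{-(\iota _1+\iota _2)}[l_1,l_2]$. Here $[l_1,l_2]\in\bar{\c L}^{\dag }(s_1+s_2)[m_1+m_2]_k$ by the central-filtration property $[\c L^{\dag }(s_1),\c L^{\dag }(s_2)]\subset\c L^{\dag }(s_1+s_2)$ together with the grading compatibility $[\bar{\c L}^{\dag }[m_1],\bar{\c L}^{\dag }[m_2]]\subset\bar{\c L}^{\dag }[m_1+m_2]$. So put $s:=s_1+s_2$, $m:=m_1+m_2$, $\iota :=\iota _1+\iota _2$; I must verify (a)--(c) for the triple $(\iota ,s,m)$. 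Condition (b) for each factor reads $\iota _i+\op{ch}(\iota _i)b^*<s_ib^*$; to add these I need $\op{ch}(\iota _1)+\op{ch}(\iota _2)\le\op{ch}(\iota _1+\iota _2)$, which is the subadditivity (really additivity, by Corollary~\ref{C2.7}) of $\op{ch}$ on the relevant sums of $p$-digits—this is where one invokes that the $p$-adic digit sum of $\beta /r^*$ is $\op{ch}$ and digit sums add when no carries force a drop; combined with $\iota $ still lying in $\mathfrak A^0$ (i.e.\ $|\iota |\le b^*(p-1)$, which follows from (b) for the sum once $s\le p$, using $\bar{\c L}^{\dag }(p)=0$ so $s_1+s_2\le p-1$ unless the bracket vanishes) this gives (b). For the first half of (c), $\op{ch}(\iota )\ge m$ follows from $\op{ch}(\iota _i)\ge m_i$ by the same additivity. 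For the second half, $\kappa (\iota _1+\iota _2)\le\kappa (\iota _1)+\kappa (\iota _2)$ (a presentation of $p^{m}\alpha $ as a sum of $\le u^*$ terms is obtained by concatenating presentations of the two summands—one must check the total count $\le u^*$ stays in range, which is guaranteed by the remark that $(p-2)\op{ch}(\iota )+w^{\dag }(\iota )\le u^*$ for $\iota\in\mathfrak A^+(p)$, but here one works inside $\mathfrak A^0$ and uses $\kappa (\iota _i)\le(p-3)m_i+s_i$), so $\kappa (\iota )\le(p-3)(m_1+m_2)+(s_1+s_2)=(p-3)m+s$, as required. One also checks the vanishing boundary cases (if $[l_1,l_2]\in\c L^{\dag }(p)$, or $\iota\notin\mathfrak A^0$, the bracket is $0$ in $\wt{\c N}^{sp}$) so closure holds trivially there.

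For part~b), a generator of $t^{jb^*}\wt{\c N}^{sp}$ is $t^{jb^*}x=t^{-(\iota -jb^*)}l$; I must show bracketing it against any generator $y=t^{-\iota '}l'$ of $\wt{\c N}^{sp}$ lands back in $t^{jb^*}\wt{\c N}^{sp}$. The computation $[t^{jb^*}x,y]=t^{jb^*}\,t^{-(\iota +\iota ')}[l,l']$ shows the $t^{jb^*}$ factor is preserved, and $t^{-(\iota +\iota ')}[l,l']$ is a generator of $\wt{\c N}^{sp}$ by exactly the verification carried out in part~a). One should note separately that $t^{jb^*}\wt{\c N}^{sp}$ is well-defined as a subspace of $\wt{\c N}^{\,\dag }=\bar{\c N}^{\,\dag }/t^{(p-1)b^*}\bar{\c N}^{\,\dag }$, which is automatic since $t^{jb^*}\bar{\c N}^{\,\dag }$ maps into $\wt{\c N}^{\,\dag }$ and the $\wt{\c N}^{sp}$-generators have $t$-exponents controlled by (b).

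**Main obstacle.** The only genuinely delicate point is the behaviour of the three combinatorial invariants $\op{ch}$, $\kappa $, $w^0$ under addition of the corresponding $\iota $'s in $\mathfrak A^0$: I need that $\op{ch}$ is additive (no digit-carry loss, which is where the sharp numeric choices $u^*=(p-1)(p-2)+1$, $w^*=(p-1)v_0$ and the congruence conditions {\bf C1}--{\bf C3} on $N^*$ from Section~\ref{S2.1} enter, via Proposition~\ref{P2.5} and Lemma~\ref{L2.6}), and that $\kappa $ is subadditive while the number of terms in the combined presentation of $p^m\alpha $ stays $\le u^*$. Everything else is a direct transcription of the bracket formula through the definitions; the filtration and grading compatibilities are already in hand from Section~\ref{S2.3}.
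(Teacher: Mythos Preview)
Your treatment of part a) is essentially the paper's own argument: take two generators, bracket them, and verify the three defining conditions for $\wt{\c N}^{sp}$ using subadditivity of $\kappa$ and the behaviour of $\op{ch}$ under sums. One small slip: for condition (b) you need $\op{ch}(\iota)\le\op{ch}(\iota_1)+\op{ch}(\iota_2)$, not the reverse inequality you wrote; since you then invoke \emph{equality} this does no harm. The equality $\op{ch}(\iota_1+\iota_2)=\op{ch}(\iota_1)+\op{ch}(\iota_2)$ (when the right side is $<p$) is indeed what is used, but it does \emph{not} require Proposition~\ref{P2.5} or conditions {\bf C1}--{\bf C3}; it follows immediately from the fact that when the total $p$-digit sum of two nonnegative integers is $<p$, their addition produces no carries. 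You are overloading that step with machinery it does not need.

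Part b) has a genuine gap. You verify $[t^{jb^*}\wt{\c N}^{sp},\wt{\c N}^{sp}]\subset t^{jb^*}\wt{\c N}^{sp}$, which is indeed immediate from part a) via $[t^{jb^*}x,y]=t^{jb^*}[x,y]$. But an ideal \emph{in} $\wt{\c N}^{sp}$ must first be a subspace of $\wt{\c N}^{sp}$, and you only check that $t^{jb^*}\wt{\c N}^{sp}$ sits in $\wt{\c N}^{\dag}$. The inclusion $t^{jb^*}\wt{\c N}^{sp}\subset\wt{\c N}^{sp}$ is not automatic: for a generator $t^{-\iota}l$ one must show that $\iota':=\iota-jb^*$ again lies in $\mathfrak A^0$ and that the pair $(\iota',l)$ satisfies the defining conditions (a)--(c). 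This requires checking $\op{ch}(\iota')=\op{ch}(\iota)+j$, $\kappa(\iota')=\kappa(\iota)$, and that condition (b) is preserved (it is, since $\iota'+\op{ch}(\iota')b^*=\iota+\op{ch}(\iota)b^*$), together with the vanishing case $\op{ch}(\iota)+j\ge p$. In fact this inclusion is the \emph{entire} content of the paper's proof of b); once it is in hand, the bracket closure you wrote down finishes the argument. So your plan for b) has the easy half and omits the substantive half.
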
 
\begin{proof} a) Suppose $w_1=t^{-\iota _1}l_1$ and 
$w_2=t^{-\iota _2}l_2$ belong to ${\c N}^{sp}$. 
Assume that for $j=1,2$, 
$l_j\in {\c L}^{\dag }(s_j)[m_j]$, where  
$s_j=w^{0}(l_j)$ and 
$m_j=\op{ch}(l_j)$.   
We must prove that the image $\tilde w$ of 
$w=[w_1,w_2]\in\c N^{\dag }$ in $\wt{\c N}^{\dag }$ belongs to $\wt{\c N}^{sp}$. 
\medskip

Let $s=s_1+s_2$ and $m=m_1+m_2$. Then $l=[l_1,l_2]\in\c L^{\dag }(s)[m]_k$. 
We can assume that $s+m<p$ (otherwise, 
$l\in\c I^{sp}$ and $\tilde w=0$). 
\medskip 

Verify that $\iota =\iota _1+\iota _2\in\mathfrak{A}^0$.  
We can assume $\iota \geqslant (s+m)b^*-(p-1)b^*$ 
(otherwise, $w\in\c I^{sp}$ and $\tilde w=0$). 
This implies $\iota >-(p-1)b^*$. 

Since $w_1,w_2\in\c N^{sp}$ we have also that 
 $$\iota +(\op{ch}(\iota _1)+\op{ch}(\iota _2))b^*<(s+m)b^*
\leqslant (p-1)b^* \, ,$$
and this implies $\iota <(p-1)b^*$.

We can assume that  
$m'=\op{ch}(\iota _1)+\op{ch}(\iota _2)<p$. 
(Otherwise, for $j=1,2$  
$w_j\in t^{\op{ch}(\iota _j)b^*}{\c N}^{\dag }$,  
$w\in t^{m'b^*}\wt{\c N}^{\dag }\subset\c I^{sp}$ and $\tilde w=0$.) 
In addition, $\kappa (\iota )\leqslant \kappa (\iota _1)+\kappa (\iota _2)
\leqslant (p-2)m+s<u^*\, .$ 
As a result, $\iota \in \mathfrak{A}^0$. 

Finally, 
$\op{ch}(\iota )=m'\geqslant m_1+m_2=m$,  
$[w_1,w_2]\in {\c N}^{sp}$ and $w\in\wt{\c N}^{sp}$.  
\medskip 

b) Suppose $t^{-\iota }l\in\c N^{sp}$ is given in 
terms of the above definition of $\c N^{sp}$. 
We can assume that $s+m<p$ and $\iota \geqslant (s+m)b^*-(p-1)b^*$. 

Prove that the image $\tilde w$ of $w=t^{-\iota +jb^*}l$ 
in $\wt{\c N}^{\dag }$ belongs to 
$\wt{\c N}^{sp}$.  

Let $\iota '=\iota -jb^*$. We can assume that 
$\iota '\geqslant (s+m)b^*-(p-1)b^*$ 
(otherwise, $w\in\c I^{sp}$ and $\tilde w=0$). 
Then $-(p-1)b^*<\iota '\leqslant \iota <(p-1)b^*$.  

Suppose $\op{ch}(\iota )+j<p$, then $\iota '=\iota -jb^*\in\mathfrak{A}^0$. 

Indeed,  $\op{ch}(\iota ')=\op{ch}(\iota )+j<p$ 
and $\kappa (\iota ')=\kappa (\iota )<u^*$.  
Therefore,  $w=t^{-\iota '}l\in\wt{\c N}^{sp}$, because 
$\iota '+\op{ch}(\iota ')b^*=
\iota -jb^*+(\op{ch}(\iota )+j)b^*<(s+m)b^*$, 
$\op{ch}(\iota ')\geqslant\op{ch}(\iota )\geqslant m$  
and $\kappa (\iota ')=\kappa (\iota )\leqslant (p-2)m+s$.  
\medskip 

If $\op{ch}(\iota )+j\geqslant p$ then 
(as earlier) $w=t^{jb^*}t^{-\iota }l\in t^{(\op{ch}(\iota )+j)
b^*}\wt{\c N}^{\dag }\subset\c I^{sp}$ 
and $\tilde w=0$.
The lemma is proved.  
\end{proof}

Clearly, we have the induced grading  
$\wt{\c N}^{ sp}=\oplus _{m\geqslant 0}\wt{\c N}^{ sp}[m]$, 
where $\wt{\c N}^{sp}[p-1]=0$. 
Any element from $\wt{\c N}^{sp}[m]$ appears as a sum of   
elements of the form $t^{-\iota }l$, where for some $s\geqslant 1$,  
$l\in\bar{\c L}^{\dag }(s)[m]_k$, 
$\iota +\op{ch}(\iota )b^*<(s+m)b^*$, $\op{ch}(\iota )\geqslant m$ 
and $\kappa (\iota )\leqslant (p-2)m+s$.

\begin{definition}  For $j\geqslant 0$ and $s\geqslant 1$, let:

a) $\wt{\c N}^{ sp}\langle j\rangle $ be the $k$-submodule  
in $\wt{\c N}^{ sp}$ generated by all $t^{-\iota }
l\in \wt{\c N}^{ sp}$ such that for some $m\geqslant 0$, 
$t^{-\iota }
l\in \wt{\c N}^{ sp}[m]$ and 
 $\op{ch}(\iota )\geqslant m+j$;
 
 b) $\wt{\c N}^{sp}(s, j\rangle $ be the submodule in 
$\wt{\c N}^{sp}\langle j\rangle $ 
 generated by $t^{-\iota }l$ (in the above notation) such that 
 $l\in C_s(\bar{\c L}^{\dag}_k)$. 
 \end{definition} 
 
 Note that:
 \medskip 

--- $\wt{\c N}^{sp}\langle 0\rangle =\wt{\c N}^{sp}(1, 0\rangle =\wt{\c N}^{sp}$;
\medskip 

--- all $\wt{\c N}^{ sp}\langle j\rangle $ and $\wt{\c N}^{sp}(s, j\rangle $ are 
ideals in $\wt{\c N}^{ sp}$;
\medskip 

--- for all $j_1,j_2$ and $s_1,s_2$, 
$[\wt{\c N}^{ sp}\langle j_1\rangle ,\wt{\c N}^{sp}\langle j_2\rangle ]
\subset\wt{\c N}^{ sp}\langle j_1+j_2\rangle $ and 
$[\wt{\c N}^{ sp}(s_1, j_1\rangle ,\wt{\c N}^{sp}(s_2, j_2\rangle ]
\subset\wt{\c N}^{ sp}(s_1+s_2, j_1+j_2\rangle $; 
\medskip 
  
--- $\wt{\c N}^{sp}\langle p-1\rangle =0$.  
 \medskip 
 
 --- for any $\iota\in\mathfrak{A}^0(p)$, 
 $t^{-\iota}D^{\dag }_{\iota 0}\in \wt{\c N}^{sp}$.
 \medskip

\subsection{The action $\Omega _{\gamma}$} \label{S2.5}
 Suppose $\gamma\in\Z /p$. 

If  
$\iota =p^n(q\alpha -(q-1)\beta )\in \mathfrak{A}^0$ 
and $t^{-\iota }l\in\wt{\c N}^{sp}\,$, where 
$l\in\bar{\c L}^{\dag }_k$, then by Lemma  \ref{L2.9}
$$\Omega _{\gamma }(t^{-\iota }l):=t^{-\iota }
\wt{\exp}(\gamma (p^n\alpha ) t^{b^*})l\,\in\wt{\c N}^{ sp}\, .$$

If $w\in\wt {\c N}^{ sp}$ then there is a unique presentation 
$w=\sum \limits _{\iota\in\mathfrak{A}^0}t^{-\iota }l_{\iota }$, 
where all $t^{-\iota }l_{\iota }\in\wt{\c N}^{ sp}$, and we 
set 
$$\Omega _{\gamma }(w)=\sum _{\iota\in\mathfrak{A}^0}
\Omega _{\gamma }(t^{-\iota }l_{\iota })\, .$$ 

The correspondence  
$w\mapsto \Omega _{\gamma }(w)$ is a 
well-defined action of the elements 
$\gamma $ of the (additive) group $\Z /p$ on 
the Lie algebra $\wt{\c N}^{ sp}$. This action is 
unipotent because for any $n\in \wt{\c N}^{sp}\langle j\rangle $, 
$\Omega _{\gamma }(n)\equiv n\,\op{mod}\,\wt{\c N}^{sp}\langle j+1\rangle $. 
\medskip   

Choose $\bar{\,e}^{sp}\in\bar{\c N}^{\dag }$ 
satisfying the following two conditions: 
\begin{equation} \label{E2.1} \ \ \bar{\,e}^{sp}\equiv 
\sum\limits _{\iota\in\mathfrak{A}^0(p)}t^{-\iota }D^{\dag}_{\iota 0}
\,\op{mod}\,C_2(\bar{\c L}_{\c K}) \,;
\end{equation} 
\begin{equation} \label{E2.2}
\wt{e}^{sp}:=\bar e^{sp}\,\op{mod}\,t^{(p-1)b^*}\bar{\c N}^{\dag }
 \in\wt{\c N}^{sp}\, .
\end{equation}
 
A choice of $\bar{\,e}^{\,sp}$ allows us to associate 
to the 
above defined action $\Omega _{\gamma }$ the  
\lq\lq conjugated\rq\rq\ action of 
$A^{\dag }_{\gamma }$ on $\bar{\c L}^{\dag }$ as follows.  

\begin{Prop} \label{P2.10}
 For any $\gamma \in\Z /p$, there are unique 
$\wt{c}_{\gamma}\in\wt{\c N}^{sp}\langle 1\rangle $ and 
 $A^{\dag } _{\gamma }\in \Aut _{\op{Lie}}\bar{\c L}^{\dag }$ such that 
 \medskip 
 
{\rm a)}\ $\sigma \tilde c_{\gamma }\in\wt{\c N}^{sp}\langle 1\rangle $ 
and $ \Omega _{\gamma } (\wt{\,e}^{\,sp})=
(\sigma \wt{c}_{\gamma })\circ (A^{\dag } _{\gamma }
\otimes\id _{\c K}) \wt{e}^{\,sp}\circ (- \wt{c}_{\gamma })$;
\medskip 

{\rm b)}\ for any $\iota\in\mathfrak{A}^{0}(p)$, 
$A^{\dag }_{\gamma }(D^{\dag }_{\iota 0})-D^{\dag }_{\iota 0}
\in \oplus _{m<\op{ch}(\iota )}\bar{\c L}^{\,\dag }[m]_k$.
\end{Prop}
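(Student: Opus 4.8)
The plan is to construct $\wt{c}_\gamma$ and $A^\dag_\gamma$ by a successive-approximation argument along the central filtration, exploiting that $\Omega_\gamma$ is unipotent with respect to the filtration $\{\wt{\c N}^{sp}\langle j\rangle\}$. First I would rewrite the desired identity \textrm{(a)} as
\begin{equation*}
(A^\dag_\gamma\otimes\id_{\c K})\wt e^{\,sp}=(-\sigma\wt c_\gamma)\circ\Omega_\gamma(\wt e^{\,sp})\circ\wt c_\gamma,
\end{equation*}
so that the problem becomes: find $\wt c_\gamma\in\wt{\c N}^{sp}\langle 1\rangle$ with $\sigma\wt c_\gamma\in\wt{\c N}^{sp}\langle 1\rangle$ such that the right-hand side, which a priori lies in $\bar{\c N}^\dag_{\c K}$, actually comes from applying a Lie automorphism $A^\dag_\gamma$ of $\bar{\c L}^\dag$ to $\wt e^{\,sp}$. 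Because of the normalization \eqref{E2.1}, $\wt e^{\,sp}\equiv\sum_\iota t^{-\iota}D^\dag_{\iota 0}$ modulo $C_2$, and $\Omega_\gamma(t^{-\iota}D^\dag_{\iota 0})=t^{-\iota}\wt\exp(\gamma(p^n\alpha)t^{b^*})D^\dag_{\iota 0}$; the leading term $t^{-\iota}D^\dag_{\iota 0}$ is unchanged, and every correction term $t^{-\iota+jb^*}D^\dag_{\iota 0}$ with $j\geq 1$ has $\op{ch}$-index raised by $j$, hence lies in $\wt{\c N}^{sp}\langle 1\rangle$. This is what forces the ansatz that $A^\dag_\gamma$ preserves each $D^\dag_{\iota 0}$ up to terms in $\oplus_{m<\op{ch}(\iota)}\bar{\c L}^\dag[m]_k$, i.e. property \textrm{(b)}.

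The induction proceeds on $s$ with respect to the central filtration $\bar{\c L}^\dag(s)$ (equivalently, one may filter by $\wt{\c N}^{sp}\langle j\rangle$, $0\le j\le p-2$, since $\wt{\c N}^{sp}\langle p-1\rangle=0$). Suppose $\wt c_\gamma^{(s)}\in\wt{\c N}^{sp}\langle 1\rangle$ and a partially-defined automorphism $A_\gamma^{(s)}$ are constructed so that \textrm{(a)} holds modulo $\bar{\c L}^\dag(s+1)_{\c K}$. The discrepancy at the next level is an element of $\bar{\c N}^\dag_{\c K}\cap\bar{\c L}^\dag(s+1)_{\c K}$, and I would split it, using the PBW-compatible grading and the structure of $\wt{\c N}^{sp}$, into a part that can be absorbed by modifying $A_\gamma$ on the generators $D^\dag_{\iota 0}$ (the ``constant'' part, supported on monomials $t^{-\iota}l$ with $l$ a bracket monomial of the appropriate weight and $\op{ch}(\iota)$ meeting the $\wt{\c N}^{sp}$-constraints) and a remaining part of the form $(\sigma\delta)\circ(\cdots)\circ(-\delta)$ obtained by solving $\sigma\delta-\delta=A_s$ term by term. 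Here the key point, exactly as in the proof of Proposition~\ref{P1.4}, is that $\sigma$ is topologically nilpotent on $t^{jb^*}\bar{\c N}^\dag$ for $j\geq 1$ (because $t^{(p-1)b^*}\bar{\c N}^\dag\subset\bar{\c L}^\dag_{\m}$ and higher powers of $t$ push things into $\m$), so the geometric-series expression $\delta=-\sum_{m\geq 0}\sigma^m(A_s)$ converges and lands in $\wt{\c N}^{sp}\langle 1\rangle$ with $\sigma\delta$ also there. Setting $\wt c_\gamma^{(s+1)}=\wt c_\gamma^{(s)}\circ\delta$ (or the additive analogue modulo the next ideal) and correspondingly updating $A_\gamma$ advances the induction; after $p-1$ steps the process terminates and produces $\wt c_\gamma:=\wt c_\gamma^{(p)}$ and $A^\dag_\gamma$.

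For uniqueness I would argue that if $(\wt c_\gamma,A^\dag_\gamma)$ and $(\wt c_\gamma',A_\gamma'^\dag)$ both satisfy \textrm{(a)}--\textrm{(b)}, then $\wt c_\gamma'\circ(-\wt c_\gamma)$ and $(A_\gamma'^\dag)(A^\dag_\gamma)^{-1}$ satisfy the homogeneous relation $(B^\dag\otimes\id)\wt e^{\,sp}=(\sigma x)\circ\wt e^{\,sp}\circ(-x)$ with $x\in\wt{\c N}^{sp}\langle 1\rangle$ and $B^\dag$ fixing each $D^\dag_{\iota 0}$ modulo $\oplus_{m<\op{ch}(\iota)}\bar{\c L}^\dag[m]_k$; comparing leading ($\op{ch}$-minimal) terms and using that the $t^{-\iota}D^\dag_{\iota 0}$, $\iota\in\mathfrak{A}^0(p)$, are linearly independent over $k$ in $\c K/(\sigma-\id)\c K$ (Proposition~\ref{P2.8}) forces $x=0$ and $B^\dag=\id$ by the same level-by-level descent. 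I expect the main obstacle to be the bookkeeping in the inductive step: one must check that at each level the correction term really does decompose into the ``automorphism-absorbable'' part plus a $\sigma$-solvable part \emph{within} $\wt{\c N}^{sp}$, i.e. that the combinatorial constraints defining $\wt{\c N}^{sp}$ (the inequalities $\op{ch}(\iota)\geq m$, $\iota+\op{ch}(\iota)b^*<sb^*$, $\kappa(\iota)\leq(p-3)m+s$) are preserved — this is precisely where the arithmetic of $\mathfrak{A}^0$ and the parameter choices $u^*=(p-1)(p-2)+1$, $w^*=(p-1)v_0$, together with Lemma~\ref{L2.9} and Proposition~\ref{P2.5}, have to be invoked, and getting the indices to match up is the delicate part.
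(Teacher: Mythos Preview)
Your overall strategy---induction along the filtration of $\wt{\c N}^{sp}$, splitting the discrepancy at each stage into a piece absorbed by modifying $A^\dag_\gamma$ on generators and a piece absorbed into $\wt c_\gamma$ by solving a $(\sigma-\id)$-equation---is the same as the paper's. The paper packages the splitting as a lemma (Lemma~\ref{L2.11}): every $n\in\wt{\c N}^{sp}(s,j\rangle$ decomposes uniquely as $n=\c R(n)+(\sigma-\id)\c S(n)$ with $\c R(n)$ supported on $\iota\in\mathfrak{A}^+(p)$ and $\c S(n)\in\wt{\c N}^{sp}(s,j\rangle$, and then inducts modulo the ideals $\wt{\c N}^{sp}(i,i\rangle$.

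There is, however, a genuine gap in your $(\sigma-\id)$-step. You assert that $\sigma$ is topologically nilpotent on $t^{jb^*}\bar{\c N}^\dag$ for $j\geq 1$, so that $\delta=-\sum_{m\geq 0}\sigma^m(A_s)$ converges. This is false: the error $H$ contains terms $t^{-\iota}l$ with $\iota>0$ (negative $t$-valuation), and iterating $\sigma$ on such a term sends the valuation to $-\infty$; the forward geometric series diverges. The analogy with Proposition~\ref{P1.4} fails precisely because there the error lay in $\bar{\c L}_{\m}$, whereas here $\iota$ ranges over all of $\mathfrak{A}^0$, both signs.

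The paper's remedy is this. For $t^{-\iota}l$ with $\iota>0$, write $\iota=p^{m(\iota)}\iota_1$ with $m(\iota)$ maximal such that $\iota_1\in\mathfrak{A}^0$; one then checks $\iota_1\in\mathfrak{A}^+(p)$, sets $\c R(t^{-\iota}l)=t^{-\iota_1}\sigma^{-m(\iota)}l$ (this is what gets absorbed into $A^\dag_\gamma$, since $\iota_1$ indexes a generator $D^\dag_{\iota_10}$), and solves $(\sigma-\id)\c S=t^{-\iota}l-\c R$ by the \emph{finite} telescoping sum $\c S=\sum_{0\leq m<m(\iota)}\sigma^m\c R$ rather than an infinite series. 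Only for $\iota<0$ does one use the forward series $-\sum_{m\geq 0}\sigma^m(\cdot)$, which terminates in $\wt{\c N}^{sp}$ because eventually $-\iota p^m\geq (p-1)b^*$ and the term dies in the quotient $\wt{\c N}^\dag$. Verifying that both $\c R$ and $\c S$ land back in $\wt{\c N}^{sp}(s,j\rangle$ is exactly the bookkeeping you flag in your last paragraph, and it does use the $\kappa$- and $\op{ch}$-inequalities built into the definition of $\mathfrak{A}^+(p)$.
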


\begin{proof} We need the following lemma. 

\begin{Lem} \label{L2.11}
Suppose $j,s\geqslant 1$ and $n\in\wt{\c N}^{ sp}(s,j\rangle $. Then there are 
unique $\c S(n),\c R(n)
\in\wt{\c N}^{sp}(s,j\rangle $ such that  
\medskip 

{\rm a)} $\c R(n)=\sum \limits_{\iota\in\mathfrak{A}^+(p)}
t^{-\iota }l_{\iota }$ with 
all\, $l_{\iota}\in C_s(\bar{\c L}^{\dag })_k$ 
{\rm (}if $\op{ch}(\iota )<j$ then $l_{\iota }=0${\rm )};
\medskip 

{\rm b)} $n=\c R(n)+(\sigma -\id )\c S(n)$. 
\end{Lem}

\begin{proof} [Proof of lemma] 
 Note that any $n\in\wt{\c N}^{ sp}(s, j\rangle $ appears as a 
sum of elements of the form 
$t^{-\iota }l$, where for some $m^0$ and $s^0$, it holds 
\linebreak 
$l\in\bar{\c L}^{\dag }(s^0)[m^0]_k\cap C_s(\bar{\c L}^{\dag })_k$, 
$\iota +\op{ch}(\iota )b^*<(s^0+m^0)b^*$,  
$\op{ch}(\iota )\geqslant m^0+j$ and $\kappa (\iota )\leqslant (p-2)m^0+s^0$. 
When proving the existence of $\c S(n)$ and $\c R(n)$ we can assume that 
$n=t^{-\iota }l$. 
\medskip

--- \ Let $\iota <0$\,. 
\medskip 

Set $\c R(n)=0$ 
and $\c S(n)=-\sum _{m\geqslant 0}
t^{-\iota p^m}\sigma ^ml$.

If $-\iota p^m\geqslant b^*(p-1)$ then 
$t^{-\iota p^m}\sigma ^ml\in t^{b^*(p-1)}\wt{\c N}^{\dag }=0$.

 If 
$-\iota p^m<b^*(p-1)$ then: 
\medskip  

--- \ \ $\iota p^m+\op{ch}(\iota p^m)b^*\leqslant 
\iota +\op{ch}(\iota )b^*<(s^0+m^0)b^*$;
\medskip  

--- \ \ $m^0=\op{ch}(l)=\op{ch}(\sigma ^ml)$ and 
$\op{ch}(\iota p^m)=\op{ch}(\iota )\geqslant m^0+j$;
\medskip 

--- \ \ $\kappa (\iota p^m)=\kappa (\iota )$.
\medskip 

Therefore, if $\iota <0$ then both 
$\c R(n),\c S(n)\in\wt{\c N}^{sp}(s,j\rangle $.  
\medskip

--- \ Let $\iota >0$\,.  
\medskip 

Suppose $p^{m(\iota )}$ is the maximal power of 
$p$ such that $\iota =p^{m(\iota )}\iota _1$ and 
$\iota _1\in\mathfrak{A}^0$. Then $\iota _1\in\mathfrak{A}^+(p)$: 
it will be sufficient to verify just 
the last inequality for $\kappa (\iota _1)=
\kappa (\iota )$ from the definition of 
$\mathfrak{A}^+(p)$ in Sect.\ref{S2.3}. Using  that 
$t^{-\iota }l\in \wt{\c N}^{sp}$, $w^0(\iota )\geqslant 1$  
and $\op{ch}(\iota )\geqslant m^0+1$ we obtain that 
$$(p-2)\op{ch}(\iota )+w^0(\iota )\geqslant (p-2)m^0+p-1
\geqslant (p-2)m^0+s^0\geqslant\kappa (\iota _1)\,.$$

Then we set 
$$\c R(n)=t^{-\iota _1}\sigma ^{-m(\iota )}l, \ \ \c S(n)=
\sum _{0\leqslant m<m(\iota )}\sigma ^m(\c R(n))\, .$$ 

Finally,  
if $0\leqslant m\leqslant m(\iota )$ then 
$\sigma ^m\c R(n)\in\wt{\c N}^{ sp}\langle j\rangle $. Indeed, 
\medskip

--- \ \ $\iota _1p^m+\op{ch}(\iota _1p^m)b^*\leqslant 
\iota +\op{ch}(\iota )b^*<
(s^0+m^0)b^*$; 
\medskip 

--- \  \ $\op{ch}(\iota _1p^m)=\op{ch}(\iota )\geqslant m^0+j$,  
$\sigma ^{-m(\iota )+m}l\in\bar{\c L}^{\dag }(s^0)[m^0]_k
\cap C_s(\bar{\c L}^{\dag})_k$\,;
\medskip 

--- \ \ $\kappa (\iota _1p^m)=\kappa (\iota )$.
\medskip 

So, we proved the existence of $\c R(n)$ and $\c S(n)$. 

The uniqueness follows from the fact 
that for $j\geqslant 1$, $\wt{\c N}^{sp}\langle j\rangle |_{\sigma =\id }=0$ 
and the appropriate $t^{-\iota }$ are independent modulo the subgroup 
$(\sigma -\id )\c K$, cf. Prop.\ref{P2.8}. The lemma is proved. 
\end{proof} 

Continue the proof of Prop.\ref{P2.10}. 
\medskip 

Use induction on $i\geqslant 1$ to prove the  
proposition modulo $\wt{\c N}^{sp}(i, i\rangle $. 
\medskip 

--- If $i=1$ take  $ \wt{c}_{\gamma }=0$, 
$A^{\dag }_{\gamma }=\id $ and use  
$\Omega _{\gamma }(\wt{e}^{\,sp })- \wt{e}^{\,sp}
\in \wt{\c N}^{ sp}(1, 1\rangle $. 
\medskip 

--- Assume $1\leqslant i<p$ and for  
$ \wt{c}_{\gamma }\in\wt{\c N}^{ sp}(1, 1\rangle $ 
and $A_{\gamma }^{\dag }\in\op{Aut}_{\op{Lie}}(\bar{\c L}^{\dag })$,  
$$H=\Omega _{\gamma } \wt{e}^{\,sp}- 
(\sigma  \wt{c}_{\gamma })\circ (A^{\dag } _{\gamma }
\otimes\id _{\c K})\wt{\,e}^{sp}\circ 
(- \wt{c}_{\gamma })\in \wt{\c N}^{ sp}(i, i\rangle \, .$$

Then $\c R(H),\c S(H)\in \wt{\c N}^{ sp}(i, i\rangle $. Set   
$\c R(H)=\sum\limits  _{\op{ch}(\iota )\geqslant i+m}t^{-\iota }H_{\iota m}$, where 
all $H_{\iota m}\in \wt{\c L}^{\dag }[m]_k\cap C_i(\bar{\c L}^{\dag })_k$.  
Introduce $A^{\dag \prime}_{\gamma }\in\Aut _{\op{Lie}}(\bar{\c L}^{\dag })$ 
by setting for all involved $\iota $ and $m$,  
$A_{\gamma }^{\dag \prime }(D^{\dag }_{\iota 0})=
A^{\dag }_{\gamma }(D^{\dag }_{\iota 0})-\sum _{m}H_{\iota m}$. Set also 
$\wt{c}^{\,\prime }_{\gamma }= \wt{c}_{\,\gamma }-{\c S}(H)$. Then 
$$\Omega _{\gamma } \wt{e}^{\,sp }\equiv  
(\sigma  \wt{c}^{\,\prime }_{\gamma 1})\circ (A^{\dag \prime } _{\gamma }
\otimes\id _{\c K})\wt{e}^{\,sp}\circ 
(- \wt{c}^{\,\prime }_{\gamma })\,\op{mod}\, \wt{\c N}^{ sp}(i+1, i+1\rangle \, .$$

The uniqueness follows similarly by induction on $i$ and the uniqueness part of 
Lemma \ref{L2.11}. 

The proposition is proved. 
\end{proof} 

We have obviously the following properties. 

\begin{Cor} \label{C2.12} 
 For any $\gamma ,\gamma _1\in\Z /p$, 
 
 {\rm a)}\ $A^{\dag } _{\gamma +\gamma _1}=
A^{\dag } _{\gamma }A^{\dag } _{\gamma _1}$;
 
 {\rm b)}\ $\Omega _{\gamma }(\wt{c}_{\gamma _1})
\circ (A^{\dag } _{\gamma _1}\otimes\id _{\c K}) {\wt c}_{\gamma }=
  \wt{c}_{\gamma +\gamma _1}$;
 
 {\rm c)}\ if $l\in\bar{\c L}^{\dag }[m]$ then 
$A^{\dag }_{\gamma }(l)-l\in \oplus _{m'<m}
\bar{\c L}^{\dag }[m']$, e.g.  
$A^{\dag } _{\gamma }|_{\bar{\c L}^{\dag}[0]}=\id \, .$
\end{Cor}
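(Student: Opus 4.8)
The plan is to derive all three assertions formally from the uniqueness built into Proposition \ref{P2.10} together with the fact that $\gamma\mapsto\Omega_\gamma$ is an action of the additive group $\Z/p$, i.e. $\Omega_\gamma\Omega_{\gamma_1}=\Omega_{\gamma+\gamma_1}$. Statement (c) comes at once from Proposition \ref{P2.10}(b): since $A^{\dag}_\gamma\in\Aut_{\op{Lie}}\bar{\c L}^{\dag}$, the map $A^{\dag}_\gamma\otimes\id_k$ commutes with $\sigma$ and $\sigma$ preserves the grading $\bar{\c L}^{\dag}_k=\oplus_m\bar{\c L}^{\dag}[m]_k$, so the estimate of Proposition \ref{P2.10}(b) holds verbatim for every generator $D^{\dag}_{\iota n}$, $n\in\Z/N_0$; applying $A^{\dag}_\gamma$ to a bracket monomial $[\dots[D^{\dag}_{\iota_1 n_1},\dots],D^{\dag}_{\iota_r n_r}]$ of $\op{ch}$-degree $m=\op{ch}(\iota_1)+\dots+\op{ch}(\iota_r)$ and expanding multilinearly, every term other than the monomial itself has at least one factor replaced by an element of strictly smaller $\op{ch}$-degree, hence lies in $\oplus_{m'<m}\bar{\c L}^{\dag}[m']$ because $[\bar{\c L}^{\dag}[m_1],\bar{\c L}^{\dag}[m_2]]\subset\bar{\c L}^{\dag}[m_1+m_2]$. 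By $\F_p$-linearity this is (c), and for $m=0$ it reads $A^{\dag}_\gamma|_{\bar{\c L}^{\dag 0}}=\id$. The same expansion shows that the composite $A:=A^{\dag}_{\gamma_1}A^{\dag}_\gamma$ again satisfies the conclusion of Proposition \ref{P2.10}(b).

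For (a) and (b) I would apply $\Omega_\gamma$ to the identity of Proposition \ref{P2.10}(a) written for $\gamma_1$, namely $\Omega_{\gamma_1}(\wt{e}^{sp})=(\sigma\wt{c}_{\gamma_1})\circ(A^{\dag}_{\gamma_1}\otimes\id_{\c K})\wt{e}^{sp}\circ(-\wt{c}_{\gamma_1})$, and reorganise the right-hand side. Three compatibilities of $\Omega_\gamma$ enter: it respects the Campbell-Hausdorff law, being a Lie automorphism of $\wt{\c N}^{sp}$; it commutes with $A^{\dag}_{\gamma_1}\otimes\id_{\c K}$, because $A^{\dag}_{\gamma_1}$ acts on the Lie-algebra factor whereas $\Omega_\gamma(t^{-\iota}l)=t^{-\iota}\wt{\exp}(\gamma(p^n\alpha)t^{b^*})l$ only modifies the $\c K$-coefficient and depends on the exponent $\iota$ alone, which $A^{\dag}_{\gamma_1}$ leaves fixed; and it commutes with $\sigma$ on $\wt{\c N}^{sp}$. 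Combining these with $\Omega_\gamma\Omega_{\gamma_1}=\Omega_{\gamma+\gamma_1}$, with $-(x\circ y)=(-y)\circ(-x)$, and with the $\gamma$-version of Proposition \ref{P2.10}(a) substituted for the factor $\Omega_\gamma(\wt{e}^{sp})$ that occurs in the middle, one arrives at
$$\Omega_{\gamma+\gamma_1}(\wt{e}^{sp})=(\sigma\tilde c)\circ(A\otimes\id_{\c K})\wt{e}^{sp}\circ(-\tilde c),\qquad \tilde c=\Omega_\gamma(\wt{c}_{\gamma_1})\circ(A^{\dag}_{\gamma_1}\otimes\id_{\c K})\wt{c}_{\gamma},$$
with $A=A^{\dag}_{\gamma_1}A^{\dag}_\gamma$. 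Since $\wt{\c N}^{sp}\langle 1\rangle$ is an ideal, stable under $\Omega_\gamma$ and under $A^{\dag}_{\gamma_1}\otimes\id_{\c K}$, closed under $\circ$, and mapped into $\wt{\c N}^{sp}\langle 1\rangle$ by $\sigma$ on $\wt{c}_\gamma$ and $\wt{c}_{\gamma_1}$ (Proposition \ref{P2.10}(a)), both $\tilde c$ and $\sigma\tilde c$ lie in $\wt{\c N}^{sp}\langle 1\rangle$; together with the first paragraph this shows the pair $(\tilde c,A)$ satisfies every normalisation imposed on $(\wt{c}_{\gamma+\gamma_1},A^{\dag}_{\gamma+\gamma_1})$ by Proposition \ref{P2.10}. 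The uniqueness part of Proposition \ref{P2.10} then forces $A^{\dag}_{\gamma+\gamma_1}=A^{\dag}_{\gamma_1}A^{\dag}_\gamma$ and $\wt{c}_{\gamma+\gamma_1}=\tilde c$; exchanging $\gamma$ and $\gamma_1$ (legitimate since $\Z/p$ is abelian) yields (a) and (b) in the stated form, and incidentally that $A^{\dag}_\gamma$ and $A^{\dag}_{\gamma_1}$ commute.

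The step I expect to demand the most care is the commutation of $\sigma$ with $\Omega_\gamma$ on $\wt{\c N}^{sp}$, on the locus where $\sigma$ returns into $\wt{\c N}^{sp}$. The mechanism is that $\sigma$ multiplies the exponent $b^*$ occurring in $\wt{\exp}(\gamma(p^n\alpha)t^{b^*})$ by $p$ and replaces the scalar $p^n\alpha$ attached to $\iota=p^n(q\alpha-(q-1)\beta)$ by $p^{n+1}\alpha$, which is $\equiv 0\,\op{mod}\,p$ because every $p$-exponent in $\alpha\in A[u^*,n]$ is $\leqslant n$; so on $\sigma(\wt{\c N}^{sp})$ the $\Omega_\gamma$-twisting degenerates and the required identity reduces to an index computation in $\mathfrak A^0$ of exactly the kind used in the proofs of Proposition \ref{P2.10} and Lemma \ref{L2.11}. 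The remaining points — the stability of the ideal $\wt{\c N}^{sp}\langle 1\rangle$ under $A^{\dag}_{\gamma_1}\otimes\id_{\c K}$, and the verification that the reorganised identity really has, term by term, the displayed shape — are routine bookkeeping with the defining inequalities of $\wt{\c N}^{sp}$.
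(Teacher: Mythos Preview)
Your approach is the natural one and agrees with what the paper has in mind: the paper offers no argument beyond ``We have obviously the following properties'', so you are supplying exactly the elaboration it omits --- compose the two instances of Proposition~\ref{P2.10}(a), invoke the group-action identity $\Omega_\gamma\Omega_{\gamma_1}=\Omega_{\gamma+\gamma_1}$, and appeal to uniqueness; part (c) is the multilinear extension of Proposition~\ref{P2.10}(b). The structure and the identification of the delicate points are correct.

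One remark on the step you single out. Your observation that $\Omega_\gamma$ acts trivially on $\sigma(\wt{\c N}^{sp})$ (because the invariant $(p^m\alpha)_{p\iota}=p\cdot(p^m\alpha)_\iota\equiv 0\ \mathrm{mod}\ p$) is correct and gives $\Omega_\gamma(\sigma\wt c_{\gamma_1})=\sigma\wt c_{\gamma_1}$. What you still need, to match the front factor with $\sigma\tilde c$, is the companion statement $\sigma\bigl(\Omega_\gamma(\wt c_{\gamma_1})\bigr)=\sigma\wt c_{\gamma_1}$ in $\wt{\c N}^{\dag}$, i.e.\ that the terms $t^{-p\iota+jpb^*}\sigma(l_\iota)$, $j\geqslant 1$, coming from $\sigma$ applied to $\wt{\exp}(\gamma(p^n\alpha)t^{b^*})-1$ all fall into $t^{(p-1)b^*}\bar{\c N}^{\dag}$. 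This is not the same direction as the degeneracy you wrote down, and it does require the index bookkeeping you allude to (using that $\wt c_{\gamma_1}\in\wt{\c N}^{sp}\langle 1\rangle$, hence $\iota+\op{ch}(\iota)b^*<sb^*$ with $\op{ch}(\iota)\geqslant 1$, together with the construction of $\c S$ in Lemma~\ref{L2.11}, which guarantees that the $\iota$'s actually occurring in $\wt c_{\gamma_1}$ satisfy the needed inequalities). Once that is in place your argument goes through verbatim.
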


\subsection{The action $\Omega _U$} \label{S2.6}

Let $A^{\dag }:=A^{\dag }_{\gamma }|_{\gamma =1}$. 
Then for any $\gamma =n\,\op{mod}\,p$,   
$A^{\dag }_{\gamma }=A^{\dag n}$, in particular,  
$A^{\dag p}=\id _{\bar{\c L}^{\dag }}$. 
By part c) of the above corollary, 
for all $m\geqslant 0$, 
$$(A^{\dag }_{\gamma }-
\id _{\bar{\c L}^{\dag }})\left (
\oplus _{m'\leqslant m}\bar{\c L}^{\dag }[m']\right )
\subset\oplus_{m'<m}\bar{\c L}^{\dag }[m']\, .$$ 
Therefore, 
there is a differentiation $B^{\dag }\in
\op{End} _{\op{Lie}}\bar{\c L}^{\dag }$ such that 
for all $m\geqslant 0$, 
$B^{\dag }(\bar{\c L}^{\dag }[m])
\subset\oplus _{m'<m}\bar{\c L}^{\dag }[m']$ and for all $\gamma\in\Z /p$, 
$A^{\dag } _{\gamma }=\wt{\exp }(\gamma B^{\dag })$.

Recover this derivation by applying the methods from  \cite{Ab11}, Sect.3. 

Namely, define a coaction of the formal finite group scheme 
$\alpha _p=\op{Spec} \F _p[U]$ on $\wt{\c N}^{sp}$ as follows. (Here $U^p=0$ and 
the coaddition is such that $\Delta U=U\otimes 1+1\otimes U$.)  

If  
$\iota =p^n(q\alpha -(q-1)\beta )\in \mathfrak{A}^0$ 
and $t^{-\iota }l\in\wt{\c N}^{sp}\,$, where 
$l\in\bar{\c L}^{\dag }_k$, set 
$$\Omega _{U}(t^{-\iota }l):=t^{-\iota }
\wt{\exp}( U\otimes (p^n\alpha ) t^{b^*})l\,\in \F _p[U]
\otimes \wt{\c N}^{ sp}\, .$$ 

As a result, 
\begin{equation} \label{E2.3}
\Omega _U(\wt{e}^{\,sp})=\sigma (\wt c_U)
\circ (A^{\,\dag }_U\otimes\id )\wt{e}^{\,sp}\circ (-\wt c_U)\, ,
\end{equation} 
where for all $\gamma\in\Z /p $, 
$A^{\dag }_U=\wt{\exp}(UB^{\dag })$ and 
$\wt{c}_U|_{U=\gamma }= \wt{c}_{\gamma }$. 
In \cite{Ab12} we also established that 
\medskip 

--- $\wt{c}_U= \wt{c}^{\,(1)}U+\dots +
 \wt{c}^{\,(p-1)}U^{p-1}$, where all $c^{(j)},\sigma c^{(j)}
 \in\wt{\c N}^{sp}\langle j\rangle $;   
\medskip

--- the cocycle $ \wt{c}_U$ is determined uniquely by its 
linear part $\wt{c}^{\,(1)}$; 
\medskip 

--- the action $\Omega _U=\sum _{0\leqslant i<p}\Omega ^{\,i}U^i$ 
(here $\Omega ^0=\id $) is recovered uniquely from its 
differential $d\,\Omega _U:=\Omega ^1U$.

\subsection{Ideals $\bar{\c L}^{\,\dag }[v_0]$ and $\bar{\c L}[v_0]$} 
\label{S2.7} \ \ 

Recall that  $\bar{\c L}^{\,\dag }[0]$ is the minimal Lie 
subalgebra of $\bar{\c L}^{\,\dag }$ such that  
$\bar{\c L}^{\,\dag }[0]_k$ contains all $D^{\dag }_{\iota n}$ 
with $\iota\in\mathfrak{A}^0_0(p)=\{\iota\in\mathfrak{A}^0(p)\ |\ 
\op{ch}(\iota )=0\}$. 
Then $\bar{\c L}^{\,\dag }[0]$ has the induced filtration 
$\{\bar{\c L}^{\,\dag }(s)[0]\}_{s\geqslant 1}$ and 
there is epimorphism of filtered Lie algebras 
$\c V^{\,0}:\bar{\c L}^{\dag }\To\bar{\c L}^{\dag }[0]$ such that 
$D^{\dag }_{\iota n}\mapsto D^{\dag }_{\iota n}$ if 
$\iota\in\mathfrak{A}^0_0(p)$ and $D^{\dag }_{\iota n}\mapsto 0$, otherwise. 

By Lemma \ref{L2.6}, 
$\mathfrak{A}^0_0(p)=\{qa\ |\ a\in [0,(p-1)v_0)\cap \Z ^+(p)\}$. By Remark from 
the proof of Proposition \ref{P2.8}a), 
 the correspondences 
$D^{\dag }_{qa,n}\mapsto D_{an}$ establish 
isomorphism of filtered Lie algebras $j^0:\bar{\c L}^{\dag }[0]\To \bar{\c L}$. 

Let $\c V:=j^0\c V^0:\bar{\c L}^{\dag }\To\bar{\c L}$.

Define the ideal $\bar{\c L}^{\,\dag }[v_0]$ as the minimal 
ideal in $\bar{\c L}^{\,\dag }$ containing all 
$A^{\dag }_{\gamma }(\Ker\,\c V)$, $\gamma\in\Z /p$.  
Set $\bar{\c L}[v_0]=\c V(\bar{\c L}^{\,\dag }[v_0])$.  
Then $\bar{\c L}[v_0]$ is the minimal ideal in $\bar{\c L}$ such that 
$\c V^{-1}(\bar{\c L}[v_0])$ is invariant with respect to all $A^{\,\dag }_{\gamma }$. 

\begin{remark} $\Ker\c V_k$ is the ideal in $\bar{\c L}_k^{\dag }$ generated by the elements 
$D^{\dag }_{\iota n}$ such that $\op{ch}\iota \geqslant 1$. By Prop.\,\ref{P2.8}a)  
for all such $\iota $, 
$\iota p^{-v_p(\iota )}\geqslant (p-1)v_0$. In particular, all $D_{\iota p^{-v_p(\iota )},n}
\in{\c L}(p)_k$. Therefore, the projection $\bar{\op{pr}}:\c L\To\bar{\c L}$ 
factors through an epimorphic map of 
Lie algebras $\bar{\eta }^{\dag }:\c L\To \bar{\c L}^{\dag }$. In particular, 
$\bar{\op{pr}}^{-1}\bar{\c L}[v_0]=(\bar{\eta }^{\dag })^{-1}\bar{\c L}^{\dag }[v_0]$.
\end{remark}

\begin{Prop} \label{P2.13} 
If $l\in\bar{\c L}^{\,\dag }$ and 
$\gamma\in\Z /p$ then 
$$\c V(A^{\dag }_{\gamma }l)
 \equiv \c V(l)\,\op{mod}\,\bar{\c L}[v_0]\,.$$
\end{Prop}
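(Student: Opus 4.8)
The plan is to reduce the statement to the defining property of $\bar{\c L}^{\,\dag }[v_0]$. Recall that $\bar{\c L}^{\,\dag }[v_0]$ is the minimal ideal of $\bar{\c L}^{\,\dag }$ containing all $A^{\dag }_{\gamma }(\Ker\,\c V)$, and $\bar{\c L}[v_0]=\c V(\bar{\c L}^{\,\dag }[v_0])$. So to prove $\c V(A^{\dag }_{\gamma }l)\equiv\c V(l)\bmod\bar{\c L}[v_0]$, it suffices to show that $A^{\dag }_{\gamma }l - l\in\bar{\c L}^{\,\dag }[v_0]$ for every $l\in\bar{\c L}^{\,\dag }$: applying $\c V$ and using that $\c V$ sends $\bar{\c L}^{\,\dag }[v_0]$ into $\bar{\c L}[v_0]$ then gives the claim. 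Thus the real content is the inclusion $(A^{\dag }_{\gamma }-\id )(\bar{\c L}^{\,\dag })\subset\bar{\c L}^{\,\dag }[v_0]$.

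First I would treat the generators. By Corollary \ref{C2.12}(c), for $\iota\in\mathfrak{A}^0(p)$ we have $A^{\dag }_{\gamma }(D^{\dag }_{\iota 0})-D^{\dag }_{\iota 0}\in\bigoplus_{m<\op{ch}(\iota )}\bar{\c L}^{\,\dag }[m]_k$; in particular when $\op{ch}(\iota )=0$ this difference vanishes, and the generators $D^{\dag }_{\iota n}$ with $\op{ch}(\iota )=0$ are exactly those spanning $\bar{\c L}^{\,\dag }[0]=\bar{\c L}^{\,\dag }[0]$ on which $\c V$ is injective. For a generator with $\op{ch}(\iota )\geqslant 1$, the element $D^{\dag }_{\iota n}$ itself lies in $\Ker\,\c V$ (since $\c V=j^0\c V^0$ kills all generators with $\op{ch}(\iota )\ne 0$), hence $D^{\dag }_{\iota n}\in\bar{\c L}^{\,\dag }[v_0]$; and then $A^{\dag }_{\gamma }(D^{\dag }_{\iota n})=A^{\dag }_{\gamma }(D^{\dag }_{\iota n})-D^{\dag }_{\iota n}+D^{\dag }_{\iota n}$, where the difference lies in $\bigoplus_{m<\op{ch}(\iota )}\bar{\c L}^{\,\dag }[m]_k$. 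So on generators the difference $A^{\dag }_{\gamma }(D^{\dag }_{\iota n})-D^{\dag }_{\iota n}$ is either zero or a sum of a kernel element and a lower-$\op{ch}$ correction — and in the second case I must still argue the correction lies in $\bar{\c L}^{\,\dag }[v_0]$, which is where an induction on $\op{ch}$ enters.

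The key step is therefore an induction on $m=\op{ch}(l)$ (equivalently on the grading index) showing $(A^{\dag }_{\gamma }-\id )(\bar{\c L}^{\,\dag }[m])\subset\bar{\c L}^{\,\dag }[v_0]$. The base case $m=0$ is $A^{\dag }_{\gamma }|_{\bar{\c L}^{\,\dag }[0]}=\id$. For the inductive step, since $\bar{\c L}^{\,\dag }[m]$ for $m\geqslant 1$ is spanned by commutators $[\dots[D^{\dag }_{\iota_1 n_1},\dots],D^{\dag }_{\iota_r n_r}]$ with $\sum\op{ch}(\iota_j)=m$ and at least one factor has $\op{ch}(\iota_j)\geqslant 1$, such a commutator lies in the ideal generated by that factor, which is in $\Ker\,\c V$, hence in $\bar{\c L}^{\,\dag }[v_0]$. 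Since $\bar{\c L}^{\,\dag }[v_0]$ is an ideal stable under all $A^{\dag }_{\gamma }$ by construction (it is generated by the $A^{\dag }_{\gamma }(\Ker\,\c V)$, and $A^{\dag }_{\gamma }A^{\dag }_{\gamma_1}=A^{\dag }_{\gamma+\gamma_1}$ by Corollary \ref{C2.12}(a)), both $l$ and $A^{\dag }_{\gamma }l$ lie in $\bar{\c L}^{\,\dag }[v_0]$ for such $l$, so their difference does too. Combining with the $m=0$ case handled separately completes the induction, and applying $\c V$ yields the proposition.

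I expect the main obstacle to be the careful bookkeeping that $\bar{\c L}^{\,\dag }[v_0]$ really is $A^{\dag }_{\gamma }$-stable and contains every graded piece $\bar{\c L}^{\,\dag }[m]$ with $m\geqslant 1$ — i.e. that $\Ker\,\c V$ together with its $A^{\dag }_{\gamma }$-translates generates an ideal absorbing all positive-$\op{ch}$ elements. This is essentially immediate from $\c V$ killing all $D^{\dag }_{\iota n}$ with $\op{ch}(\iota )\ne 0$ and from $\bigoplus_{m\geqslant 1}\bar{\c L}^{\,\dag }[m]$ being an ideal, but one must be slightly careful that the lower-order corrections produced by $A^{\dag }_{\gamma }$ do not escape $\bar{\c L}^{\,\dag }[v_0]$ when they land in $\bar{\c L}^{\,\dag }[0]$; Corollary \ref{C2.12}(c) guarantees no such $[0]$-component is created (the correction has strictly smaller $\op{ch}$ only down to $m'\geqslant$ something, and in fact the structure of $A^{\dag }_{\gamma }$ on generators with $\op{ch}(\iota )\geqslant 1$ keeps everything in positive $\op{ch}$ or in the ideal already), so the induction closes cleanly.
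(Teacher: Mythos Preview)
Your argument is correct and follows essentially the same route as the paper: both use the splitting $\bar{\c L}^{\dag }=\bar{\c L}^{\dag }[0]\oplus\Ker\c V$, the fact that $A^{\dag }_{\gamma }|_{\bar{\c L}^{\dag }[0]}=\id$, and that $A^{\dag }_{\gamma }(\Ker\c V)\subset\bar{\c L}^{\dag }[v_0]\subset\c V^{-1}\bar{\c L}[v_0]$. Your induction on $\op{ch}$ and the final-paragraph worries are unnecessary scaffolding---once you have observed (as you do) that $\Ker\c V=\bigoplus_{m\geqslant 1}\bar{\c L}^{\dag }[m]\subset\bar{\c L}^{\dag }[v_0]$ and that $\bar{\c L}^{\dag }[v_0]$ is $A^{\dag }_{\gamma }$-stable, the claim is immediate without any induction, exactly as in the paper's three-line proof.
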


\begin{proof} a) Let $l'=\c V^0(l)$. Then $l\in l'+\op{Ker}\c V$ 
and, therefore, 
 $$A^{\dag }_{\gamma }(l)\in A^{\dag }_{\gamma }(l')+
 A^{\dag }_{\gamma }(\Ker\c V)\subset l'+\c V^{-1}\bar{\c L}[v_0]\, .$$
 It remains to apply $\c V$ to this embedding. 
(Use that $A^{\dag }_{\gamma }|_{\op{Im}\,\c V^0}=\id $.)
\end{proof}

The ideal $\bar{\c L}[v_0]$ can be also defined in terms related to 
the  action  $\Omega _U$. 
If $B^{\dag }$ is the differentiation 
from Sect.\ref{S2.6} then 
$\bar{\c L}[v_0]$ appears as the minimal ideal in $\bar{\c L}$ such that  
$\bar{\c L}[v_0]_k$ contains all the elements 
$\c VB_k^{\dag }(D^{\dag }_{\iota 0})$, 
where $\iota \in \mathfrak{A}^0(p)$ and $\op{ch}(\iota )\geqslant 1$ 
(if 
$\iota\in\mathfrak{A}^0_0(p)$ then 
$B^{\dag }(D^{\dag }_{\iota 0})=0$). This is implied by the following proposition. 

\begin{Prop} \label{P2.14} 
Suppose ${\c I}$ is an ideal in $\bar{\c L}$. Then 
 the following conditions are equivalent: 
 \medskip 
 
 {\rm a)}\ for any $\gamma\in\Z /p$, $A^{\dag }_{\gamma }(\Ker \c V)\subset 
\c V^{-1}({\c I})$;
\medskip 

{\rm b)}\ $B^{\dag }(\Ker\c V)\subset \c V^{-1}({\c I})$. 
\end{Prop}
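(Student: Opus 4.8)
The plan is to identify $\Ker\c V$ with the positive part of the $\op{ch}$-grading of $\bar{\c L}^{\dag }$ and then to exploit the fact that $B^{\dag }$ strictly decreases this grading; once these two facts are in place, both implications are purely formal.

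First I would record that, by the construction of $\c V=j^0\c V^0$ in Sect.\ref{S2.7}, the kernel $\Ker\c V$ is exactly the ideal $\bigoplus _{m\geqslant 1}\bar{\c L}^{\dag }[m]$ of elements of positive $\op{ch}$-degree (this is an ideal because $[\bar{\c L}^{\dag }[m_1],\bar{\c L}^{\dag }[m_2]]\subset\bar{\c L}^{\dag }[m_1+m_2]$), and that $\c V$ is, up to the isomorphism $j^0$, simply the projection of $\bar{\c L}^{\dag }$ onto the homogeneous piece $\bar{\c L}^{\dag }[0]$ along that ideal. Since $B^{\dag }(\bar{\c L}^{\dag }[m])\subset\bigoplus _{m'<m}\bar{\c L}^{\dag }[m']$ and there are no pieces of negative $\op{ch}$-degree, this forces $B^{\dag }(\bar{\c L}^{\dag }[0])=0$. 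Hence, writing any $x\in\bar{\c L}^{\dag }$ as $x=\c V^0(x)+(x-\c V^0(x))$ with $x-\c V^0(x)\in\Ker\c V$, one gets $B^{\dag }(x)=B^{\dag }(x-\c V^0(x))\in B^{\dag }(\Ker\c V)$; iterating this, $(B^{\dag })^i(\bar{\c L}^{\dag })\subset B^{\dag }(\Ker\c V)$ for every $i\geqslant 1$. This absorbing property — and, more precisely, the identification of $\Ker\c V$ with the positive-degree part together with the vanishing $B^{\dag }(\bar{\c L}^{\dag }[0])=0$ — is the one genuine input of the argument, and it is the step I would spell out with care.

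Granting that, b)$\Rightarrow$a) is immediate: since $A^{\dag }_{\gamma }=\wt{\exp }(\gamma B^{\dag })$, for $l\in\Ker\c V$ we have $\c V(A^{\dag }_{\gamma }l)=\sum _{1\leqslant i<p}(\gamma ^i/i!)\,\c V\big ((B^{\dag })^il\big )$ (the term $i=0$ drops out because $\c V(l)=0$); by the absorbing property each $(B^{\dag })^il$ lies in $B^{\dag }(\Ker\c V)$, so b) forces every summand into $\c I$, i.e. $A^{\dag }_{\gamma }l\in\c V^{-1}(\c I)$. For a)$\Rightarrow$b) I would fix $l\in\Ker\c V$ and view $\gamma\mapsto\sum _{1\leqslant i<p}(\gamma ^i/i!)\,\c V\big ((B^{\dag })^il\big )$ as a polynomial map $\Z /p\To\bar{\c L}/\c I$ of degree $<p$; condition a) says it vanishes at all $p$ points of $\Z /p$, so it is the zero polynomial, and reading off its linear coefficient yields $\c V(B^{\dag }l)\in\c I$, which is b). (Equivalently one plugs in $\gamma =1,\dots ,p-1$ and inverts a Vandermonde system over $\F _p$.) This completes the equivalence.
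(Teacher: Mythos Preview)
Your proof is correct and follows essentially the same route as the paper: both directions rest on the expansion $A^{\dag }_{\gamma }=\wt{\exp}(\gamma B^{\dag })$, the Vandermonde/interpolation argument for a)$\Rightarrow$b), and for b)$\Rightarrow$a) the key observation that $B^{\dag }|_{\bar{\c L}^{\dag }[0]}=0$ together with the splitting $\bar{\c L}^{\dag }=\Ker\c V\oplus\bar{\c L}^{\dag }[0]$ forces all higher powers $(B^{\dag })^i$ to land in $B^{\dag }(\Ker\c V)$. The only cosmetic difference is that the paper phrases the absorbing step as $B^{\dag\,2}(\Ker\c V)\subset B^{\dag }(\c V^{-1}(\c I))=B^{\dag }(\Ker\c V)$, routing through the given ideal $\c I$, whereas you derive $B^{\dag }(\bar{\c L}^{\dag })=B^{\dag }(\Ker\c V)$ directly; your formulation is slightly cleaner but the content is the same.
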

 
\begin{proof} 

Part a) implies b) because for any $l\in \bar{\c L}^{\dag }$ 
we have a non-degenerate system of linear relations 
\begin{equation} \label{E2.4} 
(A^{\dag }_{\gamma }-\id _{\bar{\c L}^{\dag }})l\equiv 
\sum _{1\leqslant s<p}\gamma ^sB^{\dag s}(l)/s!
\,\op{mod}\,{\c I}
\end{equation} 
with $\gamma =1,\dots,p-1$. 

Vice versa, b) 
implies that for all $s\geqslant 1$, 
$B^{\dag s}(\Ker {\c V})\subset B^{\dag }(\Ker\,\c V)$. 
Indeed, 
$\bar{\c L}^{\dag }=\Ker\c V\oplus\bar{\c L}^{\dag }[0]$ implies that 
$\c V^{-1}(\c I)=\Ker\c V\oplus (j^0)^{-1}(\c I)$. 
Therefore, $B^{\dag\,2}(\Ker\,\c V)\subset B^{\dag }(\c V^{-1}(\c I))
= B^{\dag }(\Ker\c V)$ (use 
$B^{\dag }|_{\bar{\c L}^{\dag }[0])}=0$). It remains to use 
relations \eqref{E2.4}. 
Proposition is proved. 
\end{proof}

\subsection {Lie algebras $\c N^{(q)}$, 
$\bar{\c N}^{(q)}$ and $\wt{\c N}^{(q)}$}   \label{S2.8} 
\ \

Introduce an analogue $\c N^{(q)}\subset\c L_{\c K}$ of $\c N$ 
as the $k$-module generated by all $t^{-a}l$, where for some 
$s\geqslant 1$, 
$l\in\c L(s)_k$ and  $a<sb^*$. It is a Lie $k$-algebra and 
$e^{(q)}$ together with all  
$\gamma *e^{(q)}$, $\gamma\in\Z /p$, cf. Sect.\ref{S1.5},  belong to $ \c N^{(q)}$. 
\medskip 

Similarly, introduce the Lie algebras $\bar{\c N}^{(q)}$ 
(use the algebra $\bar{\c L}$ 
instead of $\c L$) 
and $\wt{\c N}^{(q)}=\bar{\c N}^{(q)}/t^{(p-1)b^*}\bar{\c N}^{(q)}$.  
These algebras are related to $\c N^{(q)}$ via the natural projection 
$\bar{\op{pr}}_{\c K}:\c L_{\c K}\To\bar{\c L}_{\c K}$. 
The appropriate images of $e^{(q)}$ in $\bar{\c N}^{(q)}$ and $\wt{\c N}^{(q)}$ 
will be denoted, resp.,  by 
$\bar e^{(q)}$ and $\wt{e}^{(q)}$. 
Note that 
there are natural identifications 
$\bar{\c N}^{(q)}=\c V_{\c K}(\bar{\c N}^{\dag })$ and 
$\wt{\c N}^{(q)}=\c V_{\c K}(\wt{\c N}^{\dag })$, where 
$\bar{\c N}^{\dag }$, $\wt{\c N}^{\dag }$ and  
$\c V_{\c K}$ were defined in Sect.\ref{S2.4}. 
\medskip 

\subsection{Generators of $\bar{\c L}[v_0]$} \label{S2.9}

Introduce the following condition of compatibility
\begin{equation} \label{E2.5} 
 \c V_{\c K}(\bar e^{sp})=\bar e^{(q)}\, .
\end{equation}

By Prop.\ref{P2.14}, $\bar{\c L}[v_0]$ is the minimal ideal in 
$\bar{\c L}$ such that  
for all $\iota\in\mathfrak{A}^0(p)$ with $\op{ch}(\iota )\geqslant 1$, 
$\c V_kB^{\dag }_k(D^{\dag }_{\iota 0})      
\in \bar{\c L}[v_0]_k$. (Note that this implies 
$\c VB^{\dag }(C_2(\bar{\c L}^{\dag }))\subset [\bar{\c L}[v_0],\bar{\c L}]$.)

\begin{Prop} \label{P2.15}
 If $\op{ch}(\iota )\geqslant 2$ then $\c V_kB^{\dag }_k(D^{\dag }_{\iota 0})
\in [\bar{\c L}[v_0],\bar{\c L}]_k$. 
\end{Prop}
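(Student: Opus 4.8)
The strategy is to pin down the linear part of $B^{\dag }(D^{\dag }_{\iota 0})$ modulo $C_2$, observe that for $\op{ch}(\iota )\geqslant 2$ it is killed by $\c V$, and then control the remaining commutator part by the observation recorded just before the Proposition, namely $\c VB^{\dag }(C_2(\bar{\c L}^{\dag }))\subset [\bar{\c L}[v_0],\bar{\c L}]$. For the first point I would linearize the defining identity \eqref{E2.3}. Since $\Omega _U(t^{-\iota }l)=t^{-\iota }\wt{\exp }(U\,c(\iota )t^{b^*})l$, where $c(\iota )\in\F _p$ is the residue of the integer $p^n\alpha $ attached to $\iota =p^n(q\alpha -(q-1)\beta )$, the coefficient $\Omega ^1$ of $U$ in $\Omega _U$ sends $t^{-\iota }l\mapsto c(\iota )t^{-(\iota -b^*)}l$; combining this with $A^{\dag }_U=\wt{\exp }(UB^{\dag })$ and the fact that every Campbell--Hausdorff bracket correction in \eqref{E2.3} lies in $C_2(\bar{\c L}^{\dag }_{\c K})$, one gets, modulo $C_2$,
$$\sum _{\iota \in\mathfrak{A}^0(p)}t^{-\iota }\bigl(B^{\dag }(D^{\dag }_{\iota 0})\bmod C_2\bigr)=\sum _{\iota _0\in\mathfrak{A}^0(p)}c(\iota _0)\,t^{-(\iota _0-b^*)}D^{\dag }_{\iota _0,0}-(\sigma -\id )\bigl(\wt c^{\,1}\bmod C_2\bigr).$$

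Decomposing the right-hand side via the operators $\c R,\c S$ of Lemma \ref{L2.11} and using the uniqueness part there together with Prop.\ref{P2.8} (independence of the $t^{-\iota }$ modulo $(\sigma -\id )\c K$), the $\c S$-term is absorbed by $\wt c^{\,1}$ while the $\c R$-term is the left-hand side. Reading it off: for $\iota $ primitive (i.e.\ $m(\iota )=0$ in the notation of Lemma \ref{L2.11}) with $\op{ch}(\iota )\geqslant 1$,
$$B^{\dag }(D^{\dag }_{\iota 0})\equiv -\,c(\iota +b^*)\,D^{\dag }_{\iota +b^*,0}\pmod{C_2(\bar{\c L}^{\dag })_k}$$
— the term being dropped if $\iota +b^*\notin\mathfrak{A}^0(p)$, and the whole linear part vanishing for non-primitive $\iota $ — where moreover $\op{ch}(\iota +b^*)=\op{ch}(\iota )-1$. (For $\op{ch}(\iota )=1$ these are the ``linear components'' underlying the explicit generators of \cite{Ab1}.) Consequently, if $\op{ch}(\iota )\geqslant 2$ then $\op{ch}(\iota +b^*)=\op{ch}(\iota )-1\geqslant 1$, so $D^{\dag }_{\iota +b^*,0}\in\Ker \c V$ and already $\c V_kB^{\dag }_k(D^{\dag }_{\iota 0})\in C_2(\bar{\c L})_k$.

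It remains to improve $C_2(\bar{\c L})$ to $[\bar{\c L}[v_0],\bar{\c L}]$. Write $B^{\dag }(D^{\dag }_{\iota 0})=-c(\iota +b^*)D^{\dag }_{\iota +b^*,0}+r_\iota $ with $r_\iota \in C_2(\bar{\c L}^{\dag })_k$; one must show $\c V_k(r_\iota )\in [\bar{\c L}[v_0],\bar{\c L}]_k$. I would run the same linearization one commutator degree deeper (modulo $C_3$, then $C_4$, and so on), using that $\Omega ^1$, $B^{\dag }$ and $\c V$ all respect the central filtration and that $\c V_{\c K}(\bar e^{sp})=\bar e^{(q)}$ by \eqref{E2.5}. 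Then each summand of $\c V_k(r_\iota )$ falls into one of three kinds: an image $\c V_kB^{\dag }_k(c)$ with $c\in C_2(\bar{\c L}^{\dag })$ — in $[\bar{\c L}[v_0],\bar{\c L}]_k$ by the observation above; a Leibniz bracket $[\c V_kB^{\dag }_k(D^{\dag }_{\iota '0}),x]$, which is $0$ if $\op{ch}(\iota ')=0$, lies in $[\bar{\c L}[v_0],\bar{\c L}]_k$ if $\op{ch}(\iota ')=1$ (then $\c V_kB^{\dag }_k(D^{\dag }_{\iota '0})$ is a generator of $\bar{\c L}[v_0]$), and is handled by a descending induction on $\op{ch}(\iota )$ if $\op{ch}(\iota ')\geqslant 2$; or a term coming from $\Omega ^1$ applied to the $C_2$-part of $\bar e^{(q)}$ together with the cross-brackets $[\bar e^{(q)},\c V_{\c K}\wt c^{\,1}]$, absorbed again by this induction.

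The main obstacle is precisely this last bookkeeping: verifying that, as $r_\iota $ is unwound through the lower central series, every ``$\c V$-surviving'' summand which is not visibly a $B^{\dag }$-image of a commutator turns out to be a bracket of $\bar{\c L}$ with one of the degree-one generators of $\bar{\c L}[v_0]$. This is where the fine structure of $\wt{\c N}^{sp}$ (the conditions $\op{ch}(\iota )\geqslant m$ and $\kappa (\iota )\leqslant (p-3)m+s$) and Propositions \ref{P2.5} and \ref{P2.8} are genuinely used: they keep all the $t$-power manipulations inside $\wt{\c N}^{sp}$ and keep the $t^{-\iota }$ independent modulo $(\sigma -\id )\c K$, so that coefficients can be matched unambiguously at each stage.
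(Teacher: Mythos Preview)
Your opening move---linearising \eqref{E2.3} modulo $C_2(\bar{\c L}^{\dag })$ and reading off that the degree-one part of $B^{\dag }(D^{\dag }_{\iota 0})$ lands in $\Ker\c V$ once $\op{ch}(\iota )\geqslant 2$---is correct in spirit and matches the paper's starting point. The gap is in the second half. You write $B^{\dag }(D^{\dag }_{\iota 0})=(\text{linear part})+r_\iota $ with $r_\iota \in C_2(\bar{\c L}^{\dag })_k$ and then try to invoke the observation $\c VB^{\dag }(C_2(\bar{\c L}^{\dag }))\subset [\bar{\c L}[v_0],\bar{\c L}]$. But $r_\iota $ is \emph{not} of the form $B^{\dag }(c)$ for some $c\in C_2$; it is the commutator tail of $B^{\dag }$ applied to a generator. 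So the observation does not apply, and your ``three kinds'' classification does not arise from the identity. The Leibniz brackets $[\c V_kB^{\dag }_k(D^{\dag }_{\iota '0}),x]$ you list would appear if you were computing $B^{\dag }$ of a commutator, not the commutator part of $B^{\dag }$ of a generator. Your final paragraph correctly flags this as the obstacle, but does not resolve it.

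The paper closes this gap by a different organisation. It first applies $\c V_{\c K}$ to the whole linearised identity (obtaining \eqref{E2.7}, \eqref{E2.8}), so that everything lives in $\bar{\c L}_{\c K}$ from the outset and $\Ker\c V$ is already gone. It then grades $\tilde x:=\c V_{\c K}\tilde c^{\,1}$ by $\op{ch}$, writing $\tilde x=\sum _m\tilde x[m]$, and observes that the left-hand side of \eqref{E2.8} contributes only to $\op{ch}=1$. Hence for each fixed $m\geqslant 2$ one has $\c H[m]\equiv -\tilde f[m]$ modulo $[\bar{\c L}[v_0],\bar{\c L}]_{\c K}$, where $\tilde f[m]$ is an $\F _p$-linear combination of iterated brackets $[\dots [\tilde x[m],\tilde e^{(q)}],\dots ,\tilde e^{(q)}]$ and their $\sigma $-twists. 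The proof is then a \emph{joint} induction on the weight $s$ (through $\bar{\c D}(s):=[\bar{\c L}[v_0],\bar{\c L}]+\bar{\c L}(s)$): assuming $\tilde x[m]\in\bar{\c D}(s)_{\c K}$, one gets $\tilde f[m]\in\bar{\c D}(s+1)_{\c K}$, hence $\c H[m]\in\bar{\c D}(s+1)_{\c K}$, and the $(\c R,\c S)$-splitting of Lemma~\ref{L2.11} then forces both $\tilde x[m]$ and each $\c V_kB^{\dag }_k(D^{\dag }_{\iota 0})$ with $\op{ch}(\iota )=m$ into $\bar{\c D}(s+1)$. At $s=p$ one has $\bar{\c D}(p)=[\bar{\c L}[v_0],\bar{\c L}]$. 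The point you are missing is precisely this coupling: one must carry $\tilde x[m]$ along in the induction, not only the target elements, and the filtration that makes the step work is the weight filtration $\bar{\c L}(s)$, because each bracket with $\tilde e^{(q)}$ gains one unit of weight.
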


\begin{proof} 
 Suppose  $\tilde{e}^{\,(q)}=\sum \limits _{a}t^{-qa}l_a^{(q)}$ 
and  $\tilde{e}^{\,sp}=
\sum \limits _{\iota}t^{-\iota }l_{\iota }^{sp}$, where  all  
$l_a^{(q)}\in\bar{\c L}_k$ and  $l_{\iota }^{sp}\in\bar{\c L}^{\dag }_k$. 
Note that:
\medskip 

--- if $a\in\Z ^0(p)$ then  $l_a^{(q)}\equiv D_{a0}\,\op{mod}\,C_2(\bar{\c L})_k$;
\medskip 

--- if $a\notin \Z ^0(p)$ then  $l_a^{(q)}\in C_2(\bar{\c L})_k$;
\medskip 
 
--- if $\iota =qa$ with $a\in\Z $, then $\c V_k(l^{sp}_{\iota })=l_a^{(q)}$, 
otherwise $\c V_k(l_{\iota }^{sp})=0$;
\medskip 

--- if $\iota\in\mathfrak{A}^0(p)$ 
then $l^{sp}_{\iota }\equiv D^{\dag }_{\iota 0}\,\op{mod}\,
C_2(\bar{\c L}^{\dag })_k$, otherwise, $l_{\iota }^{sp}\in C_2(\bar{\c L}^{\dag })_k$.
\medskip 

Applying formalism from Sect.\ref{S2.6} we obtain 
\begin{equation} \label{E2.6} 
\Omega _U(\wt{e}^{\,sp})\equiv 
(U\sigma \tilde{c}^{\,1})\circ (\wt{e}^{\,sp}+U(B^{\dag }
\otimes\id _{\c K})\wt{e}^{\,sp})
\circ (-U\tilde{c}^{\,1})\,\op{mod}\,U^2\wt{\c N}^{\dag }\,,
\end{equation} 
where $\tilde{c}^{\,1},\sigma\tilde c^1\in\wt{\c N}^{sp}\langle 1\rangle $. 
Note that 
$$\Omega _U(\wt{e}^{\,sp})\equiv \wt{e}^{\,sp}+
U\sum _{\iota }(p^m\alpha )_{\iota }t^{-\iota +b^*}l_{\iota }^{sp}
\,\op{mod}\,U^2\, ,$$ 
where 
$\iota =q(p^m\alpha )_{\iota }-(q-1)(p^m\beta )_{\iota }$. 
\medskip  

Applying $\c V_{\c K}$ to relation \eqref{E2.6} and setting 
$\tilde x:=\c V_{\c K}\tilde c^1$ we obtain 
\begin{equation} \label{E2.7}  
\tilde{e}^{(q)}+U\sum _{a} at^{-qa +b^*}l_{a}^{(q)}
\equiv 
\end{equation}
$$(U\sigma\tilde{x})\circ \left (\tilde{e}^{(q)}+U\sum\limits _{\iota }t^{-\iota }
\c V_kB^{\dag }_k(l_{\iota }^{\,sp})\right )\circ 
(-U\tilde x)\,\op{mod}\,U^2\tilde{\c N}^{(q)}\, .$$

Let $\tilde f_1, \tilde f_2\in\wt{\c N}^{\,(q)}$ be such that 
$$ (U\sigma \tilde{x})\circ \tilde{e}^{\,(q)}\equiv 
\tilde{e}^{\,(q)}+U(\sigma \tilde{x}+\tilde{f}_1)
\,\op{mod}\,U^2$$
$$ \tilde{e}^{\,(q)}\circ (-U\tilde{x})\equiv 
\tilde{e}^{\,(q)}+U(- \tilde{x}+\tilde{f}_2)
\,\op{mod}\,U^2\, .$$
There are explicit formulas for $\tilde f_1$ and $\tilde f_2$, cf. e.g. Sect.3.2 of 
\cite{Ab11}, but 
we need only that they are just $\F _p$-linear combinations 
of the commutators $[\dots [\sigma \tilde{x}, 
\tilde{e}^{\,(q)}],\dots ,\tilde{e}^{\,(q)}]$ and, resp.,  
$[\dots [\tilde{x}, 
\tilde{e}^{\,(q)}],\dots ,\tilde{e}^{\, (q)}]$. 

Comparing the coefficients for $U$ in \eqref{E2.7} we obtain 
\begin{equation} \label{E2.8} \sum _{a} at^{-qa +b^*}l_{a}^{(q)}
=\sigma\tilde{x}-\tilde{x}+\sum\limits _{\iota }t^{-\iota }
\c V_kB^{\dag }_k(l_{\iota }^{\,sp})+\tilde f_1+\tilde f_2\,.
\end{equation}

Note that:
\medskip 

a) $\tilde{c}^{\,1},\sigma\tilde c^1\in\wt{\c N}^{\,sp}\langle 1\rangle $ implies that 
$\tilde x,\sigma \tilde x\in \sum\limits _{\op{ch}(\iota )\geqslant 1}
t^{-\iota }\bar{\c L}_k$;
\medskip 

b) $\{\ y\in\sum\limits _{\op{ch}(\iota )\geqslant 1}
t^{-\iota }\bar{\c L}_k\ |\ \sigma y=y\}=0$;

c) if $\iota\not\in\mathfrak{A}^0(p)$ then $\c V_kB_k^{\dag }(l_{\iota }^{sp})\in 
[\bar{\c L}[v_0],\bar{\c L}]_k$;
\medskip 

d) if $\iota\in\mathfrak{A}^0(p)$ then $\c V_kB^{\dag }_k(l_{\iota }^{sp})
\equiv \c V_kB^{\dag }_k(D^{\dag }_{\iota 0})
\,\op{mod}\,[\bar{\c L}[v_0],\bar{\c L}]_k$. 
\medskip

Let $\tilde x= \sum\limits _{\iota } 
t^{-\iota }x_{\iota }$ 
and $\tilde f_1+\tilde f_2= \sum\limits  _{\iota }t^{-\iota }f_{\iota }$,  
where $x_{\iota },f_{\iota }\in\bar{\c L}_k$ and 
the both sums are taken for  
$\iota \in\mathfrak{A}^0$ such that $\op{ch}(\iota )\geqslant 1$.
Let $\tilde x[m]$ be a part of the first sum 
containing all the summands $t^{-\iota }x_{\iota }$ with $\op{ch}(\iota )=m$. 
Similarly, define a part $\tilde f[m]$ of the second sum. Note that 
$\tilde f[m]$ is a linear combination of the commutators 
$[\dots [\tilde x[m],\tilde e^{(q)}],\dots ,\tilde e^{(q)}]$ 
and $[\dots [\sigma (\tilde x[m]),\tilde e^{(q)}],\dots ,\tilde e^{(q)}]$.
\medskip 

Then \eqref{E2.8} and above congruence d) imply that for any $m\geqslant 2$, 
$$\c H[m]\equiv -\tilde f[m]\in \,\op{mod}\,[\bar{\c L}[v_0],\bar{\c L}]_{\c K}\, ,$$ 
where 
$\c H[m]:=\sigma (\tilde x[m])-\tilde x[m]+\sum\limits _{\op{ch}(\iota )=m}t^{-\iota }
\c V_kB^{\dag }_k(D^{\dag }_{\iota 0})$.

Let $\bar{\c D}(s):=[\bar{\c L}[v_0],\bar{\c L}]+\bar{\c L}(s)$.

Prove by induction on $s\geqslant 1$ that  $\tilde x[m]\in\bar{\c D}(s)_{\c K}$ and  
$\c V_kB_k^{\dag }(D^{\dag}_{\iota 0})\in\bar{\c D}(s)_k$ (here $\op{ch}(\iota )=m$).
\medskip 

If $s=1$ there is nothing to prove. 

Suppose it is proved for $s<p$. 

Then  
$\tilde f[m]\in\bar{\c D}(s+1)_{\c K}$ and, therefore,  
$\c H[m]\in \bar{\c D}(s+1)_{\c K}$. Then analog of Lemma \ref{L2.11} implies that 
$\tilde x[m]$ and  $\sum\limits _{\op{ch}(\iota )=m}t^{-\iota }
\c V_kB^{\dag }_k(D^{\dag }_{\iota 0})$ belong to $\bar{\c D}(s+1)_{\c K}$. In particular, 
all $\c V_kB_k^{\dag }(D_{\iota 0})\in\bar{\c D}(s+1)_k$. 

The proposition is proved because $\bar{\c D}(p)=[\bar{\c L}[v_0],\bar{\c L}]$.
\end{proof} 

\begin{Cor} \label{C2.16} 
$\bar{\c L}[v_0]$ is the minimal ideal in 
$\bar{\c L}$ such that  
for all $\iota\in\mathfrak{A}_1^0(p):=
\{\iota \in\mathfrak{A}^0(p)\ |\ \op{ch}(\iota )=1\}$, 
$\c V_kB^{\dag }_k(D^{\dag }_{\iota 0})
\in \bar{\c L}[v_0]_k$. 
\end{Cor}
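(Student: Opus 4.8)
The plan is to deduce the corollary from Proposition \ref{P2.14} together with Proposition \ref{P2.15}. By Proposition \ref{P2.14}, $\bar{\c L}[v_0]$ is the minimal ideal $\c I$ of $\bar{\c L}$ such that $B^{\dag }(\Ker\c V)\subset\c V^{-1}(\c I)$; using the decomposition $\bar{\c L}^{\dag }=\Ker\c V\oplus\bar{\c L}^{\dag }[0]$ and the vanishing $B^{\dag }|_{\bar{\c L}^{\dag }[0]}=0$, this is equivalent to saying that $\bar{\c L}[v_0]_k$ contains all $\c V_kB^{\dag }_k(D^{\dag }_{\iota n})$ with $\iota\in\mathfrak{A}^0(p)$ and $\op{ch}(\iota )\geqslant 1$. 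Since $B^{\dag }$ commutes with $\sigma $ and $D^{\dag }_{\iota n}=\sigma ^n D^{\dag }_{\iota 0}$ (for $\iota $ with $\op{ch}(\iota )\geqslant 1$ the index $n$ runs over $\Z/N_0$), it suffices to take $n=0$; this is exactly the description recalled right before Proposition \ref{P2.15}.

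Next I would split the index set $\{\iota\in\mathfrak{A}^0(p):\op{ch}(\iota )\geqslant 1\}$ into the part with $\op{ch}(\iota )=1$, i.e. $\mathfrak{A}^0_1(p)$, and the part with $\op{ch}(\iota )\geqslant 2$. For the latter, Proposition \ref{P2.15} gives $\c V_kB^{\dag }_k(D^{\dag }_{\iota 0})\in[\bar{\c L}[v_0],\bar{\c L}]_k$, and since $[\bar{\c L}[v_0],\bar{\c L}]\subset\bar{\c L}[v_0]$ (an ideal is closed under bracketing with the whole algebra), these elements already lie in $\bar{\c L}[v_0]_k$ and hence contribute no new constraint. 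Therefore the generating set $\{\c V_kB^{\dag }_k(D^{\dag }_{\iota 0}):\op{ch}(\iota )\geqslant 1\}$ and the smaller set $\{\c V_kB^{\dag }_k(D^{\dag }_{\iota 0}):\iota\in\mathfrak{A}^0_1(p)\}$ generate the same ideal of $\bar{\c L}$.

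Finally I would conclude: let $\c I_0$ be the minimal ideal containing $\c V_kB^{\dag }_k(D^{\dag }_{\iota 0})$ for all $\iota\in\mathfrak{A}^0_1(p)$. Then $\c I_0\subset\bar{\c L}[v_0]$ trivially, and by the previous paragraph $\c I_0$ also contains $\c V_kB^{\dag }_k(D^{\dag }_{\iota 0})$ for all $\iota$ with $\op{ch}(\iota )\geqslant 1$ (the $\op{ch}\geqslant 2$ terms being swallowed by $[\,\c I_0,\bar{\c L}\,]\subset\c I_0$, applying Proposition \ref{P2.15} with $\bar{\c L}[v_0]$ replaced by $\c I_0$ — note the proof of \ref{P2.15} only uses that $\bar{\c L}[v_0]_k$ contains $\c V_kB^{\dag }_k(D^{\dag }_{\iota 0})$ for $\op{ch}(\iota )=1$, which holds for $\c I_0$ by construction), so by minimality $\bar{\c L}[v_0]\subset\c I_0$, giving equality.

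The main subtlety to check carefully is the last bootstrap: one must make sure that the proof of Proposition \ref{P2.15} genuinely goes through with $\bar{\c L}[v_0]$ replaced by the a priori smaller ideal $\c I_0$ — i.e. that it only invokes the $\op{ch}(\iota )=1$ generators of $\bar{\c L}[v_0]$ and the identity \eqref{E2.8} together with the analog of Lemma \ref{L2.11}, all of which are insensitive to which ideal we call $\bar{\c L}[v_0]$. Granting this (and it is visible in the induction on $s$ in that proof, where only $\bar{\c D}(s)=[\,\cdot\,,\bar{\c L}]+\bar{\c L}(s)$ is used), the corollary follows.
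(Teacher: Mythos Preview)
Your overall plan is right, and you correctly isolate the only nontrivial point: passing from the generating set $\{\c V_kB^{\dag }_k(D^{\dag }_{\iota 0}):\op{ch}(\iota )\geqslant 1\}$ to the subset with $\op{ch}(\iota )=1$. The inclusion $\c I_0\subset\bar{\c L}[v_0]$ is clear, and the issue is exactly the reverse inclusion.

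However, your proposed resolution of the bootstrap has a gap. You assert that the proof of Proposition~\ref{P2.15} ``only uses that $\bar{\c L}[v_0]_k$ contains $\c V_kB^{\dag }_k(D^{\dag }_{\iota 0})$ for $\op{ch}(\iota )=1$''. That is not so: the properties labelled c) and d) in that proof rest on the parenthetical remark just before the proposition, namely $\c VB^{\dag }(C_2(\bar{\c L}^{\dag }))\subset[\bar{\c L}[v_0],\bar{\c L}]$, and this containment is deduced from the fact that \emph{all} $\c V_kB^{\dag }_k(D^{\dag }_{\iota 0})$ with $\op{ch}(\iota )\geqslant 1$ lie in $\bar{\c L}[v_0]_k$ (apply the Leibniz rule to a commutator $[D^{\dag }_{\iota _1n_1},D^{\dag }_{\iota _2n_2}]$ with, say, $\op{ch}(\iota _1)=3$). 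So re-running the proof of \ref{P2.15} with $\c I_0$ in place of $\bar{\c L}[v_0]$ is circular.

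The correct (and simpler) argument does not touch the proof of \ref{P2.15} at all; it uses that proposition as a black box together with the nilpotency of $\bar{\c L}$. Prove by induction on $s\geqslant 1$ that $\bar{\c L}[v_0]\subset\c I_0+\bar{\c L}(s)$. For $s=1$ this is trivial. If it holds for $s$, then
\[
[\bar{\c L}[v_0],\bar{\c L}]\subset[\c I_0+\bar{\c L}(s),\bar{\c L}]\subset[\c I_0,\bar{\c L}]+\bar{\c L}(s+1)\subset\c I_0+\bar{\c L}(s+1),
\]
so by Proposition~\ref{P2.15} every generator with $\op{ch}(\iota )\geqslant 2$ lies in $(\c I_0+\bar{\c L}(s+1))_k$, while those with $\op{ch}(\iota )=1$ lie in $\c I_{0,k}$ by definition; hence $\bar{\c L}[v_0]\subset\c I_0+\bar{\c L}(s+1)$. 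Taking $s=p$ and using $\bar{\c L}(p)=0$ gives $\bar{\c L}[v_0]\subset\c I_0$, as required. This is the standard ``generators modulo $[I,L]$ already generate $I$'' fact for nilpotent Lie algebras, and is presumably why the paper states \ref{C2.16} without proof.
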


\section{Application to the ramification filtration} \label{S3}

\subsection{Statement of the main result} \label{S3.1} 

Recall that in Sect.\ref{S1} we fixed an element $e\in\c L_{\c K}$ satisfying 
conditions \eqref{E1.1} and \eqref{E1.3}. We also fixed $f\in\c L_{sep}$  such that 
$\sigma f=e\circ f$
and introduced epimoirphism $\eta _e=\pi _f(e):\c G\To G(\c L)$ 
which induces identification 
$\c G_{<p}\simeq G(\c L)$. Conditions 
\eqref{E1.1} and \eqref{E1.3} mean that $\eta _e$ is 
a \lq\lq sufficiently good\rq\rq\ 
lift of the reciprocity map of class field theory. 
We are going to describe the ideal $\c L^{(v_0)}$ of $\c L$ such that 
$\eta _e(\c G^{(v_0)})=\c L^{(v_0)}$  
via  the ideal $\bar{\c L}[v_0]$ introduced in Sect.\,\ref{S2}. 
In the next section this result will be related  
to the explicit 
description of $\c L^{(v_0)}$ from \cite{Ab1}.

Consider the parameters $\delta _0, r^*, N^*, q$ from Sect.\ref{S2} 
(they depend just on the original $v_0>0$). 
Note that if $e^{(q)}=\sigma ^{N^*}(e)$ and $f^{(q)}=\sigma ^{N^*}(f)$ then 
the appropriate morphism $\pi _{f^{(q)}}(e^{(q)})$ coincides with $\eta _e$. 

The 
ideal $\bar{\c L}[v_0]\subset\bar{\c L}$ was defined in the terms 
of action of 
the formal group $\alpha _p$ on  
 $\wt{e}^{\,sp}=\bar{e}^{sp}\,\op{mod}\,t^{(p-1)b^*}\bar{\c N}^{\dag }
\in\wt{\c N}^{sp}\subset\wt{\c N}^{\dag }$ 
which satisfies assumption \eqref{E2.1}. In Sect.\,\ref{S2.8} we introduced 
compatibility condition \eqref{E2.5} relating  
the elements $e$ and $\bar{e}^{sp}$. This condition can be 
definitely satisfied if e.g.  
$e=\sum\limits  _{a\geqslant 0}t^{-a}l_a, 
\text{\  where\   all\  }l_a\in\c L_k.$

\begin{Thm} \label{T3.1} Under condition \eqref{E2.5},   
$\c L^{(v_0)}=\bar{\op{pr}}^{-1}\bar{\c L}[v_0]$. 
\end{Thm}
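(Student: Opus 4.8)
The goal is to identify the ramification ideal $\c L^{(v_0)}$, and the natural route is to feed the criterion of Proposition \ref{P1.2} through the linearization machinery of Section \ref{S2}. The plan is as follows. By Proposition \ref{P1.2} applied to the quotient $L=\bar{\c L}$ (Proposition \ref{P1.3}), the ideal $\bar{\op{pr}}(\c L^{(v_0)})$ is the minimal ideal $\c I$ of $\bar{\c L}$ such that $v(\c K(\bar X_{\gamma }\,\op{mod}\,\c I_{sep})/\c K)<qv_0-b^*$ for all $\gamma\in\Z/p$, where $\bar X_{\gamma }\in\bar{\c L}_{sep}$ solves $\gamma *\bar e^{(q)}=\sigma\bar X_{\gamma }\circ\bar e^{(q)}\circ(-\bar X_{\gamma })$. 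So it suffices to prove that the minimal such $\c I$ equals $\bar{\c L}[v_0]$, and then lift back via $\bar{\op{pr}}$ using the fact that $\bar{\c L}(p)=0$ sits inside every $\c I$ of interest (so $\bar{\op{pr}}^{-1}$ is well-behaved and $\c L^{(v_0)}=\bar{\op{pr}}^{-1}\bar{\c L}[v_0]$). The core is therefore: translate the ramification-number condition on $\bar X_{\gamma }$ into the statement ``$\c V^{-1}(\c I)$ is stable under all $A^{\dag }_{\gamma }$,'' which by Section \ref{S2.7} characterizes $\bar{\c L}[v_0]$.

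First I would make the dictionary between the two sides precise. The element $\bar e^{(q)}=e(t^q)$ lives in $\bar{\c N}^{(q)}=\c V_{\c K}(\bar{\c N}^{\dag})$, and by the compatibility condition \eqref{E2.5} we may lift it to $\bar e^{sp}\in\bar{\c N}^{\dag}$ with $\c V_{\c K}(\bar e^{sp})=\bar e^{(q)}$. The twisted element $\gamma *e^{(q)}=e(t^qE(\gamma t^{b^*})^{-1})$ is obtained from $e^{(q)}$ by substituting $t\mapsto tE(\gamma t^{b^*})^{-1/q}$; passing to $\wt{\c N}^{sp}$ one sees that this substitution is exactly the operator $\Omega_{\gamma}$ of Section \ref{S2.5} acting on $\wt e^{sp}$ — this is where the truncated exponential $\wt{\exp}(\gamma(p^m\alpha)t^{b^*})$ comes from, as the leading term of the logarithmic derivative of the Artin--Hasse substitution on the monomial $t^{-\iota}$. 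Then relation \eqref{E2.3}, $\Omega_U(\wt e^{sp})=\sigma(\wt c_U)\circ(A^{\dag}_U\otimes\id)\wt e^{sp}\circ(-\wt c_U)$, says precisely that after conjugating by the cocycle $\wt c_{\gamma}$ (which, being in $\wt{\c N}^{sp}\langle1\rangle\subset\bar{\c L}_{\m}$, contributes nothing to ramification since $\sigma$ is topologically nilpotent there), the solution $\bar X_{\gamma}$ differs from $\bar X_{\gamma=0}=\bar f^{(q)}$ precisely by the automorphism $A^{\dag}_{\gamma}$ applied on the coefficient side. Concretely: if $\bar f^{\dag}\in\bar{\c L}^{\dag}_{sep}$ solves $\sigma\bar f^{\dag}=\wt e^{sp}\circ\bar f^{\dag}$ and $\c V_{sep}\bar f^{\dag}=\bar f^{(q)}=\bar X_{\gamma=0}$, then $\bar X_{\gamma}\equiv\c V_{sep}\bigl((A^{\dag}_{\gamma}\otimes\id)\bar f^{\dag}\bigr)$ modulo something of large ramification depth and modulo the cocycle. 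Hence the field of definition $\c K(\bar X_{\gamma}\,\op{mod}\,\c I_{sep})$ coincides with $\c K(\c V_{sep}((A^{\dag}_{\gamma})\bar f^{\dag})\,\op{mod}\,\c I_{sep})$, which is unramified over $\c K$ beyond level $qv_0-b^*$ if and only if $A^{\dag}_{\gamma}$ maps $\Ker\c V$ into $\c V^{-1}(\c I)$ — the point being that the part of $\bar f^{\dag}$ that sees ramification at level $\geq qv_0-b^*$ is exactly the span of the generators $D^{\dag}_{\iota n}$ with $\iota p^{-v_p(\iota)}\geq qv_0-b^*$, which by Proposition \ref{P2.8}b) are exactly those with $\op{ch}(\iota)\geq1$, i.e. those spanning $\Ker\c V$ together with its ideal closure.

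Granting this translation, the minimal $\c I$ with the ramification property is the minimal $\c I$ with $A^{\dag}_{\gamma}(\Ker\c V)\subset\c V^{-1}(\c I)$ for all $\gamma$, which by definition (Section \ref{S2.7}) is $\c V(\bar{\c L}^{\dag}[v_0])=\bar{\c L}[v_0]$; equivalently, by Proposition \ref{P2.14}, the minimal $\c I$ with $B^{\dag}(\Ker\c V)\subset\c V^{-1}(\c I)$, and by Corollary \ref{C2.16} it is generated by the $\c V_kB^{\dag}_k(D^{\dag}_{\iota0})$ with $\op{ch}(\iota)=1$. Finally, since every candidate ideal $\c I$ contains nothing forcing a discrepancy between $\bar{\op{pr}}^{-1}\bar{\c L}[v_0]$ and $\c L^{(v_0)}$ — indeed $\c L^{(v_0)}\supseteq\c L(p)$ is automatic because $C_p(\c L)=0$ and the filtration degree argument of Section \ref{S1.6} shows $\c L(p)$ acts trivially at the relevant level — we get $\c L^{(v_0)}=\bar{\op{pr}}^{-1}\bar{\c L}[v_0]$.

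\textbf{Main obstacle.} The delicate step is the second paragraph: rigorously matching the Artin--Hasse substitution $t\mapsto tE(\gamma t^{b^*})^{-1/q}$ on $e^{(q)}$ with the operator $\Omega_{\gamma}$ on $\wt e^{sp}$, including the verification that everything takes place inside $\wt{\c N}^{sp}$ (so that the combinatorial constraints defining $\mathfrak{A}^+(p)$ — the bounds on $\kappa(\iota)$, $\op{ch}(\iota)$, $w^0(\iota)$ — are exactly what is needed for the monomials $t^{-\iota}\wt{\exp}(\gamma(p^m\alpha)t^{b^*})l$ to remain admissible) and the bookkeeping that the cocycle correction $\wt c_{\gamma}$ and the higher-order tails contribute ramification strictly below $qv_0-b^*$. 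This is where the choice of parameters $r^*,N^*$ in Section \ref{S2.1}, Proposition \ref{P2.5}, and the separation lemmas \ref{L2.1}--\ref{L2.4} are consumed: they guarantee that the relevant $t$-powers are linearly independent modulo $(\sigma-\id)\c K$ and that no unwanted cancellation occurs. I expect the ramification estimate — showing $v\bigl(\c K(\bar X_{\gamma}\,\op{mod}\,\c I_{sep})/\c K\bigr)<qv_0-b^*$ is equivalent to the Lie-theoretic stability condition, and not merely implied by it — to require the careful two-sided argument already sketched around Proposition \ref{P1.1}, now run uniformly in $\gamma$.
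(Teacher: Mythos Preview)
Your overall strategy---pass to $\bar{\c L}$ via Proposition~\ref{P1.3}, then match the ramification condition on $\bar X_\gamma$ with the $A^{\dag}_\gamma$-stability condition that defines $\bar{\c L}[v_0]$---is the same as the paper's. But two genuine gaps remain, and together they are exactly what the paper's inductive argument is designed to fill.

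\textbf{First gap: $\c L(p)\subset\c L^{(v_0)}$ is not free.} You write that ``$\c L^{(v_0)}\supseteq\c L(p)$ is automatic because $C_p(\c L)=0$.'' This conflates the weight filtration $\c L(p)$ of Section~\ref{S1.6} with the commutator ideal $C_p(\c L)$. They are different: $\c L(p)_k$ contains every generator $D_{an}$ with $a\geqslant (p-1)v_0$, and these are not commutators. Proving that such $D_{an}$ lie in $\c L^{(v_0)}_k$ is exactly Proposition~\ref{P3.4}, and it requires the criterion together with the inductive hypothesis (Lemma~\ref{L3.6} extracts the linear term $a_0D_{a_00}$ from $l^{(a)}$ only after quotienting by lower-weight contributions already known to be in $\c L^{(v_0)}$). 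Without this, you cannot conclude $\c L^{(v_0)}=\bar{\op{pr}}^{-1}\bar{\op{pr}}(\c L^{(v_0)})$.

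\textbf{Second gap: the one-shot translation does not linearize.} Your proposed equivalence ``$v(\c K(\bar X_\gamma\,\op{mod}\,\c I_{sep})/\c K)<qv_0-b^*$ iff $A^{\dag}_\gamma(\Ker\c V)\subset\c V^{-1}(\c I)$'' is asserted via a direct comparison of $\bar X_\gamma$ with $\c V_{sep}((A^{\dag}_\gamma\otimes\id)\bar f^{\dag})$. But $\bar X_\gamma$ is determined by a Campbell--Hausdorff equation, and the nonlinear terms couple the contributions from different $\iota$ in a way that prevents reading off the ramification number generator-by-generator. The paper handles this by induction on $s_0$ in the form $\c L^{(v_0)}+C_{s_0}(\c L)=\bar{\op{pr}}^{-1}\bar{\c L}[v_0]+\c L(s_0)$. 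At each step one works modulo $\bar{\c D}(s_0+1)=[\bar{\c L}[v_0],\bar{\c L}]+\bar{\c L}(s_0+1)$, so that the relevant quotient $(\bar{\c L}[v_0]+\bar{\c L}(s_0+1))/\bar{\c D}(s_0+1)$ is \emph{abelian}; only then can one write $\bar X_\gamma\equiv\bar x_\gamma\circ Z_\gamma$ with $\bar x_\gamma$ defined over $\c K$ and $Z_\gamma$ a sum of independent Artin--Schreier solutions $Z_{\gamma\iota}$ attached to each $\iota\in\mathfrak{A}^0_1(p)$. The equivalence then reduces to checking whether each $W_{\gamma\iota}=\c V_k(A^{\dag}_\gamma-\id)D^{\dag}_{\iota 0}$ lies in $\c I_k$, and Proposition~\ref{P2.8}b) gives the ``only if'' direction because every such $\iota$ has $\iota p^{-v_p(\iota)}\geqslant qv_0-b^*$. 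Your identification of the main obstacle is right, but the missing idea is this induction that peels off one commutator depth at a time; the ``careful two-sided argument'' you anticipate cannot be run directly on the full nonabelian object.
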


\begin{remark}
 According to Remark from Sect.\,\ref{S2.7} this theorem also states that 
 there is epimorphism of Lie algebras 
 $\bar{\eta }^{\dag }:\c L\To \bar{\c L}^{\dag }$ such that 
 $\c L^{(v_0)}=(\bar{\eta }^{\dag })^{-1}\bar{\c L}^{\dag }[v_0]$. 
\end{remark}

\subsection{Inductive assumption} \label{S3.2} 

Prove theorem by induction on $s_0\geqslant 1$ in the following form 
(the statement of theorem appears with $s_0=p$)
\begin{equation}  \label{E3.3} 
\c L^{(v_0)}+C_{s_0}(\c L)=\bar{\op{pr}}^{-1}(\bar{\c L}[v_0])+\c L(s_0)\, .
\end{equation}

It is obviously true for $s_0=1$ because $C_1(\c L)=\c L(1)=\c L$.

Suppose \eqref{E3.3} holds for some $1\leqslant s_0<p$. 

Note that  for any $\gamma\in\Z /p$, the image $\gamma *\wt{e}^{(q)}$ of $\gamma *\bar e^{(q)}$ 
in  $\wt{\c N}^{(q)}$ coincides with 
$\c V_{\c K}
\Omega _{\gamma }(\wt{e}^{\,sp })$. 

\begin{Prop} \label{P3.2} 
There is $x_{\gamma}\in t^{b^*}\hskip -5pt
\sum\limits _{1\leqslant s<s_0}t^{-sb^*}\c L(s)_{\m }\subset\c N^{(q)}$ 
such that  
 $$\gamma *e^{(q)}\equiv (\sigma x_{\gamma })\circ e^{(q)}
 \circ (-x_{\gamma })\,\op{mod}\,(\c L^{(v_0)}+C_{s_0}(\c L))_{\c K}\, .$$ 
\end{Prop}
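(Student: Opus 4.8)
The plan is to obtain $x_\gamma$ by pushing the cocycle $\wt c_\gamma$ constructed in Section~\ref{S2} down to $\c L_{\c K}$ via the epimorphism $\c V_{\c K}$, and then lifting it along $\bar{\op{pr}}$ and truncating. I would start from the defining relation of $\wt c_\gamma$ in Proposition~\ref{P2.10}, namely $\Omega_\gamma(\wt e^{\,sp})=(\sigma\wt c_\gamma)\circ(A^\dag_\gamma\otimes\id_{\c K})\wt e^{\,sp}\circ(-\wt c_\gamma)$ with $\wt c_\gamma,\sigma\wt c_\gamma\in\wt{\c N}^{sp}\langle 1\rangle$, and apply $\c V_{\c K}$ to it. Since $\c V_{\c K}$ is a morphism of Lie $\c K$-algebras it commutes with the Campbell--Hausdorff law, so with $\wt x_\gamma:=\c V_{\c K}\wt c_\gamma$ one gets $\c V_{\c K}\Omega_\gamma(\wt e^{\,sp})=(\sigma\wt x_\gamma)\circ\c V_{\c K}\big((A^\dag_\gamma\otimes\id)\wt e^{\,sp}\big)\circ(-\wt x_\gamma)$. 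Now $\c V_{\c K}\Omega_\gamma(\wt e^{\,sp})=\gamma*\wt e^{(q)}$ (stated just before the proposition), $\c V_{\c K}\wt e^{\,sp}=\wt e^{(q)}$ by compatibility \eqref{E2.5}, and Proposition~\ref{P2.13} applied to each summand $l_\iota$ in $\wt e^{\,sp}=\sum t^{-\iota}l_\iota$ gives $\c V_{\c K}\big((A^\dag_\gamma\otimes\id)\wt e^{\,sp}\big)\equiv\wt e^{(q)}\,\op{mod}\,\bar{\c L}[v_0]_{\c K}$. As $\bar{\c L}[v_0]_k$ is a $\sigma$-stable ideal, this yields $\gamma*\wt e^{(q)}\equiv(\sigma\wt x_\gamma)\circ\wt e^{(q)}\circ(-\wt x_\gamma)\,\op{mod}\,\bar{\c L}[v_0]_{\c K}$ inside $\wt{\c N}^{(q)}$.

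Next I would identify the shape of $\wt x_\gamma$ and lift. Since $\c V$ annihilates every $\bar{\c L}^\dag[m]$ with $m\ge 1$, only the $\op{ch}=0$ summands $t^{-\iota}l$ of $\wt c_\gamma$ survive, and for such a summand of $\wt{\c N}^{sp}\langle1\rangle$ one has $l\in\bar{\c L}^\dag(s)[0]_k$ with $s=\op{wt}^\dag(l)$, $\op{ch}(\iota)\ge 1$ and $\iota+\op{ch}(\iota)b^*<sb^*$, hence $\iota<(s-1)b^*$; moreover $\c V(l)\in\bar{\c L}(s)$ under the filtered isomorphism $j^0$ (recall $\op{wt}^\dag(D^\dag_{qa,n})$ matches $\op{wt}(D_{an})$ by the Remark in the proof of Proposition~\ref{P2.8}). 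Thus $\wt x_\gamma$ lies in the image of $\sum_{s\ge1}t^{-(s-1)b^*}\bar{\c L}(s)_{\m}$. Choosing any $\bar{\op{pr}}$-lift to $\c L_{\c K}$ and truncating the weight $\ge s_0$ part produces $x_\gamma\in t^{b^*}\sum_{1\le s<s_0}t^{-sb^*}\c L(s)_{\m}\subset\c N^{(q)}$ of exactly the required shape, the discarded tail lying in $\c L(s_0)_{\c K}$.

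Finally I would check the relation survives modulo $(\c L^{(v_0)}+C_{s_0}(\c L))_{\c K}$. By the inductive hypothesis \eqref{E3.3} this subspace equals $(\bar{\op{pr}}^{-1}\bar{\c L}[v_0]+\c L(s_0))_{\c K}$, so it contains $\c L(p)_{\c K}=\Ker\bar{\op{pr}}_{\c K}$, every $\bar{\op{pr}}$-lift of $\bar{\c L}[v_0]_{\c K}$, the truncated tail $\c L(s_0)_{\c K}$, and (after a valuation check) the correction term $t^{(p-1)b^*}\c N^{(q)}$ coming from unfolding the quotient $\wt{\c N}^{(q)}=\bar{\c N}^{(q)}/t^{(p-1)b^*}\bar{\c N}^{(q)}$; hence the congruence of the first paragraph lifts to $\gamma*e^{(q)}\equiv(\sigma x_\gamma)\circ e^{(q)}\circ(-x_\gamma)\,\op{mod}\,(\c L^{(v_0)}+C_{s_0}(\c L))_{\c K}$. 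The main obstacle is precisely this last bookkeeping: keeping $x_\gamma$ simultaneously \emph{rational} (a cocycle over $\c K$, not $\c K_{sep}$) and of exactly the prescribed form, while showing that each of the three successive lifts (past $t^{(p-1)b^*}\bar{\c N}^{(q)}$, past $\bar{\op{pr}}$, and in the weight truncation) introduces only errors lying in $(\c L^{(v_0)}+C_{s_0}(\c L))_{\c K}$; this absorption is exactly what \eqref{E3.3} is there to supply, and the valuation estimate for the $t^{(p-1)b^*}\c N^{(q)}$ term is where the parameter choices from Section~\ref{S2.1} (with $r^*\in(v_0-\delta_0,v_0)$ and $s_0<p$) enter.
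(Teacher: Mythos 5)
There is a genuine gap in the last paragraph, at precisely the point you flag as ``the main obstacle.'' After applying $\c V_{\c K}$ to the relation of Proposition~\ref{P2.10}, the resulting identity lives in $\wt{\c N}^{(q)}=\bar{\c N}^{(q)}/t^{(p-1)b^*}\bar{\c N}^{(q)}$. You propose to take an \emph{arbitrary} lift $\bar x_\gamma\in\bar{\c N}^{(q)}$ of $\wt x_\gamma$ and absorb the resulting error, which lies in $t^{(p-1)b^*}\bar{\c N}^{(q)}$, into $(\c L^{(v_0)}+C_{s_0}(\c L))_{\c K}$. But this absorption fails: $t^{(p-1)b^*}\c N^{(q)}$ is just a $t$-adically small piece of $\c L_{\c K}$ (it contains, e.g., $t^{(p-1)b^*-a}l$ with $l\in\c L(1)_k=\c L_k$ and $a<b^*$, hence $t^{(p-2)b^*+1}l$ for \emph{arbitrary} $l\in\c L_k$), while $(\c L^{(v_0)}+C_{s_0}(\c L))_{\c K}=\c I\hat\otimes\c K$ for a proper ideal $\c I\subsetneq\c L$ and therefore contains no element $t^N l$ with $l\notin\c I_k$, however large $N$ is. High $t$-valuation alone never places an element inside such a subspace, and neither the parameter constraints of Sect.~\ref{S2.1} nor $s_0<p$ change this.

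The paper handles this differently: rather than taking any lift and absorbing the error, it constructs a \emph{specific} lift $\bar x_\gamma\in t^{b^*}\bar{\c N}^{(q)}$ of $\wt x_\gamma$ for which the relation \eqref{E3.4} holds \emph{exactly} in $\bar{\c N}^{(q)}$, with no error term at the level of $t^{(p-1)b^*}\bar{\c N}^{(q)}$ at all. This is done by the same contraction argument as in Proposition~\ref{P1.4}, using that $\sigma$ is topologically nilpotent on the kernel $t^{(p-1)b^*}\bar{\c N}^{(q)}$ of the projection $\bar{\c N}^{(q)}\to\wt{\c N}^{(q)}$: one iteratively corrects the lift by solving $(\sigma-\id)\delta=A$ for the successive discrepancies $A$. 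The point is that the $t^{(p-1)b^*}$-error must be killed by \emph{modifying the cocycle} $x_\gamma$, not by enlarging the ideal one reduces modulo. Only \emph{after} the exact identity \eqref{E3.4} is in hand does one invoke Proposition~\ref{P2.13} to trade $(\c VA^\dag_\gamma\otimes\id)\bar e^\dag$ for $\bar e^{(q)}$ modulo $\bar{\c L}[v_0]_{\c K}$, lift past $\bar{\op{pr}}$, truncate the $\op{wt}\geqslant s_0$ tail (which \emph{does} land in $\c L(s_0)_{\c K}$), and finish with the inductive hypothesis \eqref{E3.3}. Your first paragraph (applying $\c V_{\c K}$ and Proposition~\ref{P2.13} at the $\wt{\phantom{x}}$-level) and your identification of the shape of $\wt x_\gamma$ are fine and agree with the paper; what is missing is the Proposition~\ref{P1.4}-style construction of $\bar x_\gamma$ between those two steps.
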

 
\begin{proof} From Prop.\ref{P2.10}\,a) 
it follows that 
$$\gamma *\wt e^{\,(q)}=
(\sigma\wt x_{\gamma })\circ (\c VA^{\dag }_{\gamma }\otimes
\id _{\c K})\wt e^{\,sp}\circ (-\wt x_{\gamma })
\, ,$$
where $\wt x_{\gamma }=\c V_{\c K}(\wt c_{\gamma })\in 
\c V_{\c K}(\wt{\c N}^{\,sp}\langle 1\rangle )\subset 
\c V_{\c K}(t^{b^*}\wt{\c N}^{\dag })=
t^{b^*}\wt{\c N}^{(q)}$ (use Remark before Lemma \ref{L2.9}).
\medskip 

Recall that $\bar e^{\,sp }\in\bar{\c N}^{\,\dag }$ is a lift of $\wt{e}^{\,sp}\in\wt{\c N}^{\,sp}\subset 
\wt{\c N}^{\,\dag }$ such that $\c V_{\c K}(\bar e^{\,sp })=\bar e^{(q)}$ and  
$\sigma $ is nilpotent on the kernel of the projection 
$\bar{\c N}^{\,\dag }\To\wt{\c N}^{\,\dag }$. 
Therefore, 
proceeding similarly to the proof of Prop.\ref{P1.4} we can establish the 
existence of a unique lift $\bar x_{\gamma }\in t^{b^*}\bar{\c N}^{(q)}$ 
of $\tilde x_{\gamma }$ such that 
\begin{equation} \label{E3.4} \gamma *\bar e^{\,(q)}=
(\sigma \bar x_{\gamma })\circ (\c VA^{\dag }_{\gamma }\otimes\id _{\c K})\bar e^{\,\dag }
\circ (-\bar x_{\gamma })\, .
\end{equation}

Prop.\ref{P2.13} implies that 
$(\c VA^{\dag }_{\gamma }\otimes\id _{\c K})\bar e^{\dag }
\equiv \bar e^{(q)}\,\op{mod}\,\bar{\c L}[v_0]_{\c K}$ and we obtain 
the following congruence 
\begin{equation} \label{E3.5} \gamma *e^{(q)}\equiv (\sigma x_{\gamma })
\circ e^{(q)}\circ (-x_{\gamma})\,\op{mod}\,
\bar{\op{pr}}^{-1}\bar{\c L}[v_0]_{\c K}\, ,
\end{equation} 
where $x_{\gamma }\in\c L_{\c K}$ is
any lift of $\bar x_{\gamma }$. 

We can choose 
$x_{\gamma}\in t^{b^*}
\sum _{1\leqslant s<s_0}t^{-sb^*}\c L(s)_{\m }$ 
when taking this congruence modulo the ideal 
$(\bar{\op{pr}}^{-1}\bar{\c L}[v_0]+\c L(s_0))_{\c K}$. 
It remains to use the inductive assumption. 
The proposition is proved.
\end{proof}

\begin{remark} a)Due to the criterion from Sect.\ref{S1.5} 
congruence \eqref{E3.5} already implies that 
$\c L^{(v_0)}\subset\bar{\op{pr}}^{-1}\bar{\c L}[v_0]$ 
(use that all $x_{\gamma }$ are defined over $\c K$). 

b) In the above proof we have automatically that 
$\sigma\bar x_{\gamma }\in t^{b^*}\bar{\c N}^{(q)}$ and 
$\sigma x_{\gamma}\in t^{b^*}
\sum _{1\leqslant s<s_0}t^{-sb^*}\c L(s)_{\m }$.
\end{remark}

For (non-commuting) variables $U$ and $V$ from some Lie $\F _p$-algebra $L$ of 
nilpotent class $<p$, let $\delta ^0(U,V):=U\circ V-(U+V)$. 
Note, if $U$ and $V$ are well-defined modulo $C_{s_0}(L)$ then 
$\delta ^0(U,V)$ is well-defined modulo $C_{s_0+1}( L)$. 
\medskip 

Let $y_{\gamma }=\gamma *e^{(q)}-e^{(q)}+
\delta ^0(\gamma *e^{(q)},x_{\gamma })-
\delta ^0(\sigma x_{\gamma },e^{(q)})$.

\begin{Lem} \label{L3.3}  
 For any $\gamma\in \Z /p$,  there is 
 $X_{\gamma }\in\c L_{sep}$ such that 
 \medskip 
 
 {\rm a)}\ $\gamma *e^{(q)}\equiv  (\sigma X_{\gamma })\circ 
 e^{(q)}\circ (-X_{\gamma })\op{mod}
 \,([\c L^{(v_0)},\c L]+
C_{s_0+1}(\c L))_{sep}\,;$
\medskip 

{\rm b)}\ $X_{\gamma }\equiv x_{\gamma }\,
\op{mod}\,(\c L^{(v_0)}+C_{s_0}(\c L))_{sep}.$
\end{Lem}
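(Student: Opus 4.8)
The plan is to combine the approximate conjugation identity from Proposition \ref{P3.2} with a successive-approximation argument carried out modulo the central filtration, and then to invoke the criterion from Section \ref{S1.5} (more precisely Proposition \ref{P1.3}) to upgrade the approximate identity to an exact one after passing to $\c K_{sep}$. First I would set $X_\gamma^{(0)}=x_\gamma$, where $x_\gamma$ is the element supplied by Proposition \ref{P3.2}, so that part (b) holds trivially at level $s_0$ and
$$
\gamma *e^{(q)}\equiv (\sigma x_\gamma)\circ e^{(q)}\circ (-x_\gamma)\,\op{mod}\,(\c L^{(v_0)}+C_{s_0}(\c L))_{\c K}.
$$
The element $y_\gamma$ defined just before the lemma is precisely the degree-$s_0$ obstruction to this congruence being exact modulo $([\c L^{(v_0)},\c L]+C_{s_0+1}(\c L))$: unwinding the definition of $\delta^0$, one has
$$
\gamma *e^{(q)}-(\sigma x_\gamma)\circ e^{(q)}\circ(-x_\gamma)\equiv y_\gamma\,\op{mod}\,C_{s_0+1}(\c L_{\c K}),
$$
and by Proposition \ref{P3.2} together with the inductive hypothesis \eqref{E3.3}, $y_\gamma$ lies in $(\c L^{(v_0)}+C_{s_0}(\c L))_{\c K}$, hence its class modulo $C_{s_0+1}$ lies in $(\bar{\op{pr}}^{-1}\bar{\c L}[v_0]+\c L(s_0))_{\c K}$. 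So the obstruction is already ``in the right ideal'', and what remains is to absorb it by a correction term defined over $\c K_{sep}$ rather than over $\c K$.

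The key step is therefore to solve, modulo $[\c L^{(v_0)},\c L]+C_{s_0+1}(\c L)$, the equation $(\sigma-\id)z_\gamma\equiv y_\gamma$ for some $z_\gamma\in\c L_{sep}$ with $z_\gamma\equiv 0\,\op{mod}\,(\c L^{(v_0)}+C_{s_0}(\c L))_{sep}$, and then to set $X_\gamma=z_\gamma\circ x_\gamma$ (equivalently $X_\gamma=x_\gamma+z_\gamma$ modulo $C_{s_0+1}$). Since $y_\gamma$ has weight concentrated in degree $s_0$ and the graded piece $C_{s_0}(\c L)/C_{s_0+1}(\c L)$ is a finite-dimensional $\F_p$-vector space on which $\sigma$ acts, the map $\sigma-\id$ on $(C_{s_0}(\c L)/C_{s_0+1}(\c L))_{sep}$ is surjective by the normal basis theorem / additive Hilbert 90 (this is the standard Artin--Schreier solvability used throughout the nilpotent Artin--Schreier formalism of Section \ref{S1.3}). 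One must check that the solution $z_\gamma$ can be chosen inside $(\c L^{(v_0)}+C_{s_0}(\c L))_{sep}$; this follows because $y_\gamma$ itself lies in $\c L^{(v_0)}_{\c K}+C_{s_0+1}(\c L_{\c K})$ modulo the filtration and $\c L^{(v_0)}$ is a $\c G$-stable (indeed $\c K$-rational) ideal, so $\sigma-\id$ is already surjective on the relevant $\sigma$-semilinear subquotient of $(\c L^{(v_0)}+C_{s_0}(\c L))_{sep}/([\c L^{(v_0)},\c L]+C_{s_0+1}(\c L))_{sep}$. Then a direct computation with the Campbell--Hausdorff law, using that $\delta^0$ is bilinear-modulo-higher-degree and that $z_\gamma$ is central modulo $C_{s_0+1}$, shows
$$
(\sigma X_\gamma)\circ e^{(q)}\circ(-X_\gamma)\equiv(\sigma x_\gamma)\circ e^{(q)}\circ(-x_\gamma)+(\sigma z_\gamma-z_\gamma)\equiv\gamma *e^{(q)}\,\op{mod}\,([\c L^{(v_0)},\c L]+C_{s_0+1}(\c L))_{sep},
$$
which is (a); and (b) holds since $X_\gamma\equiv x_\gamma\,\op{mod}\,(\c L^{(v_0)}+C_{s_0}(\c L))_{sep}$ by construction of $z_\gamma$.

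The main obstacle I expect is the bookkeeping in the last computation: one has to verify that all the error terms produced by replacing $x_\gamma$ by $X_\gamma=z_\gamma\circ x_\gamma$ in the three ``$\delta^0$''-type corrections genuinely land in $[\c L^{(v_0)},\c L]+C_{s_0+1}(\c L)$ and not merely in $\c L^{(v_0)}+C_{s_0+1}(\c L)$ --- this is why the correction $z_\gamma$ must be taken modulo $\c L^{(v_0)}$ with values in $\c L^{(v_0)}+C_{s_0}(\c L)$, so that every cross term $[z_\gamma,\,\cdot\,]$ with $e^{(q)}$ or with $x_\gamma$ contributes a commutator with one factor in $\c L^{(v_0)}$, hence lands in $[\c L^{(v_0)},\c L]$. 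The surjectivity of $\sigma-\id$ on the relevant finite $\F_p$-spaces of $\c K_{sep}$ is standard and poses no difficulty; the delicate point is purely the filtration arithmetic, which is why the statement is phrased in the two-part form (a)+(b) so that the induction in \eqref{E3.3} can proceed to step $s_0+1$.
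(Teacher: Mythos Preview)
Your overall strategy is the paper's strategy: solve an Artin--Schreier equation over $\c K_{sep}$ for a correction to $x_\gamma$, chosen inside $(\c L^{(v_0)}+C_{s_0}(\c L))_{sep}$, and then check that the $\delta^0$ cross-terms land in $[\c L^{(v_0)},\c L]+C_{s_0+1}(\c L)$. However, there is a concrete bookkeeping slip that, as written, breaks the argument.

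You assert that $y_\gamma\in(\c L^{(v_0)}+C_{s_0}(\c L))_{\c K}$ and that
\[
\gamma *e^{(q)}-(\sigma x_\gamma)\circ e^{(q)}\circ(-x_\gamma)\equiv y_\gamma\ \op{mod}\,C_{s_0+1}(\c L)_{\c K}.
\]
Neither is correct. Rewriting $\gamma *e^{(q)}\circ x_\gamma=(\sigma x_\gamma)\circ e^{(q)}$ in terms of $\delta^0$ shows that the obstruction is $y_\gamma-(\sigma x_\gamma-x_\gamma)$, not $y_\gamma$; and Proposition~\ref{P3.2} gives only $y_\gamma\equiv\sigma x_\gamma-x_\gamma\ \op{mod}\,(\c L^{(v_0)}+C_{s_0}(\c L))_{\c K}$, so it is this \emph{difference}, not $y_\gamma$ itself, that lies in the ideal. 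Consequently your equation $(\sigma-\id)z_\gamma\equiv y_\gamma$ cannot be solved with $z_\gamma$ in $(\c L^{(v_0)}+C_{s_0}(\c L))_{sep}$, and even if it could, setting $X_\gamma=x_\gamma+z_\gamma$ would give $(\sigma-\id)X_\gamma=(\sigma-\id)x_\gamma+y_\gamma\neq y_\gamma$.

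The fix is exactly what the paper does: solve $\sigma X_\gamma-X_\gamma=y_\gamma$ \emph{exactly} in $\c L_{sep}$. Then $X_\gamma-x_\gamma$ satisfies $(\sigma-\id)(X_\gamma-x_\gamma)=y_\gamma-(\sigma x_\gamma-x_\gamma)\in(\c L^{(v_0)}+C_{s_0}(\c L))_{\c K}$, so $X_\gamma$ can be chosen with $X_\gamma\equiv x_\gamma$ modulo $(\c L^{(v_0)}+C_{s_0}(\c L))_{sep}$, giving (b). For (a), one observes that the relation is equivalent to
\[
\sigma X_\gamma-X_\gamma\equiv\gamma*e^{(q)}-e^{(q)}+\delta^0(\gamma*e^{(q)},X_\gamma)-\delta^0(\sigma X_\gamma,e^{(q)})
\]
modulo $([\c L^{(v_0)},\c L]+C_{s_0+1}(\c L))_{sep}$; since $X_\gamma\equiv x_\gamma$ modulo $(\c L^{(v_0)}+C_{s_0}(\c L))_{sep}$, the two $\delta^0$ terms agree with those for $x_\gamma$ modulo $[\c L^{(v_0)},\c L]+C_{s_0+1}(\c L)$, and the right-hand side becomes exactly $y_\gamma=\sigma X_\gamma-X_\gamma$. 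Your ``main obstacle'' paragraph anticipates precisely this last point and handles it correctly; only the earlier identification of the obstruction needs to be repaired.
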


\begin{proof}[Proof of lemma] Prop.\ref{P3.2} implies that  
$$y_{\gamma }\equiv \sigma x_{\gamma }-x_{\gamma }\,\op{mod}\,
(\c L^{(v_0)}+C_{s_0}(\c L))_{\c K}\, .$$
Therefore, there is $X_{\gamma }\in\c L_{sep}$ such that 
$\sigma X_{\gamma }-X_{\gamma }=y_{\gamma }$ and $X_{\gamma }$ 
satisfies the congruence b).  

It remains to note that 
a) is 
equivalent to the following congruence  
$$
\sigma X_{\gamma }-X_{\gamma }\equiv 
\gamma *e^{(q)}-e^{(q)}+
\delta ^0(\gamma *e^{(q)},X_{\gamma })-
\delta ^0(\sigma X_{\gamma },e^{(q)})\,$$ 
modulo $([\c L^{(v_0)},\c L]+
C_{s_0+1}(\c L))_{sep}$ and by the same modulo we have 
$\delta ^0(\gamma *e^{(q)},X_{\gamma })\equiv 
\delta ^0(\gamma *e^{(q)}, x_{\gamma })$ and  
$\delta ^0(\sigma X_{\gamma },e^{(q)})\equiv 
\delta ^0(\sigma x_{\gamma }, e^{(q)})$.  
\end{proof}
 
The element  $y_{\gamma }$ 
can be uniquely written as 

$$y_{\gamma }=
\sum _{\substack{m\geqslant 0\\ a\in\Z ^+(p)}}t^{-ap^m}l_{am}+l_O\,,$$
where all $l_{am}\in\c L_k$ and $l_O\in\c L_{O}$ (and $O=k[[t]]\subset\c K$). 
By Prop.\ref{P1.3} the ideal 
$\c L^{(v_0)}+C_{s_0+1}(\c L)$ appears as 
the minimal ideal in the set of all ideals $\c I$ such that:  
\medskip 

--- $\c I\supset [\c L^{(v_0)},\c L]+C_{s_0+1}(\c L)$;
\medskip 

---  if $a\in\Z ^+(p)$ and $a\geqslant qv_0-b^*$ then 
$l^{(a)}:=\sum _{m\geqslant 0}\sigma ^{-m}l_{am}\in\c I_k$. 
\medskip

\begin{Prop}\ \label{P3.4} 
$\c L(s_0+1)\subset \c L^{(v_0)}+C_{s_0+1}(\c L)$, or (equivalently) if 
$a\geqslant s_0v_0$ then all $D_{an}\in\c L^{(v_0)}_k+C_{s_0+1}(\c L)_k$.
\end{Prop}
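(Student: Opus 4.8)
The plan is to prove the statement by a downward induction on the central filtration, reading off from the explicit shape of $y_\gamma$ the \lq\lq linear in $t^{b^*}$\rq\rq\ part that governs which generators $D_{an}$ are forced into $\c L^{(v_0)}+C_{s_0+1}(\c L)$. By the inductive hypothesis \eqref{E3.3} it suffices to treat the generators $D_{an}$ with $s_0 v_0\le a$, and (since enlarging $a$ only strengthens the conclusion) it is enough to handle $s_0v_0\le a<(s_0+1)v_0$, i.e. those $D_{an}$ lying in $\c L(s_0+1)$ but with $\op{wt}$ exactly $s_0+1$ realised by a single generator; the remaining part of $\c L(s_0+1)$ is already in $C_{s_0+1}(\c L)\subset\c L^{(v_0)}+C_{s_0+1}(\c L)$ by the inductive assumption together with $[\c L(s),\c L(s')]\subset\c L(s+s')$.

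First I would compute $y_\gamma$ modulo $C_{s_0+1}(\c L)$ explicitly. Using the congruence from Prop.\ref{P3.2}, namely $\gamma*e^{(q)}\equiv(\sigma x_\gamma)\circ e^{(q)}\circ(-x_\gamma)$ modulo $(\c L^{(v_0)}+C_{s_0}(\c L))_{\c K}$, together with $x_\gamma\in t^{b^*}\sum_{1\le s<s_0}t^{-sb^*}\c L(s)_\m$, the $\delta^0$-terms in the definition of $y_\gamma$ only contribute in degree $\ge s_0+1$ modulo what is already in the target ideal; so modulo $(\c L^{(v_0)}+C_{s_0}(\c L))_{\c K}+C_{s_0+1}(\c L)_{\c K}$ one has $y_\gamma\equiv \gamma*e^{(q)}-e^{(q)}\equiv \sigma x_\gamma-x_\gamma$. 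The point is to extract, using $\gamma*e^{(q)}=e(t^qE(\gamma t^{b^*})^{-1})$ and \eqref{E1.1}, \eqref{E1.3}, the leading correction: expanding $E(\gamma t^{b^*})^{-1}=1-\gamma t^{b^*}+\dots$ shows $\gamma*e^{(q)}-e^{(q)}\equiv \gamma\sum_{a}a\,t^{-qa+b^*}l_a^{(q)}$ modulo higher $t^{b^*}$-order, and the piece of this sum with $\op{wt}(l_a^{(q)})=s_0$ is, by \eqref{E1.1}, precisely $\gamma\sum_{a}a\,t^{-qa+b^*}D_{a0}+(\text{commutators})$ for the relevant $a$. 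Hence the coefficients $l_{am}$ in the expansion $y_\gamma=\sum_{m,a}t^{-ap^m}l_{am}+l_O$ satisfy, for the top weight $s_0$ and $m=0$, $l_{a0}\equiv \gamma(q^{-1})(\,\cdots)$ — more precisely, matching exponents $ap^m$ against $qa'-b^*$, the exponent $q a'-b^*$ is not in $p\Z$ when $\op{gcd}(a',p)=1$, so $m=0$ and $l^{(qa'-b^*)}\equiv \gamma\,a'\,D_{a',0}$ modulo $C_{s_0+1}(\c L)_k$ and modulo lower-weight contributions already controlled.

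Then I would invoke the criterion in the form stated just before the Proposition: $\c L^{(v_0)}+C_{s_0+1}(\c L)$ is the minimal ideal containing $[\c L^{(v_0)},\c L]+C_{s_0+1}(\c L)$ and all $l^{(a)}=\sum_{m\ge0}\sigma^{-m}l_{am}$ with $a\ge qv_0-b^*$. Taking $a=qa'-b^*$ with $a'\ge v_0$ (equivalently $qa'-b^*\ge qv_0-b^*$) and using $\op{Tr}_{k/\F_p}(\alpha_0)=1$ together with the $\sigma$-semilinear structure on $\c L_k$ (the $D_{a'n}$ are the $\sigma$-conjugates of $D_{a'0}$), the relation $l^{(qa'-b^*)}\in\c I_k$ forces $D_{a'n}\in(\c L^{(v_0)}+C_{s_0+1}(\c L))_k$ for all $n\in\Z/N_0$ and all $a'\ge v_0$ of the appropriate weight $s_0$. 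Combined with the first paragraph's reduction this gives $\c L(s_0+1)\subset\c L^{(v_0)}+C_{s_0+1}(\c L)$.

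The main obstacle I anticipate is the bookkeeping in the second paragraph: one must check that the $\delta^0$-corrections and the commutator parts of $\gamma*e^{(q)}-e^{(q)}$ genuinely land in $[\c L^{(v_0)},\c L]+C_{s_0+1}(\c L)$ (so that they do not interfere with reading off the pure generator $D_{a'0}$), and that the passage from $\sum_m\sigma^{-m}l_{am}$ to the individual $D_{a'n}$ is legitimate, i.e. that no cancellation among different $m$ occurs — this is where the arithmetic of the exponents ($qa'-b^*\notin p\Z$ for $a'\in\Z^+(p)$, which holds since $\op{gcd}(b^*,p)=1$) is essential, exactly as in Prop.\ref{P2.8}. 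Once these two points are secured the rest is the formal minimality argument via Prop.\ref{P1.3}.
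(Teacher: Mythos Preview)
Your overall line matches the paper's: extract the ``linear'' piece of $y_\gamma\equiv\gamma*e^{(q)}-e^{(q)}$ and feed the resulting $l^{(a)}$'s into the criterion of Prop.~\ref{P1.3}. Two points, however, do not go through as written.

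\medskip
\textbf{1. The reduction in your first paragraph is not valid.} You claim it suffices to treat $s_0v_0\le a<(s_0+1)v_0$ because ``the remaining part of $\c L(s_0+1)$ is already in $C_{s_0+1}(\c L)$''. But a single generator $D_{an}$ with $a\ge(s_0+1)v_0$ lies in $\c L(s_0+2)$ and is \emph{never} in $C_2(\c L)$, let alone $C_{s_0+1}(\c L)$. What the inductive hypothesis together with $[\c L(s),\c L(s')]\subset\c L(s+s')$ does buy you is the paper's Lemma~\ref{L3.5}: $\c L(s_0+1)\cap C_2(\c L)\subset \c L^{(v_0)}\cap C_2(\c L)+C_{s_0+1}(\c L)$. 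This disposes of the commutator part of $\c L(s_0+1)$, but you must still handle \emph{all} generators $D_{an}$ with $a\ge s_0v_0$, not just those in a single weight band.

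\medskip
\textbf{2. The formula for $l^{(qa_0-b^*)}$ is upper-triangular, not diagonal.} You assert $l^{(qa'-b^*)}\equiv\gamma a'D_{a'0}$ modulo $C_{s_0+1}(\c L)_k$ and ``lower-weight contributions already controlled''. This is the real gap. The linear part of $\gamma*e^{(q)}-e^{(q)}$ is $\sum_a t^{-qa}(E(\gamma a t^{b^*})-1)D_{a0}$, and expanding $E$ gives contributions $\alpha(a',u)t^{-qa'+ub^*}D_{a'0}$ for \emph{all} $u\ge 1$, not only $u=1$. Solving $qa'-ub^*=(qa_0-b^*)p^m$ for integers $a'\in\Z^+(p)$, $u\ge 1$, $m\ge 0$ (using $\gcd(b^*,q)=1$) one finds, besides the solution $(a',u,m)=(a_0,1,0)$, further solutions all of which satisfy $a'>a_0$. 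Thus, as in the paper's Lemma~\ref{L3.6},
\[
l^{(qa_0-b^*)}\equiv a_0D_{a_00}+\{\text{$k$-linear combination of }D_{a',m'}\text{ with }a'>a_0\}
\]
modulo $\c L^{(v_0)}_k\cap C_2(\c L)_k+C_{s_0+1}(\c L)_k$. The extra terms involve generators of \emph{larger} index $a'$, hence of equal or higher weight; they are precisely the generators you are still trying to place in $\c L^{(v_0)}+C_{s_0+1}(\c L)$, so they are \emph{not} ``already controlled''. What saves the argument is that the system $\{l^{(qa_0-b^*)}:a_0\ge s_0v_0,\ a_0\in\Z^+(p)\}$ is (pro-finitely) upper-triangular in the $D_{a_00}$ with invertible diagonal $a_0\in\F_p^{\times}$. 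Inverting this system --- equivalently, a descending induction on $a_0$ in the profinite topology --- shows every $D_{a_00}$ with $a_0\ge s_0v_0$ lies in $\c L^{(v_0)}_k+C_{s_0+1}(\c L)_k$. Your ``no cancellation among different $m$'' check is correct but addresses only the trivial part of this; the substantive step is the triangularity in $a'$.
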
 

\begin{proof} We have 
 $e^{(q)}, \gamma *e^{(q)}\in\c N^{(q)}$ and   
$\gamma *e^{(q)}-e^{(q)}\in t^{b^*}\c N^{(q)}$. 
Then from Prop.\ref{P3.2} 
we obtain that   
$y_{\gamma }\equiv\gamma *e^{(q)}-e^{(q)}$ 
modulo 
$$
[t^{b^*}\c N, 
\c N]\subset (t^{b^*}\c N)\cap C_2(\c L)_{\c K}\subset $$
$$\subset \sum _{2\leqslant s_1+s_2\leqslant s_0}[\c L(s_1), 
\c L(s_2)]_{\m }t^{-(s_1+s_2-1)b^*}+
\c L(s_0+1)_{\c K}\cap C_2(\c L)_{\c K}$$
$$\subset t^{-(s_0-1)b^*}\c L_{\m }+\c L(s_0+1)_{\c K}\cap C_2(\c L)_{\c K}.$$

\begin{Lem} \label{L3.5}
 $\c L(s_0+1)\cap C_2(\c L)\subset \c L^{(v_0)}\cap C_2(\c L)+C_{s_0+1}(\c L)$. 
\end{Lem}

\begin{proof}[Proof of lemma] From the 
definition of $\c L(s_0+1)$  it follows that  
the $k$-module 
$\c L(s_0+1)_k\cap C_2(\c L)_k$ is generated 
by the commutators 
$$[\dots [D_{a_1n_1},D_{a_2n_2}],\dots ,D_{a_rn_r}]$$ 
such that $r\geqslant 2$ and 
$\op{wt} (D_{a_1n_1})+\dots +\op{wt}(D_{a_rn_r})\geqslant s_0+1$. 

Here for $1\leqslant i\leqslant r$, 
$\op{wt} (D_{a_in_i})=s_i$, where $(s_i-1)v_0\leqslant a_i<s_iv_0$. 
Hence, if 
$s_i':=\min\{s_i,s_0\}$ then 
$\sum _{i}s_i'\geqslant s_0+1$ (use that $r\geqslant 2$). 
By inductive assumption 
all $D_{a_in_i}\in\c L(s_i')_k
\subset \c L^{(v_0)}_k+C_{s_i'}(\c L)_k$ and, therefore, our commutator 
belongs to $\c L^{(v_0)}_k+C_{s_0+1}(\c L)_k$. It remains to note that 
$(\c L^{(v_0)}+C_{s_0+1}(\c L))\cap C_2(\c L)=\c L^{(v_0)}\cap C_2(\c L)+
C_{s_0+1}(\c L)$. 

The lemma is proved.
\end{proof} 

Lemma \ref{L3.5} implies that for 
$a\geqslant (s_0-1)b^*$, all  
$l^{(a)}$ modulo the ideal $\c L^{(v_0)}_k\cap C_2(\c L)_k+C_{s_0+1}(\c L)_k$ 
appear as linear combinations of the linear terms of 
$\gamma *e^{(q)}-e^{(q)}$. 
More precisely, this can be stated as follows. 
\medskip

Let 
$$\sum _{a\in\Z ^+(p)}t^{-aq}(E(at^{b^*})-1)D_{a0}=
\sum _{a',u}\alpha (a',u)t^{-qa'+ub^*}D_{a'0}\, ,$$ 
where 
$a'$ and $u$ run over $\Z ^+(p)$ and 
$\N $, resp., and all $\alpha (a',u)\in \F _p$  
(note that $\alpha (a',1)=a'$).  

\begin{Lem} \label{L3.6}
If $a\geqslant (s_0-1)b^*$ then 
$$l^{(a)}\equiv 
\sum _{\substack{m\geqslant 0 \\ qa'-ub^*=
ap^m}}\alpha (a',u)D_{a',-m}\,\op{mod}\,(\c L^{(v_0)}_k\cap C_2(\c L)_k
+C_{s_0+1}(\c L)_k)\, .$$
\end{Lem}

\begin{proof} [Proof of Lemma] 
Suppose $a_0\in\Z ^0(p)$ satisfies the following inequality 
$a_0\geqslant s_0v_0$. Then  
 $a=qa_0-b^*\geqslant (s_0-1)b^*$ and $l^{(a)}$ is congruent to 
$$a_0D_{a_00}+
\{\text{$k$-(pro)linear combination of $D_{a'm'}$ with $a'>a_0$} \}\, .$$

Since all such $l^{(a)}$ should belong to $\c L^{(v_0)}+C_{s_0+1}(\c L)$,  
this implies that  
all $D_{a_00}$ with $a_0\geqslant s_0v_0$ (or, equivalently, with the weight 
$\geqslant s_0+1$) 
must belong to $\c L^{(v_0)}_k+C_{s_0+1}(\c L)_k$. 
\end{proof} 
Proposition is proved.
\end{proof}

\subsection{Interpretation in $\bar{\c L}^{\dag }$} \label{S3.3}

It remains to prove that in $\bar{\c L}$ we have  
$\bar{\c L}^{(v_0)}+C_{s_0+1}(\bar{\c L})=
\bar{\c L}[v_0]+\bar{\c L}(s_0+1)$. 
By Prop.\ref{P3.4} and Remark from Sect.\ref{S3.2} 
it will be sufficient to establish that 
$$\bar{\c L}^{(v_0)}+\bar{\c L}(s_0+1)\supset \bar{\c L}[v_0]+\bar{\c L}(s_0+1)\, .$$ 
We can use the inductive assumption in the following form 
$$\bar{\c L}[v_0]+\bar{\c L}(s_0)=\bar{\c L}^{(v_0)}+\bar{\c L}(s_0)\, .$$

By the definition  of $\bar{\c L}[v_0]$  and Prop.\ref{P2.15} 
the ideal $\bar{\c L}[v_0]+\bar{\c L}(s_0+1)$ appears as the 
minimal ideal in the set of all ideals 
${\c I}$ of $\bar{\c L}$ such that :
\medskip 

$\bullet $\  ${\c I}\supset \bar{\c D}(s_0+1):=[\bar{\c L}[v_0], 
\bar{\c L}]+\bar{\c L}(s_0+1)=[\bar{\c L}^{(v_0)}, 
\bar{\c L}]+\bar{\c L}(s_0+1)$;
\medskip 

$\bullet $\  if $\iota\in\mathfrak {A}_1^0(p)$  then  
$\c V_kB_k^{\dag }(D^{\dag }_{\iota 0})\in {\c I}_k$.
\medskip

We must prove that 
for any $\iota\in\mathfrak{A}_1^0(p)$, 
$\c V_kB_k^{\dag }(D^{\dag }_{\iota 0})
\in \bar{\c L}_k^{(v_0)}+\bar{\c L}(s_0+1)_k$\ or, equivalently, 
for any $\gamma\ne 0$, 
$$\c V_k(A^{\dag }_{\gamma }-
\id _{\bar{\c L}_k})(D^{\dag }_{\iota 0})\in\bar{\c L}^{(v_0)}_k
+\bar{\c L}(s_0+1)_k\, .$$

Fix $\gamma\ne 0$ and consider equality \eqref{E3.4}. 

By Prop.\ref{P2.13}, 
$(\c VA^{\,\dag }_{\gamma }\otimes\, \id _{\c K})\bar{e}^{\,\dag}
\equiv \bar{e}^{(q)}\,\op{mod}\,(\bar{\c L}[v_0]+\bar{\c L}(s_0))_{\c K}$. 
Hence, there is  ${Z}_{\gamma }
\in (\bar{\c L}[v_0]+\bar{\c L}(s_0))_{sep}$ 
such that 
$$\sigma Z_{\gamma }- Z_{\gamma }=
(\c V(A^{\dag }_{\gamma }-\id _{\bar{\c L}^{\dag }})
\otimes\id _{\c K})\bar e^{\,\dag }\, ,$$
and we obtain (use that $(\bar{\c L}[v_0]+\bar{\c L}(s_0+1))
\,\op{mod}\,\bar{\c D}(s_0+1)$ is abelian)
$$\gamma *\bar{e}^{(q)}=
\sigma (\bar x_{\gamma }\circ Z_{\gamma })\circ 
\bar e^{(q)}\circ (-(\bar x_{\gamma }\circ { Z}_{\gamma }))\,\op{mod}\,
\bar{\c D}(s_0+1)_{sep} .$$

Therefore, the ideal $\bar{\c L}^{(v_0)}+\bar{\c L}(s_0+1)\supset\bar{\c D}(s_0+1)$ 
is the minimal in the family of all ideals ${\c I}$ such that 
\medskip 

$\bullet $\ $\bar{\c L}[v_0]+\bar{\c L}(s_0+1)\supset {\c I}\supset \bar{\c D}(s_0+1)$;
\medskip 

$\bullet $\ $v({Z}_{\gamma }\,\op{mod}\,{\c I}_{sep}/\c K)<qv_0-b^*$ 
(use that $\bar x_{\gamma }$ is defined over $\c K$).
\medskip

By Prop.\ref{P2.15} we have the following congruence modulo 
$\bar{\c D}(s_0+1)_{\c K}$ 
$$
(\c V(A^{\dag }_{\gamma }-\id _{\bar{\c L}^{\dag }})
\otimes\id _{\c K})\bar e^{\dag }\equiv 
\sum _{\op{ch}(\iota )=1}
t^{-\iota }{\c V_k}(A^{\dag }_{\gamma }-
\id _{\bar{\c L}^{\dag }})_kD^{\dag }_{\iota 0}\,.$$

For any $\iota\in\mathfrak{A}_1^0(p)$, consider 
$\bar W_{\gamma \iota }:=
\c V_{k}(A^{\dag }_{\gamma }-\id _{\bar{\c L}^{\dag }})_k
D^{\dag }_{\iota 0}\in\bar{\c L}_k\, .$ 

Recall that $\bar{\c L}[v_0]_k+\bar{\c L}(s_0+1)_k$ 
is generated by all $\bar W_{\gamma \iota }$ 
and the elements of  $\bar{\c D}(s_0+1)_k$. Then 

$$ Z_{\gamma }\equiv \sum \limits _{\op{ch}(\iota )=1} Z_{\gamma \iota }
\,\op{mod}\,\bar{\c D}(s_0+1)_{sep}\, ,$$
where  
$Z_{\gamma \iota }\in (\bar{\c L}[v_0]+\bar{\c L}(s_0+1)/\bar{\c D}(s_0+1))_{sep}$,  
$\sigma  Z_{\gamma \iota }- Z_{\gamma \iota }=t^{-\iota }W_{\gamma \iota }$ 
and $W_{\gamma \iota }:=\bar{W}_{\gamma \iota }\,\op{mod}\,\bar{\c D}(s_0+1)_k
\in ((\bar{\c L}[v_0]+\bar{\c L}(s_0+1)/\bar{\c D}(s_0+1))_{k}$. 
\medskip 

All above $Z_{\gamma \iota }$ come from elementary Artin-Schreier equations. 
Indeed, suppose $\{\omega _j\}$ is a (finite) $\F _p$-basis of 
$(\bar{\c L}[v_0]+\bar{\c L}(s_0+1))/\bar{\c D}(s_0+1)$. Then for some 
$w_{\gamma\iota j}\in k$, 
$W_{\gamma \iota }=\sum \limits _jw_{\gamma \iota j}\omega _j$ 
and $Z_{\gamma\iota }=\sum\limits _jz_{\gamma \iota j}\omega _j$, where 
$z_{\gamma \iota j}^p-z_{\gamma \iota j}=w_{\gamma \iota j}t^{-\iota }$. 
In particular, for any fixed $\iota $ (and $\gamma $), 
$\c K(Z_{\gamma \iota })$ is a composit of all $\c K(z_{\gamma \iota j})$. Therefore, 
$\c K(Z_{\gamma \iota })/\c K$ is an elementary abelian $p$-extension, 
which is either trivial or 
has only one (upper) 
ramification number $\iota p^{-v_p(\iota )}$.
\medskip 

This implies that  
\medskip 

--- $\c K( Z_{\gamma }\,\op{mod}\,{\c I}_{sep})/\c K$ 
coincides with the composite of  all 
\linebreak 
$\c K(Z_{\gamma\iota }\,\op{mod}\,({\c I}/\bar{\c D}(s_0+1))_{sep})/\c K$ 
(use that for different $\iota $ these extensions are linearly disjoint 
because  by Prop.\,\ref{P2.8}a) their ramification numbers are different).
\medskip 

In particular, 
\medskip 

--- if $W_{\gamma\iota }\notin \c I_k$ then the field 
$\c K(Z_{\gamma\iota }\,\op{mod}\,(\bar{\c I}/\bar{\c D}(s_0+1))_{sep})/\c K$ 
is a finite abelian $p$-extension with only one ramification number 
$\iota p^{-v_p(\iota )}$;
\medskip 

--- by Prop.\ref{P2.8}a), the ramification numbers of different non-trivial extensions 
$\c K(Z_{\gamma\iota }\,\op{mod}\,({\c I}/\bar{\c L}^*(s_0+1))_{sep})/\c K$ 
are different.  
\medskip 

As a result, the biggest upper ramification number of the field extension 
$\c K(Z_{\gamma }\,\op{mod}\,\c I_{sep})/\c K$ 
coincides with  $\max\{\iota p^{-v_p(\iota )}\ |\ W_{\gamma\iota }\notin \c I_k\}$. 

By Prop.\,\ref{P2.8}b), if $\iota\in\mathfrak{A}_1^0(p)$ then 
$\iota p^{-v_p(\iota )}\geqslant qv_0-b^*$. 
This implies that the biggest upper ramification number 
$v(\c K(Z_{\gamma }\,\op{mod}\,{\c I}_{sep})/\c K)<qv_0-b^*$ if and only if 
all $W_{\gamma\iota }\in\c I_k$, i.e. $\c I=
\bar{\c L}[v_0]+\bar{\c L}(s_0+1)$.  

Theorem \ref{T3.1} is completely proved. 
\medskip 

\section{Construction of explicit generators of $\bar{\c L}{[v_0]}$} \label{S4} 

\subsection{Choice of $e\in\c L_{\c K}$} \label{S4.1}  
In 
\cite{Ab1,Ab2,Ab3} we fixed the group 
isomorphism $\c G_{<p}\simeq G(\c L)$ 
induced by the epimorphism 
$\eta _e=\pi _f(e):\c G\To G(\c L)$ via 
a special choice of $e\in\c L_{\c K}$. In this paper we 
use more general element $e$ by assuming that 
\begin{equation} \label{E4.1} \wt{\exp}\,e\equiv 1+\sum _{1\leqslant s<p}
\eta (a_1,\dots ,a_s)t^{-(a_1+\dots +a_s)}D_{a_10}\dots D_{a_s0}
\,\op{mod}\,\c J^p_{\c K}\, .
\end{equation} 
Here $\c J$ is the augmentation ideal in the enveloping algebra 
$\c A$ of $\c L$. 
 In the above sum the indices $a_1,\dots ,a_s$ run 
over $\Z ^0(p)$ and  the ``structural constants'' 
$\eta (a_1,\dots ,a_s)\in k$  
satisfy the following 
identities:  
\medskip 

${\rm I_e}$)\   $\eta (a_1)=1$;
\medskip 

${\rm II}_e$)\  if $0\leqslant s_1\leqslant s<p$ then 
$$\eta (a_1,\dots ,a_{s_1})\eta (a_{s_1+1},\dots ,a_s)=
\sum _{\pi\in I_{s_1s}} \eta (a_{\pi (1)},\dots ,a_{\pi (s)}),$$
where $I_{s_1s}$
consists of all permutations $\pi $ of order $s$ such that the sequences 
$\pi ^{-1}(1),\dots ,\pi ^{-1}(s_1)$ and
$\pi ^{-1}(s_1+1),\dots ,\pi ^{-1}(s)$
are increasing 
(i.e. $I_{s_1s}$ is the set of all \lq\lq insertions\rq\rq\
of the ordered set $\{1,\dots ,s_1\}$ into
the ordered set $\{s_1+1,\dots ,s\}$). 

Assumption ${\rm I_e}$) means that $e$ satisfies $\eqref{E1.1}$ from Sect.\ref{S1}. 

Assumption ${\rm II}_e$) means that 
$$\Delta (\wt{\exp}(e))\equiv \wt{\exp}(e)\otimes\wt{\exp}(e)\,\op{mod}\,
(\c J_{\c K}\hat\otimes 1+1\hat\otimes \c J_{\c K})^p\, ,$$ 
i.e. 
$\wt{\exp}(e)$ is diagonal modulo degree $p$. This means that 
$e$ is a $k$-linear combination of the commutators 
$t^{-(a_1+\dots +a_r)}[\ldots [D_{a_10},\ldots ],D_{a_s0}]$. 
In particular, $e$ satisfies 
the assumption \eqref{E1.3} from Sect.\ref{S1} and the compatibility 
\eqref{E2.5} can be easily satisfied. Therefore, 
we can use Theorem \ref{T3.1} 
to obtain generators of the ramification ideal $\c L^{(v_0)}$.
Note that in most applications  
of the results from \cite{Ab1, Ab2, Ab3} we 
used  
the simplest choice $e=\sum _{a\in\Z ^0(p)}t^{-a}D_{a0}$, where all 
$\eta (a_1,\dots ,a_s)=1/s!$ 

\subsection{Statement of the main result}\label{S4.2} 

For $\bar a=(a_1,\dots ,a_s)$ with all $a_i\in\Z ^0(p)$, set 
$\eta (\bar a)=\eta (a_1,\dots ,a_s)$. 

\begin{definition} 
 Let $\bar n=(n_1,\dots ,n_s)$ with $s\geqslant 1$. Suppose there is a partition  
 $0=i_0<i_1<\dots <i_r=s$ such that 
 if $i_j<u\leqslant i_{j+1}$ then 
 $n_u=m_{j+1}$ and $m_1>m_2>\dots >m_r$. Then  set  
 $$\eta (\bar a,\bar n)_s=
 \sigma ^{m_1}\eta (\bar a^{(1)})\dots \sigma ^{m_r}\eta (\bar a^{(r)})\, ,$$
 where $\bar a^{(j)}=(a_{i_{j-1}+1}, \dots ,a_{i_j})$. 
 If such a partition does not exist we set $\eta (\bar a,\bar n)_s=0$. 
 (If there is no risk of confusion we just write  $\eta (\bar a,\bar n)$ 
 instead of $\eta (\bar a,\bar n)_s$.)
\end{definition}

If $s=0$ we set $\eta (\bar a,\bar n)_s=1$. 

For $\bar a=(a_1,\dots ,a_s)$, $\bar n=(n_1,\dots ,n_s)$,   
set $D_{(\bar a,\bar n)}=D_{a_1n_1}\dots D_{a_sn_s}$.

Note, if  $e_{(N^*,\,0]}:=\sigma ^{N^*-1}(e)\circ 
\sigma ^{N^*-2}(e)\circ \dots \sigma (e)\circ e$ then 
$$\wt{\exp}\,e_{(N^*,\,0]}\equiv \sum _{\bar a, \bar n}\eta (\bar a,\bar n)_s
D_{(\bar a, \bar n)}\, \op{mod}\,\c J_{\c K}^p.$$

For $\alpha\geqslant 0$ and $N\in\Z _{\geqslant 0} $, introduce 
$\c F^0_{\alpha ,-N}\in{\c L}_k$ such that 
$$\c F^0_{\alpha ,-N}=\sum _{\substack {1\leqslant s<p\\
a_i, n_i }}a_1\eta (\bar a,\bar n)[\dots [D_{a_1 n_1},
D_{a_2 n_2}],\dots ,D_{a_s n_s}]\, .$$

Here: 

--- \ $\bar a=(a_1,\dots ,a_s)$, $n_1=0$ and all $n_i\geqslant -N$;

--- \ $\alpha =\gamma (\bar a,\bar n)=a_1p^{n_1}+a_2p^{n_2}+\dots +a_sp^{n_s}$\,.
\medskip 

Note that non-zero terms in the above expression for 
$\c F_{\alpha , -N}^0$ can appear only if 
$0=n_1\geqslant n_2\geqslant\ldots \geqslant n_s$ and $\alpha\in A[p-1,N]$.  

Our result about explicit generators of $\bar{\c L}[v_0]$ can be stated 
in the following form.

Let $\bar{\c F}^0_{\alpha ,-N}$ be the image of $\c F^0_{\alpha ,-N}$ in 
$\bar{\c L}_k$.

If $\iota =qp^m\alpha -p^mb^*\in\mathfrak{A}^0_1$ is the standard presentation 
from Sect.\ref{S2.2} we indicate the dependance of $\alpha $ and $m$ on 
$\iota $ by setting $\alpha =\alpha [\iota ]$ and $m=m[\iota ]$. 
Recall that $\alpha [\iota ]\in A[p-1,m[\iota ]]$ and $m[\iota ]<N^*$. 

Let $m(\iota )$ be the maximal non-negative integer such that 
$\iota p^{m(\iota )}\leqslant (p-1)b^*$. 
For any $\iota\in\mathfrak{A}^0_1$, fix a choice of $m_{\iota }\geqslant m(\iota )$.

\begin{Thm} \label{T4.1} 
 $\bar{\c L}[v_0]$ is the minimal ideal in $\bar{\c L}$ such that 
for all $\iota\in\mathfrak{A}^0_1$ with $\alpha [\iota ]\geqslant 0$, 
$\bar{\c F}^0_{\alpha [\iota ], -(m[\iota ]+m_{\iota })}\in\bar{\c L}[v_0]_k$.
\end{Thm}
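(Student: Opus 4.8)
The strategy is to reconcile the abstract description of $\bar{\c L}[v_0]$ from Corollary \ref{C2.16} --- namely, that $\bar{\c L}[v_0]$ is the minimal ideal of $\bar{\c L}$ containing all $\c V_kB^{\dag }_k(D^{\dag }_{\iota 0})$ with $\iota\in\mathfrak{A}^0_1(p)$ --- with the explicit elements $\bar{\c F}^0_{\alpha [\iota ],-(m[\iota ]+m_{\iota })}$. So the core task is to compute $\c V_kB^{\dag }_k(D^{\dag }_{\iota 0})$ explicitly and to show that, modulo the ideal it already generates together with $[\bar{\c L}[v_0],\bar{\c L}]$, it agrees (up to a nonzero scalar and permissible lower-order corrections) with $\bar{\c F}^0_{\alpha [\iota ],-(m[\iota ]+m_{\iota })}$. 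First I would unwind the definition of $B^{\dag }$ via Section \ref{S2.6}: $B^{\dag }$ is the linear (in $U$) part of $d\Omega _U$, and by \eqref{E2.3} and the relation between $\Omega _U$ and $\Omega _{\gamma }$, the action $\Omega _{\gamma }$ on $\wt{e}^{\,sp}$ multiplies the coefficient $t^{-\iota }l^{sp}_{\iota }$ by $\wt{\exp}(\gamma (p^m\alpha )t^{b^*})$. Hence the ``$U$-linear'' part of $\Omega _U(\wt{e}^{\,sp})$ is $\sum _{\iota }(p^m\alpha )_{\iota }t^{-\iota +b^*}l^{sp}_{\iota }$, exactly the quantity appearing in equation \eqref{E2.6}. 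Applying $\c V_{\c K}$ and comparing $U$-coefficients as in \eqref{E2.7}--\eqref{E2.8} isolates $\sum _{\iota }t^{-\iota }\c V_kB^{\dag }_k(l^{sp}_{\iota })$ up to a coboundary $\sigma\tilde x-\tilde x$ and the correction terms $\tilde f_1+\tilde f_2$, which are iterated commutators of $\sigma\tilde x$ (resp. $\tilde x$) with $\tilde e^{\,(q)}$.

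\textbf{Key steps.} (1) Identify the source of the index shift: the passage from $\wt{e}^{\,sp}$ to $e^{(q)}=\sigma ^{N^*}(e)$ via the compatibility \eqref{E2.5}, combined with the explicit shape \eqref{E4.1} of $\wt{\exp}\,e$ and the composition formula for $e_{(N^*,0]}$, produces precisely the coefficients $\eta (\bar a,\bar n)_s$; thus the ``linear term of $\gamma *e^{(q)}-e^{(q)}$'' that governs $l^{(a)}$ in Lemma \ref{L3.6} carries the weight $a_1\eta (\bar a,\bar n)$, i.e. the defining sum of $\c F^0_{\alpha ,-N}$. (2) Match the indices: one writes $\iota =qp^m\alpha -p^mb^*$ with $\alpha=\alpha[\iota]\in A[p-1,m[\iota]]$, so $p^m\alpha = a_1p^{n_1}+\dots+a_sp^{n_s}$ with $n_1=0$; the ramification-number bookkeeping of Lemma \ref{L3.6} (where $qa'-ub^*=ap^m$) shows that the contribution to $\c V_kB^{\dag }_k(D^{\dag }_{\iota 0})$ from degree-$m$ part is governed by $\sum _{m'}\sigma ^{-m'}$ of the terms indexed by the partitions of $p^m\alpha$, which is exactly $\bar{\c F}^0_{\alpha[\iota],-(m[\iota]+m_\iota)}$ after absorbing the extra freedom $m_\iota\geqslant m(\iota)$ into the $\sigma^{-m'}$-summation range (this freedom is harmless because beyond $m(\iota)$ the term lands in $t^{(p-1)b^*}\bar{\c N}^{\dag }=0$). (3) Control the error terms $\tilde f_1+\tilde f_2$: by the argument already run in the proof of Proposition \ref{P2.15} (the induction on $s$ with $\bar{\c D}(s)=[\bar{\c L}[v_0],\bar{\c L}]+\bar{\c L}(s)$), these corrections lie in $[\bar{\c L}[v_0],\bar{\c L}]$ and hence do not affect which minimal ideal is generated; the coboundary $\sigma\tilde x-\tilde x$ disappears because $\wt{\c N}^{sp}\langle j\rangle|_{\sigma=\id}=0$ for $j\geqslant 1$ (Proposition \ref{P2.8}). (4) Finally, verify the equivalence of the two minimality statements: since $\c V_kB^{\dag }_k(D^{\dag }_{\iota 0})$ and $\bar{\c F}^0_{\alpha[\iota],-(m[\iota]+m_\iota)}$ generate the same ideal modulo $[\bar{\c L}[v_0],\bar{\c L}]$, and $[\bar{\c L}[v_0],\bar{\c L}]$ is itself contained in any candidate ideal once all $\bar{\c F}^0$ for $\op{ch}(\iota)=1$ are in it (here one re-uses Proposition \ref{P2.15} to push from $\op{ch}(\iota)\geqslant 2$ down), the two minimal ideals coincide with $\bar{\c L}[v_0]$.

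\textbf{Main obstacle.} The delicate point is Step (2): showing that, after applying $\c V_{\c K}$ and solving the Artin--Schreier-type relation \eqref{E2.8}, the ``linear component'' that survives is literally the combinatorial expression defining $\c F^0_{\alpha,-N}$, with the correct coefficient $a_1\eta(\bar a,\bar n)$ and the correct $\sigma$-twist pattern $\bar n=(0,n_2,\dots,n_s)$ with $0\geqslant n_2\geqslant\dots\geqslant n_s$. This requires carefully tracking how the iterated Frobenius $\sigma^{N^*}$ acting on the structural constants of $e$ (through $e_{(N^*,0]}$) interacts with the grading by $\op{ch}$ and with the rewriting $\iota=p^m(q\alpha-(q-1)\beta)$; in particular one must check that the spurious terms with $\beta\neq 0$ (i.e.\ $\op{ch}(\iota)\geqslant 1$ coming from the Artin--Hasse factor $E(\gamma t^{b^*})^{-1}$) are exactly the ones producing the $B^{\dag }$-action, while the ``diagonal'' $\beta=0$ part is annihilated by $\c V^0$ composed with $j^0$ only on the complementary summand. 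I expect the cleanest route is to first prove the statement modulo $C_2$ (where $\c F^0_{\alpha,-N}$ reduces to a single $a_1 D_{a_1 0}$-type term, recovering the abelian case of \cite{Ab1}) and then run the induction on nilpotence class $s_0$ exactly parallel to Section \ref{S3.2}, using Lemma \ref{L3.6} at each stage to peel off the linear-in-$t^{-\iota}$ part; the bookkeeping of which $m_\iota$ are admissible (any $m_\iota\geqslant m(\iota)$) is then a routine verification that enlarging the range only adds zero terms.
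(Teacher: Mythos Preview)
Your overall strategy---starting from Corollary \ref{C2.16}, computing $\c V_kB^{\dag}_k(D^{\dag}_{\iota 0})$ via the $U$-linear relation \eqref{E2.8}, and controlling errors modulo $[\bar{\c L}[v_0],\bar{\c L}]$---matches the paper's, and your Steps (3) and (4) are essentially right. But Step (2), which you correctly flag as the main obstacle, is where the real work lies, and your proposed attack on it has a gap.

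The paper does not use Lemma \ref{L3.6} or the Section \ref{S3.2} induction here. Instead it passes to the enveloping algebra $\bar{\c A}$ (via Lemma \ref{L4.2}, which lets one work with non-Lie intermediates) and lifts \eqref{E2.8} to the recurrent relation \eqref{E4.2}, where $\wt{\exp}\,\bar e^{(q)}$ and the element $\bar{\c E}$ (the $U$-linear part of $\wt{\exp}(U*\bar e^{(q)})$) appear explicitly. To solve \eqref{E4.2} the paper introduces intermediate elements $\bar{\c F}[\iota]$ and $\bar{\c F}[\iota]_0$ in $\bar{\c A}_k$, defined via the ``inverse'' constants $\eta^o(\bar a,\bar n)$ coming from $-e_{[0,N^*)}$ (not $e_{(N^*,0]}$ as you suggest), and proves two things: (i) a combinatorial identity (Proposition \ref{P4.4}, resting on Lemma \ref{L4.3} about the ``connected'' permutation sets $\Phi_{ss_1}$) showing $\bar{\c F}[\iota p^n]=\sigma^{m+n}\bar{\c F}^0_{\alpha,-(m+n)}$; and (ii) an induction (Proposition \ref{P4.6}) showing that these $\bar{\c F}[\iota p^m]$ solve \eqref{E4.2} modulo $\bar{\c L}[v_0](s_0)_{\c K}$, yielding $V_{\iota 0}\equiv -\sigma^{-m(\iota)}\bar{\c F}[\iota p^{m(\iota)}]\bmod\bar{\c D}_k$. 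What your outline does not address is how the coefficient $a_1$ (rather than $a_1+\dots+a_s$, which is what $\bar{\c E}$ carries) and the single commutator $[\dots[D_{a_1n_1},D_{a_2n_2}],\dots,D_{a_sn_s}]$ emerge: this is exactly the content of Lemma \ref{L4.3}(a)--(c), a telescoping over the $\Phi_{ss_1}$ that converts the enveloping-algebra expression into a Lie element with the correct leading factor. Without this combinatorial step, an induction on nilpotence class along the lines you sketch will not close.

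One smaller point: your handling of the freedom $m_\iota\geqslant m(\iota)$ (``beyond $m(\iota)$ the term lands in $t^{(p-1)b^*}\bar{\c N}^{\dag}=0$'') is not quite right, since $\bar{\c F}[\iota p^m]_0$ lives in $\bar{\c L}_k$, not in $\wt{\c N}^{\dag}$. The paper instead extracts from \eqref{E4.4} (see the Remark in the proof of Proposition \ref{P4.6}) that $\bar{\c F}[\iota p^m]_0\in\bar{\c D}_k$ once $\iota p^m>(p-1)b^*$, and this is what makes Corollary \ref{C4.7} go through.
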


The proof is given in Sect.\,\ref{S4.3}-\ref{S4.6} below.

\subsection{Recurrent relation} \label{S4.3} 

We are going to carry out computations in the enveloping algebra 
$\bar{\c A}$ of the Lie algebra 
$\bar{\c L}$. 
 Note that the natural embedding  
$\bar{\c L}_{\c K}\subset \bar{\c A}_{\c K}$ remains still injective when taken modulo 
$\bar{\c J}_{\c K}^p$. This can be established similarly to the corresponding property 
for Lie $\F _p$-algebras from Sect.\ref{S1.2}. 

Using universal properties of 
enveloping algebras obtain the following lemma. 
(We are going to use these properties slightly later.) 

\begin{Lem} \label{L4.2}
Suppose $I$ is an ideal in the Lie algebra 
$L$ of nilpotence class $<p$. Let $A$ be an enveloping algebra of $L$ 
with augmentation ideal $J$ 
and $J_I:=IA$ -- the corresponding 
(two-sided) ideal in $A$. Then:
\medskip 

{\rm a)}\ $(J_I+J^p)\cap L=I$;
\medskip 

{\rm b)}\ $(JJ_I+J_IJ+J^p)\cap L=[I,L]$.
\end{Lem}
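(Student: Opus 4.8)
The plan is to reduce both statements to a single Poincaré--Birkhoff--Witt computation, the one already used for the equality $\c L\cap\c J^p=0$ in Sect.~\ref{S1.2}: for any Lie $\F_p$-algebra $L$ of nilpotence class $<p$, every element of $J^p$ has zero degree-one component with respect to an arbitrary PBW basis of $A$. Indeed, an element of $J^p$ is a sum of products $M_1\cdots M_p$ of PBW monomials of degree $\geqslant 1$; rewriting such a product in the PBW basis expresses it as a combination of products of iterated brackets of the $\sum_i\deg M_i\geqslant p$ generators occurring in it, and its degree-one contribution comes precisely from bracketing all of them into a single element of $L$, i.e. from commutators of length $\geqslant p$, which vanish because $C_p(L)=0$. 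This is exactly the argument of \cite{Ab1}, Sect.~1.3.3.

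For a), I would use that, $I$ being a Lie ideal, the left ideal $J_I=IA$ is in fact two-sided and $A/J_I$ is canonically the enveloping algebra $U(L/I)$ of $L/I$, in such a way that the composition $L\hookrightarrow A\to A/J_I$ is the projection $L\to L/I$ followed by the PBW-embedding of $L/I$ into its enveloping algebra; in particular $L\cap J_I=I$. Given $l\in(J_I+J^p)\cap L$, its image $\bar l$ in $A/J_I$ lies both in $\bar J^p$ (the image of $J^p$, with $\bar J$ the augmentation ideal of $U(L/I)$) and in $L/I$. Since $L/I$ is again of class $<p$, the fact quoted above gives $(L/I)\cap\bar J^p=0$, so $\bar l=0$, i.e. $l\in J_I\cap L=I$; the inclusion $I\subseteq(J_I+J^p)\cap L$ is obvious.

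For b), the inclusion $[I,L]\subseteq(JJ_I+J_IJ+J^p)\cap L$ is immediate, since for $i\in I$, $l\in L$ one has $[i,l]=il-li$ with $il\in J_IJ$ and $li\in JJ_I$. For the reverse inclusion I would pass to $\bar A:=A/J_{[I,L]}$, which by part a) is the enveloping algebra of $\bar L:=L/[I,L]$ and in which the image $\bar I$ of $I$ becomes \emph{central}. Writing $\pi\colon A\to\bar A$ and $\bar J$ for the augmentation ideal of $\bar A$, one has $\pi(J_I)=\bar I\bar A$, and — using that $\bar I$ is central and that $\bar J$ absorbs multiplication by $\bar A$ — $\pi(JJ_I)=\pi(J_IJ)=\bar I\bar J$ and $\pi(J^p)=\bar J^p$. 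So for $l\in(JJ_I+J_IJ+J^p)\cap L$ the image $\pi(l)$ lies in $(\bar I\bar J+\bar J^p)\cap\bar L$, and it suffices to show this intersection is $0$: then $\pi(l)=0$, hence $l\in J_{[I,L]}\cap L=[I,L]$ by part a).

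Thus everything comes down to the central case: if $\bar I$ is central in $\bar L$ then $(\bar I\bar J+\bar J^p)\cap\bar L=0$. Here I would choose a PBW basis of $\bar A$ whose initial segment is a basis of $\bar I$; since these basis vectors are central, left multiplication by them produces no commutator corrections, so $\bar I\bar J$ is spanned by PBW monomials of degree $\geqslant 2$, and in particular the degree-one component (in this basis) of every element of $\bar I\bar J$ is zero. By the fact of the first paragraph the degree-one component of every element of $\bar J^p$ is likewise zero, hence so is that of every element of $\bar I\bar J+\bar J^p$; but on $\bar L\subset\bar A$ this component is the identity, so $(\bar I\bar J+\bar J^p)\cap\bar L=0$. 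The only genuinely non-formal ingredient is the PBW bookkeeping of the first paragraph — that the degree-one part of a product of $\geqslant p$ generators is a sum of length-$\geqslant p$ brackets, and therefore vanishes when $C_p(L)=0$; everything else is manipulation of the two-sided ideals $J_I$, $J_{[I,L]}$ and the identifications $A/J_I\cong U(L/I)$.
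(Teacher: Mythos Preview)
Your argument is correct and follows exactly the route the paper indicates: the paper does not spell out a proof of this lemma but only says that it is obtained from the PBW fact $L\cap J^p=0$ of Sect.~\ref{S1.2} together with the universal property of enveloping algebras, and your write-up supplies precisely these details---passing to $U(L/I)$ for part~a), and to $U(L/[I,L])$ (where the image of $I$ becomes central, so $\bar I\bar J$ is supported in PBW degree $\geqslant 2$) for part~b). The only point worth making explicit is that your first-paragraph claim is really the statement $\pi_1(J^n)\subseteq C_n(L)$ (the degree-one PBW component of a product of $n$ elements of $L$ is a linear combination of length-$n$ brackets), which is slightly stronger than the intersection statement $J^n\cap L=C_n(L)$ but follows by the same straightening argument you sketch; with that granted, everything goes through.
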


 Consider relation \eqref{E2.5} and choose $\wt{e}^{\,sp}=
\sum _{\iota }t^{-\iota }l_{\iota }^{sp}$ such that 
for all $\iota\in\mathfrak{A}^0(p)$ with $\op{ch}(\iota )\geqslant 1$, 
$l_{\iota }^{sp}=D^{\dag }_{\iota 0}$ if $\iota\in\mathfrak{A}^0(p)$ 
and $l_{\iota }^{sp}=0$, otherwise. 
In other words, the part of $\wt{e}^{\,sp}$ which \lq\lq disappears 
under $\c V_{\c K}$\rq\rq\  
coincides with $\sum\limits _{\op{ch}(\iota )
\geqslant 1 }t^{-\iota }D_{\iota 0}^{\dag }$. 

Note that 
$\wt{\exp}\,(U*\bar{e}^{\,(q)})\equiv \wt{\exp}\,\bar{e}^{\,(q)}+
\bar{\c E}U\,\op{mod}\,  
\bar{\c A}_{\c K}U^2$, 
where 
$$\bar{\c E}=\hskip -5pt 
\sum _{\substack{s\geqslant 1\\a_i\in \Z ^0(p)}}\hskip -5pt
\eta (a_1,\dots ,a_s)
t^{-(a_1+\dots +a_s)q+b^*}(a_1+\dots +a_s)D_{a_10}\dots 
D_{a_s0}\, .$$

Apply $\wt{\exp }$ to \eqref{E2.7} and find a 
lift $\bar x$ of $\tilde x$ to $\bar{\c N}^{(q)}$ 
such that  
$$\wt{\exp }\,\bar e^{(q)}+\bar{\c E}U\equiv  
 (1+U\sigma\bar x)
\left (\wt{\exp}\,\bar e^{(q)}+U
\sum\limits _{\op{ch}(\iota )\geqslant 1}
t^{-\iota }\c V_kB^{\dag }_k(D^{\dag }_{\iota 0})\right )(1-U\bar x)$$
modulo $\bar{\c J}_{\c K}^pU+\bar{\c A}_{\c K}U^2$ (proceed similarly 
to the proof of Prop.\ref{P1.4}). 
Comparing the coefficients for $U$   
and setting $\c V_kB^{\dag }_k(D^{\dag }_{\iota 0})=V_{\iota 0}$ 
we obtain in $\bar{\c A}_{\c K}$ the following 
congruence modulo $\bar{\c J}_{\c K}^{\,p}$ 
\begin{equation} \label{E4.2} 
\sigma \bar x-\bar x+\sum\limits _{\iota }t^{-\iota }V_{\iota 0}\equiv 
\bar{\c E}+(\wt{\exp}\,\bar e^{\,(q)}-1)\cdot \bar x-
\sigma \bar x\cdot (\wt{\exp}\,\bar e^{\,(q)}-1)\, .
\end{equation} 
This equality gives a recurrent procedure to determine uniquely the elements 
$\bar x\in \sum\limits _{\op{ch}(\iota )\geqslant 1}
t^{-\iota }\bar{\c L}_k+t^{(p-1)b^*}\bar{\c N}^{(q)}$ and $V_{\iota 0}\in\bar{\c L}_k$. 
\medskip

\subsection{Some combinatorial identities} \label{S4.4}

Let 
$$-e_{[\,0,\,N^*)}:=(-e)\circ (-\sigma e)\circ\ldots\circ  (-\sigma ^{N^*-1}e)$$ 
and introduce 
the constants $\eta ^o(\bar a,\bar n)\in k$ by the following congruence 
$$\wt{\exp}(-e_{[\,0,\,N^*)})\equiv \sum \eta ^o(\bar a,\bar n)D_{(\bar a,\bar n)}\,
\op{mod}\,\c J_{\c K}^p.$$

Set $\eta ^o(\bar a):=\eta ^o(\bar a,\bar 0)$.

It can be easily seen that if 
there is a partition from the  
definition of $\eta $-constants in Sect.\ref{S4.2} such that  $m_1<m_2<\dots <m_r$ then 
$$\eta ^o(\bar a, \bar n)_s=\sigma ^{m_1}
\eta ^o(\bar a^{(1)})\cdot \sigma ^{m_2}
\eta ^o(\bar a^{(2)})\cdot 
\ldots \cdot \sigma ^{m_r}\eta ^o(\bar a^{(r)})\, .$$
Otherwise,  $\eta ^o(\bar a,\bar n)_s=0$.  

If there is no risk of confusion we just write  $\eta (1,\dots ,s)$ instead of 
$\eta (\bar a,\bar n)_s$ and use 
the similar agreement for $\eta ^o$. E.g. the equalities 
$$e_{(N^*,\,0]}\circ (-e_{[\,0,\,N^*)})=e_{[\,0,\,N^*)}\circ (-e_{(\,N^*,\,0]})=0$$ 
can be written as the following identities
\begin{equation} \label{E4.3} 
\sum _{0\leqslant s_1\leqslant s}\eta (1,\dots ,s_1)\eta ^o(s_1+1,\dots ,s)=
\qquad\qquad\qquad\qquad\qquad 
\end{equation} 
$$\qquad\qquad\qquad\qquad 
\sum _{0\leqslant s_1\leqslant s}\eta ^o(1,\dots ,s_1)
\eta (s_1+1,\dots ,s)=\delta _{0s} 
$$
(here $\delta _{0s}$ is the Kronecker symbol). 
\medskip



 For $1\leqslant  s_1\leqslant s<p$, 
consider the subset $\Phi _{ss_1}$ of
permutations $\pi $ of order $s$ such that $\pi (1)=s_1$
and for any $1\leqslant l\leqslant s$,
the subset $\{\pi (1),\dots ,\pi (l)\}$ of the
segment $[1,s]$ is \lq\lq connected\rq\rq ,
i.e. there exists $n(l)\in\N$ such that
$$\{\pi (1),\dots ,\pi (l)\}=\{n(l),n(l)+1,\dots ,n(l)+l-1\}.$$
By definition, we set $\Phi _{s0}=\Phi _{s,s+1}=\emptyset $.
\medskip   

Set $B_{s_1}(1,\dots ,s)=\sum_{\pi\in\Phi _{ss_1} }
\eta (\pi (1),\dots ,\pi (s))$. 
\medskip 

Note that: 

--- $B_0(1,\dots ,s)=B_{s+1}(1,\dots ,s)=0$; 

--- $B_1(1,\dots ,s)=\eta (1,2,\dots ,s)$; 

--- $B_s(1,\dots ,s)=\eta (s,s-1,\dots ,1)$.

\begin{Lem} \label{L4.3} Suppose $0\leqslant s_1\leqslant s<p$. Then: 

{\rm a)}\ $B_{s_1}(1,\dots ,s)+B_{s_1+1}(1,\dots ,s)=\eta (s_1,\dots ,1)
\eta (s_1+1,\dots ,s)$; 

{\rm b)}\ $\eta ^o(1,\dots ,s)=(-1)^s\eta (s,s-1,\dots ,1)$;

{\rm c)}\ for indeterminates $X_1,\dots ,X_s$, 
$$\sum_{\substack{1\leqslant s_1\leqslant s \\ \pi\in\Phi _{ss_1}}} 
(-1)^{s_1-1}X_{\pi ^{-1}(1)}\dots X_{\pi ^{-1}(s)}=
[\dots [X_1,X_2],\dots ,X_s].$$
\end{Lem}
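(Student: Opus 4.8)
The three parts are logically independent and I would prove them separately. For (a), the first step is a combinatorial description of $\Phi _{ss_1}$: a permutation $\pi $ of $\{1,\dots ,s\}$ with $\pi (1)=s_1$ has all prefixes $\{\pi (1),\dots ,\pi (l)\}$ connected if and only if, in the word $(\pi (1),\dots ,\pi (s))$, the entries smaller than $s_1$ occur in the decreasing order $s_1-1,\dots ,1$ and the entries larger than $s_1$ occur in the increasing order $s_1+1,\dots ,s$. (If $\pi (l)$ is the first entry below $s_1$, then $\{\pi (1),\dots ,\pi (l-1)\}$ is an interval with lower end $s_1$, so connectedness forces $\pi (l)=s_1-1$, and one continues by induction, symmetrically on the right; conversely, adjoining to an interval the next integer at its bottom or top keeps it an interval.) Hence $B_{s_1}(1,\dots ,s)$ is the sum of $\eta (s_1,w)$ over all shuffles $w$ of the words $(s_1-1,\dots ,1)$ and $(s_1+1,\dots ,s)$. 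Applying the multiplicativity rule $\mathrm{II}_e$) to $\eta (s_1,s_1-1,\dots ,1)\,\eta (s_1+1,\dots ,s)$ expresses it as the sum of $\eta $ over all shuffles of $(s_1,s_1-1,\dots ,1)$ with $(s_1+1,\dots ,s)$; the first letter of such a shuffle is $s_1$ or $s_1+1$, and deleting it leaves a shuffle of $(s_1-1,\dots ,1)$ with $(s_1+1,\dots ,s)$, resp. of $(s_1,\dots ,1)$ with $(s_1+2,\dots ,s)$, so the two cases sum to $B_{s_1}(1,\dots ,s)+B_{s_1+1}(1,\dots ,s)$. The degenerate values $s_1=0$ and $s_1=s$, where $B_0=B_{s+1}=0$ and one word is empty, are consistent via the convention $\eta (\emptyset )=1$.

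For (b), I would work in the enveloping algebra $\c A_{\c K}$ of $\c L$ with its antipode $S$, a $\c K$-linear anti-automorphism with $S(l)=-l$ for $l\in \c L_{\c K}$, hence $S(D_{a_1n_1}\cdots D_{a_sn_s})=(-1)^sD_{a_sn_s}\cdots D_{a_1n_1}$ and $S(\c J^p_{\c K})=\c J^p_{\c K}$. Since $\wt{\exp}$ intertwines $\circ $ with multiplication modulo $\c J^p$ and $(-e_{[\,0,\,N^*)})\circ e_{(N^*,\,0]}=0$ (the content of \eqref{E4.3}), the element $\wt{\exp}(-e_{[\,0,\,N^*)})$ is a left inverse of $a:=\wt{\exp}(e_{(N^*,\,0]})$ modulo $\c J^p_{\c K}$. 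On the other hand $a$ is diagonal modulo degree $p$, so the Hopf identity $m(S\otimes \id )\Delta =\epsilon $, read modulo $(\c J_{\c K}\otimes 1+1\otimes \c J_{\c K})^p$, shows $S(a)$ is a left inverse of $a$ as well; since $a\in 1+\c J_{\c K}$ is a unit modulo $\c J^p$, left inverses are unique, so $\wt{\exp}(-e_{[\,0,\,N^*)})\equiv S(a)\,\op{mod}\,\c J^p_{\c K}$. Comparing coefficients of $D_{(\bar a,\bar n)}$ on both sides, using $a\equiv \sum \eta (\bar a,\bar n)D_{(\bar a,\bar n)}$ and $\wt{\exp}(-e_{[\,0,\,N^*)})\equiv \sum \eta ^o(\bar a,\bar n)D_{(\bar a,\bar n)}$, gives $\eta ^o(\bar a,\bar n)_s=(-1)^s\eta ({\bar a}^{\,\mathrm{rev}},{\bar n}^{\,\mathrm{rev}})_s$; specializing $\bar n=\bar 0$ yields (b). (One can also argue by induction from \eqref{E4.3}, but that route needs the auxiliary identity $\sum _{0\leqslant s_1\leqslant s}(-1)^{s_1}\eta (1,\dots ,s_1)\eta (s,\dots ,s_1+1)=0$, itself a consequence of $\mathrm{II}_e$).)

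For (c), I would induct on $s$, the case $s=1$ being trivial. For the step, write $[\dots [X_1,X_2],\dots ,X_s]=YX_s-X_sY$ with $Y=[\dots [X_1,X_2],\dots ,X_{s-1}]$, and insert the inductive formula for $Y$. On the other side, for $\pi \in \Phi _{ss_1}$ connectedness of $\{\pi (1),\dots ,\pi (s-1)\}$ forces $\pi (s)\in\{1,s\}$: the $\pi $ with $\pi (s)=s$ are precisely the elements of $\Phi _{s-1,s_1}$ extended by $\pi (s)=s$, contributing $\big(\sum (-1)^{s_1-1}X_{\pi ^{-1}(1)}\cdots X_{\pi ^{-1}(s-1)}\big)X_s=YX_s$; the $\pi $ with $\pi (s)=1$ correspond, after the value-shift $i\mapsto i-1$ on $\{2,\dots ,s\}$ and extension by $\pi (s)=1$, to the elements of $\Phi _{s-1,s_1-1}$, and since $(-1)^{s_1-1}=-(-1)^{(s_1-1)-1}$ they contribute $-X_sY$; summing gives $YX_s-X_sY$. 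The one genuinely non-routine point is the combinatorial bijection in (a) --- identifying $\Phi _{ss_1}$ with ``$s_1$ prepended to a shuffle'' and matching the split-by-first-letter of $\mathrm{II}_e$) against $B_{s_1}+B_{s_1+1}$, while keeping the boundary values $B_0=B_{s+1}=0$ consistent; parts (b) and (c) are formal once the antipode picture, resp. the $\pi (s)\in\{1,s\}$ dichotomy, is set up.
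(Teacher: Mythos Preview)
Your proof is correct. Part (a) is exactly the paper's argument, only spelled out: the paper's one-line hint ``all insertions of $(s_1,\dots ,1)$ into $(s_1+1,\dots ,s)$ are connected and start either with $s_1$ or $s_1+1$'' is precisely your identification of $\Phi _{ss_1}$ with $s_1$ prepended to a shuffle, followed by the split of $\mathrm{II}_e)$ by the first letter.

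Parts (b) and (c) are where you genuinely diverge. For (b) the paper does \emph{not} invoke the antipode; instead it takes the alternating sum of (a) over $s_1$, which telescopes (via $B_0=B_{s+1}=0$) to $\sum_{s_1}(-1)^{s_1}\eta (s_1,\dots ,1)\eta (s_1+1,\dots ,s)=\delta _{0s}$, and then compares this with the convolution identity \eqref{E4.3} to conclude by the uniqueness of the convolution inverse. Your Hopf-algebra argument is cleaner in that it gives the reversed identity $\eta ^o(\bar a,\bar n)=(-1)^s\eta (\bar a^{\mathrm{rev}},\bar n^{\mathrm{rev}})$ for all $\bar n$ at once, and it decouples (b) from (a); the paper's route has the virtue of being purely combinatorial and of making the dependence on \eqref{E4.3} explicit. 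For (c) the paper again gives a one-line direct argument --- expand $[\dots [X_1,X_2],\dots ,X_s]$ and observe that the monomials $X_{i_1}\cdots X_{i_s}$ appearing are exactly those for which each set $\{j\mid i_j\leqslant l\}$ is an interval of length $l$, which is the defining condition of $\bigcup_{s_1}\Phi _{ss_1}$ read through $\pi ^{-1}$ --- whereas your induction on $s$ via the dichotomy $\pi (s)\in\{1,s\}$ reaches the same conclusion recursively. Both are standard; your version is perhaps easier to verify sign-by-sign.
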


\begin{proof} 
 a) Use that all insertions of 
 $(s_1,...,1)$ into 
$(s_1+1,\dots s)$ are  
 \lq\lq connected\rq\rq\  and start either with $s_1$ or $s_1+1$.
 
 b) Clearly, part a) implies that 
 $$\sum _{0\leqslant s_1\leqslant s}(-1)^{s_1}
 \eta (s_1,\dots ,1)\eta (s_1+1,\dots ,s)=\delta _{0s}\, .$$
 Then our statement follows from above relation \eqref{E4.3}. 
 
 c) Use that the right-hand side is a linear 
 combintion of the monomials $X_{i_1}\dots X_{i_s}$ such that 
 for any $l\geqslant 1$, $\{j\ |\ i_j\in [1,l]\}$ is 
 a ``connected'' segment of consecutive $l$ integers. 
\end{proof}
 
 \subsection{Lie elements $\bar{\c F}[\iota ]$ and $\bar{\c F}[\iota ]_0$} \label{S4.5} 

Introduce the following notation:
\medskip 

--- \ $\bar n=(n_1,\dots ,n_s)\geqslant M$ means that all  
$n_i\geqslant M$. Similarly, we interpret $\bar n>M$, $\bar n\leqslant M$ and $\bar n<M$. 
\medskip

--- \ $\gamma (\bar a,\bar n)=a_1p^{n_1}+\dots +a_sp^{n_s}$. 
\medskip

 For $1\leqslant s_1\leqslant s$, let 
$\gamma ^*_{[s_1,s]}(\bar a,\bar n)=\sum\limits _{s_1\leqslant u\leqslant s}a_up^{n_u}$  
where $n_u^*=0$ if $n_u=M(\bar n):=\max\{n_1,\dots ,n_s\}$ and 
$n_u^*=-\infty $  (i.e. $p^{n_u^*}=0$), otherwise. 
\medskip 

 For $\iota\in\mathfrak{A}^0_1$, introduce 
$$\bar{\c F}[\iota ]=\sum _{(\bar a,\bar n)}\sum _{1\leqslant s_1\leqslant s}
 \eta ^o(1,\dots ,s_1-1)\eta (s_1,\dots ,s)
 \gamma _{[s_1,s]}^{*}(\bar a,\bar n)D_{(\bar a,\bar n)}\in\bar{\c A}_k\, .$$ 
Here the first sum is taken over 
 all $(\bar a,\bar n)$ of lengths $1\leqslant s<p$ such that $\bar n\geqslant 0$ and  
 $\gamma (\bar a,\bar n)-p^{M(\bar n)}b^*=\iota $. Note that 
 $M(\bar n)$ depends only on $\iota $ and, therefore, all non-zero summands 
 in $\bar{\c F}[\iota ]$ depend on $(\bar a,\bar n)$ with the same $M(\bar n)$. 
 \medskip

 Let $\bar{\c F}[\iota ]_0$ be a part of the above sum taken under 
the condition $m(\bar n):=\min\{n_1,\dots ,n_s\}=0$. 
Then for any $\iota\in\mathfrak{A}^0_1$ and $m\geqslant 0$,  
$$\sigma ^m\bar{\c F}[\iota ]_0+\sigma ^{m-1}
\bar{\c F}[\iota p]_0+\dots +\bar{\c F}[\iota p^m]_0=\bar{\c F}[\iota p^m]\, .$$
In particular, $\bar{\c F}[\iota ]=
\sum _{\iota ', m}\sigma ^m\bar{\c F}[\iota ']_0$ where the sum is taken over 
all $\iota '\in\mathfrak {A}^0_1$ and $m\geqslant 0$ such that 
$\iota 'p^m=\iota $. 

\begin{Prop} \label{P4.4}
If $\iota =qp^m\alpha -p^mb^*\in\mathfrak{A}^0_1$ (standard notation) 
then 
$\bar{\c F}[\iota p^n]=\sigma ^{m+n}\bar{\c F}^0_{\alpha , -(m+n)}$.
\end{Prop}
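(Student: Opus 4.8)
The plan is to prove Proposition \ref{P4.4} as an identity in the enveloping algebra $\bar{\c A}_k$, where $\bar{\c F}[\iota ]$ is already given as a combination of monomials $D_{(\bar a,\bar n)}$ and where the right-hand side $\sigma ^{m+n}\bar{\c F}^0_{\alpha ,-(m+n)}$, a combination of nested Lie brackets, lives as well. Since the Poincar\'e--Birkhoff--Witt embedding $\bar{\c L}_k\subset\bar{\c A}_k$ stays injective modulo degree $p$, cf. Sect.\ref{S1.2}, it is enough to match the coefficient of every monomial $D_{(\bar b,\bar l)}=D_{b_1l_1}\cdots D_{b_sl_s}$ with $s<p$; this will at the same time show that $\bar{\c F}[\iota ]$ is a Lie element.

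Set $M=m+n$. First I would rewrite the right-hand side: using Lemma \ref{L4.3}(c) I expand each bracket $[\dots [D_{a_1,0},D_{a_2,n_2}],\dots ,D_{a_s,n_s}]$ into the signed sum $\sum _{1\leqslant s_1\leqslant s,\,\pi\in\Phi _{ss_1}}(-1)^{s_1-1}D_{a_{\pi ^{-1}(1)},n_{\pi ^{-1}(1)}}\cdots D_{a_{\pi ^{-1}(s)},n_{\pi ^{-1}(s)}}$, and then apply $\sigma ^M$, which acts by $D_{a,j}\mapsto D_{a,j+M}$ on generators and by the $M$-th power of Frobenius on $k$. Here I use the $\sigma $-equivariance $\sigma ^M(\eta (\bar a,\bar n)_s)=\eta (\bar a,\bar n+(M,\dots ,M))_s$, together with the analogous formula for $\eta ^o$, both immediate from the block-product description of these constants in Sect.\ref{S4.2},\ref{S4.4}. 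Under $\bar n\mapsto \bar n+(M,\dots ,M)$ the constraint $n_i\geqslant -M$ becomes $n_i+M\geqslant 0$, the condition $n_1=0$ becomes ``first slot sits at the top level $M=M(\bar n+(M,\dots ,M))$'', and the constraint $\gamma (\bar a,\bar n)=\alpha $ becomes, after accounting for the relation $\iota =qp^m\alpha -p^mb^*$, the exponent condition defining $\bar{\c F}[\iota p^n]$; so after this reindexing the two index sets coincide.

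It then remains, for each target monomial $D_{(\bar b,\bar l)}$, to identify the two coefficients. On the $\bar{\c F}[\iota p^n]$ side it is $\sum _{s_1}\eta ^o(b_1,\dots ,b_{s_1-1};l_1,\dots ,l_{s_1-1})\,\eta (b_{s_1},\dots ,b_s;l_{s_1},\dots ,l_s)\,\gamma ^*_{[s_1,s]}(\bar b,\bar l)$, nonzero only when $\bar l$ rises on $[1,s_1-1]$, falls on $[s_1,s]$, and its maximum $M(\bar l)$ is realised at slot $s_1$. On the $\sigma ^M\bar{\c F}^0$ side it is a signed sum over the pairs $(s_1,\pi )$ that rearrange some admissible $(\bar a,\bar n)$ into $(\bar b,\bar l)$, weighted by $a_1$ (the coefficient carried by the top-level slot). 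I would show these agree in two steps: first, the alternating sum over the connected permutations $\Phi _{ss_1}$ collapses — exactly as in the passages of \cite{Ab1,Ab2} that compute such Lie-element expansions — by means of Lemma \ref{L4.3}(a) (the recursion $B_{s_1}+B_{s_1+1}=\eta (s_1,\dots ,1)\eta (s_1+1,\dots ,s)$) and Lemma \ref{L4.3}(b), so that $\sum _{\pi }(-1)^{s_1-1}(\dots )$ telescopes into the product $\eta ^o(1,\dots ,s_1-1)\eta (s_1,\dots ,s)$; second, summing the weight $a_1$ over those rearrangements produces precisely $\gamma ^*_{[s_1,s]}(\bar b,\bar l)=\sum _{u:\,s_1\leqslant u\leqslant s,\ l_u=M(\bar l)}b_u$, i.e. the sum over the falling block of the coefficients sitting at the top level. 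To get the statement for all $n$ at once it then suffices to treat the ``primitive'' pieces: the left-hand side splits via the telescoping relation $\bar{\c F}[\iota p^m]=\sum _{j=0}^m\sigma ^{m-j}\bar{\c F}[\iota p^j]_0$ recorded before the proposition, while $\sigma ^M\bar{\c F}^0_{\alpha ,-M}$ splits compatibly by the value of the minimal exponent, reducing everything to the identification of $\bar{\c F}[\iota ]_0$ with the bottom-level part of $\sigma ^m\bar{\c F}^0_{\alpha ,-m}$.

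The main obstacle is the coefficient bookkeeping of the third paragraph, and in particular the handling of the ``plateau'' at the peak (several slots at level $M(\bar l)$): this is exactly why the single weight $a_1$ must be replaced by the sum $\gamma ^*_{[s_1,s]}$ of all top-level coefficients in the falling block, and why the term must vanish whenever the peak is not realised inside $[s_1,s]$. Getting the admissible range of $s_1$ right and checking that all spurious permutations cancel is the delicate point, and it is where the combinatorial identities of Lemma \ref{L4.3} together with \eqref{E4.3} do the real work; I expect no new idea beyond them is needed.
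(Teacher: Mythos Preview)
Your plan is essentially the paper's own proof run in reverse: the paper starts from $\sigma^{-(m+n)}\bar{\c F}[\iota p^n]$, applies Lemma~\ref{L4.3}(b) to turn $\eta^o$ into $(-1)^{s_1-1}\eta(s_1-1,\dots,1)$, Lemma~\ref{L4.3}(a) to turn the product into $B_{s_1-1}+B_{s_1}$, an Abel summation to pass from $\gamma^*_{[s_1,s]}$ to the single term $a_{s_1}p^{n_{s_1}^*}$, a reindexing by $\pi$, and finally Lemma~\ref{L4.3}(c) to recognise the commutator. You propose the same chain backwards, and you correctly name every tool that is needed. The detour through the primitive pieces $\bar{\c F}[\iota]_0$ is unnecessary: the paper handles all $n$ at once simply by carrying the constraint $\bar n\geqslant -(m+n)$ through the calculation.

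Where your write-up wobbles is the ``first \dots\ second \dots'' split of the coefficient comparison. For a fixed target monomial $D_{(\bar b,\bar l)}$ and a fixed $s_1$, every $\pi\in\Phi_{ss_1}$ has $\pi(1)=s_1$, so the weight $a_1$ you extract is always $b_{s_1}$; there is no genuine sum over rearrangements that would produce $\gamma^*_{[s_1,s]}$. Likewise, the $\pi$-sum for fixed $s_1$ gives $B_{s_1}$, not $\eta^o\cdot\eta$. What actually happens is a single Abel summation in $s_1$: using $b_{s_1}p^{l_{s_1}^*}=\gamma^*_{[s_1,s]}(\bar b,\bar l)-\gamma^*_{[s_1+1,s]}(\bar b,\bar l)$, one rewrites
\[
\sum_{s_1}(-1)^{s_1-1}B_{s_1}(\bar b,\bar l)\,b_{s_1}p^{l_{s_1}^*}
=\sum_{s_1}(-1)^{s_1-1}\bigl(B_{s_1}+B_{s_1-1}\bigr)(\bar b,\bar l)\,\gamma^*_{[s_1,s]}(\bar b,\bar l),
\]
and only then do Lemma~\ref{L4.3}(a),(b) collapse $(-1)^{s_1-1}(B_{s_1}+B_{s_1-1})$ into $\eta^o(1,\dots,s_1-1)\eta(s_1,\dots,s)$. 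So the two effects you describe are not sequential but intertwined through this summation-by-parts; once you reorganise your third paragraph around it, the argument goes through and matches the paper's.
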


\begin{proof} We have 
$$\sigma ^{-(m+n)}\bar{\c F}[\iota p^n]=
\sum _{\substack{1\leqslant s_1\leqslant s<p\\ (\bar a,\bar n)}}
\eta ^o(1,\dots ,s_1-1)\eta (s_1,\dots ,s)
\gamma _{[s_1,s]}^{*}(\bar a,\bar n)D_{(\bar a,\bar n)}\, ,$$ 
where the sum is taken for $(\bar a,\bar n)$ with $M(\bar n)=0$, 
$\bar n\geqslant -(m+n)$  
and $\gamma (\bar a,\bar n)=\alpha $. By Lemma \ref{L4.3},   
$\eta ^o(1,\dots ,s_1-1)=(-1)^{s_1-1}\eta (s_1-1,\dots ,1)$,  
and we obtain 
$$\sum _{\substack{1\leqslant s_1\leqslant s<p \\ (\bar a,\bar n) }}
(-1)^{s_1-1}(B_{s_1-1}(1,\dots ,s)+
B_{s_1}(1,\dots ,s))\gamma ^*_{[s_1,s]}(\bar a,\bar n)D_{(\bar a\bar n)}=$$

$$\sum _{\substack{1\leqslant s_1\leqslant s<p \\ (\bar a,\bar n)}}
(-1)^{s_1-1}B_{s_1}(1,\dots ,s)(\gamma ^*_{[s_1,s]}(\bar a,\bar n)-
\gamma ^*_{[s_1+1,s]}(\bar a,\bar n)){D}_{(\bar a,\bar n)}=$$

$$\sum_{\substack{1\leqslant s<p \\ (\bar a,\bar n)}}
\sum_{1\leqslant s_1\leqslant s} (-1)^{s_1-1}
B_{s_1}(1,\dots ,s)a_{s_1}p^{n_{s_1}^*}{D}_{(\bar a\bar n)}=$$

$$\sum_{\substack{1\leqslant s<p \\ (\bar a,\bar n)}}
\sum_{\substack{1\leqslant s_1\leqslant s\\ \pi\in\Phi _{ss_1}}}
(-1)^{s_1-1}\eta ({\pi (1)},\dots ,{\pi (s)})
a_{s_1}p^{n_{s_1}^*}D_{(\bar a,\bar n)}=$$

$$ \sum_{\substack{1\leqslant s<p \\ (\bar a,\bar n)}}
\eta (1,\dots ,s)a_1
\sum_{\substack{1\leqslant s_1\leqslant s\\ \pi\in\Phi _{ss_1} }}
(-1)^{s_1-1}D_{a_{\pi ^{-1}(1)}n_{\pi ^{-1}(1)}}\dots
D_{a_{\pi ^{-1}(s)}n_{\pi ^{-1}(s)}} =$$
$$\sum _{\substack{1\leqslant s<p\\(\bar a,\bar n)}}
\eta (1,\dots ,s)a_1[\dots [D_{a_1n_1},D_{a_2n_2}],\dots ,D_{a_sn_s}]
=\bar{\c F}^0_{\alpha ,-(m+n)}\,.$$
The proposition is proved. 
\end{proof} 

\begin{Cor} \label{C4.5}
 All $\bar{\c F}[\iota p^m]$ and $\bar{\c F}[\iota p^m]_0$ belong to $\bar{\c L}_k$.
\end{Cor}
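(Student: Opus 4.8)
The plan is to derive the statement directly from Proposition \ref{P4.4} together with a short triangular inversion; I expect no genuine obstacle here, since all the combinatorial substance has already been absorbed into Proposition \ref{P4.4}, where the sums of $\eta^o$- and $\eta$-constants against the monomials $D_{(\bar a,\bar n)}$ are shown, via Lemma \ref{L4.3}, to collapse to a genuine Lie bracket.

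First I would record that, by its definition in Sect.\ref{S4.2}, the element $\c F^0_{\alpha,-N}$ is a $k$-linear combination of left-normed iterated commutators $[\dots[D_{a_1n_1},D_{a_2n_2}],\dots,D_{a_sn_s}]$ of the generators of $\c L_k$; hence $\c F^0_{\alpha,-N}\in\c L_k$ and its image $\bar{\c F}^0_{\alpha,-N}$ lies in $\bar{\c L}_k$. Since the $\sigma$-linear automorphism $\sigma$ only shifts the second index of the $D_{an}$, it preserves the weight function $\op{wt}$, hence the filtration $\{\c L(s)\}_{s\geqslant 1}$, and therefore descends to an automorphism of $\bar{\c L}$ carrying $\bar{\c L}_k$ into itself. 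Consequently $\sigma^{\,k}\bar{\c F}^0_{\alpha,-k}\in\bar{\c L}_k$ for all $k\geqslant 0$, and Proposition \ref{P4.4}, which identifies $\bar{\c F}[\iota p^n]$ with such an iterate $\sigma^{\,m+n}\bar{\c F}^0_{\alpha,-(m+n)}$ (here $\iota=qp^m\alpha-p^mb^*$ is the standard presentation), yields $\bar{\c F}[\iota p^n]\in\bar{\c L}_k$ for every $\iota\in\mathfrak{A}^0_1$ and $n\geqslant 0$. This disposes of the elements $\bar{\c F}[\iota p^m]$.

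For the elements $\bar{\c F}[\iota p^m]_0$ I would invert the identity displayed just before Proposition \ref{P4.4},
$$\sigma^m\bar{\c F}[\iota]_0+\sigma^{m-1}\bar{\c F}[\iota p]_0+\dots+\bar{\c F}[\iota p^m]_0=\bar{\c F}[\iota p^m]\,,$$
which is lower-triangular in $m$ with identity on the diagonal. For $m=0$ it reads $\bar{\c F}[\iota]_0=\bar{\c F}[\iota]\in\bar{\c L}_k$. For $m\geqslant 1$, applying $\sigma$ to the same identity with $m$ replaced by $m-1$ and subtracting from it gives $\bar{\c F}[\iota p^m]_0=\bar{\c F}[\iota p^m]-\sigma\bar{\c F}[\iota p^{m-1}]$, and both terms on the right lie in $\bar{\c L}_k$ by the previous paragraph ($\bar{\c F}[\iota p^{m-1}]$ is covered by Proposition \ref{P4.4} since $m-1\geqslant 0$). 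Hence $\bar{\c F}[\iota p^m]_0\in\bar{\c L}_k$ as well. The only point that needs a line of care is the index bookkeeping in this recursion — verifying that $\iota p^{m-1}$, and in the base case $\iota$ itself, is a legitimate argument of $\bar{\c F}$ — which is immediate from the standard presentation $\iota=qp^m\alpha-p^mb^*$.
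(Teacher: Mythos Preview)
Your argument is correct and is exactly the route the paper intends: Proposition~\ref{P4.4} identifies $\bar{\c F}[\iota p^n]$ with $\sigma^{m+n}\bar{\c F}^0_{\alpha,-(m+n)}\in\bar{\c L}_k$, and the triangular identity immediately preceding Proposition~\ref{P4.4} then gives $\bar{\c F}[\iota p^m]_0=\bar{\c F}[\iota p^m]-\sigma\bar{\c F}[\iota p^{m-1}]\in\bar{\c L}_k$. The paper leaves this as an unproved corollary precisely because both steps are immediate; your write-up only makes them explicit. One notational remark: the $m$ in the standard presentation $\iota=qp^m\alpha-p^mb^*$ is not the same as the exponent $m$ in $\bar{\c F}[\iota p^m]$, so your closing sentence conflates two indices, but this is harmless for the argument.
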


\subsection{Solving recurrent relation \eqref{E4.2}} \label{S4.6} 
For $\iota\in\mathfrak{A}^0$, let 
\medskip

--- \ $m(\iota ):=\max\{m\ |\ \iota p^m \in\mathfrak{A}^0\}
(=\max\{m\ |\ |\iota p^m|\leqslant (p-1)b^*\})$. 
\medskip 

--- \ $\mathfrak{A}_1^{\op{prim}}=\mathfrak{A}^0_1\setminus p\mathfrak{A}^0_1$ 
(note that $\mathfrak{A}^0_1(p)=\{\iota\in\mathfrak{A}^{\op{prim}}_1\ |\ \iota >0\}$). 
\medskip 

As earlier, set $\bar{\c D}:=[\bar{\c L}[v_0],\bar{\c L}]$, 
$\bar{\c L}[v_0](s):=\bar{\c L}[v_0]+\bar{\c L}(s)$ 
and $\bar{\c D}(s):=\bar{\c D}+\bar{\c L}(s)$. Clearly, 
$\bar{\c L}[v_0]=\bar{\c L}[v_0](p)$ and 
$\bar{\c D}=\bar{\c D}(p)$.

\begin{Prop} \label{P4.6}  
{\rm a)}\ $\bar x\equiv \sum \limits _{\iota ,m}
 \bar{\c F}[\iota p^m]
 t^{-\iota p^m}\,\op{mod}\,\bar{\c L}[v_0]_{\c K}$, 
 where the sum is taken over all $i\in\mathfrak{A}^{\op{prim}}_1$ 
 and $m\geqslant 0$; 
 \medskip 
 
 {\rm b)}\ if $\iota\in\mathfrak{A}_1^0(p)$, 
 then 
 $V_{\iota 0}\equiv -
 \sigma ^{-m(\iota )}\bar{\c F}[\iota p^{m(\iota )}]\,
 \op{mod}\,\bar{\c D}_k\, .$
\end{Prop}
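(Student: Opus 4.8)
The statement to prove is Proposition \ref{P4.6}, which identifies, modulo the ideals $\bar{\c L}[v_0]_{\c K}$ and $\bar{\c D}_k$, the unique solutions $\bar x$ and $V_{\iota 0}$ of the recurrent relation \eqref{E4.2}. The plan is to solve \eqref{E4.2} by the same weight-filtration induction used throughout Sect.\,\ref{S2}--\ref{S3}, tracking the ``$\op{ch}$-grading'' and the central filtration $\{\bar{\c L}(s)\}$ simultaneously, and to recognize the resulting closed-form solution as the combinatorial expression $\sum_{\iota,m}\bar{\c F}[\iota p^m]t^{-\iota p^m}$. First I would rewrite \eqref{E4.2} in the form
$$(\sigma-\id)\bar x = \bar{\c E} + (\wt{\exp}\,\bar e^{\,(q)}-1)\bar x - \sigma\bar x\,(\wt{\exp}\,\bar e^{\,(q)}-1) - \sum_\iota t^{-\iota}V_{\iota 0}\,,$$
and observe that, since $\sigma$ is topologically nilpotent on the relevant completed module (the elements $t^{-\iota}$ with $\op{ch}(\iota)\geq 1$ live in $t^{b^*}\wt{\c N}^{(q)}$, where $\sigma$ acts nilpotently modulo each $\bar{\c L}(s)$), the solution is obtained by the usual Artin--Schreier iteration $\bar x = -\sum_{m\geq 0}\sigma^m(\text{RHS without the }\sigma\bar x\text{ term})$, with the $V_{\iota 0}$ chosen to kill exactly the part of the right-hand side whose ``$t^{-\iota}$-support'' is a $\sigma$-fixed (i.e.\ $\Z^0(p)$-indexed, $m$-th power free) exponent — this is where the splitting $\wt{\c N}^{(q)} = \wt{\c N}^{(q)}|_{\sigma=\id}\oplus(\sigma-\id)\wt{\c N}^{(q)}$ from the analogue of Lemma \ref{L2.11} enters.

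The combinatorial heart is the identification of the solution. Working modulo $\bar{\c J}_{\c K}^p$ in $\bar{\c A}_{\c K}$, the term $\bar{\c E}$ is $\wt{\exp}$ of the linearized $\gamma$-action applied to $e_{(N^*,0]}$, and the conjugation terms $(\wt{\exp}\,\bar e^{\,(q)}-1)\bar x - \sigma\bar x(\wt{\exp}\,\bar e^{\,(q)}-1)$ are precisely what is needed to turn a ``one-sided'' expression into the balanced combinatorial sum defining $\bar{\c F}[\iota]$ — this is exactly the mechanism of identities \eqref{E4.3} and Lemma \ref{L4.3}(a),(c). Concretely, I would show by induction on the weight $s$ that the unique $\bar x$ solving \eqref{E4.2} has $t^{-\iota p^m}$-component congruent to $\bar{\c F}[\iota p^m]$ modulo $\bar{\c L}[v_0]_{\c K}$: the inductive step feeds $\bar x\,\op{mod}\,\bar{\c L}(s)$ back into the quadratic terms of \eqref{E4.2}, and the resulting new contribution at weight $s+1$ is, after applying the telescoping relation $\sigma^m\bar{\c F}[\iota]_0 + \dots + \bar{\c F}[\iota p^m]_0 = \bar{\c F}[\iota p^m]$ and Lemma \ref{L4.3}, exactly $\bar{\c F}[\iota p^m]$ up to elements of $\bar{\c L}[v_0]$ (the latter absorbing all commutators of $V_{\iota 0}$'s, which by Corollary \ref{C2.16} generate $\bar{\c L}[v_0]$). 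Part (b) then follows: $V_{\iota 0}$, being the $\sigma$-invariant residue at a primitive exponent $\iota\in\mathfrak{A}^0_1(p)$, picks up $-\sigma^{-m(\iota)}\bar{\c F}[\iota p^{m(\iota)}]$ (the shift by $m(\iota)$ comes from iterating $\sigma$ until the exponent leaves the admissible range $|\iota p^m|\leq(p-1)b^*$, i.e.\ reaches $t^{(p-1)b^*}\bar{\c N}^{(q)} = 0$), modulo $\bar{\c D}_k = [\bar{\c L}[v_0],\bar{\c L}]_k$, which absorbs the higher commutator corrections.

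The main obstacle I anticipate is bookkeeping the passage from the associative (enveloping-algebra) computation in \eqref{E4.2} to the Lie-algebra statement: one must verify that the associative expression $\sum_\iota t^{-\iota}V_{\iota 0}$, read off by comparing coefficients of $U$ modulo $\bar{\c J}_{\c K}^p$, really lands in $\bar{\c L}_k$ (not just in $\bar{\c A}_k/\bar{\c J}^p_k$), and that its ``Lie part'' is correctly extracted by Lemma \ref{L4.3}(c), which rewrites the relevant sum over connected permutations $\Phi_{ss_1}$ as an iterated bracket $[\dots[D_{a_1n_1},D_{a_2n_2}],\dots,D_{a_sn_s}]$. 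This is exactly the computation carried out in the proof of Proposition \ref{P4.4}, so in practice I would \emph{reduce} part (a) of Proposition \ref{P4.6} to Proposition \ref{P4.4} plus the analogue of Lemma \ref{L2.11}, rather than redo it. A secondary subtlety is checking that the error terms $\tilde f_1,\tilde f_2$ (the commutators $[\dots[\sigma\bar x,\bar e^{(q)}],\dots]$ and $[\dots[\bar x,\bar e^{(q)}],\dots]$ appearing implicitly in the conjugation) contribute only modulo $[\bar{\c L}[v_0],\bar{\c L}]$ at each stage — this is precisely the content of items a)--d) in the proof of Proposition \ref{P2.15}, which I would cite.
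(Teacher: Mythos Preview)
Your proposal is correct and follows essentially the same route as the paper: induction on the weight filtration $\bar{\c L}[v_0](s_0)$, substitution of the inductive hypothesis into the conjugation terms of \eqref{E4.2}, simplification via identity \eqref{E4.3} to obtain $-\sum_{\iota,m}\bar{\c F}[\iota p^m]_0\,t^{-\iota p^m}$ on the right-hand side, passage from the enveloping algebra to the Lie algebra via Lemma~\ref{L4.2}(b), and then application of the $\c S/\c R$ operators of Lemma~\ref{L2.11} together with the telescoping identity $\sum_{m_1+m_2=m}\sigma^{m_2}\bar{\c F}[\iota p^{m_1}]_0=\bar{\c F}[\iota p^m]$ to read off $\bar x$ and $V_{\iota 0}$. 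Two small points of precision: the phrase ``$\sigma$ is topologically nilpotent'' applies only to the $\iota<0$ (positive power of $t$) part of $\bar x$, while for $\iota>0$ one genuinely needs the $\c R$ operator rather than a geometric series in $\sigma$; and the reference to Proposition~\ref{P2.15} is unnecessary here, since the relevant absorption of error terms into $\bar{\c D}(s_0+1)$ comes directly from Lemma~\ref{L4.2}(b) applied to $[\bar{\c L}[v_0](s_0),\bar{\c L}]$.
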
 

\begin{proof} Apply  induction on $1\leqslant s_0< p$ 
by assuming that a) holds modulo $\bar{\c L}[v_0](s_0)_{\c K}$
and deducing from 
this that a) and b) hold modulo the ideals $\bar{\c L}[v_0](s_0+1)_{\c K}$ and, 
resp., $\bar{\c D}(s_0+1)_k$.  

Clearly, a) holds modulo $\bar{\c L}[v_0](1)_{\c K}=\bar{\c L}_{\c K}$. 

Suppose $1\leqslant s_0<p$ and part a) holds modulo 
$\bar{\c L}[v_0](s_0)_{\c K}$. Applying this assumption to the 
right-hand side of \eqref{E4.2} we obtain (use \eqref{E4.3}) 
that 
\begin{equation} \label{E4.4} 
\sigma\bar x-\bar x+\sum _{\iota }t^{-\iota }V_{\iota 0}
\equiv -\sum _{\iota , m}\bar{\c F}[\iota p^m]_0t^{-\iota p^m}
\end{equation} 
modulo ($\bar{\c J}\bar{\c J}_{\bar{\c L}[v_0](s_0)}+
\bar{\c J}_{\bar{\c L}[v_0](s_0)}\bar{\c J}
+\bar{\c J}^p)_{\c K}$, cf. notation from Lemma \ref{L4.2}. 
(Here the right-hand sum 
is taken over all 
$\iota\in\mathfrak{A}^{\op{prim}}_1$ and $m\geqslant 0$.) 

Since the both parts of congruence \eqref{E4.4} belong to 
$\bar{\c L}_{\c K}$, part b) of Lemma \ref{L4.2} 
implies that \eqref{E4.4} holds modulo 
$[\bar{\c L}[v_0](s_0),\bar{\c L}]_{\c K}=\bar{\c D}(s_0+1)_{\c K}$. 

\begin{remark} 
Since $\bar x, \sigma \bar x\in\bar{\c N}^{(q)}$ relation 
\eqref{E4.4} implies that  
$\bar{\c F}[\iota p^m]_0\in \bar{\c D}(s_0+1)_k=\bar{\c D}_k+\bar{\c L}(s_0+1)_k$  
if $\iota p^m>s_0b^*$. 
\end{remark}

Apply the operators $\c S$ and $\c R$ from Lemma \ref{L2.11} to recover the elements 
$\sum _{i\in\mathfrak{A}^0(p)}t^{-\iota }V_{\iota 0}$ 
and $\bar x$ modulo $\bar{\c D}(s_0+1)_{\c K}$ as follows.

Let $\bar x=\bar x^++\bar x^-$, where $\bar x^+$ (resp., $\bar x^-$) 
is the linear combination of elements of $\bar{\c L}_k$ with positive 
(resp., negative) powers of $t$. 

If $\iota <0$ then $\c S(\bar{\c F}[\iota p^m]_0t^{-\iota p^m})=
-\sum \limits _{n\geqslant 0}\sigma ^n\bar{\c F}[p^m\iota ]_0t^{-\iota p^{n+m}}$ 
and, therefore, 
$\bar x^+\equiv \sum\limits  _{\iota ,m}
\bar{\c F}[\iota p^m]t^{-\iota p^m}\,\op{mod}\,\bar{\c D}(s_0+1)_{\c K}$, 
where the sum is taken over all 
$\iota \in\mathfrak{A}^{\op{prim}}_1\setminus \mathfrak{A}^0_1(p)$ and $m\geqslant 0$. 
This gives part a) modulo $\bar{\c L}[v_0](s_0+1)_{\c K}\supset \bar{\c D}(s_0+1)_{\c K}$ 
at the level of positive powers of $t$. 
\medskip

Let $\iota \in\mathfrak{A}^0_1(p)$. Then 
$\c R(t^{-\iota p^m}\bar{\c F}[\iota p^m]_0)=
t^{-\iota }\sigma ^{-m}\bar{\c F}[\iota p^m]_0$ and 

$$V_{\iota 0}t^{-\iota }\equiv 
-t^{-\iota }
\sum _{0\leqslant m\leqslant m(\iota )}\sigma ^{-m}\bar{\c F}[\iota p^m]_0
\equiv 
-t^{-\iota }\sigma ^{-m(\iota )}\bar{\c F}[\iota p^{m(\iota )}]\, $$
modulo $\bar{\c D}(s_0+1)_{\c K}$. 
This gives part b).

As a result,  we have the following congruences modulo  
$\bar{\c L}[v_0](s_0+1)_{\c K}$: 

$$\c S(\bar x^-)\equiv -\sum _{\iota ,m}
\c S(t^{-\iota p^m}\bar{\c F}[\iota p^m]_0)
\equiv -\sum _{\iota , m}
\sum _{0\leqslant m_1<m}t^{-\iota p^{m_1}}
\sigma ^{-m+m_1}\bar{\c F}[\iota p^m]_0
$$
$$\equiv -\sum _{\iota , m}t^{-\iota p^{m}}
\sum _{m_1>m}
\sigma ^{-m_1+m}\bar{\c F}[\iota p^{m_1}]_0
\equiv -\sum _{\iota , m}t^{-\iota p^{m}}
\sum _{\substack{m_1+m_2=m \\ m_2<0}}
\sigma ^{m_2}\bar{\c F}[\iota p^{m_1}]_0$$
$$\equiv 
\sum _{\iota , m}t^{-\iota p^{m}}
\sum _{\substack{m_1+m_2=m \\m_1, m_2\geqslant 0}} 
\sigma ^{m_2}\bar{\c F}[\iota p^{m_1}]_0=
\sum _{\iota ,m}\bar{\c F}[\iota p^m]t^{-\iota p^m}$$
(here 
$\iota $ and $m$ run over $\mathfrak{A}^0_1(p)$ and, resp., $\Z _{\geqslant 0}$) because 
$$\sum _{m_2+m_1=m}
\sigma ^{m_2}\bar{\c F}[\iota p^{m_1}]_0=
\bar{\c F}[\iota p^m]\equiv 
\sigma ^{m-m(\iota )}V_{\iota }
\,\op{mod}\, \bar{\c L}[v_0](s_0+1)_k\, .$$
This completes the induction step for part a). 
\end{proof} 

\begin{Cor} \label{C4.7} 
$\bar{\c L}[v_0]$ is the minimal ideal 
in $\bar{\c L}$ such that $\bar{\c L}[v_0]_k$ contains 
 all $\bar{\c F}[\iota p^{m_{\iota}}]$. 
\end{Cor}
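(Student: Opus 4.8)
The plan is to derive this from Proposition \ref{P4.6}(b) together with the minimality characterisation of $\bar{\c L}[v_0]$ supplied by Corollary \ref{C2.16}. Recall that in Section \ref{S4.3} we set $V_{\iota 0}=\c V_kB^{\dag }_k(D^{\dag }_{\iota 0})$, so Corollary \ref{C2.16} asserts that $\bar{\c L}[v_0]$ is the \emph{minimal} ideal $\c I$ of $\bar{\c L}$ with $V_{\iota 0}\in\c I_k$ for all $\iota\in\mathfrak{A}^0_1(p)$. Put $\bar{\c D}=[\bar{\c L}[v_0],\bar{\c L}]$ and let $\mathfrak{h}$ be the ideal of $\bar{\c L}$ generated by the elements $\bar{\c F}[\iota p^{m_\iota }]$, $\iota\in\mathfrak{A}^0_1(p)$ (a genuine ideal of $\bar{\c L}$ by Corollary \ref{C4.5}; note that by Proposition \ref{P4.4} the generators are, up to powers of $\sigma $, the elements $\bar{\c F}^0_{\alpha [\iota ],-(m[\iota ]+m_\iota )}$). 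We must show $\mathfrak{h}=\bar{\c L}[v_0]$.

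The first step is to rewrite Proposition \ref{P4.6}(b) with the chosen exponent $m_\iota \geqslant m(\iota )$ in place of $m(\iota )$. Using the identity $\bar{\c F}[\iota p^{m_\iota }]=\sum_{j=0}^{m_\iota }\sigma ^{m_\iota -j}\bar{\c F}[\iota p^j]_0$ recorded before Proposition \ref{P4.4}, together with the remark inside the proof of Proposition \ref{P4.6} applied with $s_0=p-1$ (so that $\bar{\c D}(p)=\bar{\c D}$): for every $j>m(\iota )$ one has $\iota p^j>(p-1)b^*$, hence $\bar{\c F}[\iota p^j]_0\in\bar{\c D}_k$, so $\bar{\c F}[\iota p^{m_\iota }]\equiv \sigma ^{m_\iota -m(\iota )}\bar{\c F}[\iota p^{m(\iota )}]\pmod{\bar{\c D}_k}$; combined with Proposition \ref{P4.6}(b) this gives, for all $\iota\in\mathfrak{A}^0_1(p)$,
$$V_{\iota 0}\equiv -\sigma ^{-m_\iota }\bar{\c F}[\iota p^{m_\iota }]\pmod{\bar{\c D}_k}.$$
The inclusion $\mathfrak{h}\subseteq\bar{\c L}[v_0]$ is now immediate: each generator $\bar{\c F}[\iota p^{m_\iota }]$ is $-\sigma ^{m_\iota }V_{\iota 0}$ plus an element of $\bar{\c D}_k\subseteq\bar{\c L}[v_0]_k$, and $V_{\iota 0}\in\bar{\c L}[v_0]_k$ by Corollary \ref{C2.16}, while ideals are $\sigma $-stable.

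For the reverse inclusion I would induct on $s$, $1\leqslant s\leqslant p$, proving $\bar{\c L}[v_0]+\bar{\c L}(s)=\mathfrak{h}+\bar{\c L}(s)$; the case $s=1$ is trivial since $\bar{\c L}(1)=\bar{\c L}$. Assuming it for $s$: from $\mathfrak{h}\subseteq\bar{\c L}[v_0]$ we get $[\mathfrak{h},\bar{\c L}]\subseteq\mathfrak{h}$, and from centrality of the filtration $[\bar{\c L}(s),\bar{\c L}]\subseteq\bar{\c L}(s+1)$, so bracketing $\bar{\c L}[v_0]+\bar{\c L}(s)=\mathfrak{h}+\bar{\c L}(s)$ with $\bar{\c L}$ yields $\bar{\c D}+\bar{\c L}(s+1)\subseteq\mathfrak{h}+\bar{\c L}(s+1)$. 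Feeding this into the displayed congruence shows $V_{\iota 0}\in\mathfrak{h}_k+\bar{\c L}(s+1)_k$ for every $\iota\in\mathfrak{A}^0_1(p)$; since $\mathfrak{h}+\bar{\c L}(s+1)$ is an ideal of $\bar{\c L}$ containing all $V_{\iota 0}$, Corollary \ref{C2.16} gives $\bar{\c L}[v_0]\subseteq\mathfrak{h}+\bar{\c L}(s+1)$, and with $\mathfrak{h}\subseteq\bar{\c L}[v_0]$ this closes the induction. Taking $s=p$ and $\bar{\c L}(p)=0$ gives $\bar{\c L}[v_0]=\mathfrak{h}$, which is the assertion. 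The one genuine difficulty is the circularity in the displayed congruence---the error term $\bar{\c D}_k$ involves the very ideal $\bar{\c L}[v_0]$ we are characterising---and this is exactly what the filtration induction (which terminates because $\bar{\c L}(p)=0$) is set up to resolve.
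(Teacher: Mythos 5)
Your proof is correct and takes essentially the same route as the paper: Proposition \ref{P4.6}(b), the remark inside its proof (applied with $s_0=p-1$) to replace $m(\iota )$ by the chosen $m_\iota $, and the characterisation of $\bar{\c L}[v_0]$ from Corollary \ref{C2.16}. The paper's two-line proof leaves the final deduction implicit; your filtration induction on $\bar{\c L}(s)$, which resolves the apparent circularity coming from the error term $\bar{\c D}_k=[\bar{\c L}[v_0],\bar{\c L}]_k$, is exactly the standard argument used elsewhere in the paper (e.g.\ in the proof of Proposition \ref{P2.15}).
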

\begin{proof} 
If $m>m(\iota )$ then $\iota p^m>(p-1)b^*$ and 
by remark in the proof of Prop.\,\ref{P4.6}, 
$\bar{\c F}[\iota p^m]_0\in \bar{\c D}_k$ . 
Therefore, 
$\bar{\c F}[\iota p^{m_{\iota }}]\equiv \bar{\c F}
[\iota p^{m(\iota )}]\,\op{mod}\,\bar{\c D}_k$. 
\end{proof}
Theorem \ref{T4.1} is completely proved.

\section{Effective choice of parameters} \label{S5} 

Theorem \ref{T4.1} gives explicit description of the ramification 
ideal $\c L^{(v_0)}$ 
but this description depends on a choice of parameters 
$\delta _0$, $b^*$ and $q=p^{N^*}$ involved into the construction  of the module of 
auxilliary coefficients $\mathfrak{A}^0$. More precisely, the corresponding generating elements 
${\c F}^0_{\alpha [\iota ], -(m[\iota ]+m(\iota ))}$ of the ideal 
$\c L^{(v_0)}_k$ depend on 
the rational numbers $\alpha [\iota ]\geqslant v_0$ and the integers 
$m[\iota ]$ coming from the appropriate 
$\iota\in\mathfrak{A}^0_1(p)$. The values of 
$\delta _0$, $b^*$ and $N^*$ 
can be specified in a sufficiently constructive way directly from their definitions 
but it is highly unlikely that this could be done in a more or less optimal way. The reason is that a choice of $\delta _0$, $b^*$ and $N^*$ 
depends on the whole set 
$\mathfrak{A}^0$ but the construction of 
generators uses only the subset $\mathfrak{A}^0_1(p)\subset\mathfrak{A}^0$.

As we have noticed in the Introduction, an analogue of Theor.\,\ref{T4.1} was obtained 
in \cite{Ab1} by different methods, and was stated in the terms of generators 
${\c F}^0_{\alpha ,-N}$ with arbitrary rationals $\alpha\geqslant v_0$ and 
a boundary value $\wt{N}(v_0)$ such that 
$N\geqslant \wt{N}(v_0)$.   
In Sect.\,\ref{S5.1} we deduce this analogue from Theor.\,\ref{T4.1} 
and prove that it holds with the boundary value 
$\wt{N}(v_0)=N^*-1$. 

It should be noticed that if $\wt{N}(v_0)$ is (unreasonably) 
growing  then the number of 
dependent generators among 
$\c F^0_{\alpha ,-N}$, $\alpha\geqslant v_0$, also grows. As a result,   
the description of the ideal $\c L^{(v_0)}$ is getting more 
and more complicated. 
In Sect.\,\ref{S5.2} we use the left-continuity property 
of ramification filtration to provide \lq\lq flexible\rq\rq\  boundaries 
$\wt{N}(v_0, \alpha )$ depending on the parameter 
$\alpha $. This allows us to obtain more effective description 
of the whole filtration 
$\{\c L^{(v)}\}_{v\geqslant 1}$ under the condition that the set of its jumps is known. 

\subsection{Relation to the main result of \cite{Ab1}} \label{S5.1} \ \ 

In \cite{Ab1} (cf. also \cite{Ab3})  we proved the following theorem. 

\begin{Thm} \label{T5.1} 
There is $\wt N(v_0)\in\N $ such that if $N\geqslant  \wt N(v_0)$ 
is fixed then $\c L^{(v_0)}$ is the minimal ideal 
in $\c L$ such that for all $\alpha\geqslant v_0$, 
$\c F^0_{\alpha ,-N}\in\c L^{(v_0)}_k$. 
\end{Thm}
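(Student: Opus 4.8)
The plan is to derive Theorem~\ref{T5.1} from Theorems~\ref{T3.1} and~\ref{T4.1}, transporting everything to the graded algebra $\bar{\c L}$ and matching the elements $\bar{\c F}^0_{\alpha,-N}$ with the explicit generators of $\bar{\c L}[v_0]$ produced in Sections~\ref{S2}--\ref{S4}. First I would reduce to $\bar{\c L}$. By Theorem~\ref{T3.1}, $\c L^{(v_0)}=\bar{\op{pr}}^{-1}\bar{\c L}[v_0]$; in particular $\c L(p)=\Ker\bar{\op{pr}}\subset\c L^{(v_0)}$, which also follows from Proposition~\ref{P3.4} with $s_0=p-1$ since $C_p(\c L)=0$. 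Hence $\c F^0_{\alpha,-N}\in\c L^{(v_0)}_k$ if and only if its image $\bar{\c F}^0_{\alpha,-N}$ lies in $\bar{\c L}[v_0]_k$, and it suffices to prove that for $N\geqslant N^*-1$ the ideal $\bar{\c L}[v_0]$ is the minimal ideal of $\bar{\c L}$ containing all $\bar{\c F}^0_{\alpha,-N}$ with $\alpha\geqslant v_0$. (That the ideal these elements generate in $\c L$ already contains $\c L(p)$ --- so that the minimal-ideal statement for $\c L^{(v_0)}$ is genuinely a pullback --- is obtained by the weight induction of Proposition~\ref{P3.4}, using that $\c F^0_{\alpha,-N}\equiv\alpha D_{\alpha 0}$ modulo $C_2(\c L)_k$ when $\alpha\in\Z^+(p)$.)

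Next I would dispose of the large indices. For a non-zero commutator summand $[\dots[D_{a_1n_1},\dots],D_{a_sn_s}]$ of $\c F^0_{\alpha,-N}$, with $\op{wt}(D_{a_in_i})=s_i$, one has $a_i\geqslant(s_i-1)v_0$ and $\alpha=\sum a_ip^{n_i}\leqslant\sum a_i<v_0\sum s_i$, so its weight exceeds $\alpha/v_0$. Thus for $\alpha\geqslant(p-1)v_0$ every summand has weight $\geqslant p$; hence $\c F^0_{\alpha,-N}\in\c L(p)_k\subset\c L^{(v_0)}_k$ and $\bar{\c F}^0_{\alpha,-N}=0$. So only $v_0\leqslant\alpha<(p-1)v_0$ (and $\alpha\in A[p-1,N]$, else $\c F^0_{\alpha,-N}=0$) remain; the same weight bound will also serve to show that the commutators of $\c F^0_{\alpha,-N}$ not covered by the matching step below already lie in $[\bar{\c L}[v_0],\bar{\c L}]_k$, by an induction on nilpotence class parallel to Propositions~\ref{P3.4} and~\ref{P2.15}.

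For $v_0\leqslant\alpha<(p-1)v_0$ let $p^m$ be the least power of $p$ with $p^m\alpha\in\Z$ and set $\iota=qp^m\alpha-p^mb^*$. Since $\alpha\geqslant v_0>b^*/q$ one has $\iota>0$ and $v_p(\iota)=m$ (because $p\mid q$, $p\nmid b^*$), and $\op{ch}(\iota)=1$; using conditions C2--C3 of Section~\ref{S2.1} one checks $|\iota|\leqslant(p-1)b^*$ together with the constraints defining $\mathfrak{A}^+(p)$, so that $\iota\in\mathfrak{A}^0_1(p)$ with $\alpha=\alpha[\iota]$ and $m=m[\iota]$. By Proposition~\ref{P4.4}, $\bar{\c F}^0_{\alpha,-N}=\sigma^{-N}\bar{\c F}[\iota p^{\,N-m}]$, and $N-m\geqslant0$ because $m=m[\iota]<N^*$ by Lemma~\ref{L2.6} and $N\geqslant N^*-1$. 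Moreover both $m[\iota]$ and $m(\iota)$ are bounded by a constant depending only on $p,v_0,r^*$ --- indeed $\iota\geqslant qv_0-b^*$, which is of order $q(v_0-r^*)$, while $\iota p^{m(\iota)}\leqslant(p-1)b^*$, which is of order $(p-1)r^*q$ --- so C2 (which forces $N^*$ large) gives $N^*-1\geqslant m[\iota]+m(\iota)$ for all $\iota\in\mathfrak{A}^0_1(p)$, whence $N-m\geqslant m(\iota)$. Then Proposition~\ref{P4.6}\,b) and Corollary~\ref{C2.16} give $\bar{\c F}[\iota p^{m(\iota)}]\in\bar{\c L}[v_0]_k$, and the Remark in the proof of Proposition~\ref{P4.6} (that $\bar{\c F}[\iota p^{m'}]_0\in\bar{\c D}_k$ once $\iota p^{m'}>(p-1)b^*$) yields $\bar{\c F}[\iota p^{\,N-m}]\equiv\sigma^{\,N-m-m(\iota)}\bar{\c F}[\iota p^{m(\iota)}]$ modulo $\bar{\c D}_k\subset\bar{\c L}[v_0]_k$. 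Hence $\bar{\c F}^0_{\alpha,-N}\in\bar{\c L}[v_0]_k$ for every $\alpha\geqslant v_0$. Conversely, the generators of $\bar{\c L}[v_0]$ provided by Theorem~\ref{T4.1} are the $\bar{\c F}^0_{\alpha[\iota],-(m[\iota]+m_\iota)}$, $\iota\in\mathfrak{A}^0_1(p)$, and $\alpha[\iota]\geqslant v_0$ by the argument in the proof of Proposition~\ref{P2.8}\,b); since $m[\iota]+m_\iota$ and $N$ are both $\geqslant m[\iota]+m(\iota)$, Proposition~\ref{P4.4} and the same Remark identify $\bar{\c F}^0_{\alpha[\iota],-(m[\iota]+m_\iota)}$ with $\bar{\c F}^0_{\alpha[\iota],-N}$ modulo $\bar{\c D}_k=[\bar{\c L}[v_0],\bar{\c L}]_k$. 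Thus the $\sigma$-stable ideal generated by $\{\bar{\c F}^0_{\alpha,-N}:\alpha\geqslant v_0\}$ contains every Theorem~\ref{T4.1} generator, hence contains $\bar{\c L}[v_0]$; combined with the forward inclusion this gives equality and proves Theorem~\ref{T5.1} with $\tilde N(v_0)=N^*-1$.

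The main obstacle, I expect, is the index bookkeeping of the last step: verifying that the index $\iota=qp^m\alpha-p^mb^*$ attached to an arbitrary $\alpha\in[v_0,(p-1)v_0)\cap A[p-1,N]$ really lands in $\mathfrak{A}^0_1(p)$ (this is exactly where the inequalities of Section~\ref{S2.1} and conditions C2--C3 must be used, including handling the narrow window of $\alpha$ for which $\iota\notin\mathfrak{A}^0_1(p)$ by reducing the corresponding commutators to $[\bar{\c L}[v_0],\bar{\c L}]$ via the nilpotence-class induction mentioned above), together with the uniform estimate $m[\iota]+m(\iota)\leqslant N^*-1$ that allows the $\iota$-dependent boundary of Theorem~\ref{T4.1} to be replaced by the single boundary $N^*-1$; making that estimate explicit is then the business of Section~\ref{S5.2}.
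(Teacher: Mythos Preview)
Your reduction to $\bar{\c L}$ and the minimality (converse) direction are correct and agree with the paper: for $\iota\in\mathfrak{A}^0_1(p)$ one has $\alpha[\iota]\geqslant v_0$ by the proof of Proposition~\ref{P2.8}\,b), and $m[\iota]+m(\iota)<N^*$ (apply Lemma~\ref{L2.6}\,b) to $\iota p^{m(\iota)}\in\mathfrak{A}^0$), so the Theorem~\ref{T4.1} generators may be taken with boundary $N^*-1\leqslant N$ and, via Proposition~\ref{P4.4} and the Remark in the proof of Proposition~\ref{P4.6}, coincide with the corresponding $\bar{\c F}^0_{\alpha[\iota],-N}$ modulo $\bar{\c D}_k$.

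The forward direction --- that every $\bar{\c F}^0_{\alpha,-N}$ with $\alpha\geqslant v_0$ lies in $\bar{\c L}[v_0]_k$ --- has a genuine gap. Your assignment $\alpha\mapsto\iota=p^m(q\alpha-b^*)$ (with $p^m$ the exact denominator of $\alpha$) cannot land in $\mathfrak{A}^0_1(p)$ for most $\alpha$: the set $\mathfrak{A}^0_1(p)$ is \emph{finite} (Lemma~\ref{L2.6}\,b) forces the standard $m$ to be $<N^*$), while $[v_0,(p-1)v_0)\cap A[p-1,N]$ grows without bound with $N$. Concretely, $\iota\in\mathfrak{A}^0$ needs $p^m(q\alpha-b^*)\leqslant(p-1)b^*$, but $q\alpha-b^*\geqslant qv_0-b^*>0$ independently of $m$, so this fails as soon as $m$ is not tiny; e.g.\ for $p=5$, $v_0=5/2$, $\alpha=4+4/5$ one gets $m=1$ and $\iota=24q-5b^*>4b^*$ (since $b^*/q<v_0<8/3$), yet $[D_{4,0},D_{4,-1}]$ has weight $4<p$, so $\bar{\c F}^0_{\alpha,-N}\ne0$. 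Neither C2 (a \emph{lower} bound on $|q\alpha-(q-1)\beta|$) nor C3 gives the upper bound you claim. Your proposed patch, an ``induction on nilpotence class parallel to Propositions~\ref{P3.4} and~\ref{P2.15}'', keeps $\alpha$ fixed and so cannot work: what is needed is to \emph{change} $\alpha$. The paper does this in Lemma~\ref{L5.2} by induction on the depth $M$: when $qp^M\alpha-p^Mb^*>(p-1)b^*$ one expands $\bar{\c F}^0_{\alpha,-M}-\bar{\c F}^0_{\alpha,-(M-1)}$ as a linear combination of commutators $[\dots[\bar{\c F}^0_{\alpha',-(M-1)},D_{a'_1,-M}],\dots,D_{a'_r,-M}]$ with $\alpha'=\alpha-(a'_1+\dots+a'_r)p^{-M}\in A[p-1,M-1]$, applies the inductive hypothesis to each $\alpha'$ (which has strictly smaller denominator), and uses a short weight estimate to place each commutator in $\bar{\c D}_k$. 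This depth induction, not a direct matching, is the missing ingredient.
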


\begin{Prop} \label{P5.2} 
 Theorem \ref{T5.1} holds with $\wt{N}(v_0)=N^*-1$. 
\end{Prop}

\begin{proof} 
Let $\c L_N^{\star }[v_0]$ be the minimal ideal in $\c L$ such that for all 
$\alpha\geqslant v_0$, $\c F^0_{\alpha ,-N}\in\c L_N^{\star }[v_0]_k$. 
We should prove that for 
$N\geqslant N^*-1$, $\c L_N^{\star }[v_0]=\c L^{(v_0)}$. 

Use 
induction on $s\geqslant 1$ to deduce 
(use that $\c F^0_{a,-N}\in\c L_N^{\star }[v_0]_k$) for $a\geqslant sv_0$   
that $D_{a0}\in\c L_N^{\star }[v_0]_k+\c L(s+1)_k$, 
e.g.\,cf.\,Lemma\,\ref{L3.5} or \,Lemma 2.3 from \cite{Ab12}. 
This implies that 
$\c L(p)\subset\c L_N^{\star }[v_0]$. 

Denote by $\bar{\c L}_N^{\star }[v_0]$ the image of 
$\c L_N^{\star }[v_0]$ in $\bar{\c L}=\c L/\c L(p)$. 

It follows from Theor.\,\ref{T4.1} that $\bar{\c L}[v_0]$ is already 
the minimal ideal in $\bar{\c L}$ such that $\{\c F^0_{\alpha [\iota ], -N}\ |
\ \iota\in\mathfrak{A}^0_1(p)\}\subset\bar{\c L}[v_0]_k$ 
(use that $m_{\iota }$ could be chosen  such that $m[\iota ]+m(\iota )=N\geqslant N^*-1$).
Therefore, it remains to show that for any $\alpha\geqslant v_0$, it holds 
$\bar{\c F}^0_{\alpha ,-N}\in\bar{\c L}[v_0]$. 

Note that $\bar{\c F}^0_{\alpha ,-N}\ne 0$ implies that $\alpha \in A[p-1,N]$. Then by 
Prop.\,\ref{P2.5}, $p^{N+1}(q\alpha -b^*)\geqslant q(q\alpha -(q-1)r^*)>(p-1)b^*$. 
Therefore, our proposition is implied by the following lemma. 
\end{proof}

\begin{Lem} \label{L5.3}
Suppose $M\geqslant 0$ and 
$p^{M+1}(q\alpha -b^*)>(p-1)b^*$. Then 
\medskip 

{\rm a)} $\bar{\c F}^0_{\alpha ,-M}\in\bar{\c L}[v_0]_k$;
\medskip 

{\rm b)} if in addition $p^{M}(q\alpha -b^*)>(p-1)b^*$ then 
$$\bar{\c F}^0_{\alpha ,-(M-1)}
\equiv \bar{\c F}^0_{\alpha ,-M}\,\op{mod}\,[\bar{\c L}[v_0],\bar{\c L}]_k\, .$$
\end{Lem}

\begin{proof}[Proof of lemma] 
Apply induction on $M\geqslant 0$. 
 
Let $M=0$, i.e. $p(q\alpha -b^*)>(p-1)b^*$.
Here $\bar{\c F}^0_{\alpha ,-M}=\bar {\c F}^0_{\alpha ,\,0}$ is a $k$-linear 
combination of the commutators 
$$[\dots [D_{a_1'0},D_{a_2'0}],\dots ,D_{a_r'0}]\, ,$$  
where 
$a_1'+\dots +a_r'=\alpha $. Consider two cases: 

(i)\ If $q\alpha -b^*>(p-1)b^*$ then 
$\alpha >pb^*/q>(p-1)(v_0-\delta _0)$ and this implies 
$\alpha \geqslant (p-1)v_0$, cf.\,Lemma\,\ref{L2.6}\,a). Therefore, 
the above commutators belong to 
$\bar{\c L}(p)=0$. (Indeed, if $(s_i-1)v_0\leqslant a_i'<s_iv_0$ then 
$\sum\limits _i\op{wt}(D_{a_i'\,0})=
\sum\limits _is_i>\alpha /v_0\geqslant p-1$.)

(ii)\ If $q\alpha -b^*\leqslant (p-1)b^*$ then   
$\iota :=q\alpha -b^*\in\mathfrak{A}^0_1(p)$ and $m(\iota )=0$. Then  
Theor.\,\ref{T4.1} implies  
$\bar{\c F}^0_{\alpha ,0}\in\bar{\c L}[v_0]_k$. 
\medskip 

 Suppose $M\geqslant 1$. We have the following two cases: 
 \medskip 

(i) If $qp^M\alpha -p^Mb^*\leqslant (p-1)b^*$ then there is $\iota\in\mathfrak{A}^0_1(p)$ 
and $n\geqslant 0$ such that 
$\iota p^n=qp^M\alpha -p^Mb^*\in\mathfrak{A}^0_1$ and, 
therefore, $m(\iota )=n\leqslant M$. 
Then by Theorem \ref{T4.1} with $m_{\iota }=M-n$ we obtain 
$\bar{\c F}^0_{\alpha ,-M}=\sigma ^{-M}
\bar{\c F}[\iota p^n]\in\bar{\c L}[v_0]_k$. 
 
(ii) Suppose now that  $p^M(q\alpha -b^*)>(p-1)b^*$. 
Prove simultaneously the remaining case of a) and the statement b). 

By the induction assumption 
$\bar{\c F}^0_{\alpha ,-(M-1)}\in\bar{\c L}[v_0]_k$. 

Note that 
$\bar{\c F}^0_{\alpha ,-M}-\bar{\c F}^0_{\alpha ,-(M-1)}$ 
is a linear combination of 
the terms of the form 
\begin{equation} \label{E5.1} 
[\dots [\bar{\c F}^0_{\alpha ', -(M-1)},D_{a_1',-M}],
\dots ,D_{a_r',-M}]\, ,
 \end{equation}
where $\alpha =\alpha '+(a_1'+\dots +a_r')/p^M$, 
$r\geqslant 1$ 
and $\alpha '\in A[p-1,M-1]$. 
\medskip 

It remains to prove that \eqref{E5.1} belongs to 
$[\bar{\c L}[v_0],\bar{\c L}]_k$. 
\medskip 

Let $s\in\N $ be such that  $sb^*/q>a_1'+
\ldots +a_r'\geqslant (s-1)b^*/q$. 
\medskip 

Then 
$a_1'+\ldots +a'_r\geqslant (s-1)v_0$, cf. Sect.\ref{S2.1}, and    
$\sum \limits _{i}\op{wt}(D_{a'_i,-M})\geqslant s$.  
 (If $(s_i-1)v_0\leqslant a_i'<s_iv_0$ then 
$\sum \limits_is_i>(a_1'+\ldots +a_r')/v_0\geqslant s-1$.) 
We can assume that $s\leqslant p-2$ because, otherwise, \eqref{E5.1} belongs to 
$\bar{\c L}(p)_k=0$. 
Now the inequality 
$a_1'+\ldots +a_r'<sb^*/q$ implies 
$$(p-1)b^*/p^M<q\alpha -b^*
\leqslant q\alpha '-b^*+sb^*/p^M\, $$
and, therefore, 
\begin{equation} \label{E5.2} 
p^M(q\alpha '-b^*)>(p-1-s)b^*\, .
\end{equation}
 
If $p^M(q\alpha '-b^*)>(p-1)b^*$ then by the  
induction assumption we have 
$\bar{\c F}^0_{\alpha ', -(M-1)}\in\bar{\c L}[v_0]_k$ and   
\eqref{E5.1} belongs to $[\bar{\c L}[v_0],\bar{\c L}]_k$. 
\medskip 

If  $p^M(q\alpha '-b^*)\leqslant (p-1)b^*$ then 
$\iota ':=p^M(q\alpha '-b^*)\in\mathfrak{A}^0_1$,  
$m(\iota ')=0$ (use that $\iota '>b^*$) and, therefore, 
$\bar{\c F}[\iota ']\in\bar{\c L}[v_0]_k$. 

Then from inequality \eqref{E5.2} 
and Remark from Sect.\,\ref{S4.6} it follows that  
$$\bar{\c F}[\iota ']_0\in [\bar{\c L}[v_0],\bar{\c L}]_k+
\bar{\c L}(p-s)_k\, .$$ 

Note that $\alpha '\in A[p-1,M-1]$ implies that  
$p^M\alpha '\equiv 0\,\op{mod}\,p$ and, therefore, $\iota '/p\in\mathfrak{A}^0_1$. 
Now from the the identity 
$\bar{\c F}[\iota ']=\bar{\c F}[\iota ']_0+\sigma \bar{\c F}[\iota '/p]$ 
we deduce 
that 
$\bar{\c F}[\iota '/p]\in\bar{\c L}[v_0]_k+\bar{\c L}(p-s)_k$,    
and  from  
$\sigma ^{M-1}\bar{\c F}^0_{\alpha ', -(M-1)}=\bar{\c F}[\iota '/p]$  
it follows that   
$\bar{\c F}^0_{\alpha ', -(M-1)}\in\bar{\c L}[v_0]_k+\bar{\c L}(p-s)_k$. 
 
Finally, the commutator 
$$[\ldots [\bar{\c L}(p-s)_k, D_{a_1',-M}],\dots ,D_{a'_r,-M}]
\subset\bar{\c L}(p)=0$$
because $\sum\limits _{i}\op{wt}(D_{a_i',-M})\geqslant s$. 
As a result,  \eqref{E5.1} belongs to $[\bar{\c L}[v_0],\bar{\c L}]_k$. 

The lemma is proved. 
\end{proof} 

\begin{remark}
 If in the above notation 
 $p^M(q\alpha -b^*)>(p-1)b^*$, then 
 $$\c F^0_{\alpha ,-(M-1)}
\equiv \c F^0_{\alpha ,-M}\,\op{mod}\,[\c L[v_0],\c L]_k\, .$$

Indeed, since $\c F^0_{\alpha ,-M}$ and $\c F^0_{\alpha ,-M+1}$ 
have the same linear term the part b) of the above lemma implies that 
$$\c F^0_{\alpha ,-(M-1)}-
\c F^0_{\alpha ,-M}\in \c L(p)_k\cap C_2(\c L_k)+
[\c L[v_0],\c L]_k\, .$$
It remains to note that Lemma\,\ref{L3.5} implies 
$\c L(p)\cap C_2(\c L)\subset [\c L^{(v_0)},\c L]$. 
\end{remark}

\subsection{Flexible boundaries} \label{S5.2} 
Suppose $v\geqslant 1$. Introduce the weight function $\op{wt}_v$ on $\c L_k$ 
such that $\op{wt}_v(D_{an})=s\in\N $ iff $(s-1)v\leqslant a<sv$. 
Denote by $\c L_v(p)$ the ideal of elements with 
$\op{wt}_v$-weight $\geqslant p$. Note that in notation from Sect.\ref{S1.6} 
$\op{wt}=\op{wt}_{v_0}$.

Introduce another weight function $\op{wt}_v^{+}$ on $\c L_k$ such that 
$\op{wt}^{+}(D_0)=1$ and for $a\in\Z ^+(p)$, $\op{wt}_v^+(D_{an}=s$ iff 
$(s-1)v<a\leqslant sv$. Denote by $\c L_v^+(p)$ the ideal of elements with 
$\op{wt}_v^+$-weight $\geqslant p$.

Clearly, we have the following property: 

\begin{Prop} \label{P5.4} 
 $\c L_v^{+}(p)={\cup} _{v'>v}\c L_{v'}(p)$. 
\end{Prop}

Suppose $v>1$ and $v^{\flat }\in [1,v)$ is such that for any $v'\in (v^{\flat }, v]$, 
we have $\c G_{<p}^{(v')}=\c G_{<p}^{(v)}$. The existence of $v^{\flat }$ 
follows from the left-continuity 
property of ramification filtration.  

\begin{remark} There is the following upper estimate for $v^{\flat }$. 
 Let  $\mathfrak{B}$ 
 be the set of all  
 $a_1+a_2p^{-n_2}+\ldots a_{p-1}p^{-n_{p-1}}<v$ with   
 $a_i\in\Z ^+(p)\cap [1,(p-1)v)$ and $n_i\geqslant 0$. 
 Let $\delta _0(1)=\min\,\{v-b\,|\ b\in\mathfrak{B}\}$, cf. Sect.\,\ref{S2.1}. 
 Then $v^{\flat }\leqslant v-\delta _0(1)$. This follows easily from 
 Theorem \ref{T5.1} because if $\alpha\notin\mathfrak{B}$ and $\alpha <v$ 
 then 
 $\c F^0_{\alpha ,-N}=0$ and, therefore, 
 the set $\mathfrak{B}$ contains all possible breaks 
 of the  filtration $\{\c G_{<p}^{(v')}\}_{1\leqslant v'<v}$. 
\end{remark}

For any $\alpha > v^{\flat }$ choose 
$N_{\alpha }\geqslant 0$ such that 
$$p^{N_{\alpha }+1}(\alpha -v^{\flat })>(p-1)v^{\flat }\, .$$ 

There is the following more effective   
version of Theor.\,\ref{T5.1}.

\begin{Thm}\label{T5.5} 
$\c L^{(v)}$ is the minimal ideal in $\c L$ 
such that 
for all $\alpha\geqslant v$, $\c F^0_{\alpha ,-N_{\alpha }}\in\c L_k^{(v)}$.  
\end{Thm}

\begin{proof} 
Apply Theorem \ref{T5.1} with $v_0=v$ by choosing 
$N\geqslant \wt{N}(v)$ such that 
for all $\alpha\geqslant v$, $p^{N+1}(\alpha -v^{\flat })>(p-1)v^{\flat }$. 
Then $\c L^{(v)}$ is the minimal ideal in $\c L$ such that 
 $\c L^{(v)}_k$ contains 
all $\c F_{\alpha ,-N}^0$ with $\alpha\geqslant v$.

Fix $\alpha\geqslant v$ and choose 
$v_{\alpha }\in (v^{\flat },v)$ such that we still have the inequalities   
$p^{N_{\alpha }+1}(\alpha -v_{\alpha })>(p-1)v_{\alpha }$ and 
$p^{N+1}(\alpha -v_{\alpha })>(p-1)v_{\alpha }$

Let $b^*_{\alpha }$ and $q_{\alpha }$ be analogs of the parameters $b^*$ and $q$ 
chosen in Sect.\ref{S2.1} when $v_0$ is replaced by $v_{\alpha }$. 

Then the inequality $p^{M+1}(q_{\alpha }\alpha -b_{\alpha }^*)>(p-1)b_{\alpha }^*$ 
from Lemma \ref{L5.3} holds with $M=N$ and $M=N_{\alpha }$ 
(use that $b_{\alpha }^*/q_{\alpha }<v_{\alpha }$). 
Therefore, by remark from the end of Sect.\,\ref{S5.1} we have 
$$\c F^0_{\alpha ,-N}\equiv \c F^0_{\alpha ,-N_{\alpha }}\,\op{mod}\,
[\c L^{(v)},\c L]_k\, .$$
This means that the conditions 
$\c F^0_{\alpha ,-N}\in\c L^{(v)}_k$ and 
$\c F^0_{\alpha ,-N_{\alpha }}\in\c L^{(v)}_k$ 
are equivalent. 
Theorem is proved. 
\end{proof}

\subsection{The whole filtration 
$\{\c G^{(v)}_{<p}\ |\ v\geqslant 1\}$} \label{S5.3} \ \ 

Suppose 
$1=v_1<v_2<\ldots <v_r<\ldots $  
are all jumps of the ramification 
filtration $\{\c G_{<p}^{(v)}\}_{v\geqslant 1}$. (This set is obviously discrete.) 
In other words, 

-- $\c G_{<p}^{(v_1)}\varsupsetneqq \ldots \varsupsetneqq \c G_{<p}^{(v_r)}
\varsupsetneqq \ldots $; 
\medskip 

-- $\c G^{(1)}_{<p}$ is the ramification subgroup in $\c G_{<p}$, 
$(\c G_{<p}:\c G^{(1)}_{<p})=p$;
\medskip 

-- if $r\geqslant 2$ and $v_{r-1}<v\leqslant v_r$ 
then $\c G^{(v)}_{<p}=\c G_{<p}^{(v_r)}$.
\medskip 

Use the identification $\c G_{<p}\simeq G(\c L)$ from Sect.\,\ref{S1}. 
Then the ramification filtration appears as 
ideals 
$\c L^{(v_1)}\varsupsetneqq \ldots \varsupsetneqq \c L^{(v_r)}
\varsupsetneqq \ldots $ in $\c L$, 
where $\c L^{(1)} _k$ is 
generated by all $D_{an}$, $a\in\Z ^+(p)$. 
\medskip 

Suppose $u\geqslant 2$. 

Introduce the weight function $\op{wt}_u$ on $\c L_k$ such that 
$\op{wt}_u(D_0)=1$ and if $s\in\N $ is such that 
$(s-1)v_{u-1}<a\leqslant sv_{u-1}$ then $\op{wt}_u(D_{an})=s$.

Introduce also the elements 
$\c F^*[u]\in\c L_k$ obtained from the elements 
$\c F^0_{v_u ,-M_u}$, cf. Sect.\,\ref{S4.2}, where 
$$p^{M_u+1}(v_u-v_{u-1})>(p-1)v_{u-1}$$ 
by imposing additional restriction 
$\op{wt}_u(D_{a_1n_1})+\ldots +\op{wt}_u(D_{a_sn_s})\leqslant p-1$ if $s\geqslant 2$. 
Clearly, 
\begin{equation} \label{E5.3} 
\c F^*[u]\equiv \c F^0_{v_u, -M_u}\,\op{mod}\, [\c L^{(v_u)},\c L]_k
\end{equation}

\begin{Thm}\label{T5.6}
 For $r\geqslant 2$, $\c L^{(v_r)}$ is the minimal ideal in $\c L$ 
such that $\c L^{(v_r)}_k$ contains all $\c F^*[u]$ with $u\geqslant r$. 
\end{Thm}

\begin{proof} 
Consider $\alpha >1$ and 
let $u_{\alpha }\geqslant 2$ be such that 
$v_{u_{\alpha }-1}<\alpha\leqslant v_{u_{\alpha }}$. Let  
$N_{\alpha }\geqslant 0$ be such that 
$p^{N_{\alpha }+1}(\alpha -v_{u_{\alpha }-1})>(p-1)v_{u_{\alpha }-1}$. 
This implies that $\c F_{\alpha ,-N_{\alpha }}\in \c L_k^{(v_{u_{\alpha }})}$. 

Suppose $\alpha > v_{r-1}$. Then  $v_{u_{\alpha }-1}\geqslant v_{r-1}$ and  
$$p^{N_{\alpha }+1}(\alpha -v_{r-1})
\geqslant p^{N_{\alpha }+1}(\alpha -v_{u_{\alpha }-1})>(p-1)v_{u_{\alpha }-1}
\geqslant (p-1)v_{r-1}\, .$$

In particular,  Theor.\,\ref{T5.5} implies that $\c L^{(v_r)}$ 
is the minimal ideal in $\c L$ 
such that for all $\alpha\geqslant v_r$, 
$\c F^0_{\alpha ,-N_{\alpha }}\in\c L^{(v_r)}_k$. 

If $\alpha >v_r$ 
then $u_{\alpha }\geqslant v_{r+1}$ and 
$\c F^0_{\alpha ,-N_{\alpha }}\in\c L^{(v_{r+1})}_k$. 

If $\alpha =v_r$ then $u_{\alpha }=r$ and $p^{N_{\alpha }+1}(v_r-v_{r-1})=
p^{N_{\alpha }+1}(\alpha -v_{u_{\alpha }-1})>(p-1)v_{u_{\alpha }-1}=
(p-1)v_{r-1}$. Set $M_r=N_{\alpha }$. Then congruence \eqref{E5.3} implies that  
$$\c F^*[r]\equiv \c F^0_{v_r,-M_{r}}\equiv \c F^0_{\alpha , -N_{\alpha }}
\,\op{mod}\,[\c L^{(v_r)},\c L]_k\, .$$ 

As a result, $\c L^{(v_r)}$ is the minimal ideal in $\c L$ 
such that $\c F^*[r]\in\c L^{(v_r)}_k$ and 
$\c L^{(v_{r+1})}\subset \c L^{(v_r)}$. 

By iterating this procedure we obtain the statement of our theorem. 
\end{proof}

\end{document}